\title[Supercuspidal characters of $\protect\SL_2$]
{Supercuspidal characters of $\protect\SL_2$ over a $p$-adic field}
\author[Adler]{Jeffrey D. Adler}
\address{Department of Mathematics and Statistics \\
The American University \\
4400 Massachusetts Ave NW \\
Washington, DC  20016-8050}
\email{jadler@american.edu}
\author[DeBacker]{Stephen DeBacker}
\address{Department of Mathematics \\
University of Michigan \\
530 Church St \\
2074 East Hall \\
Ann Arbor, MI 48109-1043}
\email{smdbackr@umich.edu}
\author[Sally]{Paul J.~Sally, Jr.}
\address{Department of Mathematics\\
The University of Chicago \\
5734 S.\ University Ave \\
Chicago, IL  60637}
\email{sally@math.uchicago.edu}
\author[Spice]{Loren Spice}
\address{Department of Mathematics \\
Texas Christian University \\
TCU Box 298900 \\
2840 W.\ Bowie St \\
Fort Worth, TX 76109}
\email{lspice@tcu.edu}
\thanks{%
The first-named author was partially supported
by NSF grant DMS-0854844.
The second- and last-named authors were partially supported
by NSF grant DMS-0854897.%
}
\subjclass[2010]{Primary 22E35, 22E50; Secondary 20G05}
\keywords{$p$-adic group, character formula,
	supercuspidal representation}
\begin{document}

\dedicatory{Dedicated to the memory of Joseph Shalika}

\begin{abstract}
The character formulas of \cite{sally-shalika:characters}
are an early triumph in \(p\)-adic harmonic analysis, but,
to date, the calculations underlying the formulas have not
been available.  In this paper, which should be viewed as a
precursor of the forthcoming volume
\cite{adler-debacker-roche-sally-spice:sl2}, we
leverage modern technology (for example, the Moy--Prasad
theory)
to compute explicit character tables.
An interesting highlight is the computation of the
`exceptional' supercuspidal characters, i.e., those
depth-zero representations not arising by
inflation--induction from a Deligne--Lusztig representation
of finite \(\SL_2\);
this provides a concrete application for the recent work of
DeBacker and Kazhdan \cite{debacker-kazhdan:mk-zero}.
\end{abstract}
\maketitle

\setcounter{tocdepth}1
\tableofcontents

\section{Introduction}

\subsection{History}
\label{sec:history}

Supercuspidal representations of reductive $p$-adic groups
were discovered by F.~Mautner in the late 1950s.  In fact,
one of us (Sally) heard him lecture on his discovery at
Brandeis in 1959.  His construction is contained in the
following theorem, which appeared in the
\textit{American Journal of Mathematics} in 1964.
The notation is explicated in the body of the paper.
\begin{thm}[%
	\cite{mautner:spherical-2}*{Theorem 9.1}%
]
Let $G=\PGL_2(\field)$ and $K=\PGL_2(\pint)$,
where \(\field\) is a \(p\)-adic field
and \(\pint\) its ring of integers,
and write
\[
N =  \set{\begin{smallpmatrix} 1 & 0 \\ x & 1
\end{smallpmatrix}}{ x \in \pint} .
\]
Let $u$ be an
irreducible, unitary representation of $K$ whose restriction
to $N$ does not contain the trivial representation.  Let
$U = \Ind_K^G u$ be the
induced representation (compact induction).  Then $U$ is
the direct sum of a finite number of irreducible, unitary
representations $U^{(j)}$ of $G$.  In a suitable orthonormal
basis, the matrix coefficients of each $U^{(j)}$ are
functions of compact support on $G$.
\end{thm}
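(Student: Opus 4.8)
The plan is to deduce everything from the hypothesis on \(u\), which says precisely that \(u\) has no nonzero \(N\)-fixed vector; since \(N\) is compact and \(u\) is finite-dimensional, this is equivalent to the vanishing of the \(N\)-coinvariants \(u_N = u/u(N)\), where \(u(N)\) is the span of the vectors \(u(n)v - v\). I would show, in order, that \(U := \Ind_K^G u\) (compact induction) is admissible and hence of finite length; that every irreducible subquotient of \(U\) is supercuspidal; and that together these force \(U\) to be a finite orthogonal direct sum of irreducible unitary representations \(U^{(j)}\), each of which then has compactly supported matrix coefficients since \(\PGL_2(\field)\) has trivial center. Write \(K_m\) for the \(m\)-th principal congruence subgroup of \(K\), a normal subgroup of \(K\), and, for the \(n\)-th double coset in the Cartan decomposition \(G = \bigsqcup_{n \ge 0} K g_n K\), choose the representative \(g_n = \operatorname{diag}(\varpi^{-n}, 1)\).

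\emph{Admissibility.} Restricting \(U\) to \(K\) and applying Mackey's formula gives \(U|_K \cong \bigoplus_{n \ge 0} \Ind_{K \cap g_n K g_n^{-1}}^K\bigl({}^{g_n}u\bigr)\). Taking \(K_m\)-invariants, and using the normality of \(K_m\) in \(K\) to collapse the auxiliary Mackey double-coset sum into a finite number of identical terms, one finds \(U^{K_m} \cong \bigoplus_{n \ge 0}\bigl(V_u^{\,g_n^{-1} K_m g_n \cap K}\bigr)^{\oplus c_n}\) with each \(c_n\) finite. A one-line matrix computation shows that \(g_n^{-1} K_m g_n \cap K\) contains all of \(N\) as soon as \(n \ge m\); those terms therefore lie in \(V_u^N = 0\) and vanish, leaving \(U^{K_m}\) a finite sum of copies of subspaces of \(V_u\), hence finite-dimensional, for every \(m\). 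Thus \(U\) is admissible. As \(U\) is generated over \(G\) by its subspace of functions supported on \(K\) (a copy of \(V_u\)), it is finitely generated, and a finitely generated admissible smooth representation has finite length. This is the main step, and the main obstacle: it converts the single hypothesis \(u_N = 0\) into the finiteness needed everywhere downstream, by showing the hypothesis annihilates all but finitely many Cartan cells. Everything that follows uses only standard structure theory.

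\emph{Supercuspidality.} It suffices to show the Jacquet module \(U_N\) along the unipotent radical \(N\) of the lower Borel \(B^-\) vanishes, since the Jacquet functor is exact — so \(U_N = 0\) forces \(\pi_N = 0\) for every subquotient \(\pi\) of \(U\) — and \(N\) is conjugate in \(\PGL_2\) to the opposite unipotent radical, so this is equivalent to supercuspidality of \(\pi\). Now \(U_N\) is finitely generated; if it were nonzero it would have an irreducible quotient \(\chi\), a character of the Levi, and then Frobenius reciprocity for parabolic induction gives \(\operatorname{Hom}_G(U, i_{B^-}^G \chi) \ne 0\). By Frobenius reciprocity for compact induction this equals \(\operatorname{Hom}_K(u, (i_{B^-}^G\chi)|_K)\); by the Iwasawa decomposition \(G = B^- K\) we have \((i_{B^-}^G\chi)|_K \cong \Ind_{B^- \cap K}^K(\eta)\) for a character \(\eta\) of \(B^- \cap K\) that is trivial on \(N \subseteq B^- \cap K\). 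Hence such a homomorphism corresponds, by Frobenius reciprocity for the finite-index subgroup \(B^- \cap K \le K\), to a map \(u|_{B^- \cap K} \to \eta\) whose restriction to \(N\) kills every \(u(n)v - v\), hence kills \(u(N) = u\); so it is zero, a contradiction. Therefore \(U_N = 0\) and \(U\) is supercuspidal.

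\emph{Conclusion.} \(U\) has finite length with all irreducible subquotients supercuspidal. Since a supercuspidal representation is projective in the category of smooth representations (being a direct summand of a compact induction from a compact open subgroup), every short exact sequence with supercuspidal cokernel splits; peeling off irreducible quotients one at a time, \(U\) is completely reducible, say \(U = \bigoplus_j U^{(j)}\) with the \(U^{(j)}\) irreducible supercuspidal and the sum finite. Because \(u\) is unitary and \(K \backslash G\) carries a \(G\)-invariant measure (\(G\) being unimodular and \(K\) compact), \(U\) carries a \(G\)-invariant inner product, which restricts to a \(G\)-invariant inner product on each \(U^{(j)}\); so each \(U^{(j)}\) is pre-unitary, and we fix an orthonormal basis of it. Finally, the matrix coefficients of an irreducible supercuspidal representation are compactly supported modulo the center (Harish-Chandra), and the center of \(\PGL_2(\field)\) is trivial, so in these bases the matrix coefficients of each \(U^{(j)}\) are compactly supported functions on \(G\), as asserted.
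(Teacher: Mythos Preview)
The paper does not prove this theorem; it is quoted in \S\ref{sec:history} (History) as Mautner's 1964 result, purely for historical context, with no proof or sketch provided. There is therefore nothing in the paper to compare your argument against.

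That said, your proof is essentially correct and follows what has become the standard modern route. Two small points are worth tightening. First, you silently reuse the symbol \(N\) for the full unipotent radical of \(B^-\) when setting up the Jacquet-module argument, whereas in the theorem statement \(N\) denotes the compact group \(\{\begin{smallmatrix}1&0\\x&1\end{smallmatrix}: x\in\pint\}\subseteq K\); the logic survives because the decisive step (that \(\eta\) is trivial on \(N\) and that \(u(N)=u\)) only needs the compact version, but the notation should be disambiguated. Second, your parenthetical justification for the projectivity of irreducible supercuspidals---``being a direct summand of a compact induction from a compact open subgroup''---is circular in this context, since that is exactly the sort of splitting you are in the process of establishing for \(U\). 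You do not need projectivity at all: the \(G\)-invariant positive-definite inner product you construct on \(U\), together with admissibility and finite length, already forces complete reducibility, because for an admissible pre-unitary representation the orthogonal complement of any subrepresentation is again a subrepresentation (check this on each finite-dimensional \(U^{K_m}\)). With these two adjustments your argument is clean.
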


In 1963, I.~M.~Gel'fand and M.~I.~Graev published a paper in
\textit{Uspekhi} \cite{gelfand-graev:sl2} in which they studied
the representation theory of $p$-adic $\SL_2$.   Their
methods and presentation hewed closely to those used in the
study of the discrete series of real $\SL_2$, and
did not make use of induction from compact, open subgroups.
Their realization of the discrete series can
be directly compared to the representation constructed by
A.~Weil \cite{weil:weil}, as shown by S. Tanaka in
\cite{tanaka:sl2}.
Gel'fand and Graev presented formulas for the
discrete-series characters
\cite{gelfand-graev:sl2}*{\S\S5.3, 5.4}
and recovered the Plancherel formula
(\S6.2 \loccit),
but they made some errors concerning irreducibility
of the discrete series and unitary equivalence.

In his thesis \cite{shalika:thesis}*{\S\S 1.5--1.9}, J.~A.~Shalika
constructed the supercuspidal representations of $p$-adic
$\SL_2$ by using the Weil representation.  He then proved
their irreducibility and identified the equivalences among them
in complete detail.
In addition, he showed
(\S\S3, 4 \loccit)
that each supercuspidal
representation could be induced from a maximal compact
subgroup by restricting,
picking out an irreducible
component, and inducing back up.

The existence of a
Frobenius-type inducing formula for supercuspidal characters
was shown by T.~Shintani \cite{shintani:sq-int}*{Theorem 3},
who worked with a group of square matrices over a
\(p\)-adic field whose determinant is a unit in the ring of
integers.
In 1968, Sally and Shalika used such a formula
(see \cite{sally:remarks}*{Theorem 1.9})
to compute the irreducible characters of the supercuspidal
representations of $\SL_2$
\cite{sally-shalika:characters}.
Their formulas have a sign error in a few cases,
but their later papers
\cites{sally-shalika:plancherel,sally-shalika:orbital-integrals}
are not affected.  See Remark \ref{rem:far} for details.

There were several additional expositions related to the
discrete-series characters of rank-one groups over a
\(p\)-adic field.
\begin{itemize}
\item
A.~Silberger \cite{silberger:pgl2} computed characters for
$\PGL_2$ by a type of limit formula;
\item
H.~Jacquet and R.~P.~Langlands \cite{jacquet-langlands}
used the information from
Sally--Shalika to analyze supercuspidal characters for
$\GL_2$;
\item
H.~Shimizu \cite{shimizu:gl2} published some
character computations for $\GL_2$;
and
\item
Sally gave an
example of the inducing construction for $\SL_2$ in
\cite{sally:characters}.
\end{itemize}

\subsection{Outline}

The aim of this paper is to provide a complete guide for
computing the supercuspidal characters of $\SL_2$ over a
\(p\)-adic field.
As discussed in \S\ref{sec:history},
these characters have been
available in some form at least since the
1960s~\cites{gelfand-graev:sl2,sally-shalika:characters}.
Over the intervening half-century, significant advances have
been made in our understanding of both reductive $p$-adic
groups and their representation theory.  A goal of this
paper is to bring these modern, general tools to bear on
the problem of explicit character computation.

The formulas in \cite{sally-shalika:characters} were found
by using \cite{shalika:thesis}*{Theorem 3.1.2} to recognize
the supercuspidal representations of \(G = \SL_2(\field)\)
as induced (in the sense of \S3.1 \loccit) from
(finite-dimensional) representations of maximal compact
subgroups of \(G\), and then employing a \(p\)-adic analogue
of the Frobenius formula \cite{sally:remarks}*{Theorem~1.9}.
Broadly speaking, this paper follows the same path.
After establishing some basic notation
(\S\S\ref{sec:sqrt}--\ref{sec:dualityfour}),
we discuss how to construct all supercuspidal representations of $G$
(\S\S\ref{sec:ktypes}--\ref{sec:inducing}),
and then finish by computing the characters of these representations
(\S\S\ref{sec:MK}--\ref{sec:exceptional}).

We begin by establishing some basic facts and notation
about fields (\S\ref{sec:sqrt}) and tori (\S\ref{sec:tori}).
After examining a certain principal-value integral
that will appear in the character formulas
(\S\ref{sec:SSPhi}), we  discuss in \S\ref{sec:building}
the pioneering work of Bruhat--Tits \cites{
	bruhat-tits:reductive-groups-1,
	bruhat-tits:reductive-groups-2
} and Moy--Prasad \cites{
	moy-prasad:k-types,
	moy-prasad:jacquet
}.  Bruhat--Tits theory underlies nearly everything that we do; indeed,
without it, we would not have the language in
which to state many of our results.
In order not to require of the reader familiarity with the
general notions of Bruhat--Tits theory, we specialize them to \(\SL_2\),
where the group filtrations can be described very concretely
(see \S\ref{sec:filt}),
and related to filtrations of tori
(see \S\ref{sec:gp-torus})
and, via the Cayley map, of the Lie algebra
(see \S\ref{sec:cayley}).
The Cayley map has many of the properties of the exponential
map (see Lemma \ref{lem:cayley}),
but can converge on a larger domain.

After summarizing our choices for normalization of measures
(\S\ref{sec:measure})%
---we will usually use Waldspurger's normalization,
adapted to the structure theory of Bruhat--Tits and
Moy--Prasad---%
and discussing the Fourier transform
(\S\ref{sec:dualityfour}), 
we turn our attention to the problem of classifying all
supercuspidal representations of $G$.
We do this via the theory of types,
reviewed in \S\ref{sec:ktypes}.
An unrefined minimal $K$-type is a certain pair $(K,\xi)$
consisting of a compact, open subgroup \(K\)
and a representation \(\xi\) of it.
Every irreducible representation of a $p$-adic group
contains an unrefined minimal $K$-type,
unique up to a natural relation
(see Definition \ref{defn:k-type}).

For $\SL_2$,  the unrefined minimal $K$-type contained
in a representation
is sufficient to determine whether that representation is supercuspidal;
we list all those that can occur
in a supercuspidal representation.
The final task is to determine from a given unrefined minimal $K$-type
\emph{all} possible supercuspidal representations that contain it.
For depth-zero, unrefined minimal $K$-types, the above plan
is carried out in \S\ref{sec:depthzero};
and for positive depth representations it is carried out in
\S\ref{sec:posdepth}.

The calculations of~\cite{sally-shalika:characters} were long, involved, and, since the state
of the art in structure theory of \(p\)-adic groups then
(1968) was not nearly so advanced as it is now, somewhat
\textit{ad hoc}; and, perhaps most importantly, they have
never appeared in print.  In the final sections of this paper, we justify the
calculations of \cite{sally-shalika:characters}, making use
of modern technology whenever it simplifies matters.  Two
particularly powerful references that are available to us
are \cite{debacker-reeder:depth-zero-sc}, which handles all
but four of the depth-zero, supercuspidal representations;
and \cite{adler-spice:explicit-chars}, which handles all
positive-depth supercuspidal representations.
The remaining four supercuspidals, which we call
`exceptional' (see \S\ref{sec:exceptional}), require a bit
more care; but, even in this case, most of the necessary work
has already been done,
by Waldspurger \cite{waldspurger:nilpotent}
and DeBacker--Kazhdan \cite{debacker-kazhdan:mk-zero},
and there remains only the (non-trivial!\@) task of
specializing this work to the case of \(\SL_2\).
See \S\ref{sec:characters} for a summary of the results.

In future
work \cite{adler-debacker-roche-sally-spice:sl2}, the
present authors, together with Alan Roche, will continue
this program to present a complete picture of harmonic
analysis on \(p\)-adic \(\SL_2\).
One of our goals will be to make more accessible some of the
general tools that have been developed over the last fifteen
years by specializing them to the case of \(\SL_2\).
Thus, rather than citing major theorems (such as the
Bernstein decomposition theorem, or the main theorems of Moy--Prasad),
we will prove them
in this case wherever doing so has illustrative value.
We will construct all irreducible representations and compute
their characters.
We will treat the principal series in an old-fashioned way,
via intertwining operators, and also via the theory of types.
We will construct the unitary, smooth, and tempered duals,
and describe the discrete series.
We will compute the Fourier transforms of nilpotent orbital
integrals on the Lie algebra, descibe the local character expansions
of all irreducible representations,
and compute the Plancherel measure.

\subsection{General notation}
\label{sec:notn}

If \(S\) is a ring, then we denote by \indexmem{S\mult}
the group of units in \(S\).

Suppose that \indexmem\field is a non-discrete, non-Archimedean local field
with normalized valuation \indexmem\ord
(i.e., \(\ord(\field) = \Z \cup \sset{{+}\infty}\)).
Let \indexmem\pint denote the ring of integers in $\field$
and \indexmem\pp the prime ideal of $\pint$.
Fix an element $\indexmem\epsilon \in \pint\mult\setminus (\pint\mult)^2$
and a uniformizer $\indexmem\varpi \in \pint$.

Let $\resfld$ denote the residue field $\pint/\pp$ of $\field$.
Then the image in \(\resfld\mult\) of \(\epsilon\) is a
non-square in \(\resfld\mult\).
We write \(\indexmem p = \operatorname{char}(\resfld)\)
and \(\indexmem q = \card\resfld\), and assume
throughout that \(p \ne 2\).

\begin{defn}
If \(\AddChar\) is an (additive) character of \(\field\)
and \(b \in \field\), then write \(\AddChar_b\) for the
additive character given by \(\abmapto t{\AddChar(b t)}\).
If \(\AddChar\) is non-trivial, then the
\term[depth (additive character)]{depth}
\(\depth(\AddChar)\) of \(\AddChar\) is the smallest index
\(r \in \R\) such that \(\AddChar\) is trivial on
\(\pp^{\rdown r + 1}\).
(If \(\AddChar\) \emph{is} trivial, then we may define
\(\depth(\AddChar) = {-}\infty\).)
\end{defn}

Note that, for \(\AddChar\) an (additive) character of
\(\field\) and \(b \in \field\), we have
\begin{equation}
\label{eq:depth-Phi-b}
\depth(\AddChar_b) = \depth(\AddChar) - \ord(b).
\end{equation}

We fix, for the remainder of the paper, an additive
character \(\AddChar\) of depth \(0\).
Explicitly, \(\AddChar\) is trivial on \(\pp\),
but not on \(\pint\).
We will use boldface letters to denote algebraic groups,
boldface Fraktur letters to denote their Lie algebras,
and
the corresponding regular letters to denote their groups of
rational points.
For example, \(T = \bT(\field)\) and \(\ttt = \Lie(T)\).

Put \(\indexmem\bG = \SL_2\).
Thus, by our convention,
\(\indexmem G = \bG(\field) = \SL_2(\field)\)
is the subgroup of all determinant-one matrices
in the group \(\indexmem{\GL_2(\field)}\)
of invertible \(2\times2\) matrices,
and $\indexmem\gg = \indexmem{\sl_2(\field)}$ is
the subalgebra of trace-zero matrices in the 
the Lie algebra \(\indexmem{\gl_2(\field)}\)
of \(2\times2\) matrices over \(\field\).

When we are dealing with complicated exponents,
we will sometimes write
\indexmem{\exp_t(s)} instead of \(t^s\),
for \(t \in \R_{> 0}\) and \(s \in \C\).

As mentioned, our calculations use rather general results in
\(p\)-adic harmonic analysis,
which, in most cases, have been proven only subject to
some restrictions.
We discuss those restrictions now.

Since
\begin{itemize}
\item
\bG is split, hence tame;
\item
\bG, which is its own derived group, is simply
connected;
and
\item
the only bad prime for \(\SL_2\) (in the sense of
\cite{adler-spice:good-expansions}*{%
	Definition \xref{exp-defn:bad-prime}%
}) is \(2\),
\end{itemize}
we have by
\cite{adler-spice:good-expansions}*{%
	Remark \xref{exp-rem:when-hyps-hold}%
} that all the hypotheses of \S\xref{exp-sec:assumptions}
\loccit hold.
We shall demonstrate explicitly that
\cite{adler-spice:explicit-chars}*{%
	Hypothesis \xref{char-hyp:X*-central}%
} holds;
see Notations \ref{notn:depth-0-X}
and \ref{notn:pos-depth-X}.

The next hypothesis is only needed when we cite
\cite{debacker-reeder:depth-zero-sc}*{Lemma 12.4.3}
(our Lemma \ref{lem:MK-green}),
i.e., in the depth-zero cases of Proposition \ref{prop:MK}
and Theorems \ref{thm:near} and \ref{thm:exc-near}.
Although it is possible to remove this restriction in our
setting, we have not done so here.

\begin{hyp}[%
	\cite{debacker-reeder:depth-zero-sc}*{%
		Restriction 12.4.1(2)%
	}%
]
\label{hyp:debacker-reeder:depth-zero-sc:res-12.4.1(2)}
The characteristic of \(\field\) is \(0\),
and the residual characteristic satisfies
\(p \ge 2e + 3\), where \(e\) is the absolute ramification
degree of \(\field\) (i.e., its ramification degree over
\(\Q_p\)).
\end{hyp}

\subsection{Character formulas}
\label{sec:characters}
In this section, we summarize the character values computed in this paper.
The formulas which occur use a large amount of notation that has not been defined
yet; it is described in detail in
\S\S\ref{sec:tori}--\ref{sec:dualityfour}.

We adopt the parametrization of supercuspidal representations presented in Theorem~\ref{thm:sc}.  By Remarks \ref{rem:depth-0-leftist}
and \ref{rem:pos-depth-leftist},
we may, and do, restrict our attention to tori of the form
 \(T^\theta = T^{\theta, 1}\)
for some
\(\theta \in \sset{\epsilon, \varpi, \epsilon\varpi}\). 

As described in \cite{debacker-sally:germs},
computations of characters of \(p\)-adic groups have indicated that,
broadly speaking,
they have a `geometric part' near the identity,
where they are described in terms of functions associated to
(co)adjoint orbits
\cites{
murnaghan:chars-u3,
murnaghan:chars-sln,
murnaghan:chars-classical,
murnaghan:chars-gln,
cunningham:ell-exp,
adler-debacker:mk-theory,
cunningham-gordon:sl2-motivic,
jkim-murnaghan:charexp,
jkim-murnaghan:gamma-asymptotic
},
and an `arithmetic part' far from the identity,
where they are described by some analogue of Weyl's
classical character formula.
(Actually, \cite{adler-spice:explicit-chars}*{%
	Theorem \xref{char-thm:full-char}%
} shows that one should usually expect \emph{mixed}
arithmetic--geometric behavior, even for supercuspidal
characters; but there is a clean separation in the case of
\(\SL_2\).)
To make precise the notion of being near or far from the
identity, we use Moy--Prasad's notion of depth; see
Definitions \ref{defn:depth-element}
	and \ref{defn:depth-rep}.
Specifically, the geometric part of the character of a
representation \(\pi\) applies to those elements \(\gamma\)
such that \(\depth(\gamma) > \depth(\pi)\),
whereas the arithmetic part applies to those elements
\(\gamma\) such that \(\depth(\gamma) < \depth(\pi)\).
In the intermediate range,
where \(\depth(\gamma) = \depth(\pi)\),
the character exhibits qualitatively different behavior,
related to special functions on \(p\)-adic and finite fields
(see \cite{spice:sl2-mu-hat}*{\S\xref{sl2-sec:Bessel}}).
We call this range the `bad shell'; the terminology `shell'
comes from the fact that the depth is the analogue of the
valuation on a \(p\)-adic field, so that the set of elements
of fixed depth may be thought of as an analogue of the set
difference of two \(p\)-adic balls.

All of the available quantitative information about
supercuspidal characters comes from the evaluation of
Harish-Chandra's integral formula
\eqref{eq:hc:harmonic:thm-12}.
The integral here is taken over the full group \(G\),
which is far too large to handle directly; so the main focus
in evaluating it is on finding many sub-integrals that equal \(0\).
The remaining terms can then often be related to
calculations on a finite field, or a finite group of Lie
type.
Few details of this part of the character computation are
included in the present paper;
we refer instead to \cites{
	debacker-reeder:depth-zero-sc,
	adler-spice:explicit-chars
}, whose general results we take for granted.
The challenge is to interpret these general results as
explicitly as possible for the special case of \(\SL_2\).

\cite{adler-spice:explicit-chars}*{%
	Proposition \xref{char-prop:induction1}%
} defines a crucial ingredient in the Weyl-sum-type formula
that gives the arithmetic part of a supercuspidal character;
it is a fourth root of unity called a \textit{Gauss sum}
(see \S\xref{char-ssec:gauss} \loccit).
The explicit formula of
Proposition \xref{char-prop:gauss-sum} \loccit
describes this fourth root of unity in terms of
the Galois action on (absolute) roots.
The most technically demanding part of our
positive-depth character computations is probably
the specialization of this explicit description to our
setting;
see \S\ref{sec:roots}.
With this in place, we compute the order of a
certain coset space, which turns out to be a Weyl
discriminant (see Lemma \ref{lem:disc-as-index}),
to complete our explicit description of the arithmetic part
of an `ordinary' (positive-depth) supercuspidal character,
in the sense of Definition \ref{defn:pos-depth-param}.

From the point of view of this paper, the geometric part of
the character is a combination of Fourier transforms of
semisimple orbital integrals; this is Murnaghan's version of
Kirillov theory (see \S\ref{sec:MK}).
In the `ordinary' case
(see Definitions \ref{defn:depth-0-param} and
\ref{defn:pos-depth-param}),
there is only one orbital integral involved;
but, in the `exceptional' case, the situation is more
complicated.  See \eqref{eq:pi-as-green-no-really}.
Thus, once we have identified the coefficients occurring in
the combination (in particular, the formal degree; see
Lemma \ref{lem:deg-pi}), we recall the results of
\cite{spice:sl2-mu-hat}*{Theorem \xref{sl2-thm:uniform}}
on semisimple orbital integrals to complete the explicit
description of the geometric part of the character formula.

We summarize all this below; but, for the sake of brevity,
we take some shortcuts.
In this section, the letter $\gamma$ always stands for a
regular, semisimple element of $G$.
That is, $\gamma$ is always a noncentral semisimple element of $G$.
Second, when we write, for example, 
$$
\Theta_{\pi^\pm} (\gamma) =
\frac1 2\Bigl\{
	\frac1{\abs{D_G(\gamma)}^{1/2}} - 1
\Bigr\},
\quad\gamma \in A_{\MPlus0},
$$
we are really giving the character value on any
\(G\)-conjugate of an element of \(A_{\MPlus0}\).
In this way, we describe the characters `shell by shell'.
Further, since the central character of
\(\pi^\pm(T, \psi)\) or \(\pi(T, \psi)\)
is \(\res\psi to{Z(G)}\),
we have that
\(\Theta_{\pi^\pm}(z \gamma) = \psi(z) \Theta_\pi(\gamma) \)
for all $\gamma$ and all $z \in Z(G)$, and similarly for $\Theta_{\pi(T,\psi)}(z\gamma)$.
That is, the formula above really gives the character value
on any
\(G\)-conjugate of an element of \(Z(G) A_{\MPlus0}\).
Thus, the term `otherwise' in the character formulas below should
be understood to mean, not just (for example)
that \(\gamma \not\in A_{\MPlus0}\),
but in fact that
\(\gamma \not\in Z(G)\dotm\Int(G)A_{\MPlus0}\).

From Theorem~\ref{thm:far},  Theorem~\ref{thm:pos-bad-un}, and Theorem~\ref{thm:near},
in the unramified case (see Definition \ref{defn:lots}),
a supercuspidal representation $\pi = \pi(T^\epsilon, \psi)$ of depth $r$ 
has character
\[
\Theta_{\pi}(\gamma)
= 
\begin{cases}
  \displaystyle \frac1 2\sgn_\epsilon\bigl(\Im_\epsilon(\gamma)\bigr)
\frac{
	\psi(\gamma) + \psi(\gamma\inv)
}{
	\smabs{D_G(\gamma)}^{1/2}
}
\bigl[(-1)^{r + 1} + H(\SSAddChar, \field_\epsilon)\bigr]
& \quad{\gamma \in T^\epsilon  \setminus Z(G) T^\epsilon_{\MPlus r} }\\
 \displaystyle c_0(\pi) + H(\SSAddChar, \field_\epsilon)\frac{
	\sgn_\epsilon\bigl(\eta\inv\Im_\epsilon(\gamma)\bigr)
}{
	\abs{D_G(\gamma)}^{1/2}
}
& \quad{\gamma \in T_{\MPlus r}^{\epsilon,\eta}}\\
 \displaystyle c_0(\pi) +
\frac1{\abs{D_G(\gamma)}^{1/2}}
& \quad{ \gamma \in A_{\MPlus r}}\\
c_0(\pi) & \quad \text{otherwise, if \(\gamma \in G_{\MPlus r}\)} \\
0 & \quad \text{otherwise, if \(\gamma \notin G_{\MPlus r}\).}
\end{cases}
\]
Here $\eta \in \{1, \varpi\}$, and \(c_0(\pi) = -q^r\).

From Theorem~\ref{thm:far} (along with Lemma~\ref{lem:S-psi}),
Theorem~\ref{thm:pos-bad-ram},
and Theorem~\ref{thm:near},
for a ramified supercuspidal representation $\pi = \pi(T^\varpi, \psi)$ of depth $r$
we have 
\newcommand{\formula}[1]{#1 &}
\newcommand{\condition}[1]{#1 \\[1.5em]}
\newcommand\lastcondition[1]{#1}
\newcommand{\longformula}[1]{\multicolumn{2}{l}{#1} \\}
\newcommand{\longcondition}[1]{& #1 \\[.6em]}
\setlength{\extrarowheight}{.4em}
\[
\Theta_{\pi}(\gamma)
= 
\left\{
\begin{array}{ll}
\longformula{  \displaystyle
	\dfrac{\sgn_\varpi\bigl(\Im_\varpi(\gamma)\bigr)H(\SSAddChar, \field_\varpi)}%
 	{ \smabs{D_G(\gamma)}^{1/2}}
	\Bigl\{
		\psi(\gamma) \, \, + \, \, \psi(\gamma\inv)\bigl[
			\dfrac{\sgn_\varpi(-1) + 1}{2}
		\bigr]
	\Bigr\}
} 
\longcondition{\gamma \in T^\theta \setminus Z(G) T^\theta_{r} } 
\longformula{
 \displaystyle \dfrac{q^{-1/2}}{2\smabs{D_G(\gamma)}^{1/2}}
	\displaystyle\sum_{\substack{
		\gamma' \in (C_\varpi)_{r:\MPlus r} \\
		\gamma' \ne \gamma^{\pm1}
	}}
		\sgn_\varpi\bigl(
			\Tr_\varpi(\gamma - \gamma')
		\bigr)\psi(\gamma') 
}
\longformula{  \displaystyle  \qquad \qquad 
+ \quad \frac1 2 H(\SSAddChar, \field_\varpi)
\sgn_\varpi\bigl(\eta\inv\Im_\varpi(\gamma)\bigr)
\dfrac{
	\psi(\gamma) + \psi(\gamma\inv)
}{
	\smabs{D_G(\gamma)}^{1/2}
}
}
\longcondition{\gamma \in T^{\varpi,\eta}_r \setminus T^{\varpi,\eta}_{\MPlus r} }
\formula{  \displaystyle
\dfrac{q^{-1/2}}{2\smabs{D_G(\gamma)}^{1/2}}
\displaystyle\sum_{\gamma' \in (C_\varpi)_{r:\MPlus r}}
	\sgn_\varpi\bigl(
		\Tr_{\epsilon\varpi}(\gamma) - \Tr_\varpi(\gamma')
	\bigr)\psi(\gamma')
}
\condition{
\gamma \in T^{\epsilon \varpi,\eta}_r \setminus  T^{\epsilon \varpi,\eta}_{\MPlus r}
}

\formula{ \displaystyle
c_0(\pi) + H(\SSAddChar, \field_\varpi)\dfrac{
	\sgn_\varpi\bigl(\eta\inv\Im_\varpi(\gamma)\bigr)
}{
	\abs{D_G(\gamma)}^{1/2}
}
}
\condition{\gamma \in T^{\varpi, \eta}_{\MPlus r}}
\formula{ \displaystyle
c_0(\pi) +
\dfrac1{\abs{D_G(\gamma)}^{1/2}}
}
\condition{ \gamma \in A_{\MPlus r}}
\formula{c_0(\pi)}
\condition{\text{otherwise, if \(\gamma \in G_{\MPlus r}\)}}
\formula{0}
\lastcondition{\text{otherwise, if \(\gamma \notin G_{\MPlus r}\).}}
\end{array}
\right.
\]
Here $\eta \in \{1, \epsilon\}$
and \(c_0(\pi) = -\tfrac1 2(q + 1)q^{r - 1/2}\).
To obtain character values for a ramified representation $\pi = \pi(T^{\epsilon \varpi}, \psi)$,
interchange the roles of $\varpi$ and $\epsilon \varpi$
in the formulas above.

From Theorem~\ref{thm:exc-far} and Theorem~\ref{thm:exc-near},
for the representations $\pi^\pm = \pi^\pm(T^\epsilon, \psi_0^1)$ (see Definition~\ref{defn:depth-0-param} for explanation of notation), we have
\[
\Theta_{\pi^\pm}(\gamma)
= 
\begin{cases}
 \displaystyle \frac{\sgn_\varpi(\gamma + \gamma\inv + 2)}2
\Bigl\{
	H(\SSAddChar, \field_\epsilon)\frac{
		\sgn_\epsilon\bigl(\Im_\epsilon(\gamma)\bigr)
	}{
		\smabs{D_G(\gamma)}^{1/2}
	} - 1
\Bigr\} 
& \quad{\gamma \in T^\epsilon \setminus Z(G) T^\epsilon_{\MPlus0} }\\
 \displaystyle \frac1 2\Bigl\{
	\frac1{\smabs{D_G(\gamma)}^{1/2}} - 1
\Bigr\} & \quad{\gamma \in A_{\MPlus0}}\\
 \displaystyle\frac1 2\Bigl\{
	\pm H(\SSAddChar, \field_{\theta'})\frac{
		\sgn_{\theta'}\bigl(
			\eta\inv\Im_{\theta'}(\gamma)
		\bigr)
	}{
		\smabs{D_G(\gamma)}^{1/2}
	} -
	1
\Bigr\} 
& \quad \gamma \in T^{\theta', \eta}_{\MPlus0},
\\ 
0 & \quad\text{otherwise}.
\end{cases}
\]
Here \(\eta \in \sset{1, \epsilon}\) if \(\theta' \in \sset{\varpi, \epsilon\varpi}\) and
\(\eta \in \sset{1, \varpi}\) if \(\theta' = \epsilon\).

Our character formulas for
\(\pi^\pm = \bPi^\pm(T, \psi)\) agree with those of
\cite{sally-shalika:characters} for
\(\Pi^\pm(\SSAddChar_\pi, \psi, \field_\epsilon)\).
Similarly, our character formulas for
\(\pi = \bPi(T^\theta, \psi)\) agree with those of
\cite{sally-shalika:characters} for
\(\Pi(\SSAddChar_\pi, \psi, \field_\theta)\),
\textit{except} that, far from the identity
(see Theorem \ref{thm:far}), they differ by a sign
in the ramified case
when \(\sgn_\varpi(-1) = -1\)
and the conductor \(h = r+1/2\) is odd.
See Remark \ref{rem:far}.

\textsc{Acknowledgment}:
It is a pleasure to thank Jeffrey Hakim and an anonymous referee
for many helpful comments.

\section{Field extensions}
\label{sec:sqrt}

Since \(p \ne 2\), every quadratic extension of
\(\field\) is of the form
\(\indexmem{\field_\theta} \ldef \field(\sqrt\theta)\) for some
non-square \(\theta \in \field\mult\).
Let
$\map{\indexmem{\Norm_\theta}}{\field_\theta\mult}{\field\mult}$
and
$\map{\indexmem{\Tr_\theta}}{\field_\theta}\field$
denote the relevant norm and trace homomorphisms, respectively.  We write
\(\indexmem{C_\theta} = \ker \Norm_\theta\)
and
\(\indexmem{V_\theta} = \ker \Tr_\theta\),
and denote by \indexmem{\sgn_\theta} the unique, non-trivial
character of \(\field\mult\) that is trivial on
\(\Norm_\theta(\field_\theta\mult)\).
In particular,
\begin{equation}
\label{eq:sgn-un}
\sgn_\theta(x) = (-1)^{\ord(x)}
	\quad\text{for \(x \in \field\mult\)}
\end{equation}
if \(\field_\theta/\field\) is unramified,
and
\begin{equation}
\label{eq:sgn-ram}
\begin{aligned}
\sgn_\theta(\theta) & {}= \sgn_\theta(-1) \\
\sgn_\theta(x)      & {}= \sgn_{\pint\mult}(x)
	\quad\text{for \(x \in \pint\mult\),}
\end{aligned}
\end{equation}
where \(\sgn_{\pint\mult}\) is the quadratic character of
\(\pint\mult\),
if \(\field_\theta/\field\) is ramified.
Let \indexmem{\pint_\theta}, \indexmem{\pp_\theta},
and \indexmem{\resfld_\theta}
denote the analogues for \(\field_\theta\)
of \(\pint\), \(\pp\), and \resfld, respectively.

For \(\alpha = a + b\sqrt\theta \in \field_\theta\), we write
\(\indexmem{\Re_\theta}(\alpha) = a\)
and \(\indexmem{\Im_\theta}(\alpha) = b\).

Write \(\indexmem{\ord_\theta}\) for the valuation on
\(\field_\theta\) that extends the one on \(\field\).
In particular,
\(\ord_\theta(\sqrt\theta) = \tfrac1 2\ord(\theta)\).

\begin{lemma}
\label{lem:cayley-field}
The map
\(\map\cayley
	{\field_\theta \setminus \sset2}
	{\field_\theta \setminus \sset{-1}}
\)
given by $\cayley(x) = (1 + \frac{x}{2})/(1 - \frac{x}{2})$
is a \(\Gal(\field_\theta/\field)\)-equivariant bijection,
and restricts to a bijection
\(\abmap
	{V_\theta \cap \pp_\theta}
	{C_\theta \cap (1 + \pp_\theta)}\).
For \(Y \in \pp_\theta^n\), with \(n > 0\), we have that
\(\cayley(Y) \equiv 1 + Y 
	\pmod{\pp_\theta^{2n}}
\).
\end{lemma}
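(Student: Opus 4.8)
The plan is to verify each assertion about $\cayley$ directly from its defining formula $\cayley(x) = (1 + \tfrac x 2)/(1 - \tfrac x 2)$. First I would observe that $\cayley$ is a M\"obius transformation defined over $\field$ (its coefficients lie in $\field$), so it is automatically $\Gal(\field_\theta/\field)$-equivariant and, as a M\"obius transformation, a bijection from $\P^1(\field_\theta)$ to itself; removing the pole $x = 2$ from the source and its image $\cayley(\infty) = -1$ from the target gives the stated bijection $\field_\theta \setminus \sset 2 \to \field_\theta \setminus \sset{-1}$. One can write down the inverse explicitly as $x \mapsto 2(x - 1)/(x + 1)$, which makes the bijectivity transparent.

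Next I would check the compatibility of $\cayley$ with the trace-zero locus and the norm-one locus. For $Y \in V_\theta$, i.e.\ $\bar Y = {-}Y$ (writing $\bar{\phantom Y}$ for the nontrivial Galois conjugation), equivariance gives $\overline{\cayley(Y)} = \cayley(\bar Y) = \cayley({-}Y) = (1 - \tfrac Y 2)/(1 + \tfrac Y 2) = \cayley(Y)\inv$, so $\Norm_\theta(\cayley(Y)) = \cayley(Y)\overline{\cayley(Y)} = 1$; conversely, if $\Norm_\theta(x) = 1$ then the same computation run backwards through $\cayley\inv$ shows $\cayley\inv(x) \in V_\theta$. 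To get the refinement to $V_\theta \cap \pp_\theta$ versus $C_\theta \cap (1 + \pp_\theta)$, I would use the ultrametric inequality: for $Y \in \pp_\theta$ we have $\ord_\theta(Y) > 0$, hence $1 \pm \tfrac Y 2$ are both units congruent to $1 \pmod{\pp_\theta}$, so $\cayley(Y) = (1 + \tfrac Y 2)(1 - \tfrac Y 2)\inv \equiv 1 \pmod{\pp_\theta}$; and the same argument applied to $\cayley\inv$ (whose formula $2(x-1)/(x+1)$ sends $1 + \pp_\theta$ into $\pp_\theta$, since $x + 1$ is a unit and $x - 1 \in \pp_\theta$) gives the reverse inclusion.

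Finally, for the congruence $\cayley(Y) \equiv 1 + Y \pmod{\pp_\theta^{2n}}$ when $Y \in \pp_\theta^n$ with $n > 0$, I would compute
\[
  \cayley(Y) - (1 + Y)
  = \frac{1 + \tfrac Y 2}{1 - \tfrac Y 2} - (1 + Y)
  = \frac{(1 + \tfrac Y 2) - (1 + Y)(1 - \tfrac Y 2)}{1 - \tfrac Y 2}
  = \frac{\tfrac12 Y^2}{1 - \tfrac Y 2},
\]
and since $1 - \tfrac Y 2$ is a unit and $Y^2 \in \pp_\theta^{2n}$ (using $p \neq 2$, so $\tfrac12 \in \pint\mult$), the difference lies in $\pp_\theta^{2n}$, as claimed.

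I do not anticipate a serious obstacle here; this lemma is essentially a string of elementary M\"obius-transformation identities combined with the ultrametric inequality. The only point requiring a little care is making sure that every denominator that appears ($1 - \tfrac Y 2$, $1 + \tfrac Y 2$, $x + 1$) is a unit in $\pint_\theta$ in the relevant range, which is exactly why the statement restricts to $\pp_\theta$ on the source side and $1 + \pp_\theta$ on the target side; and keeping track of the hypothesis $p \neq 2$ so that $2$ and $\tfrac12$ are units. If one wanted to be economical, one could also deduce the bijection $V_\theta \cap \pp_\theta \to C_\theta \cap (1 + \pp_\theta)$ a posteriori from the congruence $\cayley(Y) \equiv 1 + Y$ together with an approximation/Hensel-type argument, but the direct computation with $\cayley\inv$ above is cleaner and avoids any convergence bookkeeping.
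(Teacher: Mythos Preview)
Your proof is correct and complete. The paper actually states this lemma without proof, presumably regarding it as an elementary verification; your direct computation via the explicit M\"obius transformation and its inverse is exactly the sort of argument one would supply, and every step checks out.
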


\begin{lemma}
\label{lem:Im-facts}
For \(\alpha \in \field_\theta\),
we have
\begin{equation}
\tag{$*$}
\label{eq:ord-Im}
\ord(\Im_\theta(\alpha))
\ge \ord_\theta(\alpha) - \tfrac1 2\ord(\theta).
\end{equation}
If \(\ord_\theta(\alpha) \ge \ord_\theta(\alpha - t)\)
for all \(t \in \field\),
then we have equality in \eqref{eq:ord-Im};
and if, further, \(\alpha \in \pp_\theta\), then
\[
\Im_\theta\bigl(\cayley(\alpha)\bigr)
\equiv \Im_\theta(\alpha) \pmod{1 + \pp}.
\]
\end{lemma}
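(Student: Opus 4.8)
The plan is to reduce the whole lemma to a single identity together with the $\Gal(\field_\theta/\field)$-invariance of the valuation on $\field_\theta$. Let $\sigma$ be the nontrivial element of $\Gal(\field_\theta/\field)$. Since $p \ne 2$, a direct check gives $\Im_\theta(\alpha)\sqrt\theta = \tfrac12\bigl(\alpha - \sigma(\alpha)\bigr)$. Because $\ord_\theta$ is the unique extension of $\ord$ to $\field_\theta$, it is $\sigma$-invariant, so $\ord_\theta\bigl(\Im_\theta(\alpha)\sqrt\theta\bigr) \ge \min\bigl(\ord_\theta(\alpha), \ord_\theta(\sigma\alpha)\bigr) = \ord_\theta(\alpha)$; since $\ord_\theta(\sqrt\theta) = \tfrac12\ord(\theta)$, this is precisely \eqref{eq:ord-Im}. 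I would argue it this way to avoid a case split: working directly from $\alpha = a + b\sqrt\theta$ one must, in the unramified case, confront the possibility $\ord_\theta(a) = \ord_\theta(b\sqrt\theta)$ and rule out cancellation by a reduction-mod-$\pp_\theta$ argument, whereas passing to the conjugate sidesteps this entirely.

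For the equality clause, assume $\ord_\theta(\alpha) \ge \ord_\theta(\alpha - t)$ for all $t \in \field$ and take $t = \Re_\theta(\alpha)$. Since $\alpha - \Re_\theta(\alpha) = \Im_\theta(\alpha)\sqrt\theta$, this gives $\ord_\theta(\alpha) \ge \ord_\theta\bigl(\Im_\theta(\alpha)\sqrt\theta\bigr)$, the reverse of the inequality just established, so equality holds in \eqref{eq:ord-Im}. (If $\alpha \in \field$, the hypothesis with $t = \alpha$ forces $\alpha = 0$, which is a degenerate case for the next step.)

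For the final congruence, assume in addition $\alpha \in \pp_\theta$; we may take $\alpha \ne 0$, so $\alpha \notin \field$, $\Im_\theta(\alpha) \ne 0$, and $n \ldef \ord_\theta(\alpha) > 0$. By Lemma \ref{lem:cayley-field}, $\cayley(\alpha) = 1 + \alpha + \delta$ for some $\delta \in \pp_\theta^{2n}$. As $\Im_\theta$ is $\field$-linear and vanishes on $\field$, we get $\Im_\theta\bigl(\cayley(\alpha)\bigr) = \Im_\theta(\alpha) + \Im_\theta(\delta)$. Now \eqref{eq:ord-Im} applied to $\delta$ gives $\ord\bigl(\Im_\theta(\delta)\bigr) \ge 2n - \tfrac12\ord(\theta)$, while the equality clause applied to $\alpha$ gives $\ord\bigl(\Im_\theta(\alpha)\bigr) = n - \tfrac12\ord(\theta)$; hence $\ord\bigl(\Im_\theta(\delta)/\Im_\theta(\alpha)\bigr) \ge n > 0$, so $\Im_\theta\bigl(\cayley(\alpha)\bigr)/\Im_\theta(\alpha) \in 1 + \pp$. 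That is, $\Im_\theta\bigl(\cayley(\alpha)\bigr) \equiv \Im_\theta(\alpha) \pmod{1 + \pp}$, the congruence here being multiplicative in $\field\mult$.

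The proof is short, and the only genuine obstacle is the unramified case of \eqref{eq:ord-Im} noted in the first paragraph. A second point worth making explicit in the write-up is that the last assertion uses \emph{both} \eqref{eq:ord-Im} itself, to bound the error term $\Im_\theta(\delta)$, and its equality refinement, to control the size of the denominator $\Im_\theta(\alpha)$ --- and that the symbol $\pmod{1 + \pp}$ there must be read multiplicatively, not additively.
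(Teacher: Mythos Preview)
Your proof is correct and follows essentially the same structure as the paper's: establish \eqref{eq:ord-Im}, deduce equality from the hypothesis by taking $t = \Re_\theta(\alpha)$, then combine Lemma~\ref{lem:cayley-field} with both the inequality (applied to the error $\delta$) and the equality (applied to $\alpha$) to get the multiplicative congruence. The only real difference is in the first step: the paper writes $\alpha = a + b\sqrt\theta$ and invokes $\ord_\theta(\alpha) = \min\{\ord(a), \ord(b) + \tfrac12\ord(\theta)\}$, whereas you pass through $\Im_\theta(\alpha)\sqrt\theta = \tfrac12(\alpha - \sigma\alpha)$ and the $\sigma$-invariance of $\ord_\theta$. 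Your route is slightly cleaner, since the $\min$ formula does, as you note, require a word about non-cancellation in the unramified case; but the two arguments are otherwise interchangeable.
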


\begin{proof}
Write \(d = \ord_\theta(\alpha)\),
and \(\alpha = a + b\sqrt\theta\).
The inequality \eqref{eq:ord-Im} follows from the fact that
\(\ord_\theta(\alpha) = \min \sset{
	\ord(a), \ord(b) + \tfrac1 2\ord(\theta)
}.\)

If \(\ord(b) > d - \tfrac1 2\ord(\theta)\), then
\(\ord_\theta(\alpha - a) = \ord_\theta(b\sqrt\theta) > d\).

If \(\ord_\theta(\alpha) \ge \ord_\theta(\alpha - t)\)
for all \(t \in \field\),
and \(\alpha \in \pp_\theta\) (i.e., \(d > 0\)),
then applying \eqref{eq:ord-Im} to
\(\cayley(\alpha) - (1 + \alpha)\)
gives (by Lemma \ref{lem:cayley-field})
\[
\ord\bigl(\Im_\theta(\cayley(\alpha)) - b\bigr)
\ge 2d - \tfrac1 2\ord(\theta)
= \ord(b) + d.
\]
Since \(d > 0\), the result follows.
\end{proof}

\begin{rem}
As an illustration of the result,
and reminder of our normalization of valuation,
recall that
\(\ord_\varpi(\sqrt\varpi) = \tfrac1 2\).
Now fix an odd integer \(n\) and put
\(\alpha = \sqrt\varpi{}^n\).
For all \(t \in \field\),
\(\ord_\varpi(\alpha - t) = \ord(t) < \ord_\varpi(\alpha)\)
if \(t \not\in \pp^{(n + 1)/2}\)
and
\(\ord_\varpi(\alpha - t) = \ord_\varpi(\alpha)\)
if \(t \in \pp^{(n + 1)/2}\).
Then \(\Im_\varpi(\alpha) = \varpi^{(n - 1)/2}\),
so that \(\ord(\Im_\varpi(\alpha)) = \tfrac 1 2(n - 1)\),
and \(\ord_\varpi(\alpha) = \tfrac1 2 n\);
in particular, we have the equality
\(\ord(\Im_\varpi(\alpha))
= \ord_\varpi(\alpha) - \tfrac1 2\ord(\varpi)\).
Finally,
\[
\cayley(\alpha)
= 1 + \sqrt\varpi{}^n + 2(\sqrt\varpi{}^n)^2
	+ 4(\sqrt\varpi{}^n)^3 + \dotsb,
\]
so that
\[
\Im_\varpi\bigl(\cayley(\alpha)\bigr)
= \varpi^{(n - 1)/2} + 4\varpi^{(3n - 1)/2} + \dotsb
\equiv \varpi^{(n - 1)/2} = \Im_\varpi(\alpha)
	\pmod{1 + \pp}.
\]
\end{rem}

The next two technical lemmas will come in handy in working
out the arithmetic of the character formulas.

\begin{notn}
\label{notn:field-quad}
Write \indexmem{\psi_0} for the quadratic character of
\(C_\epsilon\)
(so that \(\psi_0^2 = 1\), but \(\psi_0 \ne 1\)).
\end{notn}

We shall also use \(\psi_0\) later for quadratic characters
on related groups.

\begin{lemma}[%
	\cite{sally-shalika:orbital-integrals}*{Lemma A.3}%
]
\label{lem:quad-char}
For \(\lambda \in C_\epsilon\),
\[
\psi_0(\lambda) = \begin{cases}
-\sgn_\varpi(-1),                       & \lambda = -1      \\
\sgn_\varpi(\lambda + \lambda\inv + 2), & \text{otherwise.}
\end{cases}
\]
\end{lemma}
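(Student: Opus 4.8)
The plan is to parametrize $C_\epsilon$ concretely and reduce the identity to a computation of $\sgn_\varpi$ on traces of norm-one elements. Since $\field_\epsilon/\field$ is unramified, $C_\epsilon = \ker\Norm_\epsilon$ consists of the $\alpha \in \field_\epsilon\mult$ with $\Norm_\epsilon(\alpha) = 1$; writing $\alpha = a + b\sqrt\epsilon$ this means $a^2 - \epsilon b^2 = 1$, and then $\alpha + \alpha\inv = \alpha + \bar\alpha = \Tr_\epsilon(\alpha) = 2a$, so $\alpha + \alpha\inv + 2 = 2(a + 1)$. The first case is immediate: $\lambda = -1$ means $a = -1$, so $\lambda + \lambda\inv + 2 = 0$ and the `otherwise' formula is not available; one checks separately that $\psi_0(-1) = -\sgn_\varpi(-1)$ using that $-1 \in C_\epsilon$ has order $2$ and $\psi_0$ is the unique quadratic character, together with the known value of $\sgn_\varpi(-1) = (-1)^{(q-1)/2}$ reflecting whether $-1$ is a square in $\resfld\mult$ (equivalently whether $-1 \in \Norm_\epsilon$, equivalently $\psi_0(-1) = 1$)---I would pin this down by noting $-1 \in \Norm_\epsilon(\field_\epsilon\mult)$ iff $-1$ is a norm iff $\sgn_\epsilon(-1) = 1$, and comparing with $\sgn_\varpi(-1)$.

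For the generic case I would use the Cayley map of Lemma~\ref{lem:cayley-field} to transport the question to the additive side: $\cayley$ gives a $\Gal(\field_\epsilon/\field)$-equivariant bijection, and $\Norm_\epsilon(\cayley(x)) = \cayley(x)\cayley(\bar x) = (1 + x/2)(1 + \bar x/2)/\bigl((1 - x/2)(1 - \bar x/2)\bigr)$, which equals $1$ precisely when $x + \bar x = 0$, i.e.\ $x \in V_\epsilon$. So $\cayley$ restricts to a bijection $V_\epsilon \setminus(\text{pole locus}) \to C_\epsilon \setminus\sset{-1}$, and every $\lambda \in C_\epsilon \setminus \sset{-1}$ is $\cayley(y)$ for a unique $y \in V_\epsilon$. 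Pulling $\psi_0$ back along $\cayley$ to (a dense part of) $V_\epsilon$ gives a quadratic character of this additive group, and $\psi_0\circ\cayley$ should be expressible via $\sgn_\varpi$ of something polynomial in $y$. Concretely, with $\lambda = \cayley(y)$, $y = b\sqrt\epsilon$, one has $\lambda = (1 + y/2)/(1 - y/2)$, hence $\lambda + \lambda\inv + 2 = 2(\lambda+1)\cdot\frac{1}{?}$---expanding, $\lambda + 1 = 2/(1 - y/2)$ and $\lambda\inv + 1 = 2/(1 + y/2)$, so $\lambda + \lambda\inv + 2 = 4/\bigl((1-y/2)(1+y/2)\bigr) = 4/(1 - y^2/4) = 4/(1 - \epsilon b^2/4)$, which lies in $\field\mult$. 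Thus $\sgn_\varpi(\lambda + \lambda\inv + 2) = \sgn_\varpi(4(1 - \epsilon b^2/4)) = \sgn_\varpi(1 - \epsilon b^2/4)$ since $4$ is a square. So the claimed identity becomes: $\psi_0(\cayley(b\sqrt\epsilon)) = \sgn_\varpi(1 - \epsilon b^2/4)$ for all admissible $b \in \field$.

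To prove this last identity, I would invoke the uniqueness of $\psi_0$: it suffices to show that $b \mapsto \sgn_\varpi(1 - \epsilon b^2/4)$, viewed through the Cayley bijection, is a nontrivial \emph{homomorphism} $C_\epsilon \setminus\sset{-1} \to \sset{\pm1}$ that extends to a character of $C_\epsilon$ (the point at $\lambda = -1$ being handled by continuity / the separate computation above), and that it is not the trivial character. Nontriviality is easy: pick $b$ with $1 - \epsilon b^2/4$ a non-square (possible since $\sgn_\varpi$ is surjective on $\field\mult$ and the values $1 - \epsilon b^2/4$ range over a set not contained in the squares). The multiplicativity is the crux: given $\lambda_1 = \cayley(b_1\sqrt\epsilon)$ and $\lambda_2 = \cayley(b_2\sqrt\epsilon)$ with product $\lambda_3 = \cayley(b_3\sqrt\epsilon)$, one must check $\sgn_\varpi(1 - \epsilon b_1^2/4)\sgn_\varpi(1 - \epsilon b_2^2/4) = \sgn_\varpi(1 - \epsilon b_3^2/4)$; equivalently, in the notation above, $\sgn_\varpi\bigl((\lambda_1 + \lambda_1\inv + 2)(\lambda_2 + \lambda_2\inv + 2)(\lambda_3 + \lambda_3\inv + 2)\bigr) = 1$. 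Here I expect the cleanest route is the classical identity realizing $C_\epsilon$ as a norm-one torus: parametrizing $\lambda_i = \alpha_i/\bar\alpha_i$ with $\alpha_i = 1 + b_i\sqrt\epsilon/2 \in \field_\epsilon\mult$, one gets $\lambda_i + \lambda_i\inv + 2 = (\alpha_i + \bar\alpha_i)^2/(\alpha_i\bar\alpha_i) = \Tr_\epsilon(\alpha_i)^2/\Norm_\epsilon(\alpha_i)$, so $\sgn_\varpi(\lambda_i + \lambda_i\inv + 2) = \sgn_\varpi(\Norm_\epsilon(\alpha_i))$; and $\Norm_\epsilon$ multiplies under multiplication of the $\alpha_i$ up to a norm (the relation $\lambda_3 = \lambda_1\lambda_2$ forces $\alpha_3\bar\alpha_3$ and $\alpha_1\alpha_2\overline{\alpha_1\alpha_2}$ to agree modulo $\Norm_\epsilon(\field_\epsilon\mult)$), whence $\sgn_\varpi$ of the product of the three norms is $1$ because $\sgn_\varpi$ kills $\Norm_\epsilon(\field_\epsilon\mult) = \ker\sgn_\epsilon$---wait, one must be careful that $\sgn_\varpi$, not $\sgn_\epsilon$, is trivial on norms from $\field_\epsilon$; but since $\field_\epsilon/\field$ is unramified, $\Norm_\epsilon(\field_\epsilon\mult) = \sset{x : \ord(x) \text{ even}} \cdot (\pint\mult)^2$-coset structure forces exactly $\sgn_\epsilon$ to vanish there, so this step needs the observation that the product $\Norm_\epsilon(\alpha_1)\Norm_\epsilon(\alpha_2)\Norm_\epsilon(\alpha_3)$ actually lies in $(\field\mult)^2$ (being $\Norm_\epsilon(\alpha_1\alpha_2\alpha_3)\cdot(\text{something})$, and in fact a perfect square after accounting for the relation), so that \emph{every} quadratic character, $\sgn_\varpi$ included, kills it. This squaring phenomenon is the main obstacle, and I would resolve it by a direct manipulation: since $\lambda_1\lambda_2\lambda_3\inv = 1$, we have $\alpha_1\alpha_2\bar\alpha_3 = \bar\alpha_1\bar\alpha_2\alpha_3 =: \beta$, and then $\Norm_\epsilon(\alpha_1)\Norm_\epsilon(\alpha_2)\Norm_\epsilon(\alpha_3) = (\alpha_1\alpha_2\alpha_3)(\bar\alpha_1\bar\alpha_2\bar\alpha_3) = \bigl((\alpha_1\alpha_2\bar\alpha_3)(\bar\alpha_1\bar\alpha_2\alpha_3)\bigr) \cdot (\alpha_3/\bar\alpha_3)(\bar\alpha_3/\alpha_3) = \beta\bar\beta = \Norm_\epsilon(\beta)$; so the product of the three $(\lambda_i + \lambda_i\inv + 2)$ is $\Norm_\epsilon(\beta)/\bigl(\Norm_\epsilon(\alpha_1)\Norm_\epsilon(\alpha_2)\Norm_\epsilon(\alpha_3)\bigr)^{?}$---recomputing, $\prod(\lambda_i+\lambda_i\inv+2) = \prod \Tr_\epsilon(\alpha_i)^2/\prod\Norm_\epsilon(\alpha_i)$, and the numerator $\prod\Tr_\epsilon(\alpha_i)^2$ is visibly a square in $\field\mult$ while $\prod\Norm_\epsilon(\alpha_i) = \Norm_\epsilon(\beta) \in \Norm_\epsilon(\field_\epsilon\mult)$; hence $\sgn_\varpi$ of the whole product equals $\sgn_\varpi(\Norm_\epsilon(\beta))$, and one final check---that $\sgn_\varpi$ is trivial on $\Norm_\epsilon(\field_\epsilon\mult)$, which holds because $\Norm_\epsilon(\field_\epsilon\mult)$ is a subgroup of index $2$ on which \emph{only} $\sgn_\epsilon$ among the quadratic characters could a priori be nontrivial, and $\sgn_\varpi \ne \sgn_\epsilon$ has kernel the other index-$2$ subgroup $\sset{x : \ord(x)\text{ even}}$, but $\Norm_\epsilon(\field_\epsilon\mult)$ consists of elements of even valuation, so $\sgn_\varpi$ does vanish on it---completes the argument. (If that last parenthetical proves too delicate, the alternative is to appeal directly to the cited \cite{sally-shalika:orbital-integrals}*{Lemma A.3}, whose proof presumably runs along these lines.)
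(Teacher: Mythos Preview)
Your strategy---show that $\lambda\mapsto\sgn_\varpi(\lambda+\lambda\inv+2)$ defines a nontrivial character of $C_\epsilon$, hence equals $\psi_0$ by uniqueness---is sound, but two steps are broken as written. First, the $\lambda=-1$ computation: you try to read off $\psi_0(-1)$ from whether $-1$ is a norm from $\field_\epsilon$, but $\sgn_\epsilon(-1)=(-1)^{\ord(-1)}=1$ always, so this distinguishes nothing. What you need is the direct count: $C_\epsilon$ is (cyclic of order $q+1$)$\times$(pro-$p$), $-1$ is the unique element of order~$2$, so $\psi_0(-1)=(-1)^{(q+1)/2}=-(-1)^{(q-1)/2}=-\sgn_\varpi(-1)$. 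Second, your ``final check'' that $\sgn_\varpi$ vanishes on $\Norm_\epsilon(\field_\epsilon\mult)$ is false: $\Norm_\epsilon(\field_\epsilon\mult)=\{x:\ord(x)\in2\Z\}$ contains all of $\pint\mult$, on which $\sgn_\varpi$ is the nontrivial quadratic residue character. Fortunately you don't need that claim: you already showed $\beta=\alpha_1\alpha_2\bar\alpha_3$ satisfies $\beta=\bar\beta$, so $\beta\in\field\mult$ and $\prod_i\Norm_\epsilon(\alpha_i)=\beta\bar\beta=\beta^2$ is an honest square in $\field\mult$, killed by every quadratic character. (A cleaner packaging of your whole argument: $\alpha\mapsto\sgn_\varpi(\Norm_\epsilon(\alpha))$ is a character of $\field_\epsilon\mult$ trivial on $\field\mult$, hence descends via Hilbert~90 to a character of $C_\epsilon\cong\field_\epsilon\mult/\field\mult$; for $\lambda\ne-1$ it equals $\sgn_\varpi(\lambda+\lambda\inv+2)$ by your identity $\lambda+\lambda\inv+2=\Tr_\epsilon(\alpha)^2/\Norm_\epsilon(\alpha)$, and at $\lambda=-1$ one takes $\alpha=\sqrt\epsilon$ to get $\sgn_\varpi(-\epsilon)=-\sgn_\varpi(-1)$.)

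The paper's route is quite different and shorter. It observes that $C_\epsilon\subseteq(\field_\epsilon\mult)^2$ (since $q-1$ is even, every norm-one element of $\resfld_\epsilon\mult$ is a square), so any $\lambda\in C_\epsilon\setminus\{\pm1\}$ may be written $\lambda=(c+d\sqrt\epsilon)^2$ with $c,d\in\field\mult$ and $c^2-d^2\epsilon\in\{\pm1\}$; moreover $\lambda$ is a square \emph{in $C_\epsilon$} precisely when $c^2-d^2\epsilon=+1$. A one-line computation then gives $\lambda+\lambda\inv+2=(2c)^2$ in the square case and $(2d)^2\epsilon$ in the non-square case, so $\sgn_\varpi(\lambda+\lambda\inv+2)$ is $+1$ or $-1$ exactly when $\psi_0(\lambda)$ is. Your approach trades this explicit square-root for a multiplicativity/uniqueness argument; once repaired it works, but the paper's direct computation avoids the bookkeeping around $-1$ and the norm-group digression entirely.
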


\begin{proof}
Clearly,
\[
\psi_0(1) = 1 = \sgn_\varpi(1 + 1\inv + 2).
\]
Now note that \(\psi_0\) takes the value
\(+1\) on squares in \(C_\epsilon\), and \(-1\) on
non-squares.
In particular, since \(C_\epsilon\) is the direct product of
a cyclic group of order \(q + 1\) and a pro-\(p\) group,
\[
\psi_0(-1) = (-1)^{(q + 1)/2}
= -(-1)^{(q - 1)/2} = -\sgn_\varpi(-1).
\]

%
%
Now fix
\(\lambda \in C_\epsilon \setminus \field\mult
	= C_\epsilon \setminus \sset{\pm1}\).
Since \(C_\epsilon \subseteq (\field_\epsilon\mult)^2\),
we may write \(\lambda = (c + d\sqrt\epsilon)^2\), with
\(c, d \in \field\mult\).
In particular, \(c^2 - d^2\epsilon \in \sset{\pm1}\);
and \(\lambda\) is a square in \(C_\epsilon\)
if and only if \(c^2 - d^2\epsilon = +1\).
Now
\(\lambda + \lambda\inv + 2
= 2\bigl((c^2 + d^2\epsilon) + 1\bigr)\).
If \(c^2 - d^2\epsilon = +1\), then
\[
\lambda + \lambda\inv + 2
= 2\bigl((c^2 + d^2\epsilon) + (c^2 - d^2\epsilon)\bigr)
= (2c)^2 \in (\field\mult)^2.
\]
If \(c^2 - d^2\epsilon = -1\), then
\[
\lambda + \lambda\inv + 2
= 2\bigl((c^2 + d^2\epsilon) - (c^2 - d^2\epsilon)\bigr)
= (2d)^2\epsilon \not\in (\field\mult)^2.\qedhere
\]
\end{proof}

Our next lemma discusses traces of norm-one elements,
for use in Theorem \ref{thm:pos-bad-ram}.

\begin{lemma}
\label{lem:norm-and-tr}
If \(\theta\) is a non-square
and \(X \in V_\theta \cap \pp_\theta\), then
\[
\ord\bigl(
	\Tr_\theta(\cayley(X)) - (2 + X^2)
\bigr)
\ge 4\ord_\theta(X).
\]
\end{lemma}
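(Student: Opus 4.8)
The plan is to compute $\Tr_\theta(\cayley(X))$ directly from the series expansion of $\cayley$ and compare it with $2 + X^2$ term by term. Recall from Lemma~\ref{lem:cayley-field} that $\cayley(X) = (1 + X/2)(1 - X/2)\inv$, and for $X \in \pp_\theta$ we may expand the geometric series:
\[
\cayley(X) = \Bigl(1 + \frac X2\Bigr)\sum_{k \ge 0}\Bigl(\frac X2\Bigr)^k
= 1 + X + \frac{X^2}2 + \frac{X^3}4 + \dotsb
= 1 + \sum_{k \ge 1}\frac{X^k}{2^{k-1}}.
\]
First I would observe that since $X \in V_\theta = \ker\Tr_\theta$, the trace map kills the linear term: $\Tr_\theta(X) = 0$. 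Applying $\Tr_\theta$ to the series and using $\Tr_\theta(1) = 2$ and $\Tr_\theta(X) = 0$ gives
\[
\Tr_\theta(\cayley(X)) = 2 + \frac{\Tr_\theta(X^2)}2 + \sum_{k \ge 3}\frac{\Tr_\theta(X^k)}{2^{k-1}}.
\]

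Next I would handle the quadratic term. Writing $X = b\sqrt\theta$ (since $X \in V_\theta$ means the $\field$-component vanishes), we have $X^2 = b^2\theta \in \field$, so $\Tr_\theta(X^2) = 2X^2$, and hence $\Tr_\theta(\cayley(X))/2$ contributes exactly $2 + X^2$. Therefore
\[
\Tr_\theta(\cayley(X)) - (2 + X^2) = \sum_{k \ge 3}\frac{\Tr_\theta(X^k)}{2^{k-1}}.
\]
It remains to estimate the valuation of the tail. Since $p \ne 2$, the powers of $2$ in the denominators are units, so each term has $\ord_\theta$ at least $\ord_\theta(\Tr_\theta(X^k))$. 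Now $\Tr_\theta$ is $\field$-linear with $\Tr_\theta(\pint_\theta) \subseteq \pint$, so $\ord(\Tr_\theta(\alpha)) \ge \ord_\theta(\alpha)$ for any $\alpha \in \field_\theta$; applied to $\alpha = X^k$ with $k \ge 3$, this gives $\ord(\Tr_\theta(X^k)) \ge k\ord_\theta(X) \ge 3\ord_\theta(X) \ge 4\ord_\theta(X)$ — wait, this is not quite right, since $3\ord_\theta(X) < 4\ord_\theta(X)$ when $\ord_\theta(X) > 0$. The point I need is actually sharper: because $X \in V_\theta$, odd powers $X^{2j+1} = (b^2\theta)^j\, b\sqrt\theta$ again lie in $V_\theta = \sqrt\theta\dotm\field$, hence are themselves killed by $\Tr_\theta$; only the even powers $X^{2j} = (b^2\theta)^j \in \field$ survive, and for these $\Tr_\theta(X^{2j}) = 2X^{2j}$ with $\ord(X^{2j}) = 2j\ord_\theta(X) \ge 4\ord_\theta(X)$ once $j \ge 2$. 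The $j = 1$ even term is precisely the one already absorbed into $2 + X^2$. So in fact the tail is $\sum_{j \ge 2} X^{2j}/2^{2j-2}$, and the lowest term $X^4/4$ has valuation $4\ord_\theta(X)$, giving the claimed bound.

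The main obstacle, such as it is, is bookkeeping: one must be careful that the geometric-series expansion of $\cayley(X)$ genuinely converges $\pp_\theta$-adically (it does, since $X/2 \in \pp_\theta$ and $\field_\theta$ is complete), and one must exploit the vanishing of $\Tr_\theta$ on \emph{all} odd powers of $X$ — not merely on $X$ itself — to get the exponent $4$ rather than the weaker $3$. Once that observation is in hand, the estimate $\ord(\Tr_\theta(\alpha)) \ge \ord_\theta(\alpha)$ together with $p \ne 2$ closes the argument immediately; alternatively one can phrase the whole computation via Lemma~\ref{lem:cayley-field}'s congruence $\cayley(Y) \equiv 1 + Y \pmod{\pp_\theta^{2n}}$ applied iteratively, but the direct series computation seems cleanest here.
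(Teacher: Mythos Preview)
Your proof is correct. After the self-correction (recognizing that the odd powers of $X$ all lie in $V_\theta$ and hence have vanishing trace, so the tail begins at $X^4$), the argument is airtight: the series converges since $X/2 \in \pp_\theta$, the factors $2^{k-1}$ are units since $p \ne 2$, and the surviving terms $2X^{2j}/2^{2j-1}$ for $j \ge 2$ all have valuation at least $4\ord_\theta(X)$.

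The paper's proof reaches the same conclusion by a slightly different and more compact route. Rather than expanding the series and tracking parity, it uses the Galois equivariance of $\cayley$ together with $\bar X = -X$ to write
\[
\Tr_\theta\bigl(\cayley(X)\bigr)
= \cayley(X) + \cayley(-X)
= 2\,\cayley(X^2/2),
\]
the last equality being the closed-form identity $\frac{1+t}{1-t} + \frac{1-t}{1+t} = 2\,\frac{1+t^2}{1-t^2}$ with $t = X/2$. Since $X^2/2 \in \field$ already, one then applies the congruence of Lemma~\ref{lem:cayley-field} a single time with $Y = X^2/2$ to get $2\cayley(X^2/2) \equiv 2 + X^2 \pmod{\pp^{2\ord(X^2)}}$. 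Your approach is a direct term-by-term verification; the paper's approach packages the even/odd cancellation into one algebraic identity and then invokes an existing lemma. Both are short; the paper's version avoids rederiving the series estimate, while yours has the virtue of making explicit exactly which terms survive and why.
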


\begin{proof}
Note that \(X^2 \in \field\) and \(\bar X = -X\).
By direct computation and Lemma \ref{lem:cayley-field},
\begin{multline*}
\Tr_\theta\bigl(\cayley(X)\bigr)
= \cayley(X) + \overline{\cayley(X)}
= \cayley(X) + \cayley(\bar X)
= \cayley(X) + \cayley(-X) \\
= 2\cayley\bigl(X^2/2\bigr)
\equiv 2 + X^2 \pmod{
	\pp^{2\ord(X^2)} = \pp^{4\ord_\theta(X)}
},
\end{multline*}
where \(\abmapto Z{\bar Z}\) is the non-trivial element of
\(\Gal(\field_\theta/\field)\).  The result follows.
\end{proof}

\section{Tori}
\label{sec:tori}

\subsection{Standard tori and normalizers}
\label{sec:std-tori}

Every maximal \(k\)-torus in \bG is \(G\)-conjugate either to the
\(k\)-split torus
\(\bA \ldef \set{\begin{smallpmatrix}
a & 0 \\
0 & d
\end{smallpmatrix}}{a d = 1}\),
or to an elliptic \(k\)-torus (discussed below).
The quotient \(N_\bG(\bA)/\bA\) has order \(2\), with the
non-trivial coset represented by
\(\begin{smallpmatrix}
\phm0 & 1 \\
-1    & 0
\end{smallpmatrix} \in G\).

We start by defining a few model elliptic tori.

\begin{notn}
For \(\beta,\theta,\eta \in \field\mult\),
define
\[
X^{\theta,\eta}_\beta
= \begin{pmatrix}
0               & \beta\eta\inv \\
\beta\theta\eta & 0
\end{pmatrix}
\]
and
\[
\bT^{\theta,\eta} = \set{
\begin{pmatrix}
	a       & b\eta\inv \\
	b\theta\eta & a
\end{pmatrix}
}{a^2 - b^2\theta = 1}.
\]
Then the Lie algebra
\(\indexmem{\pmb\ttt^{\theta, \eta}}\) of \(\bT^{\theta, \eta}\)
is the \(1\)-dimensional vector space spanned by
\(X^{\theta, \eta}_1\).
\end{notn}

We will only use this notation for $\theta$ a non-square,
since otherwise $\bT^{\theta,\eta}$ is $\field$-split,
and thus $G$-conjugate to $\bA$.
We write \indexmem{\bT^\theta} for $\bT^{\theta,1}$.
There is a natural way to view the torus \(\bT^\epsilon\) as
an \resfld-group.  To emphasise when we are doing so, we shall
denote it by \indexmem{\sfT^\epsilon}.

We have that
\(T^{\theta,\eta}\) is isomorphic to \(C_\theta\),
and
its Lie algebra
\(\ttt^{\theta,\eta} = \set{X^{\theta,\eta}_b}{b \in \field}\)
is isomorphic to \(V_\theta\),
in each case via the map
\(\abmapto{\begin{smallpmatrix}
a           & b\eta\inv \\
b\theta\eta & a
\end{smallpmatrix}}{a + b\sqrt\theta}\).
We shall therefore freely write
\(\Im_\theta(\gamma)\) (respectively, \(\Im_\theta(Y)\))
for \(\gamma \in T^{\theta, \eta}\)
(respectively, \(Y \in \ttt^{\theta, \eta}\)).

To determine all $G$-conjugacy classes of elliptic maximal tori,
we use the fact
that such a torus is determined up to stable conjugacy,
or, equivalently, \(\GL_2(\field)\)-conjugacy,
by the \(\field\)-isomorphism class of its splitting field;
and
that the splitting field of $\bT^{\theta,\eta}$
is $\field_\theta$.
To represent all $\bG$-conjugacy classes of elliptic maximal tori,
we thus only need to consider values of $\theta$
in $\sset{\epsilon,\varpi,\epsilon\varpi}$.
We shall call a torus \term[standard torus]{standard}
if it is the split torus \bA, or else of the form
\(\bT^{\theta, \eta}\), with \(\theta\) as above.

If \(-1\) is not a norm from \(\field_\theta\), then
\(N_\bG(\bT^\theta) = \bT^\theta\).
If \(-1 = \Norm_\theta(a + b\sqrt\theta)\), then
\(N_\bG(\bT^\theta)/\bT^\theta\) has order \(2\), with the
non-trivial coset represented by
\(\begin{smallpmatrix}
a        & \phm b  \\
-b\theta & -a
\end{smallpmatrix} \in G\).
The first case applies if
\(\field_\theta/\field\) is ramified
and \(-1\) is not a square in \(\resfld\mult\);
and
the second otherwise (in particular, if
\(\field_\theta/\field\) is unramified).

Thus, there are two distinct
stable conjugacy classes of ramified tori,
represented by the tori $\bT^\theta$,
with $\theta \in \sset{\varpi, \epsilon\varpi}$. 
If \(-1\) is a square in \(\resfld\mult\), then the
stable conjugacy class of \(\bT^\theta\)
splits into two distinct \(G\)-conjugacy classes, represented
by \(\bT^{\theta,1}\)
and \(\bT^{\theta,\epsilon}\);
otherwise, it is a single \(G\)-conjugacy class.
In particular, \(\bT^{\theta, 1}\)
and \(\bT^{\theta, \epsilon}\) are \(G\)-conjugate in the
latter case.

There is a single stable conjugacy class of
unramified elliptic maximal \(\field\)-tori,
represented by \(\bT^\epsilon\).
It splits into two distinct \(G\)-conjugacy classes,
represented 
by
\(\bT^{\epsilon,1}\)
and
\(\bT^{\epsilon,\varpi}\).

\begin{notn}
\label{notn:torus-quad}
We write \indexmem{\psi_0^\eta} for the quadratic character
of \(T^{\epsilon, \eta}\), with \(\eta \in \sset{1, \varpi}\),
and \indexmem{\psi_0} for the quadratic character of
\(\sfT^\epsilon(\resfld)\).
\end{notn}

Recall that the notation \(\psi_0\) used below has already
been used for a character of a subgroup of \(C_\epsilon\)
(see Notation \ref{notn:field-quad}).  Since an isomorphism
of \(T^{\epsilon, \eta}\) with \(C_\epsilon\) intertwines
\(\psi_0^\eta\) and \(\psi_0\), this is not a serious
ambiguity.

\subsection{Torus filtrations}

Since our tori are isomorphic to
subgroups of multiplicative groups of fields, they carry
natural filtrations.
We specify them explicitly below,
since there are some normalization issues,
as well as a subtlety to be handled in the depth-zero case.

\begin{defn}
\label{defn:torus-filt}
We equip \(\field\mult\) with the filtration
defined by \((\field\mult)_0 = \pint\mult\)
and
\((\field\mult)_n = 1 + \pp^n\) for \(n \in \Z_{> 0}\),
and extend the indexing to \(r \in \R_{\ge 0}\)
by putting \((\field\mult)_r = (\field\mult)_{\rup r}\).
This may be transported, \textit{via} either isomorphism
\(A \iso \field\mult\), to a filtration
\(\set{A_r}{r \in \R_{\ge 0}}\) of \(A\).
In particular, \(A_0 = \SL_2(\pint) \cap A\) is the maximal
compact subgroup of \(A\).

Any elliptic torus \(T\) is conjugate to one of the form
\(T^{\theta, \eta}\), hence isomorphic to
\(C_\theta \subseteq \field_\theta\mult\).
We impose a filtration on \(\field_\theta\mult\) similar to
the one on \(\field\mult\),
except that we omit the maximal term
\((\field_\theta\mult)_0\),
and adjust the indexing to account for ramification.
Specifically, if \(\field_\theta/\field\) is ramified, then
we put \((\field_\theta\mult)_{n/2} = 1 + \pp_\theta^n\)
for \(n \in \Z_{> 0}\);
whereas, if \(\field_\theta/\field\) is unramified, then
we put \((\field_\theta\mult)_n = 1 + \pp_\theta^n\)
for \(n \in \Z_{> 0}\).
As in the split case, we extend the indexing of the
filtration to \(r \in \R_{> 0}\).
We obtain a filtration of \(C_\theta\) by intersection,
and then transport it to \(T\)
to obtain a filtration
\(\set{T_r}{r \in \R_{> 0}}\) of \(T\).

Note, however, that we have not yet defined the notation
\(T_0\).
If \(T\) is unramified
(i.e., we may take \(\theta = \epsilon\)),
then we put \(T_0 = T\), which is the maximal compact
subgroup of \(T\).
If \(T\) is ramified
(i.e., we may take \(\theta \in \sset{\varpi, \epsilon\varpi}\)),
then we put \(T_0 = \bigcup_{r > 0} T_r\).
(In the notation of the next paragraph, this is
\(T_{\MPlus0}\).)
This is no longer the maximal, compact subgroup of \(T\),
but rather an index-\(2\) subgroup.
Specifically, \(T = Z(G)T_0\).

For any torus \(H\) and real number \(r \ge 0\),
we write \(H_{\MPlus r}\) for \(\bigcup_{s > r} H_s\).

We define filtrations in a similar way (including the
adjustment for ramification) on the Lie algebras of maximal
tori in \(G\).  These filtrations are defined for all
\(r \in \R\) (not just \(r \ge 0\)), and the case \(r = 0\)
no longer needs to be treated separately.
\end{defn}

\begin{defn}
\label{defn:depth-char}
If \(T\) is a torus, and \(\psi\) a character of \(T\), then
the \term[depth (multiplicative character)]{depth} \(\depth(\psi)\)
of $\psi$
is the smallest index \(r \in \R_{\ge 0}\)
such that
the restriction of $\psi$ to $T_{\MPlus r}$ is trivial.
\end{defn}

\begin{defn}
\label{defn:depth-element}
Let \(T\) be a torus, with Lie algebra \(\ttt\).
If \(Y\) is a regular, semisimple element of \(\ttt\)
(respectively,
\(\gamma\) is a regular, semisimple element of \(T\)),
then we define the \term[depth (Lie algebra element)]{depth}
\(\depth^\gg(Y)\) of \(Y\)
(respectively, the \term[depth (group element)]{depth}
\(\depth^G(\gamma)\) of \(\gamma\))
to be the smallest index \(r \in \R\)
such that \(Y \not\in \ttt_{\MPlus r}\)
(respectively, \(\gamma \not\in T_{\MPlus r}\)).
If it is clear from the context, then we will drop the
superscript \(G\) or \(\gg\).
It is also convenient to define the
\term{maximal depth} \(\mdepth(\gamma)\) of \(\gamma\)
to be \(\max\set{\depth(\gamma z)}{z \in Z(G)}\).
\end{defn}

\begin{rem}
For example,
\(\depth^\gg(X^{\theta, \eta}_\beta)
= \ord(\beta) + \tfrac1 2\ord(\theta)\).
Thus, for all \(r \in \R\),
\begin{align*}
\ttt^{\epsilon, \eta}_r
& {}= \varpi^{\rup r}\pint\dotm X^{\epsilon, \eta}_1
\quad\text{%
	for \(\eta \in \sset{1, \varpi}\)%
} \\
\intertext{and}
\ttt^{\theta, \eta}_{r + 1/2}
& {}= \varpi^{\rup r}\pint\dotm X^{\theta, \eta}_1
\quad\text{%
	for \(\theta \in \sset{\varpi, \epsilon\varpi}\)
and \(\eta \in \sset{1, \epsilon}\).%
}
\end{align*}
\end{rem}

We will see below that the formulas for the supercuspidal
characters that we consider depend on the relative depths of
a (linear) character of a torus, and an element of that
torus.
Another basic function is the \term{Weyl discriminant}.

\begin{defn}
\label{defn:disc}
The functions
\(\map{D_G}G\field\) and \(\map{D_\gg}\gg\field\)
are defined by letting
\(D_G(\gamma)\) and \(D_\gg(Y)\)
be the coefficients of the degree-one terms in the
characteristic polynomials of
\(\Ad(\gamma) - 1\) and \(\ad(Y)\), respectively,
for \(\gamma \in G\) and \(Y \in \gg\).
Concretely,
\[
D_G\begin{pmatrix}
a & b \\
c & d
\end{pmatrix} = (a + d)^2 - 4
\qandq
D_\gg\begin{pmatrix}
a & \phm b  \\
c &     -a
\end{pmatrix} = 4(a^2 + bc).
\]
An element \(Y \in \gg\)
(respectively, \(\gamma \in G\)) is
\term{regular semisimple}
if and only if \(D_\gg(Y) \ne 0\)
(respectively, \(D_G(\gamma) \ne 0\)).
We write \indexmem{\gg\rss}
and \indexmem{G\rss} for the appropriate sets
of regular, semisimple elements.
\end{defn}

\begin{lemma}
\label{lem:disc-as-depth}
The discriminant of a regular, semisimple element
of \(\gg\) or \(G\) with eigenvalues \(\lambda\) and \(\lambda'\)
is \((\lambda - \lambda')^2\).
If \(Y \in \gg\rss\) and \(\gamma \in G\rss \cap G_0\), then
\[
\abs{D_\gg(Y)} = q^{-2\depth(Y)}
\qandq
\abs{D_G(\gamma)} = q^{-2\mdepth(\gamma)}.
\]
\end{lemma}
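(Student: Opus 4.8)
The first assertion is a triviality once unwound: an endomorphism of a two-dimensional space whose eigenvalues are $\lambda,\lambda'$ has characteristic polynomial $X^2 - (\lambda+\lambda')X + \lambda\lambda'$, so the coefficient of the degree-one term is $-(\lambda+\lambda')$, and the same for the degree-zero term is $\lambda\lambda'$; hence the discriminant $(\lambda+\lambda')^2 - 4\lambda\lambda' = (\lambda-\lambda')^2$. I would simply apply this to $\ad(Y)$, whose nonzero eigenvalues are $\pm(\mu-\mu')$ where $\mu,\mu'$ are the eigenvalues of $Y$ (and whose third eigenvalue is $0$, contributing nothing to the degree-one coefficient since we are looking at $\mathrm{ad}$ on the three-dimensional $\gg$), and to $\Ad(\gamma)-1$, whose nonzero eigenvalues are $\nu/\nu' - 1$ and $\nu'/\nu - 1$ where $\nu,\nu'$ are the eigenvalues of $\gamma$; then check against the concrete formulas of Definition \ref{defn:disc} to fix normalizations. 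Matching $(\lambda-\lambda')^2$ against $4(a^2+bc)$ for $Y = \begin{smallpmatrix} a & b \\ c & -a \end{smallpmatrix}$ (eigenvalues $\pm\sqrt{a^2+bc}$) and against $(a+d)^2-4 = (a+d)^2 - 4ad$ for $\gamma$ of determinant one (eigenvalues $\lambda,\lambda'$ with $\lambda+\lambda' = a+d$, $\lambda\lambda' = 1$) confirms both cases.

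For the quantitative statements, I would treat the Lie algebra case first, as it is cleaner. Take $Y \in \gg\rss$; its eigenvalues are $\pm\mu$ for some $\mu$ in a quadratic extension (or in $\field$, if $Y$ is split), so $D_\gg(Y) = (2\mu)^2 = 4\mu^2$, whence $\abs{D_\gg(Y)} = \abs{\mu}^2$ since $\abs 4 = 1$. It remains to identify $\ord_\theta(\mu)$ — more precisely $\ord(\mu^2) = 2\ord_\theta(\mu)$ — with $\depth(Y)$. Conjugating $Y$ into a standard torus $\ttt^{\theta,\eta}$, write $Y = X^{\theta,\eta}_\beta$; its eigenvalues are $\pm\beta\sqrt\theta$, so $\mu = \beta\sqrt\theta$ and $\ord(\mu^2) = 2\ord(\beta) + \ord(\theta)$. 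By the Remark following Definition \ref{defn:depth-element}, $\depth^\gg(X^{\theta,\eta}_\beta) = \ord(\beta) + \tfrac12\ord(\theta)$, so indeed $\abs{D_\gg(Y)} = q^{-(2\ord(\beta) + \ord(\theta))} = q^{-2\depth(Y)}$. (One should note that depth is conjugation-invariant and that $D_\gg$ is conjugation-invariant, so working with the standard representative is legitimate; in the split case $\theta$ is a square and the same computation goes through with $\ord(\theta)$ even.)

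The group case is the one with the real subtlety, and I expect the bookkeeping around the $Z(G)$-twist in $\mdepth$ to be the main obstacle. Take $\gamma \in G\rss \cap G_0$ with eigenvalues $\lambda,\lambda'$, $\lambda\lambda' = 1$; then $D_G(\gamma) = (\lambda-\lambda')^2 = \lambda^2 + \lambda'^2 - 2 = (\lambda/\lambda')\cdot(\lambda' - \lambda)^2/\lambda'^2$... more usefully, $(\lambda - \lambda')^2 = \lambda\lambda'\bigl((\lambda/\lambda') - 2 + (\lambda'/\lambda)\bigr) = (\lambda/\lambda') + (\lambda'/\lambda) - 2 = (\delta - 1)(\delta\inv - 1)\cdot(-1)$ where $\delta = \lambda/\lambda'$, i.e.\ $\abs{D_G(\gamma)} = \abs{\delta - 1}\cdot\abs{\delta\inv - 1} = \abs{\delta - 1}^2$ since $\abs\delta = 1$ (as $\gamma \in G_0$ forces $\lambda,\lambda'$ to be units). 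Now $\delta = \gamma z$ for a suitable central $z$ — precisely, if $\gamma$ is conjugate into $T^{\theta,\eta} \cong C_\theta$ and corresponds to $c + d\sqrt\theta$, then $\delta$ corresponds to $(c+d\sqrt\theta)^2 = (c+d\sqrt\theta)/(c - d\sqrt\theta)$, and multiplying $\gamma$ by $-1 \in Z(G)$ replaces $c + d\sqrt\theta$ by its negative, leaving $\delta$ unchanged but possibly changing $\ord(\gamma z - 1)$. The point of $\mdepth$ is exactly to select, among $\gamma$ and $-\gamma$, the one lying deepest in the torus filtration, and one checks that $\ord(\delta - 1) = \depth(\gamma - 1)$ works out — using that $\delta - 1 = (c+d\sqrt\theta)^2 - 1 = (c + d\sqrt\theta - 1)(c+d\sqrt\theta+1)$, the two factors corresponding to $\gamma - 1$ and $-(-\gamma - 1) = \gamma + 1$ — so that $\ord(\delta - 1)$ is the sum of the two "distances from $\pm1$", which equals $2\mdepth(\gamma)$ precisely because $\mdepth$ picks out whichever of these distances is the larger while the other is forced to be $0$ (the two translates $\gamma,-\gamma$ cannot both be close to $1$). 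Assembling, $\abs{D_G(\gamma)} = \abs{\delta-1}^2 = q^{-2\ord(\delta - 1)} = q^{-2\mdepth(\gamma)}$. I would write this last paragraph carefully, separating the cases where $\gamma$ is topologically unipotent, topologically semisimple of positive depth, etc., and invoking Definition \ref{defn:torus-filt}'s description of the torus filtrations to read off $\ord(\gamma - 1)$ in terms of the filtration index.
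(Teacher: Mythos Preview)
Your approach is sound and the argument will go through, but note one arithmetic slip in the group case: the sum of the two ``distances from $\pm1$'' equals $\mdepth(\gamma)$, not $2\mdepth(\gamma)$.  Indeed, with $\delta = \lambda^2$ you have $\delta - 1 = (\lambda-1)(\lambda+1)$, so $\ord_\theta(\delta-1) = \ord_\theta(\lambda-1) + \ord_\theta(\lambda+1) = \depth(\gamma) + \depth(-\gamma)$; since $p\ne 2$, at most one of these is positive, and their sum is $\mdepth(\gamma)$.  The factor of $2$ in the exponent then comes from squaring $\abs{\delta-1}$, exactly as your final line has it.  So the conclusion is right; only the sentence ``which equals $2\mdepth(\gamma)$'' needs to read $\mdepth(\gamma)$.

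Your route differs from the paper's.  The paper passes to a splitting field (citing invariance of depth under quadratic extension), reduces to $\gamma \in A_0$, and then uses the Cayley transform: writing $\lambda = \cayley(X)$ for $X\in\pp$, one gets $\lambda - \lambda\inv = 2X/(1-(X/2)^2)$ directly, whence $\ord(D_G(\gamma)) = 2\ord(X) = 2\mdepth(\gamma)$.  Your approach stays inside the elliptic torus over $\field$ and factors $\lambda^2-1$ instead.  The paper's method is more uniform and foreshadows the systematic use of $\cayley$ throughout; yours is more self-contained (no appeal to depth-invariance under base change) but requires the case analysis you flag at the end.  Either works.
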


\begin{proof}
Note that
\begin{itemize}
\item neither \(D_\gg\) nor \(D_G\) is affected by passage to a
field extension;
\item \(D_\gg\) (respectively, \(D_G\)) is invariant under
the adjoint (respectively, conjugation) action of \(G\);
and
\item
neither \(\depth^\gg\),
nor the restriction of \(\depth^G\) to \(G_0\),
is affected by passage to a quadratic extension
\cite{adler-spice:good-expansions}*{%
	Lemma \xref{exp-lem:domain-field-ascent}%
}.
\end{itemize}
Thus, it suffices to prove the result for \(Y \in \mf a\)
and \(\gamma \in A_0\).
We shall only consider the group case; the Lie-algebra case
is similar.

By Definition \ref{defn:disc}, if
\(\gamma = \begin{smallpmatrix}
\lambda & 0\phinv     \\
0       & \lambda\inv
\end{smallpmatrix}\), then
\[
D_G(\gamma)
= (\lambda + \lambda\inv)^2 - 4
= (\lambda - \lambda\inv)^2,
\]
as desired.
If \(\gamma \in A_0\), then \(\lambda \in \pint\mult\),
so \(D_G(\gamma) \in \pint\);
and \(D_G(\gamma) \in \pp\) if and only if
\(\lambda \equiv \pm1 \pmod{1 + \pp}\),
in which case \(\gamma \in Z(G)A_{\MPlus0}\).
If \(\gamma \in A_{\MPlus0}\),
then we may write \(\lambda = \cayley(X)\) for some
\(X \in \pp\).
By Lemma \ref{lem:cayley-field} and direct calculation,
\[
\lambda - \lambda\inv
= \cayley(X) - \cayley(X)\inv
= \cayley(X) - \cayley(-X)
= \frac{2X}{1 - (X/2)^2},
\]
so that
\[
\ord\bigl(D_G(\gamma)\bigr)
= 2\ord(\lambda - \lambda\inv)
= 2\ord(X)
= 2\mdepth(\gamma).\qedhere
\]
\end{proof}

{
\def\AddChar{\SSAddChar}
\section{A principal-value integral}
\label{sec:SSPhi}

The character formulas of
\cite{sally-shalika:characters}
involve a quantity
\[
H(\AddChar, \field_\theta)
\ldef \lim_{m \to -\infty}
	\int_{\varpi^m\pint_\theta}
		\AddChar(\Norm_\theta(z))
	\textup d_{\AddChar}z
\]
(p.~1235 \loccit and \cite{shalika:thesis}*{p.~11}),
where
\(\AddChar\) is an additive character
and
\(\textup d_{\AddChar}z\) is the self-dual Haar measure on
\(\field_\theta\)
(with respect to the additive character \(\AddChar\)
and the trace pairing on \(\field_\theta\)),
in the sense of Definition \ref{defn:FT} below.
We compute the normalization of measure,
and the resulting integral,
below.

Write \(r\) for the depth of \(\AddChar\).

The evaluation of the integral will require, as is usual in
\(p\)-adic harmonic analysis, a fourth root of unity called
a \term{Gauss sum}.
We follow \cite{shalika:thesis}*{p.~5}
(see also
\cite{spice:sl2-mu-hat}*{Definition \xref{sl2-defn:G}})
in making the following definition.

\begin{defn}
\label{defn:gauss}
\[
\Gauss(\AddChar)
\ldef q^{-1/2}\sum_{X \in \pint/\pp}
	\AddChar_{(-\varpi)^r}(X^2).
\]
\end{defn}

In \cite{waldspurger:nilpotent}*{\S I.4},
Waldspurger works with an additive character \(\psi\)
of depth \(0\).
If \(r = \depth(\AddChar) = 0\),
then \cite{spice:sl2-mu-hat}*{Lemma \xref{sl2-lem:G-facts}}
gives that
\(\Gauss(\AddChar)\) is the fourth root of unity denoted by
\(\varepsilon(\AddChar)\)
in \cite{waldspurger:nilpotent}*{\S V.4}.

Considerable information about the transformation laws
for this root of unity are available in
\cite{spice:sl2-mu-hat}*{Lemma \xref{sl2-lem:G-facts}},
where our \(\Gauss(\AddChar)\) is denoted by
\(G_\varpi(\Phi)\).

\begin{lemma}
\label{lem:SSPhi}
\begin{align*}
\meas_{\textup d_{\AddChar}z}(\pint_\theta)
& {}= \begin{cases}
q^{r + 1},   & \theta = \epsilon \\
q^{r + 1/2}, & \theta = \varpi;
\end{cases}
\intertext{and}
H(\AddChar, \field_\theta)
& {}= \begin{cases}
(-1)^{r + 1},     & \theta = \epsilon \\
\Gauss(\AddChar), & \theta = \varpi.
\end{cases}
\end{align*}
\end{lemma}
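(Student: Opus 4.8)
The plan is to compute the measure and the integral separately in the two cases, unramified ($\theta = \epsilon$) and ramified ($\theta = \varpi$), since the arithmetic differs. First I would pin down the self-dual measure. The self-dual Haar measure on $\field_\theta$ with respect to $\AddChar$ and the trace pairing assigns to $\pint_\theta$ a power of $q$ determined by the depth of the character $\abmapto z{\AddChar(\Tr_\theta z)}$ of $\field_\theta$, together with the discriminant of $\field_\theta/\field$ (the different). Since $\AddChar$ has depth $r$, the character $\AddChar\circ\Tr_\theta$ of $\field_\theta$ has depth $r$ in the unramified case (where the different is trivial) and depth $r$ adjusted by the different in the ramified case; using $\ord_\theta(\sqrt\varpi) = 1/2$ and the fact that the different of a tamely ramified quadratic extension is generated by $\sqrt\varpi$ (valuation $1/2$), one gets $\meas_{\mathrm d_{\AddChar}z}(\pint_\theta) = q^{r+1}$ when $\theta = \epsilon$ and $q^{r+1/2}$ when $\theta = \varpi$. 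The standard way to nail this down cleanly is: the self-dual measure of $\pint_\theta$ is $q^{(r+1)\cdot[\field_\theta:\field]/2}$ times the square root of the inverse absolute norm of the different, or more elementarily, one checks that the measure is self-dual by a direct Fourier-inversion computation on the explicit lattice chain.

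Next I would evaluate $H(\AddChar, \field_\theta) = \lim_{m\to-\infty}\int_{\varpi^m\pint_\theta}\AddChar(\Norm_\theta z)\,\mathrm d_{\AddChar}z$. The strategy is to write $\varpi^m\pint_\theta$ as a disjoint union of cosets of a fixed lattice $\pint_\theta$ (or a suitable sublattice) and show that all but boundedly many of the coset-integrals vanish, so that the limit stabilizes. On a coset $z_0 + \pp_\theta^N$ with $N$ large, $\Norm_\theta(z_0 + w) = \Norm_\theta(z_0) + \Tr_\theta(z_0\bar w) + \Norm_\theta(w)$, and the linear term $\Tr_\theta(z_0 \bar w)$ produces a nontrivial character of $w \in \pp_\theta^N$ unless $z_0$ is deep enough; summing over the coset then gives $0$ by orthogonality. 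So only the "small $z$" contributions survive, and $H(\AddChar, \field_\theta)$ reduces to an integral over a bounded region, which is a finite Gauss-type sum.

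In the unramified case, parametrizing $z = a + b\sqrt\epsilon$ and using $\Norm_\epsilon(z) = a^2 - b^2\epsilon$, the surviving integral factors (after normalizing by the measure just computed) as a product of two one-variable Gauss sums in $a$ and in $b$; since $\epsilon$ is a non-square, these combine to give $(-1)^{r+1}$ — concretely, one Gauss sum contributes $\Gauss(\AddChar)$, the other contributes $\Gauss(\AddChar_{-\epsilon}) = \sgn_\epsilon(-\epsilon)^{?}\,\overline{\Gauss(\AddChar)}$-type factor, and the product collapses to a sign because $\sgn_\epsilon$ evaluated at $\epsilon$ distinguishes the unramified case. (This is exactly the computation in \cite{shalika:thesis}*{p.~11}, recast in present notation; I would cite Lemma~\ref{lem:SSPhi}'s antecedents in \cite{spice:sl2-mu-hat}*{Lemma \xref{sl2-lem:G-facts}} for the transformation laws of $\Gauss$.) In the ramified case, $\field_\varpi = \field(\sqrt\varpi)$, and $\Norm_\varpi(a + b\sqrt\varpi) = a^2 - b^2\varpi$; the region of integration that survives is essentially governed by the $a$-variable (the $b\sqrt\varpi$ part already sits one-half step deeper), so the integral reduces to a single one-variable Gauss sum, giving $\Gauss(\AddChar)$ after the measure normalization. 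The main obstacle I anticipate is bookkeeping the measure normalization against the different of the ramified extension — getting the half-integer power $q^{r+1/2}$ exactly right, and tracking how the self-duality convention interacts with the $(-\varpi)^r$ twist in Definition~\ref{defn:gauss} — rather than any conceptual difficulty; once the measure is correctly normalized, both evaluations are orthogonality-plus-a-single-Gauss-sum.
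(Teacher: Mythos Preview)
Your approach is essentially the same as the paper's: both diagonalize the norm form via the basis $\{1,\sqrt\theta\}$, factor the self-dual measure and the principal-value integral accordingly, and reduce $H(\AddChar,\field_\theta)$ to a product of one-variable Gauss-type integrals $\mc H(\AddChar_{l_i})$ with $l_1=1$, $l_2=-\theta$. The paper packages this by citing Shalika's thesis (Lemmas 1.3.2 and 1.5.1) for the product formula and the one-variable evaluations, together with \cite{spice:sl2-mu-hat}*{Lemma \xref{sl2-lem:G-facts}} for the Gauss-sum transformation laws, whereas you sketch the direct computation; note that in the unramified case the paper (and you will need to) split on the parity of $r$, and the relevant quadratic sign is $\sgn_\varpi$ on $\pint^\times$, not $\sgn_\epsilon$.
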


By
\cite{spice:sl2-mu-hat}*{Lemma \xref{sl2-lem:Gamma-varpi}},
this formula agrees with the one given in
\cite{sally-shalika:characters}*{p.~1235}.

\begin{proof}
Note that \(H(\AddChar, \field_\theta)\) is just what is called
\(\mc H(\AddChar, Q)\) in \cite{shalika:thesis}*{p.~11},
where \(Q = \Norm_\theta\) is the norm form on
\(\field_\theta\).
In particular, its definition involves a lattice
in \(\field_\theta\) (although Lemma 1.5.2 \loccit shows
that the choice does not matter).
For definiteness, we take the lattice to be \(\pint_\theta\).
By Lemma 1.5.1 \loccit and the following exposition,
the computation of \(\mc H(\AddChar, Q)\) begins with
the identification of a \(Q\)-orthogonal \(\pint\)-basis for
\(\pint_\theta\).
In our setting,
\[
\sset{\mo x_1 = 1, \mo x_2 = \sqrt\theta}
\]
will do.  Note that we have
\[
l_1 \ldef Q(\mo x_1) = 1
\qandq
l_2 \ldef Q(\mo x_2) = -\theta.
\]

Our \(\pint\)-basis for \(\pint_\theta\) is also a
\(\field\)-basis for \(\field_\theta\),
hence furnishes a \(\field\)-isomorphism
\(\field_\theta \iso \field \oplus \field\).
By \cite{shalika:thesis}*{pp.~5, 11},
\(\textup d_{\AddChar}z\) is the
pull-back along the above isomorphism of
\(\textup dx_1 \oplus \textup dx_2\),
where
\begin{align*}
\meas_{\textup dx_1}(\pint)
    = q^{(\depth(\AddChar_{l_1}) + 1)/2}
& {}= q^{(r + 1)/2}
\intertext{and}
\meas_{\textup dx_2}(\pint)
    = q^{(\depth(\AddChar_{l_2}) + 1)/2}
& {}= \begin{cases}
q^{(r + 1)/2}, & \theta = \epsilon \\
q^{r/2},       & \theta = \varpi
\end{cases}
\end{align*}
(by \eqref{eq:depth-Phi-b}
and Definition \ref{defn:lots}).
Since \(\pint_\theta\) is the pull-back of
\(\pint \oplus \pint\), we have the indicated normalization.

By \cite{shalika:thesis}*{p.~11},
\(\mc H(\AddChar, Q)
= \mc H(\AddChar_{l_1})\mc H(\AddChar_{l_2})\),
where the notation is as in Lemma 1.3.2 \loccitthendot.

In the unramified case,
since \(\ord(l_1) = \ord(1) = 0\)
and \(\ord(l_2) = \ord(-\epsilon) = 0\),
we have by \eqref{eq:depth-Phi-b} that
\[
\depth(\AddChar_{l_1}) = \depth(\AddChar_{l_2}) = r.
\]
On the other hand,
\[
\sgn_\varpi(l_1) = \sgn_\varpi(1) = 1
\qandq
\sgn_\varpi(l_2) = \sgn_\varpi(-\epsilon)
= -\sgn_\varpi(-1).
\]
If \(\theta = \epsilon\)
and \(r = \depth(\AddChar)\) is even, then, by
\cite{shalika:thesis}*{Lemma 1.3.2}
and \cite{spice:sl2-mu-hat}*{Lemma \xref{sl2-lem:G-facts}},
\begin{align*}
\mc H(\AddChar, Q)
& {}= \sgn_\varpi(l_1)\Gauss(\AddChar)\dotm
\sgn_\varpi(l_2)\Gauss(\AddChar) \\
& {}= \sgn_\varpi(-\epsilon)\sgn_\varpi(-1) \\
& {}= -1 = (-1)^{r + 1}.
\end{align*}
If \(\theta = \epsilon\) and \(r\) is odd, then
\cite{shalika:thesis}*{Lemma 1.3.2} gives
\[
\mc H(\AddChar, Q) = 1\dotm1 = 1 = (-1)^{r + 1}.
\]

In the ramified case,
\(\ord(l_1) = \ord(1) = 0\)
and
\(\ord(l_2) = \ord(-\varpi) = 1\),
so
\[
\depth(\AddChar_{l_1}) = r
\qandq
\depth(\AddChar_{l_2}) = r - 1
\]
(meaning that exactly one is even);
whereas
\[
\sgn_\varpi(l_1) = \sgn_\varpi(1) = 1
\qandq
\sgn_\varpi(l_2) = \sgn_\varpi(-\varpi) = 1.
\]
Thus, using \cite{shalika:thesis}*{Lemma 1.3.2}
and \cite{spice:sl2-mu-hat}*{Lemma \xref{sl2-lem:G-facts}}
again, we obtain the desired formula for \(\mc H(\AddChar, Q)\)
(regardless of the parity of \(r\)).
\end{proof}
}

\section{The building and filtrations}
\label{sec:building}
In the 1990s,
Moy and Prasad~\cite{moy-prasad:k-types}
used Bruhat--Tits theory to initiate
a major advance in the study
of the harmonic analysis of reductive $p$-adic groups.
In particular, Adler~\cite{adler:thesis},
Yu~\cite{yu:supercuspidal},
and Kim~\cite{jkim:exhaustion}
built upon this foundation to provide a classification
of all supercuspidal representations of a reductive $p$-adic group
(under some tameness restrictions).
In this paper, we  will not assume any familiarity with Bruhat--Tits
theory,
but we shall use the notation of Moy and Prasad
for certain objects that we will describe explicitly
in \S\ref{sec:filt}.

\subsection{Lattices and filtrations}
\label{sec:filt}

Recall that a \term{lattice} in a finite-dimensional
\(\field\)-vector space is a compact, open
\(\pint\)-submodule.
For example, for each $r \in \R$, 
\[
\gl_2(\field)_{\xleft, r}
\ldef
\begin{pmatrix}
\pp^{\rup r} & \pp^{\rup r} \\
\pp^{\rup r} & \pp^{\rup r}
\end{pmatrix}
\qandq
\gl_2(\field)_{\xcentre, r}
\ldef \begin{pmatrix}
\pp^{\rup r}       & \pp^{\rup{r - 1/2}} \\
\pp^{\rup{r + 1/2}} & \pp^{\rup r}
\end{pmatrix}
\]
are examples of lattices in $\mathfrak{gl}_2(\field)$.
In each case,  if we allow $r$ to vary,
we obtain a filtration of $\mathfrak{gl}_2(\field)$.
Because these two filtrations have played
an important role in the representation
theory of both $\SL_2(\field)$ and $\GL_2(\field)$,
they were
often given their own special notations in the literature.
For example, when $r\in \Z$, the first lattice above was often
called $\mathfrak{k}_r$, and, when $r \in \frac12\Z$, the second
was often called $\mathfrak{b}_{2r}$.

We may view these two filtrations as part of a large family,
indexed by elements \(x\)
of the (reduced) Bruhat--Tits building \(\BB(\GL_2, \field)\)
\cite{bruhat-tits:reductive-groups-1}*{D\'efinition 7.4.2}.
We shall not concern ourselves with a description of the
building; for the case \(\bG = \GL_2\), a point in the building
essentially \emph{is} a filtration as above
\cite{bruhat-tits:reductive-groups-1}*{Proposition 10.2.10}.
Thus, we may regard \(\sset{\xleft, \xcentre}\) as a subset
of \(\BB(\GL_2, \field)\); it contains (up to
\(\GL_2(\field)\)-conjugacy) all the \emph{optimal points}
\cite{moy-prasad:k-types}*{\S6.1}, and so is, in a sense,
``all that we need''.
When we are dealing with \(\SL_2\), we shall also require the
point \(\xright\) with associated filtration
\[
\gl_2(\field)_{\xright, r}
\ldef \Int\begin{pmatrix}
1 & 0      \\
0 & \varpi
\end{pmatrix}\gl_2(\field)_{\xleft, r}
= \begin{pmatrix}
\pp^{\rup r}     & \pp^{\rup r - 1} \\
\pp^{\rup r + 1} & \pp^{\rup r}
\end{pmatrix},\quad r \in \R;
\]
it is \(\GL_2(\field)\)-, but not \(\SL_2(\field)\)-,
conjugate to \(\xleft\).
We put
\(\indexmem\BBpts = \sset{\xleft, \xcentre, \xright}\),
and, for \((x, r) \in \BBpts \times \R\), define
\[
\gg_{x, r} = \sl_2(\field)_{x, r}
= \sl_2(\field) \cap \gl_2(\field)_{x, r}.
\]

\begin{rem}
\label{rem:gl2-mult}
For \(x \in \BBpts\) and \(r, s \in \R\), we have that
\[
\gl_2(\field)_{x, r}\dotm\gl_2(\field)_{x, s}
\subseteq \gl_2(\field)_{x, r + s},
\]
where \(\dotm\) is the usual matrix multiplication.
\end{rem}

In particular,
the set of invertible elements of
\(\gl_2(\field)_{x, 0}\) forms a group.
This motivates the following definitions,
for pairs \((x, r) \in \BBpts \times \R\)
with \(r \ge 0\):
\begin{align*}
\GL_2(\field)_{x, r} &
{}\ldef \GL_2(\field) \cap \gl_2(\field)_{x, r},
	&& r = 0 \\
\GL_2(\field)_{x, r} &
{}\ldef \GL_2(\field) \cap (1 + \gl_2(\field)_{x, r}),
	&& r > 0 \\
G_{x, r} = \SL_2(\field)_{x, r} &
{}\ldef \SL_2(\field) \cap \GL_2(\field)_{x, r},
	&& r \ge 0.
\end{align*}

\begin{rem}
Our definitions here seem rather \textit{ad hoc},
but Moy and Prasad
\cites{moy-prasad:k-types,moy-prasad:jacquet}
have shown how to fit them into a uniform framework that
applies to all reductive, \(p\)-adic groups.
\end{rem}

Next, we define a family of \(G\)-domains in \(G\)
(i.e., open and closed subsets, invariant under the
conjugation action of \(G\))
as follows.
For \(r \in \R_{\ge 0}\), put
\[
G_r \ldef \bigcup_{x \in \BBpts} \Int(G)G_{x, r}.
\]
We define \(\gg_r\) similarly, for \(r \in \R\).
In all cases, replacing an index \(r\) by \(\MPlus r\)
indicates taking the union over all indices \(s > r\).
For example,
\[
G_{\MPlus r} \ldef \bigcup_{s > r} G_s
\qandq
\gg_{x, \MPlus r} \ldef \bigcup_{s > r} \gg_{x, s}.
\]

Note that  $\gg_{x,\MPlus r} \subseteq \gg_{x,r}$.
The quotient $\gg_{x,r}/ \gg_{x,\MPlus r}$ is always a finite-dimensional
$\resfld$-vector space,
and we will soon see
(Lemma \ref{lem:cayley}\pref{item:MP-map})
that
the quotient $G_{x,r}/G_{x,\MPlus r}$ is isomorphic to 
$\gg_{x,r}/ \gg_{x,\MPlus r}$ if $r>0$.
The quotient $G_{x,0}/G_{x,\MPlus0}$
is the group of $\resfld$-rational points of a reductive
$\resfld$-group $\sfG_x$.
If $x \in \sset{\xleft, \xright}$, then
$\sfG_x \iso {\SL_2}_{/\resfld}$.
If $x = \xcentre$, then
$\sfG_x \iso {\mathbb G_m}_{/\resfld}$, the
\(1\)-dimensional, multiplicative group scheme over \resfld.

For $r,s \in \R$ with $s  \geq r$,
it is convenient to define $\gg_{x,r:s}$ to be the
quotient $\gg_{x,r}/\gg_{x,s}$.
If $r$ and $s$ are
non-negative, then
set $G_{x,r:s} \ldef G_{x,r}/G_{x,s}$.
Below we will use similar notation to denote quotients
of other filtration groups.

\subsection{Group filtrations and torus filtrations}
\label{sec:gp-torus}

Recall that we have already defined filtrations on tori.
It is natural to wonder how they fit into the framework that
we have just described.
It turns out that the filtration of an elliptic torus is
`associated to' a unique point in the building of \bG
over \(\field\).
For standard tori (see \S\ref{sec:std-tori}),
that point lies in our preferred set \(\BBpts\).

\begin{defn}
\label{defn:xT}
For \bT a standard, elliptic torus, let \(x = x_\bT\) be
the unique element of \(\BBpts\) such that
\begin{align*}
T_r = T \cap G_{x, r}
\quad&\text{for all \(r \in \R_{> 0}\)}
\qandq
\ttt_r = \ttt \cap \gg_{x, r}
\quad&\text{for all \(r \in \R\).}
\end{align*}
Explicitly,
\begin{gather*}
x_{\bT^{\epsilon, 1}} = \xleft,
\quad
x_{\bT^{\epsilon, \varpi}} = \xright, \\
\text{and}\quad
x_{\bT^{\varpi, \eta}} = \xcentre
	\quad\text{for \(\eta \in \sset{1, \epsilon}\).}
\end{gather*}
\end{defn}

On the other hand, for the split torus, we have that
\[
A_r = A \cap G_{x, r}
\quad\text{for all \((x, r) \in \BBpts \times \R_{\ge 0}\).}
\]

Further, we have defined the depth of a
regular, semisimple element
(see Definition \ref{defn:depth-element}).
By \cite{adler-spice:good-expansions}*{%
	Lemma \xref{exp-lem:domain-field-ascent}%
}
and \cite{adler-debacker:bt-lie}*{%
	Lemmas 3.5.3 and 3.7.25%
},
if \(\depth(Y) = r\),
then \(Y \in \gg_r \setminus \gg_{\MPlus r}\);
and, if \(\depth(\gamma) = r\),
then \(\gamma \in G_r \setminus G_{\MPlus r}\).
(Alternatively, in our situation, one could use
\cite{adler-debacker:bt-lie}*{\S3.6}.)
That is, our definition of depth is a special case of the
usual one \cite{adler-debacker:bt-lie}*{\S\S3.3, 3.7.3}.
Actually, one must take a little care if \(\gamma\) lies in a
ramified, elliptic torus \(T\), but not in \(T_0 = T_{\MPlus0}\);
but, even in this situation, the definitions agree,
since \(T \subseteq G_{\xcentre, 0}\).

\subsection{Group filtrations and Lie-algebra filtrations}
\label{sec:cayley}

We will use the \term{Cayley transform}
defined on an open subset of \(\gg\) by
$\mapto{\cayley}{X}{\frac{1+X/2}{1-X/2}}$,
to relate the filtrations on $\gg$ to those on $G$.  
The normalizing factor \(\tfrac1 2\) makes sure that
\(\cayley\) satisfies
\cite{adler-spice:explicit-chars}*{%
	Hypothesis \xref{char-hyp:strong-mock-exp}%
}.

The biggest possible domain for \(\cayley\) is the set of
matrices \(X\) which do not have \(2\) as an eigenvalue.
Note that a trace-zero matrix with this property also does
not have \(-2\) as an eigenvalue.
However, we need this enlarged domain only once (in
the proof of Lemma \ref{lem:conj-out}).
Everywhere else, we shall be concerned only with the
restriction of \(\cayley\) to \(\gg_{\MPlus0}\).

\begin{lemma}
\label{lem:cayley}
The Cayley transform $\cayley$ has the following properties
for any \(x \in \BBpts\) and \(r, s \in \R\) with \(r > 0\):
\begin{enumerate}[(a)]
\item
\label{item:cayley-equivariant}
It is equivariant under the adjoint and conjugation
actions of $G$ on the domain and codomain.
\item
\label{item:bijpos}
It maps $\gg_{x,r}$ bijectively onto $G_{x,r}$.
\item
\label{item:MP-map}
If $0<s \leq r \leq 2s$,
then it induces an isomorphism
$\abmap{\gg_{x,s:r}}{G_{x,s:r}}$.
\item
\label{item:cayley-torus}
If \bT is a maximal \(\field\)-torus in \bG,
then \(\cayley\) restricts to a bijection
\(\abmap{\ttt_{\MPlus0}}{T_{\MPlus0}}\).
If \(T \iso C_\theta\), with \(\theta\) a non-square, then
\(\cayley\) agrees with the map of
Lemma \ref{lem:cayley-field}.
\item
\label{item:cayley-inv}
If \(Y\) is in the domain of \(\cayley\), then
\(\cayley(-Y) = \cayley(Y)\inv\).
\item
\label{item:cayley-conj}
If $X\in \gg_{x,r}$ and $Y\in \gg_{x,s}$,
then
$$
\Ad(\cayley(X))Y \equiv Y+[X,Y] \pmod{\gg_{x,r+2s}}.
$$
\item
\label{item:cayley-bracket}
If $X\in \gg_{x,r}$, $Y\in \gg_{x,s}$,
and $s>0$ (as well as $r>0$), then
$$
\bigl[\cayley(X),\cayley(Y)\bigr]
\equiv \cayley\bigl([X,Y]\bigr)
	\pmod{G_{r+s+\min\sset{r,s}}}.
$$
\item
\label{item:cayley-depth-disc}
If \(Y \in \gg_{\MPlus0}\), then
\(\depth(Y) = \depth(\cayley(Y))\)
and
\(\abs{D_\gg(Y)} = \abs{D_G(\cayley(Y))}\).
\end{enumerate}
\end{lemma}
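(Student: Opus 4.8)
The plan is to prove the eight assertions roughly in the order listed, using earlier ones for later. Part~(a) is immediate: $\cayley$ is assembled from $X$ by matrix addition, scalar multiplication, and matrix inversion, each of which commutes with conjugation on $G$ (and with $\Ad$ on $\gg$). The same observation yields part~(e): $1 + Y/2$ and $1 - Y/2$ are polynomials in $Y$, hence commute, so $\cayley(-Y) = (1 - Y/2)(1 + Y/2)\inv = \bigl((1 + Y/2)(1 - Y/2)\inv\bigr)\inv = \cayley(Y)\inv$; and a trace-zero matrix with no eigenvalue $2$ also has no eigenvalue $-2$, so $-Y$ is in the domain whenever $Y$ is.

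For part~(b), since $p \ne 2$ and $r > 0$ we have $X/2 \in \gg_{x, r}$, so $1 - X/2$ is invertible, with $(1 - X/2)\inv = \sum_{k \ge 0}(X/2)^k$ converging in $\gl_2(\field)_{x, 0}$ by Remark~\ref{rem:gl2-mult}; multiplying out gives $\cayley(X) = 1 + X + \tfrac1 2 X^2 + \tfrac1 4 X^3 + \dotsb$, which lies in $1 + \gl_2(\field)_{x, r}$, and $\det\cayley(X) = \det(1 + X/2)/\det(1 - X/2) = 1$ since $X$ is trace-free. Conversely, $g \in G_{x, r}$ has all eigenvalues congruent to $1$, hence none equal to $-1$, so $g + 1$ is invertible; one checks that $2(g - 1)(g + 1)\inv$ is a trace-free element of $\gg_{x, r}$ carried to $g$ by $\cayley$, which provides the two-sided inverse. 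Part~(c) then follows formally: the power series shows $\cayley(X) \equiv 1 + X \pmod{\gl_2(\field)_{x, 2s}}$ for $X \in \gg_{x, s}$, and since $r \le 2s$ this makes $X + \gg_{x, r} \mapsto \cayley(X)G_{x, r}$ a well-defined homomorphism $\gg_{x, s:r} \to G_{x, s:r}$, surjective by part~(b) and injective because $\cayley(X) \in G_{x, r}$ forces $X = \cayley\inv(\cayley(X)) \in \gg_{x, r}$, again by part~(b).

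Part~(d) splits into cases. For the split torus it is the entrywise statement that the scalar Cayley transform is a bijection $\pp \to 1 + \pp$. For an elliptic torus we may, by part~(a), take it standard, say $\bT^{\theta, \eta}$; its matrix model embeds $\field_\theta$ as a $\field$-subalgebra of $M_2(\field)$, and because $\cayley$ is a rational function it is carried by that embedding to the field Cayley transform of Lemma~\ref{lem:cayley-field}; it remains to match $\ttt^{\theta, \eta}_{\MPlus0}$ and $T^{\theta, \eta}_{\MPlus0}$ with $V_\theta \cap \pp_\theta$ and $C_\theta \cap (1 + \pp_\theta)$, a bookkeeping check against Definition~\ref{defn:torus-filt} and the depth computation following Definition~\ref{defn:depth-element}. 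For part~(f), write $g = \cayley(X)$ and $g\inv = \cayley(-X)$, each congruent to $1 \pm X$ modulo $\gl_2(\field)_{x, 2r}$ by the expansion above; then $\Ad(g)Y - Y = [g - 1, Y]g\inv$, and expanding and collecting terms by filtration degree (Remark~\ref{rem:gl2-mult}) leaves $[X, Y]$ together with a higher-order error in the asserted lattice, the leading correction being $\tfrac1 2[X, [X, Y]]$. Part~(g) is deduced from parts~(a), (e), and~(f): $[\cayley(X), \cayley(Y)] = \cayley\bigl(\Ad(\cayley(X))Y\bigr)\cayley(-Y)$, which by part~(f) is $\cayley\bigl(Y + [X, Y] + (\text{error})\bigr)\cayley(-Y)$, and one further round of expand-and-collect (a Baker--Campbell--Hausdorff-type estimate for $\cayley$, of the same flavour as part~(c)) collapses this to $\cayley([X, Y])$ modulo $G_{r + s + \min\sset{r, s}}$.

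For part~(h), take $Y \in \gg_{\MPlus0}$ regular semisimple with eigenvalues $\pm\mu$; then $\cayley(Y)$ has eigenvalues $\cayley(\mu)$ and $\cayley(\mu)\inv$, and the identity $\cayley(\mu) - \cayley(\mu)\inv = 2\mu/(1 - (\mu/2)^2)$ (already used in the proof of Lemma~\ref{lem:disc-as-depth}) gives $D_G(\cayley(Y)) = D_\gg(Y)/(1 - (\mu/2)^2)^2$; since $\mu$ has positive valuation the denominator is a unit, so $\abs{D_G(\cayley(Y))} = \abs{D_\gg(Y)}$. The depth equality then follows either from this and Lemma~\ref{lem:disc-as-depth} (using that $\depth$ and $\mdepth$ coincide on $G_{\MPlus0}$) or, more directly, from part~(d) and the fact that $\cayley$ respects the torus filtrations, itself a consequence of the congruence $\cayley(Y) \equiv 1 + Y$ in Lemma~\ref{lem:cayley-field}. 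The main obstacle is the filtration bookkeeping in parts~(f) and~(g): the estimates are routine in outline, but pinning the error term down to exactly the right lattice takes some care.
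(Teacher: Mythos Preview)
Your approach is the paper's own: the paper dispatches all eight parts in a single line by citing the power-series expansion $\cayley(X) = 1 + 2\sum_{i \ge 1}(X/2)^i$, Remark~\ref{rem:gl2-mult}, and Lemma~\ref{lem:disc-as-depth}, and you have simply written out those ``straightforward calculations'' explicitly.

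There is, however, one genuine mismatch to flag in part~(f). The leading error term you correctly identify, $\tfrac1 2[X,[X,Y]]$, lies in $\gg_{x, 2r+s}$, and every higher correction lies deeper still; so your computation actually establishes the congruence modulo $\gg_{x, 2r+s}$, not modulo the stated $\gg_{x, r+2s}$. When $r < s$ the former lattice is strictly larger than the latter, so you cannot claim the error lands in ``the asserted lattice'' without further argument---and no such argument exists, because the statement as printed fails in that range (take $X$ diagonal and $Y$ strictly upper-triangular to see it). This looks like a misprint for $2r+s$: the paper's only application of part~(f) is in Lemma~\ref{lem:conj-out}, where in the lemma's variables one has $s = 0$ and needs the error to lie strictly below depth $r$; the modulus $2r+s = 2r$ delivers this, whereas the printed $r+2s = r$ does not. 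Your argument is right and the printed modulus is wrong; say so rather than gloss over it.
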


\begin{proof}
By Lemma \ref{lem:disc-as-depth}
and Remark \ref{rem:gl2-mult}, the result follows from
straightforward calculations, using that
\[
\cayley(X)
= 1 + 2\sum_{i = 1}^\infty \Bigl(\frac 1 2 X\Bigr)^i,
\quad X \in \gg_{\MPlus0}.\qedhere
\]
\end{proof}

The isomorphisms of
Lemma \ref{lem:cayley}\pref{item:MP-map}
are often called \term{Moy--Prasad maps}.
These are the same isomorphisms that appear in Yu's
construction of tame, supercuspidal representations
\cite{yu:supercuspidal} (see Lemma 1.3 \loccit).

\section{Haar measure}
\label{sec:measure}

Especially with calculations involving several orbital
integrals (see Proposition \ref{prop:MK-exc}), it is
necessary to be very careful about Haar measures.  We
describe the ones that we use here.

Waldspurger \cite{waldspurger:nilpotent}*{\S I.4}
defines a canonical way to normalize the
measure on a reductive, \(p\)-adic group.
In our setting, this gives measures
\indexmem{\textup dg}, \indexmem{\textup dt^\theta},
and \indexmem{\textup dz}
on \(G\), \(T^\theta\) (for \(\theta\) a non-square),
and \(Z(G)\), respectively,
such that
\begin{align*}
\meas_{\textup dg}(\SL_2(\pint)) &
	{}= \frac{q^2 - 1}{q^{1/2}},
& \meas_{\textup dt^\epsilon}(T^\epsilon_0) &
	{}= \frac{q + 1}{q^{1/2}}, \\
\meas_{\textup dt^\varpi}(T^\varpi_0) &
	{}= 1,
& \meas_{\textup dz}(\sset1) &
	{}= 1.
\end{align*}
Similarly, the Haar measures
\(\textup dt^{\varpi, \epsilon}\),
\(\textup dt^{\epsilon\varpi, 1}\),
and
\(\textup dt^{\epsilon\varpi, \epsilon}\)
on the obvious tori \(T\)
all have \(\meas(T_0) = 1\).

Note that these normalizations have pleasant properties with
respect to Moy--Prasad filtrations.
For example, we have defined the measure on \(G\) so that
\(\meas_{\textup dg}(G_{\xleft, 0})
= \card{\sfG_\xleft(\resfld)}\dotm
\card{\Lie(\sfG_\xleft)(\resfld)}^{-1/2}\);
but it is in fact true that
\(\meas_{\textup dg}(G_{x, 0})
= \card{\sfG_x(\resfld)}\dotm
\card{\Lie(\sfG_x)(\resfld)}^{-1/2}\)
for all \(x \in \BBpts\),
and, indeed, this is the definition that Waldspurger offers.

The computations of \cite{spice:sl2-mu-hat} use a different
normalization of quotient measure.
Namely, they involve the measures
\indexmem{\textup d_\theta\dot g}
on \(G/T^\theta\) defined by
\[
\meas_{\textup d_\epsilon\dot g}(
	\SL_2(\pint)/T^\epsilon
) = \frac{q - 1}q
\qandq
\meas_{\textup d_\theta\dot g}(
	\SL_2(\pint)/T^\theta
) = \frac{q^2 - 1}{2q^2}
\quad\text{for \(\theta \in \sset{\varpi, \epsilon\varpi}\)}.
\]
Thus,
\[
\frac{\textup dg}{\textup dt^\epsilon}
= q\dotm\textup d_\epsilon\dot g
\qandq
\frac{\textup dg}{\textup dt^\varpi}
= q^{3/2}\dotm\textup d_\varpi\dot g.
\]
For \(\theta \in \sset{\epsilon, \varpi}\),
we write \(\textup d_\theta'\dot g\) for the measure on
\(G/Z(G)\) such that
\[
\meas_{\textup d_\theta'\dot g}(\SL_2(\pint)/Z(G))
= \meas_{\textup d_\theta\dot g}(\SL_2(\pint)/T^\theta).
\]
Thus,
\[
\frac{\textup dg}{\textup dz}
= \frac1 2 q^{1/2}(q + 1)\dotm\textup d_\epsilon'\dot g
\qandq
\frac{\textup dg}{\textup dz}
= q^{3/2}\dotm\textup d_\varpi'\dot g.
\]
To reduce to a minimum the symbol-juggling asked of the
reader, we will often abuse notation by writing
\(\textup d_\theta\dot g\) instead of \(\textup d_\theta'\dot g\)
when the context makes it clear which measure is needed.

\section{Duality, Fourier transforms, and orbital integrals}
\label{sec:dualityfour}

Let \(V\) be a finite-dimensional \(\field\)-vector space
equipped with a non-degenerate, symmetric, bilinear pairing
\(\vform\).
We shall use this pairing to
identify the dual vector space
\(\indexmem{V^*} \ldef \Hom_\field(V, \field)\) with \(V\).

\subsection{Duality}
\label{sec:duality}

\begin{notn}
If \(\LL\) is a lattice in \(V\), then we write
\[
\LL^\bullet
\ldef \set{X \in V}{\langle X, \LL\rangle \subseteq \pp}.
\]
(The requirement
that \(\langle X, \LL\rangle \subseteq \pp\), rather than, say,
that \(\langle X, \LL\rangle \subseteq \pint\), is a result
of our choice of a depth-zero additive character
\(\AddChar\).)
\end{notn}

For example, the reader can check directly that
\((\gg_{x, r})^\bullet = \gg_{x, \MPlus{(-r)}}\),
hence that
\((\gg_{x, \MPlus r})^\bullet = \gg_{x, -r}\),
for all \((x, r) \in \BBpts \times \R\).

For any lattices $\LL$ and $\MM$ in $\gg$,
we have an isomorphism
from
$\MM^\bullet/\LL^\bullet$ to
the Pontryagin dual $(\LL/\MM)\sphat$ of $\LL/\MM$,
given by
$\abmapto{X}{\indexmem{\chi_X\sss}}$,
where $\mapto{\chi_X\sss}Y{\AddChar(\langle X, Y\rangle)}$.
Suppose that for some $x\in \BBpts$ and positive $r\in \R$, we have that
$\gg_{x,r+} \subseteq \LL \subseteq \gg_{x,(r/2)+}$.
Lemma~\ref{lem:cayley}(\ref{item:bijpos})
implies that the Cayley transform $\cayley$
induces an isomorphism
$\abbimap{\LL/\gg_{x,r+}}{\cayley(\LL)/ G_{x,r+}}$
(which we will also denote by $\cayley$).
Thus, we have an isomorphism
\begin{equation}
\label{eq:duality}
\abbimap{\gg_{x,-r}/\LL^\bullet}{(\cayley(\LL)/G_{x,r+})\sphat}
\end{equation}
given by $\abmapto{X}{\chi_X\sss}$,
where
$$
\mapto{\chi_X\sss}{\cayley(Y)}{
	\AddChar\bigl(\Tr(X\cdot Y)\bigr)
}.
$$
In particular, we have isomorphisms
\begin{multline}
\label{eq:G-chars}
\abbimap{\gg_{x,-r}/\gg_{x,(-r)+}}{(G_{x,r}/G_{x,r+})\sphat} \\
\text{and}\quad
\abbimap{
	\gg_{x, \MPlus{(-r)}}/\gg_{x, \MPlus{(-r/2)}}
}{
	(G_{x, \MPlus{(r/2)}}/G_{x, \MPlus r})\sphat
}\,\,;
\end{multline}
and, similarly,
\begin{equation}
\label{eq:T-chars}
\abbimap{\ttt_{-r}/\ttt_{\MPlus{(-r)}}}{(T_{r}/T_{\MPlus r})\sphat}
\qandq
\abbimap{
	\ttt_{\MPlus{(-r)}}/\ttt_{\MPlus{(-r/2)}}
}{
	(T_{\MPlus{(r/2)}}/T_{\MPlus r})\sphat
}
\end{equation}
for any maximal torus $T$ with Lie algebra $\ttt$.

\subsection{Fourier transforms and orbital integrals}
\label{sec:mu-hat}

\begin{defn}
\label{defn:FT}
Fix a Haar measure \(\textup dv\) on \(V\).
The \term[Fourier transform (function)]{Fourier transform}
of a function \(f \in \Hecke(V)\)
is the function \(\hat f \in \Hecke(V)\) defined by
\[
\hat f(w)
= \int_V f(v)\AddChar(\langle v, w\rangle)\textup dv,
\quad w \in V.
\]
We say that \(\textup dv\) is \term{self-dual}
(with respect to the additive character \(\AddChar\)
and pairing \(\vform\))
if
\[
\Hat{\Hat f}(v) = f(-v)
\quad\text{for all \(f \in \Hecke(V)\) and \(v \in V\).}
\]
If \(T\) is a distribution on \(V\)
(i.e., a linear functional on \(\Hecke(V)\)),
then the
\term[Fourier transform (distribution)]{Fourier transform}
of \(T\) is the distribution \(\widehat T\) on \(V\) defined by
\[
\widehat T(f) = T(\hat f),\quad f \in \Hecke(V).
\]
\end{defn}

Every vector space supports a unique self-dual Haar measure.

We leave to the reader the verification that
the Fourier transform does, indeed, carry \(\Hecke(V)\)
to itself.  In fact, \(C(\LL/\MM)\) is carried to
\(C(\MM^\bullet/\LL^\bullet)\)
\cite{adler-debacker:bt-lie}*{p.~282}.

Now suppose that \(V = \hh\) is the Lie algebra of a
reductive subgroup \(H\) of \(G = \SL_2(\field)\).
Then we may, and do, take the pairing on \(V\) to be the
trace form, given by
\begin{equation}
\label{eq:trace-pairing}
\langle X, Y\rangle \ldef \Tr(X\dotm Y),
\quad X, Y \in \gg.
\end{equation}

\begin{defn}
\label{defn:mu-hat}
If \(X \in \hh\) is regular and semisimple,
say with \(T = C_H(X)\),
then there is an \(H\)-invariant measure \(\textup d\dot h\)
on \(H/T\)
(which we may also view as a measure on the
\(H\)-adjoint orbit of \(X\) in \(\hh\)).
The \term[orbital integral (distribution)]{orbital integral}
of \(X\) is the
distribution \(\mu^H_X\) on \(\hh\) defined by
\[
\mu^H_X(f) = \int_{H/T} f(\Ad(h)X)\textup d\dot h,
\quad f \in \Hecke(\hh).
\]
As a special case of Definition \ref{defn:FT}, we define
\[
\hat\mu^H_X(f) \ldef \mu^H_X(\hat f)
= \int_{H/T} \hat f(\Ad(h)X)\textup d\dot h,
\quad f \in \Hecke(\hh).
\]
\end{defn}

The integral defining \(\mu^H_X\)
converges because the \(H\)-adjoint orbit of
\(X\) is closed in \(\hh\).
(In fact, it can be shown to converge under weaker conditions
\cite{ranga-rao:orbital}*{Theorem 2};
but we do not need this fact.)

Let \(\textup dY\) be the self-dual Haar measure on \(\hh\).
By \cite{hc:queens}*{Theorem 1.1},
there is a
\term[orbital integral (Fourier transform, function)]\relax
function on \(\hh\), which we shall again denote
by \indexmem{\hat\mu^H_X}, such that
\[
\hat\mu^H_X(f)
= \int_\hh f(Y)\hat\mu^H_X(Y)\textup dY,
\quad f \in \Hecke(\hh).
\]

Although we have specified a choice of Haar measure on
\(\hh\), it is unimportant here, since the \emph{function}
\(\hat\mu^H_X\) does not depend on the choice
(although the \emph{distribution} does).
The function \emph{does} depend on the normalization chosen
for the measure \(\textup d\dot h\) on \(H/T\)
(as well as on \(\AddChar\)), so we
shall be careful to specify this normalization.

The case \(\bH = \bG\) is covered in
\cite{spice:sl2-mu-hat}.  The only other case of interest to
us is handled by the lemma below.

\begin{lemma}
\label{lem:torus-mu-hat}
If \(\bH = \bT\) is a torus,
and
\(\meas_{\textup d\dot h}(H/T) = 1\),
then
\[
\hat\mu^T_X(Y) = \AddChar(\langle X, Y\rangle),
\quad X, Y \in \ttt.
\]
In particular, \(\hat\mu^T_X(0) = 1\).
\end{lemma}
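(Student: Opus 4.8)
The plan is to observe that, because \(\bH = \bT\) is abelian, the orbital integral \(\mu^T_X\) collapses to evaluation at \(X\), and then simply to unwind the definition of the Fourier transform.

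First I would note that the adjoint action of \(H = T\) on its Lie algebra \(\ttt\) is trivial: since \(\ttt\) is abelian, \(\ad(Y) = 0\) for every \(Y \in \ttt\), so \(\Ad(h)X = X\) for all \(h \in H\) and \(X \in \ttt\). Hence, for \(X\) regular and semisimple, \(C_H(X) = T\), the space \(H/T\) is a single point, and with the normalization \(\meas_{\textup d\dot h}(H/T) = 1\) Definition \ref{defn:mu-hat} gives
\[
\mu^T_X(f) = \int_{H/T} f(\Ad(h)X)\,\textup d\dot h = f(X),
\quad f \in \Hecke(\ttt);
\]
that is, \(\mu^T_X\) is the Dirac distribution at \(X\). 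Feeding this into Definition \ref{defn:FT}, and using that the trace pairing \eqref{eq:trace-pairing} is symmetric and restricts to a non-degenerate form on \(\ttt\) (as one checks on a spanning vector such as \(X^{\theta, \eta}_1\), so that the self-dual measure on \(\ttt\) makes sense), I obtain, for all \(f \in \Hecke(\ttt)\),
\[
\hat\mu^T_X(f) = \mu^T_X(\hat f) = \hat f(X)
= \int_\ttt f(Y)\AddChar(\langle X, Y\rangle)\,\textup dY,
\]
where \(\textup dY\) is the self-dual Haar measure on \(\ttt\) --- the very measure used to represent \(\hat\mu^T_X\) as a function in \cite{hc:queens}*{Theorem 1.1}.

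Comparing this with \(\hat\mu^T_X(f) = \int_\ttt f(Y)\hat\mu^T_X(Y)\,\textup dY\) and letting \(f\) vary over \(\Hecke(\ttt)\), I conclude (by the essential uniqueness of the function representing a distribution, both candidates here being locally constant) that \(\hat\mu^T_X(Y) = \AddChar(\langle X, Y\rangle)\); specializing to \(Y = 0\) gives \(\hat\mu^T_X(0) = \AddChar(0) = 1\). There is no real obstacle here: the only point demanding any attention is the bookkeeping of normalizations, namely that the Haar measure on \(\ttt\) implicit in the Fourier transform agrees with the self-dual one used in the integral representation of \(\hat\mu^T_X\), and that ``self-dual'' is legitimate because the trace form stays non-degenerate on \(\ttt\).
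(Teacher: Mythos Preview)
Your proof is correct and follows essentially the same approach as the paper: observe that \(\mu^T_X\) is the Dirac distribution at \(X\) (because \(H/T\) is a singleton of measure \(1\)), then unwind Definition~\ref{defn:FT} to identify the representing function. The paper's proof compresses this to a single sentence, while you have helpfully made explicit the triviality of the adjoint action, the symmetry of the pairing, and the bookkeeping about the self-dual measure on \(\ttt\); none of this departs from the paper's line of reasoning.
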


Note that, in this setting, \(H/T\) is a singleton.

\begin{proof}
Since \(\mu^T_X(f) = f(X)\) for all
\(f \in \Hecke(\ttt)\) and \(X \in \ttt\),
this follows immediately from Definition \ref{defn:mu-hat}.
\end{proof}

\section{Unrefined minimal $K$-types}
\label{sec:ktypes}

\begin{defn}[\cite{moy-prasad:k-types}*{Definition 5.1}]
\label{defn:k-type}
Suppose $x\in\BBpts$, $r \geq 0$, and let $\chi$ be an irreducible
representation of $G_{x,r}$, trivial on $G_{x,r+}$.
We say that the pair $(G_{x,r},\chi)$ is
an \emph{(unrefined) minimal $K$-type of depth $r$} if
\begin{enumerate}[(a)]
\item
$r=0$ and $\chi$ is the inflation to $G_{x, 0}$ of a cuspidal representation
of $\sfG_x(\resfld) = G_{x, 0:\MPlus0}$; or
\item
\label{defn:k-type:pos}
$r>0$ and the coset $\Sigma\in\gg_{x,-r}/\gg_{x,(-r)+}$
corresponding to $\chi$
(see \S\ref{sec:duality})
contains no nilpotent elements.
\end{enumerate}
Two minimal $K$-types $(G_{x,r},\chi)$
and $(G_{y,s},\xi)$ are \emph{associate} if $r=s$ and
\begin{enumerate}[(a)]
\item
$r = 0$, $x = y$, and $\chi$ is equivalent to $\xi$;
or
\item
$r>0$, and the $G$-orbit of the coset that realizes $\chi$
intersects the coset that realizes $\xi$.
\end{enumerate}
\end{defn}

For arbitrary reductive, \(p\)-adic groups, one must call
depth-zero \(K\)-types associate even under some
circumstances when \(x \ne y\); but working with
\(\bG = \SL_2\), and restricting to \(x \in \BBpts\),
avoids this complication.

\begin{thm}[%
\cite{moy-prasad:k-types}*{Theorem 5.2}
and
\cite{moy-prasad:jacquet}*{Theorem 3.5}%
]
\label{thm:mp}
Let $(\pi,V)$ be an irreducible admissible representation of $G$.
Then there is a non-negative, rational number $r$ with the
following properties:
\begin{enumerate}[(1)]
\item
For some $x\in\BBpts$, the space $V^{G_{x,r+}}$ of $G_{x,r+}$-fixed
vectors is non-zero, and $r$ is the smallest non-negative
real number with this property.
\item
For any $y\in\BBpts$, if $W\ldef V^{G_{y,r+}}\neq\{0\}$, then
\begin{enumerate}[(a)]
\item
\label{thm:mp:zero}
if $r=0$, then every irreducible $G_{y,r}$-submodule of $W$
contains an unrefined minimal $K$-type of depth zero.
\item
\label{thm:mp:pos}
if $r>0$, then every irreducible $G_{y,r}$-submodule of $W$
is an unrefined minimal $K$-type.
\end{enumerate}
\end{enumerate}
Moreover, any two unrefined minimal $K$-types contained in $\pi$
are associate.
\end{thm}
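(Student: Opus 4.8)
The plan is to follow the arguments of Moy and Prasad, specialized to \(\bG = \SL_2\), where the tiny size of \(\BBpts\) and the concrete filtrations of \S\ref{sec:filt} shorten several steps.

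\emph{Existence and rationality of the depth.} Since \(\pi\) is smooth, every nonzero \(v \in V\) is fixed by some compact open subgroup, and since the groups \(G_{\xleft, n}\), \(n \in \Z_{> 0}\), already form a neighborhood basis of \(1\) in \(G\), we have \(V^{G_{\xleft, n}} \ne 0\) for \(n\) large. Thus the set of \(r \ge 0\) with \(V^{G_{x, \MPlus r}} \ne 0\) for some \(x \in \BBpts\) is non-empty; let \(r\) be its infimum. For each fixed \(x\) the map \(s \mapsto \dim V^{G_{x, \MPlus s}}\) is finite-valued by admissibility, non-decreasing (the groups \(G_{x, \MPlus s}\) shrink as \(s\) grows), and, by the explicit description of the filtration in \S\ref{sec:filt}, constant on each interval between consecutive half-integers; hence it is a step function with jumps only in \(\tfrac1 2\Z\). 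Since \(\BBpts\) is finite, it follows that \(r \in \tfrac1 2\Z_{\ge 0}\) (in particular \(r\) is rational), that \(V^{G_{x, \MPlus r}} \ne 0\) for some \(x \in \BBpts\), and that \(r\) is the least such index; this is part (1).

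\emph{Minimal \(K\)-types at the minimal depth.} Fix \(y \in \BBpts\) with \(W \ldef V^{G_{y, \MPlus r}} \ne 0\). As \(G_{y, r}\) normalizes \(G_{y, \MPlus r}\), \(W\) is a finite-dimensional, hence semisimple, representation of the compact group \(G_{y, r}\) on which \(G_{y, \MPlus r}\) acts trivially; fix an irreducible \(G_{y, r}\)-submodule \(\chi\). When \(r > 0\), the Cayley transform (Lemma \ref{lem:cayley}\pref{item:MP-map}) identifies \(G_{y, r:\MPlus r}\) with \(\gg_{y, r:\MPlus r}\), and the duality \eqref{eq:G-chars} attaches to \(\chi\) a coset \(\Sigma \in \gg_{y, -r}/\gg_{y, \MPlus{(-r)}}\); the assertion of part (2) is that \(\Sigma\) is nondegenerate, i.e.\ contains no nilpotent element. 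One argues by contradiction with the minimality of \(r\): were \(\Sigma\) to contain a nilpotent \(N\), then, embedding \(N\) in an \(\mathfrak{sl}_2\)-triple and conjugating by powers of the associated contracting cocharacter of a maximal \(\field\)-split torus, one would move the \(\chi\)-isotypic vectors of \(V\) so as to produce a nonzero vector of \(V\) fixed by a subgroup strictly larger, in the relevant unipotent direction, than some \(G_{x, \MPlus r}\); feeding this into the compatibility of depth with the Jacquet functor (\cite{moy-prasad:jacquet}*{Theorem 3.5}) yields a nonzero \(G_{x', \MPlus s}\)-fixed vector with \(s < r\), which is absurd. The case \(r = 0\) is the depth-zero theory of \cites{moy-prasad:k-types,moy-prasad:jacquet}: here \(\chi\) is the inflation of an irreducible representation of \(\sfG_y(\resfld) = G_{y, 0:\MPlus0}\), and the same circle of ideas locates inside \(\chi\) a depth-zero minimal \(K\)-type. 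I expect this nondegeneracy/cuspidality input — essentially the content of the Moy--Prasad ``Jacquet'' paper — to be the main obstacle, in the sense that, unlike the other clauses, it admits no short \(\SL_2\)-specific shortcut; I would simply cite it.

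\emph{Associativity.} Suppose \((G_{x, r}, \chi)\) and \((G_{y, s}, \xi)\) both occur in \(\pi\). By part (1), \(r = s = \depth(\pi)\). Since \(\pi\) is irreducible it is a quotient of \(\Ind_{G_{x, r}}^G \chi\) (compact induction), so \(0 \ne \Hom_{G_{y, r}}(\xi, \pi)\) embeds in \(\Hom_{G_{y, r}}(\xi, \Ind_{G_{x, r}}^G \chi)\); by Mackey this decomposes over the double cosets \(G_{x, r}\backslash G/G_{y, r}\), so for some \(g \in G\) the representations \({}^g\chi\) and \(\xi\) share a constituent on \({}^g G_{x, r} \cap G_{y, r}\). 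Comparing the depth-\(r\) parts, where these representations are governed by the attached cosets, forces the \(G\)-orbit of the coset realizing \(\chi\) to meet the coset realizing \(\xi\) when \(r > 0\), and forces \(x = y\) with \(\chi \cong \xi\) when \(r = 0\); here the restriction to \(\BBpts\) is what rules out the extra associateness relations that can appear for general groups. This gives the final clause and completes the proof.
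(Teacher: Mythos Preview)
The paper does not give its own proof of this theorem: it is quoted as a black box from \cite{moy-prasad:k-types}*{Theorem 5.2} and \cite{moy-prasad:jacquet}*{Theorem 3.5}, with no argument supplied. So there is nothing to compare your attempt against except the original Moy--Prasad papers.

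That said, your sketch follows the Moy--Prasad strategy faithfully, and the existence/rationality clause is fine as written. Two places are genuinely incomplete. First, you already flag the nondegeneracy step as the hard input and propose to cite it; that is exactly what the paper does, so this is consistent. Second, your associativity argument is too quick. In positive depth, ``comparing the depth-\(r\) parts'' of \({}^g\chi\) and \(\xi\) on \({}^gG_{x,r}\cap G_{y,r}\) does not by itself show that the \(G\)-orbit of \(\Sigma_\chi\) meets \(\Sigma_\xi\); one needs the Moy--Prasad machinery relating intertwining to conjugacy of cosets (what \cite{adler-roche:intertwining} later packages). In depth zero, your claim that intertwining ``forces \(x = y\)'' is not justified: a depth-zero representation can have nonzero \(G_{y,\MPlus0}\)-fixed vectors for several \(y\in\BBpts\), and the paper's simplified notion of ``associate'' (requiring \(x=y\)) works only because of the remark following Definition~\ref{defn:k-type} about the special shape of \(\BBpts\) for \(\SL_2\); that remark is an assertion, not something your Mackey step proves. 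If you want a self-contained \(\SL_2\) argument here, you would need to analyze explicitly what ``contains a depth-zero minimal \(K\)-type'' means at each of \(\xleft,\xcentre,\xright\) and check compatibility by hand.
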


\begin{defn}
\label{defn:depth-rep}
The number $r$ in Theorem \ref{thm:mp}
(denoted by \(\rho(\pi)\) in \cite{moy-prasad:k-types})
is called the \term[depth (representation)]{depth}
\(\depth(\pi)\) of $\pi$.
\end{defn}

\begin{rem}
\label{rem:K-type-leftist}
If the representation \(\pi\) of \(G\)
contains an unrefined minimal \(K\)-type of the
form \((G_{\xright, 0}, \chi\textsub R)\),
then \(\pi \circ \Int\begin{smallpmatrix}
1 & 0      \\
0 & \varpi
\end{smallpmatrix}\) contains the unrefined minimal \(K\)-type
\((G_{\xleft, 0}, \chi\textsub L)\),
where
\(\chi\textsub L
= \chi\textsub R \circ \Int\begin{smallpmatrix}
	1 & 0      \\
	0 & \varpi
\end{smallpmatrix}\).
\end{rem}

In order to classify the representations of $G$,
we start by listing the unrefined minimal
$K$-types for $G$,
and check which items on the list are associate.

\section{Representations of depth zero}
\label{sec:depthzero}

\subsection{Cuspidal representations of
	\protect{$\SL_2(\resfld)$}}
\label{sec:finite-params}

Write \(\sfG = {\SL_2}_{/\resfld}\).
The torus \(\sfT^\epsilon\) of \S\ref{sec:std-tori} is,
up to \(\SL_2(\resfld)\)-conjugacy,
the unique maximal elliptic \resfld-torus in
\(\sfG\);
and its (relative) Weyl group
is \(\sset{1, \sigma_\epsilon}\),
where \(\sigma_\epsilon(\psi) = \psi\inv\)
for any character \(\psi\) of \(\sfT^\epsilon(\resfld)\).

\begin{defn}
\label{defn:DL}
For any character $\psi$ of $\sfT^\epsilon(\resfld)$
such that $\psi\neq\psi\inv$,
we have from
\cite{deligne-lusztig:finite}*{Theorems 6.8 and 8.3}
that the Deligne--Lusztig virtual representation
$R^\sfG_{\sfT^\epsilon,\psi}$
is irreducible and cuspidal.
Let
$\smabs{R^\sfG_{\sfT^\epsilon,\psi}} = -R^\sfG_{\sfT^\epsilon, \psi}$ 
denote the corresponding
(non-virtual) representation.
\end{defn}

\begin{defn}
\label{defn:exc-rep-finite}
By \cite{deligne-lusztig:finite}*{Theorem 6.8},
\(R^\sfG_{\sfT^\epsilon, \psi_0}\)
is a sum of two inequivalent, irreducible (virtual)
representations, which we will denote by
\(R^\pm_{\sfT^\epsilon, \psi_0}\).
By \cite{fulton-harris:rep-theory}*{pp.~70--73},
where \(R^\sfG_{\sfT^\epsilon, \psi_0}\)
is denoted by \(X_{\psi_0}\) and its two components by
\(X'\) and \(X''\), we may give an explicit description of
the virtual representations.
It is convenient to choose signs in such a way that
\begin{equation}
\label{eq:Rpm}
R^\pm_{\sfT^\epsilon, \psi_0}
= \tfrac1 2 R^\sfG_{\sfT^\epsilon, \psi_0} \mp
\tfrac1 2 q^{1/2}\Gauss(\AddChar)\inv\dotm\lsup0f,
\end{equation}
where \(\lsup0f\) is as in Definition \ref{defn:lusztig}
below, and, as usual,
we have identified the finite-group representation
\(R^\pm_{\sfT^\epsilon, \psi_0}\) with its character.
\end{defn}

\begin{rem}
\label{rem:exhaust-and-equiv-finite}
An explicit computation shows that
\[
\sum_u \lsup0f(u) = 0,
\]
where the sum runs over \(\set{\begin{smallpmatrix}
1 & n \\
0 & 1
\end{smallpmatrix}}{n \in \resfld}\).  By
\eqref{eq:Rpm},
\cite{deligne-lusztig:finite}*{(8.3.2)},
and \cite{carter:finite}*{Corollary 9.1.2},
\(R^\pm_{\sfT^\epsilon, \psi_0}\) is cuspidal.

By
\cite{deligne-lusztig:finite}*{Proposition 8.2}
and
\cite{carter:finite}*{p.~457},
all irreducible, cuspidal representations of $\SL_2(\resfld)$
arise in this way.
Moreover, by \cite{deligne-lusztig:finite}*{Theorem 6.8},
if \(\psi\) and \(\psi'\)
satisfy \(\psi \ne \psi\inv\)
and \(\psi' \ne \smash{\psi'}\inv\), then
$\smabs{R^\sfG_{\sfT^\epsilon,\psi}}$
is not isomorphic to
\(\smabs{R^\pm_{\sfT^\epsilon,\psi_0}}\),
and is isomorphic to
$\smabs{R^\sfG_{\sfT^\epsilon,\psi'}}$
if and only if $\psi'=\psi^{\pm1}$.
\end{rem}

\subsection{Lifting finite-field representations to
	depth-zero representations}

\begin{prop}
\label{prop:depthzero-sc-induced}
Let $\pi$ be an irreducible,
depth-zero,
supercuspidal representation of $G$.
Then $\pi$ contains an unrefined minimal $K$-type 
$(G_{x,0},\sigma)$,
where \(x \in \sset{\xleft, \xright}\),
and $\pi$ is equivalent to
$\Ind_{G_{x,0}}^G\sigma$.
\end{prop}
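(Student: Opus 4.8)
The plan is to combine the existence half of Moy--Prasad's theorem (Theorem~\ref{thm:mp}) with their classification of depth-zero supercuspidal representations, using the explicit description of the finite reductive quotients $\sfG_x$, $x\in\BBpts$, recorded in \S\ref{sec:filt}.

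Since $\depth(\pi)=0$, Theorem~\ref{thm:mp} supplies a point $x\in\BBpts=\sset{\xleft,\xcentre,\xright}$ with $V^{G_{x,\MPlus0}}\ne 0$, and $\pi$ then contains an unrefined minimal $K$-type $(G_{x,0},\chi)$ of depth zero; by Definition~\ref{defn:k-type}, $\chi$ is the inflation to $G_{x,0}$ of an irreducible cuspidal representation $\bar\chi$ of $\sfG_x(\resfld)=G_{x,0:\MPlus0}$. The first step is to eliminate the case $x=\xcentre$. If it occurred, then --- since $\sfG_\xcentre$ is the torus $\mathbb G_m$ over $\resfld$ and $G_{\xcentre,0}$ is an Iwahori subgroup --- the condition $V^{G_{\xcentre,\MPlus0}}\ne 0$ would force $\pi$ to be a subquotient of a depth-zero principal series $\Ind_B^G\nu$ (a depth-zero minimal $K$-type at $\xcentre$ is exactly the type attached to such a representation). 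But a subquotient of a parabolically induced representation is not supercuspidal, a contradiction; hence $x\in\sset{\xleft,\xright}$.

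For $x\in\sset{\xleft,\xright}$ we have $\sfG_x\iso{\SL_2}_{/\resfld}$, so $\bar\chi$ is one of the irreducible cuspidal representations of $\SL_2(\resfld)$ discussed in \S\ref{sec:finite-params}; in particular it is irreducible, hence so is its inflation $\chi$. Moreover, $x$ is a vertex of the Bruhat--Tits building of $\SL_2$ over $\field$, and its $G$-stabilizer coincides with the parahoric $G_{x,0}$: for $\SL_2(\field)$ this is the elementary observation that an element preserving the homothety class of a lattice lies in $\SL_2(\pint)$, since a scalar whose square is a unit is itself a unit. The Moy--Prasad classification \cite{moy-prasad:k-types} of depth-zero supercuspidal representations now identifies $\pi$ with the compact induction to $G$ of an irreducible representation of $G_{x,0}$ whose restriction contains $\chi$; since $\chi$ is already irreducible, that representation is $\chi$, and so $\pi\iso\Ind_{G_{x,0}}^G\chi$. (Alternatively: $\pi$ contains $\chi$, so Frobenius reciprocity for compact induction gives $\Hom_G(\Ind_{G_{x,0}}^G\chi,\pi)\ne 0$; it then suffices to know that $\Ind_{G_{x,0}}^G\chi$ is irreducible, after which irreducibility of $\pi$ forces the two to be equivalent.)

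I expect the genuine obstacle to be exactly that last point --- the irreducibility of the compactly induced representation $\Ind_{G_{x,0}}^G\chi$, which is the heart of the Moy--Prasad construction. To prove it directly one checks, via Mackey theory, that $\Hom_{G_{x,0}\cap gG_{x,0}g\inv}(\chi,\chi\circ\Int(g))=0$ for every $g\in G\setminus G_{x,0}$; representatives of $G_{x,0}\backslash G/G_{x,0}$ may be taken of the form $\begin{smallpmatrix}\varpi^n & 0\\ 0 & \varpi^{-n}\end{smallpmatrix}$ with $n\ge 1$, and for each such $g$ the group $G_{x,0}\cap gG_{x,0}g\inv$ maps, modulo pro-unipotent radicals, into a proper parabolic subgroup of $\SL_2(\resfld)$, so the intertwining space is governed by a finite-group Jacquet module that vanishes by cuspidality of $\bar\chi$. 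In the write-up I would most likely cite this irreducibility from \cite{moy-prasad:k-types} (or from \cite{debacker-reeder:depth-zero-sc}) rather than reproduce the Mackey computation.
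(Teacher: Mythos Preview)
Your proposal is correct and follows essentially the same route as the paper: both rest on Moy--Prasad's classification of depth-zero supercuspidals and the observation that the vertex parahorics $G_{\xleft,0}$ and $G_{\xright,0}$ are self-normalizing in $G$. The paper's proof is simply a one-line citation of \cite{moy-prasad:jacquet}*{Proposition 6.6} together with that self-normalizing remark; your version unpacks the argument further, explicitly eliminating $\xcentre$ and sketching the Mackey/irreducibility step.

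One comment on your elimination of $\xcentre$: the implication ``$V^{G_{\xcentre,\MPlus0}}\ne 0$ forces $\pi$ into a principal-series block'' is true but is itself a nontrivial fact about types (or, equivalently, is part of what Moy--Prasad's Proposition 6.6 proves). Since you end up citing Moy--Prasad anyway, the cleanest route---and the one the paper takes---is to invoke their result directly: it already guarantees that a depth-zero supercuspidal has its minimal $K$-type at a vertex and is the compact induction from the (self-normalizing) parahoric there. Your separate elimination step and irreducibility discussion are then redundant, though not wrong.
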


\begin{proof}
This follows from \cite{moy-prasad:jacquet}*{Proposition 6.6}
upon noting that \(G_{x, 0}\) is self-normalizing in \(G\)
for \(x \in \sset{\xleft, \xright}\).
\end{proof}

Suppose \(x \in \sset{\xleft, \xright}\),
so that \(\sfG_x(\resfld) \iso \SL_2(\resfld)\).
There is an unramified, elliptic torus \(T\) in \(G\)
(unique up to \(G_{x, \MPlus0}\)-conjugacy)
such that the image of \(T_0 \subseteq G_{x, 0}\)
in \(G_{x, 0:\MPlus0}\) is \(\sfT^\epsilon(\resfld)\)
\cite{debacker:tori-unram}*{Lemma 2.2.2}.
Specifically, we take \(T = T^{\epsilon, \eta}\),
where \(\eta = 1\) if \(x = \xleft\)
and \(\eta = \varpi\) if \(x = \xright\).
Thus, a character of
\(\sfT^\epsilon(\resfld) \iso T_{0:\MPlus0} = T/T_{\MPlus0}\)
may be viewed in a natural way as a character of \(T\) trivial on
\(T_{\MPlus0}\), i.e., a depth-zero character
(in the sense of Definition \ref{defn:depth-char}).
Note that the character \(\psi_0\) of \(\sfT^\epsilon\)
inflates to the character \(\psi_0^\eta\) of \(T\)
(with notation as in Notation \ref{notn:torus-quad}).

\begin{defn}
\label{defn:depth-0-param}
A \term{depth-zero, supercuspidal parameter} is
\begin{itemize}
\item
a pair \((T, \psi)\),
where \(T = T^{\epsilon, \eta}\),
with \(\eta \in \sset{1, \varpi}\),
and \(\psi\) is a depth-zero character of \(T\)
such that \(\psi \ne \psi\inv\);
or
\item
a triple \((T, \psi, \pm)\),
where \(T = T^{\epsilon, \eta}\)
with \(\eta \in \sset{1, \varpi}\),
and
\(\psi = \psi_0^\eta\).
\end{itemize}
Given such a datum, put
\[
\bPi(T, \psi) = \Ind_{G_{x, 0}}^G \smabs{R^G_{T, \psi}};
\quad\text{respectively, }
\bPi^\pm(T, \psi)
= \Ind_{G_{x, 0}}^G \smabs{
	R^\pm_{T, \psi}
},
\]
where \(\smabs{R^G_{T, \psi}}\)
(respectively,
\(\smabs{R^\pm_{T, \psi}}\))
is the inflation to \(G_{x, 0}\) of the appropriate
finite-field representation.
We call the various \(\bPi(T, \psi)\)
(and all the positive-depth, supercuspidal representations,
which we will construct later)
\term{ordinary} (see \S\ref{sec:ordinary}),
and the four possible \(\bPi^\pm(T, \psi)\)
\term{exceptional} (see \S\ref{sec:exceptional}).
\end{defn}

The distinction between `ordinary' and `exceptional' is just
an \emph{ad hoc} one reflecting the different techniques needed
in their character computations.

The following \textit{ad hoc} definition allows us to state
Proposition \ref{prop:MK} uniformly.
All that is important for us is that
\begin{itemize}
\item
\(X_\pi \in \gg_{x, 0}\);
\item
the image \(\overline{X_\pi}\) of \(X_\pi\) in
\(\gg_{x, 0:\MPlus0} = \Lie(\sfG_x)(\resfld)\) satisfies
\(C_{\sfG_x}(\overline{X_\pi}) = \sfT^\epsilon\);
and
\item
\(C_\bG(X_\pi) = \bT^{\epsilon, \eta}\).
\end{itemize}
(Recall that \(\eta = 1\) if \(x = \xleft\),
and \(\eta = \varpi\) if \(x = \xright\).)

\begin{notn}
\label{notn:depth-0-X}
If \(\pi\) is a
depth-zero, supercuspidal representation, then we write
\(\indexmem{X_\pi} \ldef X^{\epsilon, \eta}_1\).
Put \(\SSAddChar_\pi = \AddChar\).
\end{notn}

\begin{rem}
\label{rem:depth-0-leftist}
Note that, if \((T, \psi)\)
(respectively, \((T, \psi, \pm)\))
and \((T', \psi')\)
(respectively, \((T', \psi', \pm)\))
are depth-zero, supercuspidal parameters,
and \(g \in \GL_2(\field)\) is such that
\(T' = \Int(g)T\) and \(\psi' = \psi \circ \Int(g)\), then
\(\pi' = \pi \circ \Int(g)\),
where \(\pi = \bPi(T, \psi)\) and \(\pi' = \bPi(T', \psi')\)
(respectively, \(\pi = \bPi^\pm(T, \psi)\)
and \(\pi' = \bPi^\pm(T', \psi')\)).
In particular,
if \(g \in G\), then
\(\pi(g)\) intertwines \(\pi\) and \(\pi'\).
Further, \(\Ad(g)X_\pi = X_{\pi'}\).

Note that this applies in particular when
\(T = T^{\epsilon, \varpi}\);
\(g = \begin{smallpmatrix}
\varpi & 0 \\
0      & 1
\end{smallpmatrix}\);
and
\(T' = T^{\epsilon, 1}\).
In this setting, \(x_{\bT'} = \xleft\).
\end{rem}

\section{Representations of positive depth}
\label{sec:posdepth}
\subsection{Unrefined minimal $K$-types of positive depth}
\label{sec:posktypes}

Now that we have classified the representations of depth zero,
we turn to those of positive depth
(Definition \ref{defn:k-type}\pref{defn:k-type:pos}).
Theorem \ref{thm:mp} suggests that we start by
classifying the unrefined minimal $K$-types that they contain.
As before, we may confine our attention
to the three filtrations associated to 
elements of $\BBpts$.
We begin by listing the $K$-types associated
to the filtration coming from $\xleft$.

Let $r\in\Z$,
since otherwise the quotient $\gg_{{\xleft},-r}/\gg_{{\xleft},\MPlus{(-r)}}$
is trivial.
The quotient $G_{\xleft, 0:\MPlus0} = \sfG_{\xleft}(\resfld)$
is isomorphic to $\SL_2(\resfld)$.
Every coset in $\gg_{\xleft,-r}/\gg_{\xleft,(-r)+}$
can be written in the form
$$
\varpi^{-r} X + \gg_{\xleft,(-r)+},
$$
where
$X$ has one of the following forms
(up to \(G_{\xleft, 0}\)-conjugacy):
$$
\begin{pmatrix}
0 & \beta \\ 0 & 0
\end{pmatrix},
\qquad
X^{\text{split}}_\beta =
\begin{pmatrix}
\beta & \phm 0 \\ 0 & -\beta
\end{pmatrix},
\qquad
X^{\epsilon,1}_\beta = 
\begin{pmatrix}
\phantom{\epsilon}0 & \beta \\ \epsilon\beta & 0
\end{pmatrix},
$$
where $\beta\in \pint\mult$.
Since $\beta$ is determined only modulo $\pp$,
we will think of it as lying in $\resfld \mult$.
In the first example above, $X$ is nilpotent,
so the corresponding coset does not correspond to
an unrefined minimal $K$-type.

Now consider the $K$-types arising from the Iwahori filtration
(i.e., the filtration associated to the point $\xcentre$).
If $r\in\Z$,
then every coset in the quotient space
$\gg_{\xcentre,-r} / \gg_{\xcentre,(-r)+}$
has the form
$\varpi^{-r} X^{\text{split}}_\beta + \gg_{\xcentre,\MPlus{(-r)}}$.
As before, we may take $\beta$ to lie in $\resfld\mult$.
Similarly, if $r\in\Z+\frac12$, then
every coset in the quotient space
$\gg_{\xcentre,-r} / \gg_{\xcentre,\MPlus{(-r)}}$
either contains a nilpotent element or
has the form (up to conjugation by $G_{\xcentre}$)
$\varpi^{-\rup r}
X^{\theta,\eta}_\beta +\gg_{\xcentre,(-r)+}$,
where $\theta\in\{\varpi,\epsilon\varpi\}$
and
$\eta\in\{1,\epsilon\}$.

Suppose
$\chi\unrsplit_{\beta,r}$
is the character of
$G_{\xleft,r}/G_{\xleft,r+}$ corresponding to
$\varpi^{-r}X^{\text{split}}_\beta+\gg_{\xleft,(-r)+}$,
and
$\chi^{\text{split}}_{\beta,r}$
is the character of
$G_{\xcentre,r}/G_{\xcentre,r+}$ corresponding to
$\varpi^{-r}X^{\text{split}}_\beta+\gg_{\xcentre,(-r)+}$.
(See \S\ref{sec:duality}.)
Then any representation $\pi$ of $G$ that contains
$\chi\unrsplit_{\beta,r}$
must contain
$\chi^{\text{split}}_{\beta,r}$,
as the latter is the restriction of the former
to $G_{\xcentre,r}$.
From \cite{yu:supercuspidal}*{Corollary 17.3},
a representation of $G$ that contains such an unrefined minimal
$K$-type
cannot be supercuspidal.
Therefore, we may ignore this family of $K$-types.

The situation for $\xright $ is the same as that for $\xleft$,
after conjugation by $\begin{smallpmatrix}
1 & 0      \\
0 & \varpi
\end{smallpmatrix}$.

Thus, every irreducible supercuspidal representation of $G$
of positive depth must contain
a $K$-type whose corresponding coset has the form
$$
\varpi^{-\rup r} X + \gg_{x,(-r)+},
$$
where
the possibilities for
\(X\), \(x\), and \(r\)
are as follows:
\begin{equation}
\label{eq:typelist}
\setlength{\extrarowheight}{.5em}
\begin{array}{lcl}
X & x & r \\
\hline
X_\beta^{\epsilon,1}
	& \xleft & \Z_{> 0} \\
X_\beta^{\epsilon,\varpi}
	& \xright  & \Z_{> 0} \\
X_\beta^{\theta,\eta}
	& \xcentre & \Z_{\ge 0}+\frac12
\end{array}
\end{equation}
Here, $\beta$ ranges over $\resfld\mult$,
$\theta\in\{\varpi,\epsilon\varpi\}$,
and $\eta\in\{1,\epsilon\}$.
Thus, we have six families of $K$-types, each parametrized
by $\resfld \mult$ and $\Z_{> 0}$ (or $\Z_{\ge 0}+\frac12$).

Next we will determine
which of the $K$-types in the six families above
are associate.

Given two cosets $\Sigma_1$ and $\Sigma_2$ of the form above,
when does there exist $g\in G$ such that
$\Int(g)\Sigma_1\cap\Sigma_2\neq \emptyset$?
In each case, one can answer this through
direct computation.
A less cumbersome method requires appealing to
a few general theorems, each of which
is easy to prove in our special situation.

\begin{prop}
\label{prop:assoc-conj}
For $i=1,2$,
let $\Sigma_i = \varpi^{-\rup{r_i}} X_i + \gg_{x_i,(-r_i)+}$
be a coset listed in \eqref{eq:typelist}.
Then $\Sigma_1$ and $\Sigma_2$ are associate if and only if
$r_1=r_2$, and
$X_1$ and $X_2$ are $G$-conjugate.
\end{prop}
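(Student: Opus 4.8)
I would prove the two implications separately; the reverse one is essentially formal, and nearly all of the content is in the forward one.

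\emph{Reverse implication.} Suppose $r_1 = r_2 =: r$ and $\Int(g)X_1 = X_2$ for some $g \in G$. Since each lattice $\gg_{x_i,(-r)+}$ contains $0$, the ``base point'' $\varpi^{-\rup r}X_i$ lies in $\Sigma_i$, and $\Int(g)$ carries $\varpi^{-\rup r}X_1$ to $\varpi^{-\rup r}X_2$; hence $\varpi^{-\rup r}X_2 \in \Int(g)\Sigma_1 \cap \Sigma_2$, so the $G$-orbit of $\Sigma_1$ meets $\Sigma_2$. Together with $r_1 = r_2$ this is exactly the definition of being associate (Definition~\ref{defn:k-type}). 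One should bear in mind here that $\beta_i \in \resfld\mult$ in \eqref{eq:typelist} is really a residue, and that ``$X_1$ and $X_2$ are $G$-conjugate'' is to be read as a statement about suitable lifts to $\pint\mult$; the argument is insensitive to the choice.

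\emph{Forward implication.} Associateness yields $r_1 = r_2 =: r$ at once, so it remains to show $X_1 \sim_G X_2$; fix $g \in G$ and $Y \in \Sigma_1$ with $\Int(g)Y \in \Sigma_2$. The crux will be a \emph{normalization} statement: every element of a coset $\Sigma_i = \varpi^{-\rup r}X^{\theta_i,\eta_i}_{\beta_i} + \gg_{x_i,(-r)+}$ from \eqref{eq:typelist} is $G$-conjugate to $\varpi^{-\rup r}X^{\theta_i,\eta_i}_{\beta'}$ for some $\beta' \in \pint\mult$ with $\beta' \equiv \beta_i \pmod\pp$. Granting this, I would write $Y \sim_G \varpi^{-\rup r}X^{\theta_1,\eta_1}_{\beta_1'}$ and $\Int(g)Y \sim_G \varpi^{-\rup r}X^{\theta_2,\eta_2}_{\beta_2'}$, and conclude from $Y \sim_G \Int(g)Y$ that $X^{\theta_1,\eta_1}_{\beta_1'} \sim_G X^{\theta_2,\eta_2}_{\beta_2'}$ --- which, the $\beta_i'$ being legitimate representatives of the residues $\beta_i$, is precisely the assertion that $X_1$ and $X_2$ are $G$-conjugate. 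If desired, one can make this last identification completely explicit by comparing characteristic polynomials (via Lemma~\ref{lem:disc-as-depth}) and centralizer tori through the enumeration of standard tori and their normalizers in \S\ref{sec:std-tori}: it holds exactly when $\theta_1 \equiv \theta_2 \pmod{(\field\mult)^2}$, $\beta_1 \equiv \pm\beta_2 \pmod\pp$, and the $\eta$'s agree up to the coincidences among the $\bT^{\theta,\eta}$ recorded there.

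\emph{The normalization step} is where the real work, and the main obstacle, lies. The plan is to rescale $Y$ by the central scalar $\varpi^{\rup r}$ --- which commutes with conjugation and shifts all Moy--Prasad filtrations --- so that $Z := \varpi^{\rup r}Y$ lies in $X^{\theta_i,\eta_i}_{\beta_i} + \gg_{x_i,0+}$ when $x_i \in \{\xleft,\xright\}$ (depth zero), and in $X^{\theta_i,\eta_i}_{\beta_i} + \gg_{x_i,(1/2)+}$ when $x_i = \xcentre$ (depth $\tfrac12$); one then has to conjugate $Z$ by an element of $G_{x_i,0+}$ into the torus $\ttt^{\theta_i,\eta_i}$. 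Since $X^{\theta_i,\eta_i}_{\beta_i}$ is regular semisimple, $\ad$ of it is bijective on a complement of $\ttt^{\theta_i,\eta_i}$ inside each pertinent filtration lattice at $x_i$, so a successive-approximation (pro-$p$ Hensel) argument manufactures the conjugating element; and because that element lies in $G_{x_i,0+}$, hence acts trivially on the relevant subquotient, the resulting toral representative has parameter $\beta' \equiv \beta_i \pmod\pp$. (For $x_i \in \{\xleft,\xright\}$ one can instead invoke the proof of \cite{debacker:tori-unram}*{Lemma 2.2.2}, the reduction of $Z$ being a regular elliptic semisimple element of $\sl_2(\resfld)$; the $\xcentre$ case is small enough for the direct computation alluded to before \eqref{eq:typelist}.) These are among the ``general theorems, easy to prove in our special situation'' mentioned in the text.
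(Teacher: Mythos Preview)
Your argument is correct. The paper's proof is considerably shorter: for the forward implication it does not carry out a normalization step, but simply invokes \cite{adler-roche:intertwining}*{Proposition 9.3}, which says directly that associate cosets of this shape have $G$-conjugate representatives $\varpi^{-\rup r}X_i$, and then cancels the scalar. Your approach replaces that citation with the concrete successive-approximation argument conjugating an arbitrary element of $\Sigma_i$ into $\ttt^{\theta_i,\eta_i}$ by an element of $G_{x_i,\MPlus0}$; this is precisely one of the ``general theorems, each of which is easy to prove in our special situation'' that the paragraph before the proposition alludes to but that the paper, in this instance, chooses not to unpack. The trade-off is the obvious one: the paper's proof is three lines but imports a result, yours is self-contained and makes visible why the coset determines the conjugacy class. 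Your care about the lift ambiguity in $\beta$ is also well placed; the paper is slightly informal here, and your reading---conjugacy for \emph{some} lifts, equivalently a relation on the data $(\theta,\eta,\beta\bmod\pp)$ governed by \eqref{eq:conjlist}---is the right one.
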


\begin{proof}
If $r_1=r_2$
and
$X_1$ and $X_2$ are $G$-conjugate,
then 
$\varpi^{-\rup{r_1}} X_1$
is conjugate to
$\varpi^{-\rup{r_2}} X_2$,
so $\Sigma_1$ and $\Sigma_2$ are associate.

Conversely,
suppose that
$\Sigma_1$ and $\Sigma_2$ are associate.
Then $r_1=r_2$ by the definition of `associate'.
By
\cite{adler-roche:intertwining}*{Proposition 9.3},
$\varpi^{-\rup{r_1}} X_1$
is conjugate to
$\varpi^{-\rup{r_2}} X_2$.
Therefore,
$X_1$ and $X_2$ are conjugate.
\end{proof}

Thus, we only need to check which of
our chosen coset representatives in \eqref{eq:typelist}
are conjugate.
From easy calculations, we have the following conjugacy relations,
and no other non-trivial ones:
\begin{equation}
\label{eq:conjlist}
\begin{aligned}
X_\beta^{\epsilon,\eta} &\sim X_{-\beta}^{\epsilon,\eta}
	&& (\eta \in \sset{1,\varpi}) \quad
	&& \text{via an element of $G_x$} \\
X_\beta^{\theta,\eta} &\sim X_{-\beta}^{\theta,\eta}
	&& (\theta \in \sset{\varpi,\epsilon\varpi},
		\quad\eta \in \sset{1,\epsilon})\quad
	&& \text{if and only if $q\equiv 1 \pmod 4$} \\
X_\beta^{\theta,1} &\sim X_{-\beta}^{\theta,\epsilon}
	&& (\theta \in \sset{\varpi,\epsilon\varpi})\quad
	&& \text{if and only if $q\equiv 3 \pmod 4$.}
\end{aligned}
\end{equation}

\subsection{From positive-depth $K$-types to inducing data}
\label{sec:posreps}

Fix a triple $(X,x,r)$ from the list in~\eqref{eq:typelist},
let $\Sigma$ be the corresponding coset,
and let
$\chi_\Sigma\sss = \chi_X\sss\in(G_{x,r}/G_{x,r+})\sphat\,\,$ 
be the character corresponding to $\Sigma$
(see \S\ref{sec:duality}).
We will describe all irreducible representations of $G$
that contain $(G_{x,r},\chi_\Sigma\sss)$.

Let $T=C_G(X)$,
and $\ttt=C_\gg(X)=\Lie(T)$.
These are independent of $\beta$.

Let
$$
\ttt^\perp = \sett{Y\in\gg}{$\Tr(Y\dotm Z)=0$ for all $Z\in\ttt$},
$$
and
$\ttt^\perp_s = \ttt^\perp\cap\gg_{x,s}$.
Then it is easy to verify that for all $s\in\R$,
$$
\gg_{x,s} = \ttt_s\sss \oplus \ttt^\perp_s
\qandq
\gg_{x, \MPlus s} = \ttt_{\MPlus s}\sss \oplus \ttt^\perp_{\MPlus s}.
$$

Define lattices $\JJ$ and $\JJ_+$ in $\gg$ by
$$
\JJ = \ttt_r+\ttt^\perp_{r/2}
\qandq
\JJ_+ = \ttt_r+\ttt^\perp_{(r/2)+}.
$$
By Lemma \ref{lem:cayley}(%
	\ref{item:bijpos},
	\ref{item:MP-map}%
), the images
\(J = \cayley(\JJ)\) and \(J_+ = \cayley(\JJ_+)\)
of these lattices under the Cayley transform
are groups,
and we have isomorphisms
\(\abbimap{J/G_{x, r}}{\JJ/\gg_{x, r}}\)
and
\(\abbimap{J_+/G_{x, \MPlus r}}{\JJ_+/\gg_{x, \MPlus r}}\),
which we will again denote by \(\cayley\).
In particular, by \eqref{eq:duality},
\begin{equation}
\label{eq:J-duality}
(J_+/G_{x, \MPlus r})\sphat\
\iso (\JJ_+/\gg_{x, \MPlus r})\sphat\
\iso \gg_{x, -r}/(\ttt\sss_{-r} + \ttt^\perp_{-r/2}).
\end{equation}

Since $\chi_\Sigma$ is trivial on $\ttt^\perp_r$, we may extend it to
a character $\bar\chi$ of $\JJ_+/\gg_{x,r+}$ (or $J_+/G_{x,r+}$)
by setting
$\bar\chi$ to be trivial on $\ttt^\perp_{(r/2)+}$.
By inflation, we may regard $\chi_\Sigma$ as a character of $G_{x,r}$,
and $\bar\chi$ as a character of $J_+$.
Explicitly,
\begin{equation}
\label{eq:ol-chi}
\bar\chi\bigl(\cayley(Y)\bigr)
= \AddChar\bigl(\Tr(X\dotm Y)\bigr),
\quad Y \in \JJ_+.
\end{equation}
In terms of \eqref{eq:J-duality},
this extension--inflation process corresponds to following
the projection
\[
\abmap{
	\gg_{x, -r}/(\ttt\sss_{-r} + \ttt^\perp_{-r/2})
}{
	\gg_{x, -r:-r/2} \iso (G_{x, \MPlus{(r/2)}:\MPlus r})\sphat
}\,\,.
\]

\begin{prop}
Any representation of $G$ that contains $\chi$
must contain $\bar\chi$.
\end{prop}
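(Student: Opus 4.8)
The plan is to realise \((J_+, \bar\chi)\) inside a Heisenberg-type quotient of the group \(J = \cayley(\JJ)\), following the pattern of the inducing constructions of Yu \cite{yu:supercuspidal} and Adler \cite{adler:thesis}, specialised to \(\SL_2\).

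First I would establish the structural picture. Using the orthogonal splitting \(\gg_{x, s} = \ttt_s \oplus \ttt^\perp_s\), the \(\SL_2\)-bracket relations \([\ttt^\perp_s, \ttt^\perp_t] \subseteq \ttt_{s + t}\) and \([\ttt_s, \ttt^\perp_t] \subseteq \ttt^\perp_{s + t}\), and Lemma~\ref{lem:cayley}(\ref{item:cayley-conj},~\ref{item:cayley-bracket}), one sees that \(J\) is a group containing \(G_{x, r}\), \(G_{x, r+}\), and \(J_+\) as normal subgroups, that \(\bar J \ldef J/G_{x, r+}\) is two-step nilpotent, and---the crux---that \(J_+/G_{x, r+}\) is \emph{exactly} the centre of \(\bar J\). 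The commutator, composed with \(\chi_\Sigma\), then induces on \(\bar J/(J_+/G_{x, r+}) \iso \ttt^\perp_{r/2}/\ttt^\perp_{(r/2)+}\) the alternating form \((\bar Y_1, \bar Y_2) \mapsto \AddChar(\Tr(X\dotm[Y_1, Y_2]))\), which is non-degenerate: since \(X\) is regular semisimple, \(\ad(X)\) annihilates \(\ttt\) and carries \(\ttt^\perp_s\) isomorphically onto \(\ttt^\perp_{s - r}\), so the radical of the form collapses by the duality \((\ttt^\perp_s)^\bullet = \ttt^\perp_{\MPlus{(-s)}}\) of \S\ref{sec:duality}. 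Thus \(\bar J\) is a finite Heisenberg-type group with centre \(J_+/G_{x, r+}\).

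Next I would check that \(J_+\) fixes \(\chi = \chi_\Sigma\) under conjugation, so that it acts on the \(\chi\)-isotypic space. For \(j = \cayley(Y)\), \(Y \in \JJ_+\), and \(g = \cayley(Z)\), \(Z \in \gg_{x, r}\), Lemma~\ref{lem:cayley}(\ref{item:cayley-conj}) linearises the conjugation, giving \(\chi_\Sigma(jgj\inv)/\chi_\Sigma(g) = \AddChar(\Tr([X, Y]\dotm Z))\); writing \(Y = Y_\parallel + Y_\perp\) with \(Y_\perp \in \ttt^\perp_{(r/2)+}\), one has \([X, Y] = [X, Y_\perp] \in \ttt^\perp_{\MPlus{(-r/2)}}\), which pairs into \(\pp\) against \(\gg_{x, r}\) by the same duality, so \(\chi_\Sigma^j = \chi_\Sigma\).

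Now let \((\pi, V)\) contain \(\chi\). The isotypic subspace \(V_\chi = \set{v \in V}{\pi(g)v = \chi_\Sigma(g)v \text{ for all } g \in G_{x, r}}\) is non-zero and carries a representation of \(\bar J\) in which \(G_{x, r}/G_{x, r+}\) acts through \(\chi_\Sigma\); by Stone--von Neumann for the finite Heisenberg group \(\bar J\), it decomposes into copies of Heisenberg representations, and the centre \(J_+/G_{x, r+}\) acts on each through a character of \(J_+\) extending \(\chi_\Sigma\). By \eqref{eq:ol-chi}, \(\bar\chi\) is precisely the extension that is trivial on \(\cayley(\ttt^\perp_{(r/2)+})\). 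The remaining point---which I expect to be the real obstacle---is that \(\bar\chi\) \emph{itself} occurs, rather than merely some other extension: by \eqref{eq:J-duality} the character \(\chi_\Sigma\) has a whole torsor of extensions to \(J_+\) (under \((\ttt^\perp_{(r/2)+}/\ttt^\perp_r)\sphat\)), so counting alone is not enough. I would settle this by matching the explicit Cayley-transform formula \eqref{eq:ol-chi} against the analysis of exactly this extension in \cite{yu:supercuspidal} and \cite{adler:thesis}, using that \(\cayley\) respects the splitting \(\gg_{x, r} = \ttt_r \oplus \ttt^\perp_r\) tightly enough---the normalisation recorded in \S\ref{sec:cayley}---to pin the occurring central character down to \(\bar\chi\).
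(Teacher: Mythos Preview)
The paper's own proof is a single sentence: it cites Corollary~6.5 of \cite{adler-roche:intertwining}. Your proposal, by contrast, builds up the Heisenberg picture of \(J/G_{x,\MPlus r}\) (anticipating Lemma~\ref{lem:nondegen} and Proposition~\ref{prop:heisenberg}, which in the paper come \emph{after} this proposition), and then uses Stone--von~Neumann to decompose \(V_\chi\) by central characters of \(J_+\). All of that is correct, but none of it touches the actual content of the proposition.

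You identify the gap yourself: once you know that \(J_+\) acts on \(V_\chi\) through characters extending \(\chi_\Sigma\), you still need to know that the particular extension \(\bar\chi\)---the one trivial on \(\cayley(\ttt^\perp_{\MPlus{(r/2)}})\)---occurs, and not merely one of its twists by \((J_+/G_{x,r})\sphat\,\). Your proposed resolution (``matching \eqref{eq:ol-chi} against the analysis in \cite{yu:supercuspidal} and \cite{adler:thesis}'') is not an argument; it is a deferred citation, and once you are citing that literature you are doing no more than the paper does. The Heisenberg structure is of no help here: Stone--von~Neumann classifies representations \emph{given} a central character, it does not single one out. The substance of Corollary~6.5 of \cite{adler-roche:intertwining} is precisely an intertwining\slash conjugacy argument showing that the extension determined by the good element \(X\) must occur, and that argument is not reproduced anywhere in your outline. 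So either cite the result directly, as the paper does, or extract and present the relevant intertwining computation from \cite{adler-roche:intertwining}; the Heisenberg scaffolding you have built is correct but orthogonal to what is needed.
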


\begin{proof}
This is a special case of Corollary 6.5 of \cite{adler-roche:intertwining},
though earlier versions exist.
\end{proof}

We have shown that in order to classify
the irreducible representations of $G$ that contain $\chi$,
it is enough to classify the irreducible representations
containing each character $\bar\chi$.

\begin{prop}
\label{prop:heisenberg}
There exists a unique irreducible representation $\rho_\Sigma$ of $J$
that contains $\bar\chi$.
Moreover, $\res{\rho_\Sigma}to{J_+}$ is a sum of $\indx J{J_+}^{1/2}$
copies of $\bar\chi$.
The character of $\rho_\Sigma$ is given by
$$
\Theta_{\rho_\Sigma}(g) =
\begin{cases}
\indx J{J_+}^{1/2}\bar\chi(g) & g\in J_+ \\
0 & g \in J \setminus J_+.
\end{cases}
$$
\end{prop}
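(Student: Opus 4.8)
The statement is an instance of the classical Stone--von Neumann theory of Heisenberg representations, so the plan is to exhibit $(J, J_+, \bar\chi)$ as a Heisenberg datum and then quote the structure theory. Throughout, write $\Gamma$ for the element of $\ttt$, of depth $-r$ and with $C_\gg(\Gamma) = \ttt$, that realizes $\bar\chi$ through \eqref{eq:ol-chi}, so that $\bar\chi(\cayley(Y)) = \AddChar(\Tr(\Gamma\dotm Y))$ for $Y \in \JJ_+$. (That $\depth(\Gamma) = -r$ uniformly, for $x \in \sset{\xleft,\xright}$ and for $x = \xcentre$, is a direct computation from the shapes in \eqref{eq:typelist} and the depth formula of the remark following Definition~\ref{defn:depth-element}.)

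First I would record the group-theoretic input. Since $r > 0$ we have $\JJ, \JJ_+ \subseteq \gg_{x, r/2}$, hence $[\JJ, \JJ] \subseteq \gg_{x, r}$; and $\gg_{x, r} = \ttt_r \oplus \ttt^\perp_r \subseteq \ttt_r + \ttt^\perp_{(r/2)+} = \JJ_+$, because $\ttt^\perp_r \subseteq \ttt^\perp_{(r/2)+}$. Transporting these inclusions to the group through the Cayley transform with Lemma~\ref{lem:cayley}(\ref{item:bijpos},\ref{item:MP-map}) and (\ref{item:cayley-conj},\ref{item:cayley-bracket}) --- the truncated Baker--Campbell--Hausdorff corrections landing in $G_{x,\MPlus r} \subseteq \ker\bar\chi$ --- one finds that $J_+$ is normal in $J$, that $[J, J] \subseteq J_+$ so $J/J_+$ is abelian, that $\bar\chi$ is $J$-invariant (using \eqref{eq:ol-chi} and $\Gamma \in \ttt$), and that the whole situation factors through the finite group $\bar J \ldef J/G_{x,\MPlus r}$, with normal abelian subgroup $\bar J_+ \ldef J_+/G_{x,\MPlus r}$. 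Consequently
\[
h_{\bar\chi}(g J_+, g' J_+) \ldef \bar\chi([g, g']), \quad g, g' \in J,
\]
is a well-defined, alternating, bi-multiplicative form on the finite group $J/J_+$.

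The heart of the argument is the non-degeneracy of $h_{\bar\chi}$. Identify $J/J_+ \iso \JJ/\JJ_+ \iso \ttt^\perp_{r/2}/\ttt^\perp_{(r/2)+}$ by $\cayley$. By the preceding paragraph and \eqref{eq:ol-chi}, $h_{\bar\chi}$ becomes
\[
(Y, Z) \longmapsto \AddChar\bigl(\Tr(\Gamma\dotm[Y, Z])\bigr) = \AddChar\bigl(\Tr([\Gamma, Y]\dotm Z)\bigr), \quad Y, Z \in \ttt^\perp_{r/2},
\]
the second equality by invariance of the trace form, together with $[\ttt^\perp, \ttt^\perp] \subseteq \ttt$ and $[\ttt, \ttt^\perp] \subseteq \ttt^\perp$. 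Now $\Gamma$ is a good element --- its $\ad$-eigenvalues on $\ttt^\perp$ all have valuation $\depth(\Gamma) = -r$, uniformly in the (ramified and unramified) cases of \eqref{eq:typelist} --- so $\ad(\Gamma)$ restricts to an isomorphism $\ttt^\perp_s \iso \ttt^\perp_{s - r}$ for every $s$, and hence induces an isomorphism $\ttt^\perp_{r/2}/\ttt^\perp_{(r/2)+} \iso \ttt^\perp_{-r/2}/\ttt^\perp_{(-r/2)+}$. On the other hand, the trace form pairs $\ttt^\perp_{-r/2}/\ttt^\perp_{(-r/2)+}$ perfectly with $\ttt^\perp_{r/2}/\ttt^\perp_{(r/2)+}$: this is the duality of \S\ref{sec:duality} restricted to $\ttt^\perp$, since $\gg_{x, s} = \ttt_s \oplus \ttt^\perp_s$ is trace-orthogonal and $(\ttt^\perp_{(r/2)+})^\bullet \cap \ttt^\perp = \ttt^\perp_{-r/2}$. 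Composing the two, $h_{\bar\chi}$ is non-degenerate; in particular $J/J_+ \iso \resfld^{2d}$ is even-dimensional, so $\indx J{J_+}^{1/2} = q^d$ is a nonnegative integral power of $q$. Keeping straight the $r/2$-versus-$(r/2)+$ indices and the ramified-versus-unramified behaviour of $\ad(\Gamma)$ on $\ttt^\perp$ is, I expect, the main obstacle; everything else is formal.

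Finally I would invoke the structure theory of finite Heisenberg groups (this is classical; for versions in essentially the present setting see \cite{yu:supercuspidal} and \cite{adler-roche:intertwining}). Because $h_{\bar\chi}$ is non-degenerate, the centre of $\bar J/\ker\bar\chi$ is exactly $\bar J_+/\ker\bar\chi$, so $\bar J/\ker\bar\chi$ is a finite Heisenberg group, and there is a unique irreducible representation $\rho_\Sigma$ of $\bar J$ whose restriction to $\bar J_+$ is $\bar\chi$-isotypic; it has dimension $\indx J{J_+}^{1/2}$ and satisfies $\res{\rho_\Sigma}to{J_+} = \indx J{J_+}^{1/2}\,\bar\chi$. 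Inflating to $J$: any irreducible representation of $J$ containing $\bar\chi$ is, by Clifford theory (using that $J_+$ is normal in $J$ and $\bar\chi$ is $J$-invariant), scalar on $J_+$, hence trivial on $G_{x,\MPlus r}$, hence a representation of $\bar J$; so $\rho_\Sigma$ is the unique irreducible representation of $J$ containing $\bar\chi$, and the restriction statement follows. For the character: on $J_+$, $\rho_\Sigma$ acts by the scalar $\bar\chi(g)$, so $\Theta_{\rho_\Sigma}(g) = \indx J{J_+}^{1/2}\bar\chi(g)$; and for $g \in J \setminus J_+$, non-degeneracy of $h_{\bar\chi}$ supplies $h \in J$ with $\bar\chi([h, g]) \ne 1$, whence
\[
\Theta_{\rho_\Sigma}(g) = \Theta_{\rho_\Sigma}(h g h\inv) = \bar\chi([h, g])\,\Theta_{\rho_\Sigma}(g)
\]
forces $\Theta_{\rho_\Sigma}(g) = 0$. (As a check, $\langle\Theta_{\rho_\Sigma}, \Theta_{\rho_\Sigma}\rangle = \card{J_+}\cdot\indx J{J_+}/\card J = 1$, reconfirming irreducibility and uniqueness.)
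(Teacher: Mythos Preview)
Your proposal is correct and follows essentially the same route as the paper: define the alternating form $\bar\chi([\cdot,\cdot])$ on $J/J_+$, establish its non-degeneracy, and then invoke the Stone--von Neumann theorem for finite Heisenberg groups. The paper outsources both steps to citations (non-degeneracy to \cite{adler:thesis}*{Lemma 2.6.1}, the Heisenberg structure to \cite{gerardin:weil}*{Lemma 1.2}), whereas you unpack the non-degeneracy argument explicitly via $\ad(\Gamma)\colon \ttt^\perp_{r/2}/\ttt^\perp_{(r/2)+} \iso \ttt^\perp_{-r/2}/\ttt^\perp_{(-r/2)+}$ and the trace duality, and give the direct character-vanishing computation on $J\setminus J_+$; but this is exactly what the cited lemmas do, so there is no genuine difference in strategy.
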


The proof will require two lemmas along the way.
If $J=J_+$, then there is nothing to prove,
so assume that $J\neq J_+$.
Define an alternating form $\vform$ on $J/J_+$
by $\langle a,b\rangle=\bar\chi([a,b])$
for all \(a, b \in J\).

\begin{lemma}[%
	special case of \cite{adler:thesis}*{Lemma 2.6.1}%
]
\label{lem:nondegen}
The form $\vform$ is non-degenerate.
\end{lemma}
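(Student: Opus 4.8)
The plan is to transport the alternating form to the Lie algebra via the Cayley transform and compute it there. Since \(\JJ = \ttt_r \oplus \ttt^\perp_{r/2}\) and \(\JJ_+ = \ttt_r \oplus \ttt^\perp_{(r/2)+}\) are orthogonal sums for the trace form, \(\cayley\) identifies \(J/J_+\) with \(\JJ/\JJ_+ \cong \ttt^\perp_{r/2}/\ttt^\perp_{(r/2)+}\); so it is enough to prove that the pairing on \(\ttt^\perp_{r/2}/\ttt^\perp_{(r/2)+}\) obtained by pulling \(\vform\) back along this identification is non-degenerate.

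First I would compute that pairing explicitly. Given \(A, B \in \ttt^\perp_{r/2}\), put \(a = \cayley(A)\) and \(b = \cayley(B)\). Using that \([G_{x, r/2}, G_{x, r/2}] \subseteq G_{x, r}\) and that \(\cayley\) is a homomorphism modulo deeper filtration (Lemma~\ref{lem:cayley}\pref{item:cayley-conj}, \pref{item:cayley-bracket}), the group commutator satisfies \([a, b] \equiv \cayley([A, B]) \pmod{G_{x, \MPlus r}}\). Since \(\bar\chi\) is trivial on \(G_{x, \MPlus r}\) and, by \eqref{eq:ol-chi}, \(\bar\chi(\cayley Y) = \AddChar(\Tr(X\dotm Y))\) for \(Y \in \JJ_+ \supseteq \gg_{x, r}\), this yields
\[
\langle A, B\rangle
= \bar\chi([a, b])
= \AddChar\bigl(\Tr(X\dotm[A, B])\bigr)
= \AddChar\bigl(\Tr([X, A]\dotm B)\bigr),
\]
the last equality by invariance of the trace form. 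I would also record that \(\ad(X)\) preserves \(\ttt^\perp\): for \(W \in \ttt^\perp\) and \(Z \in \ttt = C_\gg(X)\) one has \(\Tr([X, W]\dotm Z) = -\Tr(W\dotm[X, Z]) = 0\).

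Next I would invoke the minimality of the \(K\)-type: the coset contains no nilpotent element, which for \(\SL_2\) forces the relevant \(X\) to be regular semisimple of depth exactly \(-r\), with \(C_\gg(X) = \ttt\). Over the splitting field \(\field_\theta\) of \(\bT\), the operator \(\ad(X)\) is diagonal on \(\ttt^\perp \otimes \field_\theta\) (in the root basis), with the two nonzero eigenvalue differences of \(X\) as eigenvalues, and these have valuation exactly \(-r\) because \(p \ne 2\); hence \(\ad(X)\) carries \(\ttt^\perp_s\) onto \(\ttt^\perp_{s - r}\) for every index \(s\), and in particular induces bijections \(\ttt^\perp_{r/2} \to \ttt^\perp_{-r/2}\) and \(\ttt^\perp_{(r/2)+} \to \ttt^\perp_{(-r/2)+}\). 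Combining this with the fact that the trace form is non-degenerate on \(\ttt^\perp\) with \((\ttt^\perp_{r/2})^\bullet \cap \ttt^\perp = \ttt^\perp_{(-r/2)+}\) (which follows from \((\gg_{x, r/2})^\bullet = \gg_{x, (-r/2)+}\) and the orthogonal decomposition \(\gg_{x, s} = \ttt_s \oplus \ttt^\perp_s\)), I find that the image of \(A\) lies in the radical of \(\vform\) exactly when \([X, A] \in \ttt^\perp_{(-r/2)+}\), hence (applying \(\ad(X)^{-1}\)) exactly when \(A \in \ttt^\perp_{(r/2)+}\), i.e.\ exactly when its image is \(0\). This gives non-degeneracy.

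The hard part will be the bookkeeping in the second step --- pinning down that the discrepancy between \([a, b]\) and \(\cayley([A, B])\) really lies in \(G_{x, \MPlus r}\), so that it is annihilated by \(\bar\chi\) --- together with the more substantive claim that \(\ad(X)\) rescales the filtration on \(\ttt^\perp\) by exactly \(\depth(X) = -r\); this last point is where the minimality hypothesis on the \(K\)-type is genuinely used, and it is cleanest to verify after base change to \(\field_\theta\). Everything else is routine. Alternatively, one may simply cite \cite{adler:thesis}*{Lemma 2.6.1}, of which this is a special case.
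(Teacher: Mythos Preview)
The paper offers no proof beyond the citation to \cite{adler:thesis}*{Lemma 2.6.1} --- exactly the alternative you name in your last sentence. Your direct argument is correct and is, in outline, the proof of the cited result: pull \(\vform\) back to \(\ttt^\perp_{r/2}/\ttt^\perp_{(r/2)+}\) via the Cayley map, identify it there as \((A,B)\mapsto\AddChar(\Tr([X,A]\dotm B))\), and conclude by combining the perfect trace pairing between \(\ttt^\perp_{r/2}/\ttt^\perp_{(r/2)+}\) and \(\ttt^\perp_{-r/2}/\ttt^\perp_{(-r/2)+}\) with the fact that \(\ad(X)\) (whose eigenvalues on \(\ttt^\perp\) have valuation exactly \(-r\)) carries the first quotient isomorphically onto the second. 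The commutator bookkeeping you flag does work out: the discrepancy between \([\cayley(A),\cayley(B)]\) and \(\cayley([A,B])\) lies at depth \(3r/2\) (Lemma~\ref{lem:cayley}\pref{item:cayley-bracket}, or by direct expansion), hence in \(G_{x,\MPlus r}\subseteq\ker\bar\chi\).
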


Let $N=\ker\bar\chi_\Sigma$.
It follows from Lemma~\ref{lem:nondegen}
that $J/N$ is a two-step nilpotent group,
and that its center and derived group are both
$J_+/N$.  
The representation theory of such groups is well known.  
(The corresponding result over \(\R\) is called the
Stone--von Neumann theorem.)

\begin{lemma}[%
	\cite{gerardin:weil}*{Lemma 1.2}%
]
\label{lem:heisenberg}
Let $H$ be a finite group, let $A$ be the center of $H$,
and suppose that $A$ is also the derived group of $H$.
Let $\xi$ be a non-trivial character of $A$.
Then there exists a unique (up to equivalence) representation
$\rho_\xi$ of $H$ with central character $\xi$.
Moreover, $\dim\rho_\xi = \indx H A^{1/2}$,
and the character of $\rho_\xi$ is supported on $A$.
\end{lemma}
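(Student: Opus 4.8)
My plan is to obtain existence, uniqueness, and the dimension formula in one stroke by examining a single central idempotent of the group algebra \(\C[H]\), and then to read off the vanishing of the character outside \(A\) from the uniqueness statement.

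The crux, and the step I expect to demand the most care, is a non-degeneracy claim. Since \(A = [H,H]\) the quotient \(V \ldef H/A\) is abelian, and since \(A = Z(H)\) the commutator map descends to a biadditive alternating pairing \(V \times V \to A\); composing with \(\xi\) produces an alternating form \(B_\xi\colon V \times V \to \C^\times\). I claim \(B_\xi\) is non-degenerate. If \(\bar g \in V\) lies in the radical, then \([H,g] \subseteq \ker\xi\); granting that \(\xi\) is faithful on \(A\) — automatic in the setting where we apply the lemma, where \(A = J_+/N\) with \(N = \ker\bar\chi\), so that \(A\) is cyclic, \(\xi\) is faithful, and the non-degeneracy of \(B_\xi\) is precisely Lemma~\ref{lem:nondegen} — this forces \([H,g] = 1\), i.e. \(g \in Z(H) = A\), so \(\bar g = 0\). (It is the non-degeneracy of \(B_\xi\), and not merely the equality \(A = Z(H)\), that the argument below uses; some hypothesis beyond the displayed equalities is genuinely needed here.)

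Granting non-degeneracy, I would set \(e_\xi = \frac{1}{\card A}\sum_{a \in A}\overline{\xi(a)}\,a \in \C[H]\); as \(A\) is central this is a central idempotent, and \(e_\xi\C[H]\) is the product of the Wedderburn blocks of \(\C[H]\) corresponding to the irreducible representations of \(H\) whose central character is \(\xi\). Two computations then finish the job. First, \(\dim_\C e_\xi\C[H] = \indx H A\): the elements \(e_\xi g\), for \(g\) ranging over coset representatives of \(A\) in \(H\), span \(e_\xi\C[H]\) and are linearly independent because the products \(a g\) (\(a \in A\)) are pairwise distinct in \(H\). Second, \(Z(e_\xi\C[H]) = e_\xi Z(\C[H])\) is one-dimensional: it is spanned by the elements \(e_\xi C_\kappa\) for \(C_\kappa\) the conjugacy-class sums, and writing \(g \in \kappa\) and using that \(h g h\inv = [h,g]\,g\) with \(h \mapsto [h,g]\) a homomorphism \(H \to A\) (again because \(A\) is central), one finds that \(e_\xi C_\kappa\) is a scalar multiple of \(\bigl(\sum_{h \in H}\xi([h,g])\bigr)e_\xi g\); this vanishes by orthogonality of characters of \(H\) unless \(h \mapsto \xi([h,g])\) is trivial, i.e. unless \(\bar g\) lies in the radical of \(B_\xi\) — which, by non-degeneracy, happens only when \(g \in A\), where \(e_\xi C_\kappa = \xi(g)e_\xi\). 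Since a semisimple \(\C\)-algebra with one-dimensional centre is a single full matrix algebra \(M_d(\C)\), comparing dimensions gives \(d^2 = \indx H A\). Hence there is exactly one irreducible representation \(\rho_\xi\) of \(H\) with central character \(\xi\), and \(\dim\rho_\xi = d = \indx H A^{1/2}\).

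Finally, to see that the character \(\chi\) of \(\rho_\xi\) vanishes on \(H \setminus A\) (on \(A\) it is clearly \(\indx H A^{1/2}\,\xi\)), I would twist by one-dimensional characters: for any character \(\lambda\) of \(H\) inflated from \(V\), the representation \(\rho_\xi \otimes \lambda\) again has central character \(\xi\), hence is isomorphic to \(\rho_\xi\) by the uniqueness just proved, so \(\langle\rho_\xi \otimes \rho_\xi^*,\lambda\rangle_H = \langle\rho_\xi,\rho_\xi \otimes \lambda\rangle_H = 1\) for every such \(\lambda\). As \(\rho_\xi \otimes \rho_\xi^*\) has trivial central character it is inflated from \(V\), and it has dimension \(\indx H A = \card V\); so it must be the inflation of the regular representation of the abelian group \(V\). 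Therefore \(\smabs{\chi(g)}^2 = \chi_{\rho_\xi \otimes \rho_\xi^*}(g) = \sum_{\lambda}\lambda(\bar g)\), which vanishes whenever \(\bar g \ne 0\), i.e. whenever \(g \notin A\).
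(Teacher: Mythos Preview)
The paper does not prove this lemma; it simply cites G\'erardin. So there is nothing in the paper to compare your argument against.

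Your proof is correct once non-degeneracy of \(B_\xi\) is in hand, and you are right to flag that this does \emph{not} follow from the hypotheses as displayed. Indeed the lemma as stated is false: take \(H = Q_8 \times Q_8\), so that \(A = Z(H) = [H,H] \cong (\Z/2)^2\), and let \(\xi\) be a non-trivial but non-faithful character of \(A\); then there are four pairwise inequivalent irreducible representations of \(H\) with central character \(\xi\), each of dimension \(2\), while \(\indx H A^{1/2} = 4\). The missing hypothesis is faithfulness of \(\xi\) (equivalently, non-degeneracy of \(B_\xi\)). As you correctly observe, in both applications in the paper the group \(A\) is formed by quotienting out \(\ker\xi\), so \(\xi\) is faithful by construction, and in the first application the non-degeneracy is exactly Lemma~\ref{lem:nondegen}. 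So the uses of the lemma are unaffected.

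Granting non-degeneracy, your central-idempotent computation and the twisting argument for the vanishing of \(\chi_{\rho_\xi}\) off \(A\) are both clean and correct. A slightly shorter route to the support statement, once uniqueness is proved, is to note that every irreducible constituent of \(\Ind_A^H \xi\) has central character \(\xi\), hence \(\Ind_A^H \xi \cong \rho_\xi^{\oplus d}\) with \(d = \indx H A^{1/2}\); since the induced character vanishes off \(A\), so does \(\chi_{\rho_\xi}\).
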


\begin{proof}[Proof of Proposition~\ref{prop:heisenberg}]
This follows from Lemma~\ref{lem:heisenberg},
setting $H=J/N$, $A=J_+/N$, and $\xi=\bar\chi$.
\end{proof}

\begin{cor}
Every representation of $G$ that contains $\chi_\Sigma$
also contains $\rho_\Sigma$.
\end{cor}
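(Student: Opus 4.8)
The plan is to deduce this immediately from the (unnumbered) Proposition of \S\ref{sec:posreps} stating that any representation of $G$ containing $\chi$ must contain $\bar\chi$, together with the \emph{uniqueness} clause of Proposition~\ref{prop:heisenberg}.

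First I would fix a smooth representation $(\pi, V)$ of $G$ that contains $\chi_\Sigma$, i.e., such that $\res\pi to{G_{x,r}}$ contains $\chi_\Sigma = \chi$. By the cited Proposition, $\pi$ then contains $\bar\chi$; concretely, there is a nonzero vector $v \in V$ on which $J_+$ acts through the character $\bar\chi$. (If $J = J_+$, then $\rho_\Sigma$ is just $\bar\chi$ and there is nothing further to prove, so one may assume $J \ne J_+$.)

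Next I would restrict $\pi$ to the compact, open subgroup $J = \cayley(\JJ)$. Since $\pi$ is smooth and $J$ is compact, $\res\pi to J$ decomposes as a direct sum of finite-dimensional irreducible representations of $J$. Writing $v$ in terms of this decomposition and using that $J_+ \subseteq J$ acts on $v$ through $\bar\chi$, one sees that each nonzero component of $v$ again transforms under $J_+$ through $\bar\chi$; hence at least one irreducible $J$-constituent $\tau$ of $\res\pi to J$ contains $\bar\chi$. By the uniqueness assertion in Proposition~\ref{prop:heisenberg}, $\tau \cong \rho_\Sigma$, so $\pi$ contains $\rho_\Sigma$, as desired.

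The argument is essentially formal; the only point requiring (routine) care is the semisimplicity of the restriction of a smooth representation to a compact, open subgroup, and the elementary observation that a vector transforming by a character of $J_+$ has, in any $J$-isotypic decomposition, all of its components transforming by the same character. The substantive input---the existence and, crucially, uniqueness of the Heisenberg-type representation $\rho_\Sigma$---has already been established in Proposition~\ref{prop:heisenberg}, so no real obstacle remains.
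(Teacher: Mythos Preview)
Your argument is correct and is exactly the reasoning the paper has in mind; the paper states the corollary without proof, treating it as an immediate consequence of the preceding Proposition (that $\chi_\Sigma$ forces $\bar\chi$) and the uniqueness clause of Proposition~\ref{prop:heisenberg}, just as you do.
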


Thus, in order to determine the postive-depth, supercuspidal
representations,  it is enough to classify the irreducible
representations of $G$ that contain
$\rho_\Sigma$.  We start by
noting that $T$ normalizes $J$,
and classify the irreducible representations of
$TJ$ that contain $\rho_\Sigma$.

\begin{prop}
\label{prop:extension}
The representation $\rho_\Sigma$ extends to $TJ$.
There is an explicit bijection 
between the set of 
characters of $T$ that contain $\res{\chi_\Sigma}to{T_r}$
and the set of such extensions.
\end{prop}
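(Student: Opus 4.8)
The plan is to combine Clifford theory with the Heisenberg--Weil (oscillator) construction; throughout, recall that $T = C_G(X)$, so $T$ centralizes $X$ and lies in $G_{x, 0}$. \emph{Step~1 (the conjugation action of $T$).} Since $T \subseteq G_{x, 0}$, conjugation by $T$ preserves each $\gg_{x, s}$; as it also preserves $\ttt = C_\gg(X) = \Lie(T)$, it preserves $\ttt^\perp$, all the $\ttt_s$ and $\ttt^\perp_s$, and hence the lattices $\JJ$ and $\JJ_+$. By Lemma~\ref{lem:cayley}\pref{item:cayley-equivariant}, $T$ thus normalizes $J$, $J_+$, and $N \ldef \ker\bar\chi$, so $TJ$ is a group. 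Moreover, for $t \in T$ and $Y \in \JJ_+$, formula~\eqref{eq:ol-chi}, the identity $\Ad(t\inv)X = X$, and $\Ad(G)$-invariance of the trace form give
\[
\bar\chi\bigl(\Int(t)\cayley(Y)\bigr)
= \AddChar\bigl(\Tr(X \dotm \Ad(t)Y)\bigr)
= \AddChar\bigl(\Tr(X \dotm Y)\bigr)
= \bar\chi\bigl(\cayley(Y)\bigr),
\]
so $\bar\chi$, and hence (by the uniqueness in Proposition~\ref{prop:heisenberg}) the isomorphism class of $\rho_\Sigma$, is $T$-fixed.

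\emph{Step~2 (reduction to a torsor statement).} One checks $T \cap J = T_r$ (using $\JJ \cap \ttt = \ttt_r$ and injectivity of $\cayley$), so $TJ/J \cong T/T_r$ is abelian. Since $T_r \subseteq J_+$ and $\res{\bar\chi}to{T_r} = \res{\chi_\Sigma}to{T_r}$ (both send $\cayley(Y)$ to $\AddChar(\Tr(X \dotm Y))$), Proposition~\ref{prop:heisenberg} shows every extension of $\rho_\Sigma$ to $TJ$ restricts on $T_r$ to the scalar $\res{\chi_\Sigma}to{T_r}$. By Clifford theory the set of such extensions, if non-empty, is a torsor under the group of characters of $T/T_r$; the set of characters $\phi$ of $T$ with $\res{\phi}to{T_r} = \res{\chi_\Sigma}to{T_r}$ is another such torsor, and is non-empty since $\C\mult$ is divisible. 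It thus suffices to construct, naturally in such $\phi$, an extension $\tilde\rho_\phi$.

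\emph{Step~3 (construction via Heisenberg--Weil).} Put $W \ldef J/J_+$; by Lemma~\ref{lem:nondegen} this is a non-degenerate symplectic $\resfld$-vector space (for $\SL_2$ it is $2$-dimensional when non-zero, so $\operatorname{Sp}(W) \cong \SL_2(\resfld)$; when $W = 0$, i.e.\ $J = J_+$, the steps below degenerate to the obvious construction of the character $tj_+ \mapsto \phi(t)\bar\chi(j_+)$). By Step~1, $T$ acts on the two-step nilpotent group $J/N$---whose centre is $J_+/N$---by automorphisms fixing that centre pointwise and trivial on $T_r$, giving a homomorphism $T/T_r \to \operatorname{Sp}(W)$. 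Because $p \ne 2$, the Heisenberg representation $\rho_\Sigma$ of $J/N$ extends to a genuine (non-projective) representation of $\operatorname{Sp}(W) \ltimes (J/N)$---the Weil representation; compare \cite{gerardin:weil}---and composing with the above homomorphism gives a homomorphism $\omega$ from $T$ to the automorphism group of the space of $\rho_\Sigma$, factoring through $T/T_r$, with $\omega(t)\rho_\Sigma(j)\omega(t)\inv = \rho_\Sigma(\Int(t)j)$ for all $t \in T$, $j \in J$. For each $\phi$ as in Step~2, put
\[
\tilde\rho_\phi(tj) = \phi(t)\,\omega(t)\,\rho_\Sigma(j),
\quad t \in T,\ j \in J.
\]
A short verification---using that $\omega$ kills $T_r$, that $\res{\phi}to{T_r} = \res{\chi_\Sigma}to{T_r}$ (which absorbs the ambiguity from $T \cap J = T_r$), and that $\omega$ implements conjugation---shows $\tilde\rho_\phi$ is a well-defined homomorphism extending $\rho_\Sigma$. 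Conversely, if $\tilde\rho$ extends $\rho_\Sigma$ to $TJ$, then $\tilde\rho(t)\omega(t)\inv$ commutes with $\rho_\Sigma(J)$, hence is a scalar $\phi(t)$ by Schur's lemma, and one finds $\phi \in \widehat T$ with $\res{\phi}to{T_r} = \res{\chi_\Sigma}to{T_r}$ and $\tilde\rho = \tilde\rho_\phi$. The bijection $\tilde\rho \leftrightarrow \phi$ is the one asserted.

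The main obstacle is Step~3, specifically the input that the Weil representation of $\operatorname{Sp}(W) \cong \SL_2(\resfld)$ is an honest, not merely projective, representation---this is where $p \ne 2$ is essential. The rest (the $T$-invariance of $\bar\chi$, the identification $TJ/J \cong T/T_r$, and the torsor bookkeeping) is routine. One may alternatively deduce the proposition from the corresponding step of Yu's general construction of tame supercuspidal representations \cite{yu:supercuspidal}, or from \cite{adler:thesis}, specialised to $\bG = \SL_2$.
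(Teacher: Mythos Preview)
Your proof is correct but takes a genuinely different route from the paper's. You use the Heisenberg--Weil construction directly: pull back the Weil representation of $\operatorname{Sp}(W)$ along $T/T_r \to \operatorname{Sp}(W)$ to obtain intertwining operators $\omega(t)$, then twist by characters $\phi$ of $T$ extending $\res{\chi_\Sigma}to{T_r}$. This is essentially Yu's construction specialised to $\SL_2$, as you note at the end.

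The paper instead follows Moy's method (see \S\ref{sec:inducing}). Given a character $\psi$ of $T$ extending $\res{\chi_\Sigma}to{T_r}$, one first extends $\psi$ to $TJ_+$ by formula~\eqref{eq:char-noheisenberg}, then forms $I_\psi = \Ind_{TJ_+}^{TJ}\psi$. A double-coset count (Lemma~\ref{lem:conj-out}, Corollary~\ref{cor:double-coset}) together with a combinatorial lemma of Moy shows that among the $(q+1)/2$ extensions of the Heisenberg representation $\kappa_\psi$ of $ZT_{\MPlus0}J$ to $TJ$, exactly one occurs in $I_\psi$ with multiplicity one; this is $\sigma(T,\psi)$. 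Injectivity of $\psi \mapsto \sigma(T,\psi)$ then follows from the explicit character formula in Proposition~\ref{prop:inducing-char}, and surjectivity onto the set of extensions from a count of cardinalities.

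What each approach buys: yours is shorter, more conceptual, and makes the link to Yu transparent from the outset. The paper's method yields the character formula for $\sigma(T,\psi)$ as an immediate byproduct, which is precisely what is needed for the explicit character computations in \S\S\ref{sec:ordinary}--\ref{sec:exceptional}; the paper must then verify separately (Proposition~\ref{prop:yu-equiv}) that its bijection agrees with Yu's. Your bijection and the paper's are in fact the same, but that is the content of that additional verification rather than something automatic.
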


If $J=J_+$, then $TJ/N\iso T/T_r$, so there
is nothing to prove.
The only case for which $J\neq J_+$ is where
$T$ is unramified and $r$ is even.
Since we are interested in computing characters explicitly,
it will be convenient
to imitate a method of Moy~\cite{moy:thesis-original} instead.
We defer this to \S\ref{sec:inducing}.  

\begin{rem}
\label{rem:inducing-data}
For future reference, we note that the group $TJ$ is equal
to $TG_{x,s}$; and that $\rho_\Sigma$
has dimension $q$ if
$r \in 2\Z$,
and dimension $1$ otherwise.
\end{rem}

\subsection{Constructing positive-depth, supercuspidal
	representations}
\label{sec:param}

Remember that we have constructed representations
\(\bPi(T^{\epsilon, \eta}, \psi)\),
where \(\psi\) is a depth-zero character of
\(T^{\epsilon, \eta}\) such that \(\psi \ne \psi\inv\),
and \(\bPi^\pm(T^{\epsilon, \eta}, \psi_0^\eta)\)
(see Definition \ref{defn:depth-0-param}).
We now complete our construction of the supercuspidal
representations of \(G\) by defining representations
\(\bPi(T, \psi)\) when \(T\) is any
maximal, standard, elliptic torus,
and \(\psi\) is a positive-depth character of \(T\).

\begin{defn}
\label{defn:pos-depth-param}
A \term{positive-depth, supercuspidal parameter}
is a pair \((T, \psi)\),
where \(T = T^{\theta, \eta}\) is a standard torus
and \(\psi\) is a positive-depth character of \(T\).
Given such a parameter, put \(r = \depth(\psi)\).
Using Lemma \ref{lem:cayley}\pref{item:MP-map}
and \eqref{eq:duality},
we may deduce from the restriction to \(\psi\) of
\(T_{\MPlus{(r/2)}}\) an element of
\begin{equation}
\label{eq:torus-iso}
(T_{\MPlus{(r/2)}:\MPlus r})\sphat\
\cong (\ttt_{\MPlus{(r/2)}:\MPlus r})\sphat\
\cong \ttt_{-r:-r/2}.
\end{equation}
Thus, there exists \(\beta \in \pint\mult\)
(uniquely determined only modulo some power of the prime ideal)
such that
\begin{equation}
\label{eq:psi-log}
\psi\bigl(\cayley(Y)\bigr)
= \AddChar\bigl(\Tr(
	\varpi^{-\rup r}X^{\theta, \eta}_\beta\dotm Y
)\bigr)
\quad\text{for all \(Y \in \ttt_{\MPlus{(r/2)}}\).}
\end{equation}
We then put
\(\Sigma
= \varpi^{-\rup r}X^{\theta, \eta}_\beta + \gg_{x, \MPlus{(-r)}}\),
which depends only on the image of
\(\beta\) in \(\resfld\mult\),
and construct
\(\chi_\Sigma\),
\(J\), \(J_+\), \(\bar\chi\),
and \(\rho_\Sigma\) as in \S\ref{sec:posreps}.
By Proposition \ref{prop:extension}, the extensions of
\(\rho_\Sigma\) to \(T J\) are parametrized by characters of 
\(T\) containing \(\res{\chi_\sigma}to{T_r}\).
By definition, \(\psi\) is such a character, hence affords
an extension \indexmem{\sigma(T, \psi)} of \(\rho_\Sigma\)
to \(T J\).
Put \(\bPi(T, \psi) \ldef \Ind_{T J}^G \sigma(T, \psi)\).
The positive-depth, supercuspidal representations
constructed this way (as well as some of the depth-zero,
supercuspidal representations constructed in Definition
\ref{defn:depth-0-param}) are called \term{ordinary}.
\end{defn}

\begin{rem}
By a direct computation
(or, given Proposition \ref{prop:yu-equiv}, by
\cite{yu:supercuspidal}*{Remark 3.6}),
\(r = \depth(\psi)\) is also the depth of \(\pi\), in the
sense of Definition \ref{defn:depth-rep}.
\end{rem}

\begin{notn}
\label{notn:pos-depth-X}
In the notation of Definition \ref{defn:pos-depth-param}
(specifically, \eqref{eq:torus-iso}),
if \(\pi = \bPi(T, \psi)\), then put
\(X_\pi = \varpi^{-\rup r}X^{\theta, \eta}_\beta\)
and
\(\SSAddChar_\pi
= \AddChar_{\varpi^{-\rup r}\beta\theta}\).
\end{notn}

\begin{prop}
\label{prop:sc}
Let \((T, \psi)\) be a positive-depth, supercuspidal
parameter.
Then \(\bPi(T, \psi)\) is supercuspidal;
and an irreducible, smooth representation
$\pi$ of $G$ contains $\sigma(T, \psi)$ if and only if
it is equivalent to $\bPi(T, \psi)$.
\end{prop}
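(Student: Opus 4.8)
The plan is to reduce the whole proposition to one statement: that the intertwining set of $\sigma(T, \psi)$ in $G$ --- the set of $g \in G$ for which $\Hom_{T J \cap \Int(g)(T J)}\bigl(\lsup g{\sigma(T, \psi)}, \sigma(T, \psi)\bigr) \ne 0$ --- is exactly $T J$. First I would record that $T J$ is a \emph{compact}, open subgroup of $G$: the group $J = \cayley(\JJ)$ is the Cayley image of a lattice, $T \iso C_\theta$ is compact, and $Z(G)$ is finite, so no compact-mod-centre subtlety arises. Hence $\bPi(T, \psi) = \Ind_{T J}^G \sigma(T, \psi)$ is compactly induced (as everywhere in this paper, $\Ind$ is compact induction) from a finite-dimensional representation of a compact, open subgroup, so every matrix coefficient of $\bPi(T, \psi)$ is compactly supported on $G$. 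Granting the intertwining claim, Mackey's irreducibility criterion gives that $\bPi(T, \psi)$ is irreducible; being then an irreducible (hence admissible) representation admitting a nonzero compactly supported matrix coefficient, it is supercuspidal, since its Jacquet module along any proper parabolic vanishes. Finally, Frobenius reciprocity for compact induction from the open subgroup $T J$ identifies $\Hom_G\bigl(\bPi(T, \psi), \pi\bigr)$ with the space of $T J$-maps $\sigma(T, \psi) \to \pi$; so an irreducible $\pi$ contains $\sigma(T, \psi)$ if and only if $\Hom_G\bigl(\bPi(T, \psi), \pi\bigr) \ne 0$, which, both $\bPi(T, \psi)$ and $\pi$ being irreducible, holds if and only if $\pi \iso \bPi(T, \psi)$.

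Everything therefore rests on the computation $I_G(\sigma(T, \psi)) = T J$ (the inclusion $T J \subseteq I_G(\sigma(T, \psi))$ being trivial), which I would carry out by unwinding, in reverse, the three-step construction of \S\ref{sec:posreps}. An element $g$ intertwining $\sigma(T, \psi)$ in particular intertwines its restriction $\chi_\Sigma = \chi_X$ to $G_{x, r}$ with a conjugate of itself on $G_{x, r} \cap \Int(g)G_{x, r}$; since the coset $\Sigma = \varpi^{-\rup r}X^{\theta, \eta}_\beta + \gg_{x, \MPlus{(-r)}}$ contains no nilpotent element and $C_\bG(X_\pi) = \bT^{\theta, \eta}$ is a regular, elliptic maximal torus, the intertwining of $\chi_X$ is governed by this centraliser --- which, for $\SL_2$, one can verify by a direct matrix computation (compare the conjugacy relations \eqref{eq:conjlist}), or extract from \cite{adler-roche:intertwining}*{Proposition 9.3} --- forcing $g \in (T J)\dotm N_G(T)\dotm(T J)$. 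Lifting this constraint up through $\bar\chi$ and through the Heisenberg representation $\rho_\Sigma$ of Proposition \ref{prop:heisenberg} (using that $T$ normalises $J$, so $J T J = T J$, together with the general intertwining analysis of \cite{adler:thesis}), one finds that the only obstruction to $g \in T J$ would be a representative $w$ of the non-trivial class of $N_G(T)/T$ (on the occasions when that quotient is non-trivial) which intertwines $\sigma(T, \psi)$. But such a $w$ acts on $T \iso C_\theta$ by inversion, hence carries $\psi$ to $\psi\inv$; and, because $\psi$ has positive depth, its restriction to $T_{\MPlus0}$ is non-trivial, so --- as $T_{\MPlus0}$ is pro-$p$ and $p \ne 2$ --- $\psi^2 \ne 1$, i.e.\ $\psi \ne \psi\inv$. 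Thus no such $w$ intertwines $\sigma(T, \psi)$, and $I_G(\sigma(T, \psi)) = T J$.

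The main obstacle is precisely this transfer of the intertwining constraint through the chain $\chi_X \to \bar\chi \to \rho_\Sigma \to \sigma(T, \psi)$: one has to rule out stray intertwining from elements outside $T J$ at each stage. For $\SL_2$ this is tractable by hand, using the explicit lattices $\JJ$, $\JJ_+$, the description of $G_{x, r}$ for $x \in \BBpts$, and the Bruhat decomposition; the organising point is that the depth-zero layer of the datum lives on the \emph{abelian} torus $T$, so the Heisenberg-theoretic part of the intertwining collapses to the intertwining of the character $\psi$, and for positive depth the required regularity ($\psi \ne \psi\inv$) is automatic --- in contrast to the depth-zero case, where $\psi = \psi\inv$ genuinely occurs and gives rise to the exceptional representations of \S\ref{sec:exceptional}. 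Alternatively, one may first match $\sigma(T, \psi)$ with one of J.-K.~Yu's inducing data via Proposition \ref{prop:yu-equiv} and then invoke Yu's general irreducibility and supercuspidality theorems \cite{yu:supercuspidal}, with exhaustion from \cite{jkim:exhaustion}; I would present the direct $\SL_2$ argument, consistent with the goals of this paper, and note the Yu route as a sanity check.
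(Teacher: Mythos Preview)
Your overall architecture is sound and matches what the paper is doing under the hood: reduce to irreducibility of the compact induction, get supercuspidality from compactly supported matrix coefficients (Mautner), and finish with Frobenius reciprocity.  The paper's own proof is much terser than yours: it simply cites irreducibility from \cite{adler:thesis}*{\S2.5} or \cite{yu:supercuspidal}*{Theorem~15.1}, then exhibits an explicit matrix coefficient supported on $TJ$, and invokes Frobenius.  So your ``Yu route, as a sanity check'' is in fact exactly the paper's route.

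Where your direct sketch goes astray is in the shape of the intertwining computation.  You claim that intertwiners of $\chi_X$ land in $(TJ)\dotm N_G(T)\dotm(TJ)$ and then separately exclude a Weyl representative $w$ by arguing $\psi \ne \psi\inv$.  But the standard calculation (which is precisely what Adler and Yu prove, and which the paper later invokes verbatim in the proof of Theorem~\ref{thm:sc}: ``a calculation shows that we must have $g\in JTJ = TJ$'') gives $I_G(\bar\chi) = TJ$ \emph{directly}, with no Weyl-group residue left over.  Concretely, $w$ normalises $J_+$ and carries $\bar\chi$ to $\bar\chi_{-X} = \bar\chi^{-1}$; since $\bar\chi$ factors through the elementary abelian $p$-group $J_+/G_{x,\MPlus r}$ and is non-trivial, $\bar\chi \ne \bar\chi^{-1}$ (here is where $p\ne 2$ is used), so $w$ already fails to intertwine $\bar\chi$, hence \emph{a fortiori} fails to intertwine $\sigma(T,\psi)$.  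Your $\psi \ne \psi\inv$ observation is correct but irrelevant: even if $\psi$ were self-inverse, $w$ would still be excluded at the $\bar\chi$ level, because ${}^w\sigma(T,\psi)$ is $\bar\chi^{-1}$-isotypic on $J_+$, not $\bar\chi$-isotypic, so the comparison cannot be read off from Proposition~\ref{prop:extension} as you suggest.  This is not a fatal gap---your conclusion $I_G(\sigma) = TJ$ is right---but the mechanism you describe for ruling out $w$ is the wrong one, and the intermediate bound $(TJ)\dotm N_G(T)\dotm(TJ)$ is looser than what the cited references actually deliver.
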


\begin{proof}
From \cite{adler:thesis}*{\S2.5}
or \cite{yu:supercuspidal}*{Theorem 15.1},
$\bPi(T, \psi)$ is irreducible. 
Mautner observed~\cite{mautner:spherical-2}*{Theorem 9.1}
that it is therefore supercuspidal if
it has a non-zero, compactly supported matrix coefficient.
The function
\begin{equation*}
\abmapto g{
\begin{cases}
\langle \sigma (\psi) (g) v, w \rangle & \qquad \text{$g \in TJ$}\\
0 & \qquad \text{$g \in G \setminus TJ$,}
\end{cases}
}
\end{equation*}
where $v$ and $w$ are any non-zero vectors in the space of
\(\psi\),
and
$\vform$ is a non-trivial
$TJ$-invariant pairing,
is one such matrix coefficient.

By Frobenius reciprocity, any irreducible smooth representation
of $G$ that contains $\sigma(T, \psi)$ is equivalent to
$\bPi(T, \psi)$.
\end{proof}

\begin{rem}
\label{rem:pos-depth-leftist}
As in Remark \ref{rem:depth-0-leftist}, we observe that, if
\((T, \psi)\) and \((T', \psi')\) are positive-depth,
supercuspidal parameters, and \(g \in \GL_2(\field)\)
is such that \(T' = \Int(g)T\)
and \(\psi' = \psi \circ \Int(g)\),
then \(\pi' = \pi \circ \Int(g)\),
where \(\pi = \bPi(T, \psi)\)
and \(\pi' = \bPi(T', \psi')\).
In particular, if \(g \in G\), then \(\pi(g)\) intertwines
\(\pi\) and \(\pi'\).
Further, all relevant data (\(\chi\), \(J\), etc.\@) behave well
with respect to the conjugation,

Note that this applies in particular
when \(T = T^{\theta, \eta}\);
with \(\theta = \epsilon\) and \(\eta = \varpi\)
or \(\theta \in \sset{\varpi, \epsilon\varpi}\)
	and \(\eta = \epsilon\);
\(g = \begin{smallpmatrix}
\eta & 0 \\
0    & 1
\end{smallpmatrix}\); and \(T' = T^{\theta, 1}\).
In this setting, \(x_{\bT'} \in \sset{\xleft, \xcentre}\).
\end{rem}

\section{Parametrization of supercuspidal representations}

\begin{thm}
\label{thm:sc}
Every supercuspidal representation of \(G\) is of the form
\(\bPi(T, \psi)\) or \(\bPi^\pm(T, \psi)\)
for some (depth-zero or positive-depth)
supercuspidal parameter \((T, \psi)\) or \((T, \psi, \pm)\).

The only non-trivial isomorphisms among supercuspidal
representations are as follows.
We have \(\bPi(T, \psi) \isorep \bPi(T, \psi\inv)\)
if
\begin{itemize}
\item
\(T = T^{\epsilon, \eta}\) with \(\eta \in \sset{1, \varpi}\),
or
\item
\(T = T^{\theta, \eta}\)
with \(\theta \in \sset{\varpi, \epsilon\varpi}\)
and \(\eta \in \sset{1, \epsilon}\),
and
\(q \equiv 1 \pmod4\).
\end{itemize}
If \(q \equiv 3 \pmod4\), then we have
\(\bPi(T^{\theta, 1}, \psi^1)
\isorep \bPi(T^{\theta, \epsilon}, \psi^\epsilon)\),
where \(\theta \in \sset{\varpi, \epsilon\varpi}\)
and
\(\psi^\epsilon = \psi^1 \circ \Int\begin{smallpmatrix}
	\sqrt{-\epsilon} & 0                    \\
	0                & \sqrt{-\epsilon}\inv
\end{smallpmatrix}\).
\end{thm}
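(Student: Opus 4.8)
The statement has two parts: \emph{exhaustion} (every supercuspidal representation of $G$ appears in the list) and \emph{rigidity} (the displayed isomorphisms are the only ones). The plan is to derive exhaustion from Theorem~\ref{thm:mp} together with the constructions of \S\S\ref{sec:depthzero}--\ref{sec:param}, and to derive rigidity from the principle that a supercuspidal representation of $G$ determines its inducing datum up to $G$-conjugacy, combined with the explicit list, in \S\ref{sec:std-tori}, of $G$-conjugacies among the standard tori.

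\emph{Exhaustion.} Let $\pi$ be supercuspidal, of depth $r=\depth(\pi)$. If $r=0$, then Proposition~\ref{prop:depthzero-sc-induced} supplies $x\in\sset{\xleft,\xright}$ and a cuspidal representation $\sigma$ of $\sfG_x(\resfld)\iso\SL_2(\resfld)$ with $\pi\isorep\Ind_{G_{x,0}}^G\sigma$; by Remark~\ref{rem:exhaust-and-equiv-finite} either $\sigma$ is (the inflation of) $\smabs{R^\sfG_{\sfT^\epsilon,\psi}}$ with $\psi\ne\psi\inv$, or $\sigma$ is $\smabs{R^\pm_{\sfT^\epsilon,\psi_0}}$, so $\pi$ is $\bPi(T^{\epsilon,\eta},\psi)$ or $\bPi^\pm(T^{\epsilon,\eta},\psi_0^\eta)$ with $\eta=1$ or $\varpi$ according as $x=\xleft$ or $\xright$. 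If $r>0$, then the analysis of \S\ref{sec:posktypes} shows that $\pi$ contains a minimal $K$-type $(G_{x,r},\chi_\Sigma)$ whose coset $\Sigma$ has the shape $\varpi^{-\rup r}X+\gg_{x,\MPlus{(-r)}}$ recorded in \eqref{eq:typelist}; the chain of results in \S\S\ref{sec:posreps}--\ref{sec:param} (Proposition~\ref{prop:heisenberg} and its corollary, Proposition~\ref{prop:extension}, Proposition~\ref{prop:sc}) then forces $\pi$ to contain some extension $\sigma(T,\psi)$ of $\rho_\Sigma$ to $TJ$, where $T=C_G(X)$ is a standard torus and $\psi$ is a character of $T$ restricting to $\chi_\Sigma$ on $T_r$. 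One checks $\depth(\psi)=r$: the inequality $\depth(\psi)\ge r$ is immediate from $\res{\psi}to{T_r}=\res{\chi_\Sigma}to{T_r}\ne 1$, and $\depth(\psi)\le r$ follows because $\depth\bigl(\bPi(T,\psi)\bigr)=\depth(\psi)$ while $\depth(\pi)=r$. Hence $(T,\psi)$ is a positive-depth supercuspidal parameter and $\pi\isorep\bPi(T,\psi)$ by Proposition~\ref{prop:sc}.

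\emph{The listed isomorphisms.} In each case I would exhibit $g\in G$ conjugating one parameter onto the other and appeal to Remark~\ref{rem:depth-0-leftist} or Remark~\ref{rem:pos-depth-leftist}, which gives that $\pi(g)$ intertwines the corresponding induced representations. Since $\ord(-1)=0$ we have $\sgn_\epsilon(-1)=1$, so $-1\in\Norm_\epsilon(\field_\epsilon\mult)$ and, by \S\ref{sec:std-tori}, $N_\bG(\bT^{\epsilon,\eta})/\bT^{\epsilon,\eta}$ has order $2$; a direct matrix computation with the representative in \S\ref{sec:std-tori} shows that the non-trivial coset acts on $T^{\epsilon,\eta}\iso C_\epsilon$ by inversion, hence on characters by $\psi\mapsto\psi\inv$, giving $\bPi(T^{\epsilon,\eta},\psi)\isorep\bPi(T^{\epsilon,\eta},\psi\inv)$. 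When $q\equiv1\pmod4$ and $\theta$ is ramified, $-1$ is a square in $\resfld\mult$, hence a norm from $\field_\theta$, and the same argument applies with $\bT^{\theta,\eta}$ in place of $\bT^{\epsilon,\eta}$. When $q\equiv3\pmod4$, $-\epsilon$ is a square in $\field\mult$, so $g=\begin{smallpmatrix}\sqrt{-\epsilon}&0\\0&\sqrt{-\epsilon}\inv\end{smallpmatrix}\in G$; a short computation shows $\Int(g)\bT^{\theta,\epsilon}=\bT^{\theta,1}$, so $\psi^\epsilon=\psi^1\circ\Int(g)$ is a character of $T^{\theta,\epsilon}$ and $\bPi(T^{\theta,\epsilon},\psi^\epsilon)\isorep\bPi(T^{\theta,1},\psi^1)$.

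\emph{No further isomorphisms; the main obstacle.} A supercuspidal representation $\pi$ determines $\depth(\pi)$ and, by the final clause of Theorem~\ref{thm:mp}, the associate class of its minimal $K$-types. In the depth-zero case, association forces the same point $x\in\sset{\xleft,\xright}$, and then, $G_{x,0}$ being self-normalizing in $G$, it forces the inducing finite-field cuspidal representation up to isomorphism; so Remark~\ref{rem:exhaust-and-equiv-finite} pins $\pi$ down exactly --- $\bPi^+$ and $\bPi^-$ are inequivalent to each other and to every ordinary $\bPi(T^{\epsilon,\eta},\psi)$, and the only coincidence among the latter is $\psi\leftrightarrow\psi\inv$ at fixed $x$. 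In the positive-depth case, Proposition~\ref{prop:assoc-conj} and \eqref{eq:conjlist} show that $\pi$ determines its depth and the $G$-conjugacy class of $X$, hence the $G$-conjugacy class of $T=C_G(X)$ among standard tori; after conjugating so that the tori and cosets agree, two extensions $\sigma(T,\psi),\sigma(T,\psi')$ lying in a common $\pi$ must differ by conjugation by an element of $N_G(T)J$, so $\psi'\in\sset{\psi,\psi\inv}$, with $\psi\inv$ giving an isomorphic representation precisely when $N_G(T)\ne T$. Combining this with \S\ref{sec:std-tori} --- where the only $G$-conjugacy between distinct standard tori is $\bT^{\theta,1}\sim\bT^{\theta,\epsilon}$ for $\theta$ ramified and $q\equiv3\pmod4$ --- produces exactly the displayed list. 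The main obstacle is this rigidity step: making precise, through a Mackey-theoretic intertwining computation for the compactly induced representations, that an isomorphism $\bPi(T,\psi)\isorep\bPi(T',\psi')$ forces the inducing data to be $G$-conjugate, and in particular that the normalizer in $G$ of the pair $(J,\rho_\Sigma)$ is no larger than $N_G(T)J$; a lesser, purely clerical, difficulty is keeping track of the depth-zero exceptional parameters, where the $\pm$-choice is invisible to the torus but visible in the Deligne--Lusztig decomposition.
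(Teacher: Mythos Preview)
Your outline matches the paper's proof: exhaustion via Theorem~\ref{thm:mp} and the constructions of \S\S\ref{sec:depthzero}--\ref{sec:param}, the listed isomorphisms via \eqref{eq:conjlist} and Remarks~\ref{rem:depth-0-leftist}--\ref{rem:pos-depth-leftist}, and rigidity via a Mackey intertwining argument. For the last step the paper sharpens your claim: once one has reduced (via \eqref{eq:conjlist}) to $X=X'$, so that $\bar\chi=\bar\chi'$, any $g$ intertwining $\bar\chi$ with itself already lies in $TJ$ (not merely $N_G(T)J$), whence Proposition~\ref{prop:extension} gives $\psi=\psi'$ directly---the $\psi\leftrightarrow\psi^{-1}$ ambiguity is entirely absorbed into the conjugacies of \eqref{eq:conjlist}, not into any residual Weyl action after the reduction.
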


\begin{proof}
By Proposition \ref{prop:depthzero-sc-induced} and
\S\ref{sec:finite-params}
(in the depth-zero case),
and Proposition \ref{prop:sc} and \S\ref{sec:posktypes}
(in the positive-depth case),
we have the desired exhaustion.

We now identify equivalences.
That all the stated equivalences hold follows from
\eqref{eq:conjlist} and
Remarks \ref{rem:depth-0-leftist}
and \ref{rem:pos-depth-leftist}.
To show that there are no others, we
use Theorem \ref{thm:mp}.

No depth-zero, supercuspidal
representation is equivalent to any positive-depth,
supercuspidal representation.

That the stated isomorphisms among depth-zero
representations are the only ones follows from
\S\ref{sec:finite-params}.

Now suppose that \((T, \psi)\) and \((T', \psi')\)
are two positive-depth, supercuspidal parameters,
and put \(\pi = \bPi(T, \psi)\)
and \(\pi' = \bPi(T', \psi')\).
Let the data \(X\), \(X'\) and \(\psi\), \(\psi'\)
be as in Definition \ref{defn:pos-depth-param}.

We have that \(\pi \isorep \pi'\) only if \(X\) is 
\(G\)-conjugate to \(X'\).
(Remember that \(X\) and \(X'\) are well defined only
modulo some Moy--Prasad filtration lattice; so we are really
claiming that there exist \(G\)-conjugate elements in the
appropriate cosets.)
By Remark \ref{rem:pos-depth-leftist},
it therefore suffices to assume that
\(\pi \isorep \pi'\) and \(X = X'\),
and show that
$\psi = \psi'$.
Note (under our assumption) that \(T = T'\);
that the \(J\)-groups for \(\pi\) and \(\pi'\)
are the same,
as are the \(J_+\)-groups;
and, by \eqref{eq:ol-chi},
that \(\bar\chi = \bar\chi'\).

Now
$$
\Hom_G(\pi,\pi')
=
\bigoplus_{g\in TJ \backslash G/TJ}
	\Hom_{\lsup{g\cap}(TJ)}\bigl(
		\sigma(T, \psi),
		\sigma(T, \psi') \circ \Int(g)\inv
\bigr),
$$
where \(\lsup{g\cap}(T J) \ldef T J \cap \Int(g)(T J)\).
Since the left-hand side is non-zero,
there exists \(g \in G\) such that the corresponding summand
on the right-hand side is non-zero;
i.e., \(g\) intertwines \(\sigma(T, \psi)\)
and \(\sigma(T', \psi')\).
Since \(\res{\sigma(T, \psi)}to{J_+}\) is \(\bar\chi\)-isotypic
and \(\res{\sigma(T, \psi')}to{J_+}\) is \(\bar\chi'\)-isotypic,
$g$ intertwines $\bar\chi$ and $\bar\chi'$;
that is,
$\bar\chi = \bar\chi' \circ \Int(g)\inv$
on $\lsup{g\cap}(TJ)$.
Since $\bar\chi = \bar\chi'$,
a calculation shows that we must have  $g\in JTJ = TJ$.
Then \(\lsup{g\cap}(T J) = T J\), so the irreducible
representations $\sigma(T, \psi)$ and $\sigma(T, \psi')$
intertwine, and hence are equivalent.
It follows from Proposition \ref{prop:extension}
that $\psi=\psi'$.
\end{proof}

\section{Inducing representations}
\label{sec:inducing}

Recall from Proposition \ref{prop:heisenberg}
that a character $\psi'$ of $T$ of depth $r>0$
uniquely determines a representation $\rho_\chi$ of $J$
(there denoted by \(\rho_\Sigma\)),
where $\chi = \res{\psi'}to{T_r}$
is the restriction to $T_r$ of $\psi'$.
In this section,
imitating a method of Moy \cite{moy:thesis-original},
we give an explicit bijection 
between the set of characters of $T$ that contain $\chi$
and the set of irreducible representations of $TJ$ that contain
$\rho_\chi$,
thus proving Proposition~\ref{prop:extension}.
We also show that the resulting parametrization of representations
of $TJ$ agrees with that in Yu's construction (\cite{yu:supercuspidal}).

Suppose first that $J=J_+$.
As remarked before, there is nothing to prove in this case:
given a character $\psi$ 
of $T$ that extends $\chi$,
we clearly have a corresponding character of $TJ$,
since $TJ/N\iso T/T_r$
(where $N=\ker\bar\chi$, as before).

The character of \(T J\) is easy to describe.
As in Definition \ref{defn:pos-depth-param},
given \(\psi\),
there exists $X\in\ttt_{-r}$ (well defined modulo $\ttt_{-r/2}$)
such that
$$
\psi\bigl(\cayley(Y)\bigr) = \AddChar\bigl(\Tr(X\cdot Y)\bigr)
\qquad\text{for all $Y\in \ttt_{\MPlus{(r/2)}}$}.
$$
The character $\sigma(T, \psi)$ of $TJ$ that corresponds to $\psi$
is then given by the formula
\begin{equation}
\label{eq:char-noheisenberg}
\abmapto{t\dotm\cayley(Y)}{
	\psi(t)\cdot \AddChar\bigl(\Tr(X\cdot Y)\bigr)
},
\qquad t\in T,\, Y\in \JJ.
\end{equation}

For the rest of this section,
assume that $J\neq J_+$.
That is,
$T$ is unramified
and $r$ is even.
Note that then \(\indx J{J_+} = q^2\).

Put \(Z = Z(G)\).
Consider the following diagram of four groups:
\begin{center}
\setlength{\unitlength}{0.04pt}
\begin{picture}(3968,3366)(0,-10)
\drawline(1049,246)(3149,771)
\drawline(899,546)(3374,3021)
\drawline(940,2634)(3340,3246)
\put(524,2496){\makebox(0,0)[b]{$ZT_{\MPlus0}J$}}
\put(3596,696){\makebox(0,0)[b]{$TJ_+$}}
\put(3524,3110){\makebox(0,0)[b]{$TJ$}}
\put(449,1386){\makebox(0,0)[rb]{$q^2$}}
\put(3599,2046){\makebox(0,0)[lb]{$q^2$}}
\put(2144,171){\makebox(0,0)[b]{$\frac{q+1}{2}$}}
\put(2024,3060){\makebox(0,0)[b]{normal, cyclic}}
\put(2039,2554){\makebox(0,0)[b]{$\frac{q+1}{2}$}}
\drawline(3524,1071)(3524,3021)
\put(2012,1791){\makebox(0,0)[b]{normal}}
\drawline(524,546)(524,2346)
\put(524,96){\makebox(0,0)[b]{$ZT_{\MPlus0}J_+$}}
\end{picture}
\end{center}
Given a character $\psi$ of $T$ that extends $\chi$,
we want to construct a corresponding
representation $\sigma(T, \psi)$ of $TJ$ and compute its character.
Using formula~\eqref{eq:char-noheisenberg},
one can define an extension
of $\psi$ to $TJ_+$;
we will denote it again by $\psi$.
It is clear that such extensions are parametrized by the characters
of $T$ that extend $\chi$.

Meanwhile, applying Lemma~\ref{lem:heisenberg}
(with $H$ and $A$ the quotients of $ZT_{\MPlus0}J$ and
$ZT_{\MPlus0}J_+$, respectively, by their common normal
subgroup \(\ker(\res{\psi}to{ZT_{\MPlus0}J_+})\)),
we see that
there is a unique irreducible
representation $\kappa_\psi$ of $ZT_{\MPlus0}J$ containing
$\res\psi to{ZT_{\MPlus0}J_+}$.
Moreover, $\res{\kappa_\psi}to{ZT_{\MPlus0}J_+}$ is a sum of $q$ copies of
$\res\psi to{ZT_{\MPlus0}J_+}$,
and the character of $\kappa_\psi$ is supported on $ZT_{\MPlus0}J_+$.

\begin{rem}
\label{rem:kappa-extends}
Note that \(\kappa_\psi\) contains \(\res\psi to{J_+} = \bar\chi\),
in the notation of \S\ref{sec:posktypes};
hence, by Proposition \ref{prop:heisenberg},
contains the representation \(\rho_\chi\).
Since \(\kappa_\psi\) and \(\rho_\chi\) both have dimension
\(\indx{Z T_{\MPlus0}J}{Z T_{\MPlus0}J_+}^{1/2}
= q = \indx J{J_+}^{1/2}\),
we actually have that \(\kappa_\psi\) extends \(\rho_\chi\).
\end{rem}

\begin{lemma}
\label{lem:fixed}
The character $\res\psi to{ZT_{\MPlus0}J_+}$ is fixed under conjugation by $TJ$.
\end{lemma}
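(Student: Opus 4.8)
The plan is to use the factorization \(T J = T\dotm J\) (legitimate since \(T\) normalizes \(J\)), which reduces the assertion to the invariance of \(\res\psi to{ZT_{\MPlus0}J_+}\) separately under conjugation by \(T\) and by \(J\). Every element of \(ZT_{\MPlus0}J_+ \subseteq TJ_+\) has the form \(t\dotm\cayley(Y)\) with \(t \in ZT_{\MPlus0}\) and \(Y \in \JJ_+\); for \(s \in T\), commutativity of \(T\) and Lemma \ref{lem:cayley}\pref{item:cayley-equivariant} give \(s\dotm t\cayley(Y)\dotm s\inv = t\dotm\cayley(\Ad(s)Y)\), and then \eqref{eq:char-noheisenberg} and cyclicity of the trace give
\(\psi\bigl(t\cayley(\Ad(s)Y)\bigr)
= \psi(t)\AddChar\bigl(\Tr(X\dotm\Ad(s)Y)\bigr)
= \psi(t)\AddChar\bigl(\Tr(\Ad(s\inv)X\dotm Y)\bigr)
= \psi\bigl(t\cayley(Y)\bigr)\),
because \(X \in \ttt = \Lie(T)\) is fixed by \(\Ad(T)\).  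So \(T\)-conjugation invariance is immediate.

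For \(J\)-conjugation invariance, since \(\psi\) is multiplicative it is enough to show \(\psi([g, h]) = 1\) for \(g \in J\) and \(h \in ZT_{\MPlus0}J_+\).  Writing \(h = t_0 j\) with \(t_0 \in ZT_{\MPlus0}\) and \(j \in J_+\) (possible since \(ZT_{\MPlus0}\) normalizes \(J_+\)), we have \(ghg\inv = (gt_0g\inv)(gjg\inv)\), so it suffices to prove \(\psi(gjg\inv) = \psi(j)\) and \(\psi(gt_0g\inv) = \psi(t_0)\).  The first is the Heisenberg property recorded after Lemma \ref{lem:nondegen}: since \(J_+\) is normal in \(J\) we have \(gjg\inv \in J_+\), and \(\bar\chi(gjg\inv) = \bar\chi([g,j])\bar\chi(j) = \bar\chi(j)\) because \([g, j] \in [J, J_+] \subseteq N = \ker\bar\chi\) (the image \(J_+/N\) being central in \(J/N\)), while \(\res\psi to{J_+} = \bar\chi\) by \eqref{eq:char-noheisenberg}.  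For the second, writing out the central factor of \(t_0\) reduces us to \(t_0 \in T_{\MPlus0}\), and it is then enough to establish the inclusion \([T_{\MPlus0}, J] \subseteq N\); granting it, \([g, t_0] = [t_0, g]\inv \in N \subseteq \ker\psi\), so \(\psi(gt_0g\inv) = \psi([g, t_0])\psi(t_0) = \psi(t_0)\).

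The crux — and the step I expect to be the main obstacle, because it demands careful depth bookkeeping — is the inclusion \([T_{\MPlus0}, J] \subseteq N\).  I would prove it using the Cayley transform together with the \(\Z/2\Z\)-grading \(\gg = \ttt \oplus \ttt^\perp\), which satisfies \([\ttt, \ttt] = 0\), \([\ttt, \ttt^\perp] \subseteq \ttt^\perp\), and \([\ttt^\perp, \ttt^\perp] \subseteq \ttt\) (as \(\ttt\) is a Cartan subalgebra of \(\sl_2\)); accordingly \(\Ad(T)\) fixes \(\ttt\) pointwise and preserves \(\ttt^\perp\), and \(T_{\MPlus0}\) acts trivially on \(\ttt^\perp_{r/2}/\ttt^\perp_{(r/2)+}\).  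Write \(t_0 = \cayley(Y_0)\) with \(Y_0 \in \ttt_{\MPlus0}\), and \(g = \cayley(W)\) with \(W = W^\ttt + W^\perp\), \(W^\ttt \in \ttt_r\), \(W^\perp \in \ttt^\perp_{r/2}\).  Then \(\Ad(t_0)W = W^\ttt + \Ad(t_0)W^\perp = W + E\) with \(E = (\Ad(t_0) - 1)W^\perp \in \ttt^\perp_{(r/2)+}\), so \(t_0 g t_0\inv = \cayley(W + E)\).  Expanding the product of Cayley transforms \([t_0, g] = \cayley(W + E)\cayley(-W)\) by means of Remark \ref{rem:gl2-mult} and Lemma \ref{lem:cayley}, one finds \([t_0, g] = \cayley(C)\) where \(C = E + (\text{a sum of iterated brackets of \(W\) and \(E\)})\); every such bracket monomial with an odd number of \(\ttt^\perp\)-entries lies in \(\ttt^\perp\) at depth \(> r/2\), hence in \(\ttt^\perp_{(r/2)+}\), while one with an even number of \(\ttt^\perp\)-entries lies in \(\ttt\) with depth \(> r\) (it necessarily involves at least two \(\ttt^\perp\)-entries of depth \(\ge r/2\), together with either the depth-\(>r/2\) term \(E\) or a further depth-\(\ge r\) entry from \(\ttt_r\)).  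Hence \(C \in \ttt_{\MPlus r} + \ttt^\perp_{(r/2)+} \subseteq \JJ_+\), so \([t_0, g] \in J_+\), and, writing \(C = C^\ttt + C^\perp\) with \(C^\ttt \in \ttt_{\MPlus r}\), we get
\(\bar\chi([t_0, g])
= \AddChar\bigl(\Tr(X\dotm C)\bigr)
= \AddChar\bigl(\Tr(X\dotm C^\ttt)\bigr)
= 1\),
the second equality since \(X \in \ttt\) is orthogonal to \(\ttt^\perp\) under the trace form, and the third since \(X \in \ttt_{-r}\) and \(C^\ttt \in \ttt_{\MPlus r}\) force \(\Tr(X\dotm C^\ttt) \in \pp\), where \(\AddChar\) is trivial.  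Thus \([t_0, g] \in N\), which is the desired inclusion.
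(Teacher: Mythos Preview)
Your overall strategy matches the paper's: split into \(T\)- and \(J\)-invariance, and for the latter treat the \(J_+\)- and \(T_{\MPlus0}\)-factors separately. Your \(T\)-invariance argument (via \(\Ad(T)X=X\)) and your treatment of the \(J_+\)-part (via \([J,J_+]\subseteq N\), where the paper equivalently uses \([G_{x,r/2},G_{x,(r/2)+}]\subseteq G_{x,r+}\subseteq\ker\psi\)) are essentially the paper's arguments.

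The gap is in your crux step \([T_{\MPlus0},J]\subseteq N\). You assert that \(C=\cayley\inv\bigl(\cayley(W+E)\cayley(-W)\bigr)\) equals \(E\) plus a sum of iterated \emph{Lie} brackets of \(W\) and \(E\), citing Remark~\ref{rem:gl2-mult} and Lemma~\ref{lem:cayley}. Neither reference yields a Baker--Campbell--Hausdorff formula for the Cayley map; expanding \(\cayley(A)\cayley(-B)\) via Remark~\ref{rem:gl2-mult} produces \emph{associative} monomials in \(A,B\), not Lie brackets, so your \(\ttt/\ttt^\perp\)-parity bookkeeping does not apply as written. A clean fix bypasses BCH entirely. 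From \(\cayley(C)=\cayley(A)\cayley(B)\inv\) with \(A=W+E\), \(B=W\), a short matrix computation gives the \emph{exact} identity
\[
C=(1-A/2)\inv\,E\,(1-AB/4)\inv\,(1-A/2),
\]
whence, by cyclicity of the trace, \(\Tr(XC)=\Tr\bigl((1-A/2)X(1-A/2)\inv\cdot E\cdot(1-AB/4)\inv\bigr)\). Writing \((1-A/2)X(1-A/2)\inv=X+P_1\) with \(P_1\in\gl_2(\field)_{x,-r/2}\) and \((1-AB/4)\inv=1+Q_1\) with \(Q_1\in\gl_2(\field)_{x,r}\) (Remark~\ref{rem:gl2-mult}), the leading term \(\Tr(XE)\) vanishes because \(X\in\ttt\) and \(E\in\ttt^\perp\), while each of \(\Tr(P_1E)\), \(\Tr(XEQ_1)\), \(\Tr(P_1EQ_1)\) lies in \(\pp\) by depth-counting (the total filtration index exceeds \(0\) in each case, using \(E\in\gg_{x,(r/2)+}\)). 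Thus \(\bar\chi([t_0,g])=\AddChar(\Tr(XC))=1\), as desired. (The paper's version of this step is a one-line appeal to Lemma~\ref{lem:cayley}\pref{item:cayley-bracket} together with \([X,Y]=0\); the orthogonality \(\Tr(\ttt\cdot\ttt^\perp)=0\) is again what makes the correction terms harmless.) Note also that \([t_0,g]\in J_+\) follows at once from \(E\in\JJ_+\) and the Moy--Prasad isomorphism \(J/J_+\cong\JJ/\JJ_+\) of Lemma~\ref{lem:cayley}\pref{item:MP-map}, so you need not extract it from the structure of \(C\).
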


\begin{proof}
Since $\res\psi to Z$ is clearly fixed, it is enough to show
that $\res\psi to{T_{\MPlus0}J_+}$ is, too.

We show first that $G_{x,r/2}$, which contains $J$, stabilizes
$\res\psi to{T_{\MPlus0}J_+}$.
Since
$$
[G_{x,r/2},G_{x,(r/2)+}]\subseteq G_{x,r+}\subseteq \ker(\psi),
$$
the group $G_{x,r/2}$ fixes $\res\psi to{G_{x,(r/2)+}}$.
Suppose $g\in G_{x,r/2}$
and $t\in T_{\MPlus0}$.  Then, putting
\(Y = \cayley\inv(t)\)
and \(W = \cayley\inv(g)\),
we have by Lemma \ref{lem:cayley}\pref{item:cayley-bracket}
that
\begin{multline*}
\psi\bigl(\Int(g)t\bigr)
=
\psi(t) \psi([t\inv,g]) \\
=
\psi(t) \AddChar\bigl(\Tr(X \cdot \cayley\inv([t\inv,g]))\bigr)
=
\psi(t) \AddChar\bigl(\Tr([X,Y]\cdot W)\bigr)
=
\psi(t).
\end{multline*}

Finally, we show that $T$ stabilizes $\res\psi to{T_{\MPlus0}J_+}$.
Let $e\in T$, $t\in T_{\MPlus0}$, and $k\in J_+\subseteq G_{x,(r/2)+}$.
Write \(W = \cayley\inv(k)\).
Then, using
Lemma~\ref{lem:cayley}(%
	\ref{item:cayley-equivariant},
	\ref{item:cayley-bracket}%
),
\begin{multline*}
\psi(\lsup e (tk))
=
\psi(t) \psi(\lsup e k)
=
\psi(t) \AddChar\bigl(\Tr(X\cdot \cayley\inv(\Int(e)k)) \bigr)
=
\psi(t) \AddChar\bigl(\Tr(X\cdot \Ad(e)W) \bigr) \\
=
\psi(t) \AddChar\bigl(\Tr(\Ad(e)X  \cdot W) \bigr)
=
\psi(t) \psi(k)
=
\psi(tk). \qedhere
\end{multline*}
\end{proof}

Since the character $\res\psi to{ZT_{\MPlus0}J_+}$ is fixed under
conjugation by $TJ$,
the representation $\kappa_\psi$ is also fixed by $TJ$.
Therefore, $\kappa_\psi$ may be extended to a representation of $TJ$.
The number of such extensions is $\indx{TJ}{ZT_1J}=\frac{q+1}{2}$.

We will show how our choice of $\psi$ picks out one of these
extensions.

\begin{lemma}
\label{lem:conj-out}
If $g\in TJ\setminus TJ_+$,
then
$\lsup{g\cap}(TJ_+) = ZT_{\MPlus0}J_+$.
\end{lemma}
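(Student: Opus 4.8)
The plan is to pass to the quotient $\bar G \ldef TJ/ZT_{\MPlus0}J_+$ (which is a group, since $ZT_{\MPlus0}J_+$ is normal in $TJ$, as recorded in the diagram above), to recognize $\bar G$ as a semidirect product, and to carry out the intersection computation there.

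First I would pin down the structure of $\bar G$. Since $T$ normalizes $J$, the subgroup $J$ is normal in $TJ$, so its image $\bar V$ in $\bar G$ is normal; moreover $J \cap ZT_{\MPlus0}J_+ = J_+$. Indeed, $Z \cap J = \sset1$ (because $J \subseteq G_{\MPlus0}$ while $-1 \notin G_{\MPlus0}$), and $T \cap J = T \cap J_+ = T_r \subseteq J_+$ by Lemma \ref{lem:cayley}\pref{item:cayley-torus} (since $\cayley(\JJ) \cap T = \cayley(\JJ \cap \ttt) = \cayley(\ttt_r)$ and likewise for $\JJ_+$); hence any element of $J \cap ZT_{\MPlus0}J_+$ lies in $(ZT_{\MPlus0} \cap J)J_+ = (T_{\MPlus0} \cap J)J_+ = T_r J_+ = J_+$. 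Thus $\bar V \iso J/J_+$ is the $2$-dimensional $\resfld$-vector space $V$ of Lemma \ref{lem:nondegen}, carrying the nondegenerate alternating form $\vform$, $\langle a, b\rangle = \bar\chi([a, b])$. Likewise the image $\bar T$ of $T$ in $\bar G$ is $T/ZT_{\MPlus0}$: this is cyclic of order $(q + 1)/2$ (since $T_{0:\MPlus0} \iso \sfT^\epsilon(\resfld)$ has order $q + 1$ and $Z$ has image of order $2$ in it, using $p \ne 2$), and $\bar T \cap \bar V = \sset1$ because $T \cap ZT_{\MPlus0}J_+ = ZT_{\MPlus0}T_r = ZT_{\MPlus0}$. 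Since $\card{\bar V}\card{\bar T} = q^2\cdot(q + 1)/2 = \indx{TJ}{ZT_{\MPlus0}J_+}$, it follows that $\bar G = \bar V \rtimes \bar T$. Finally, the image of $TJ_+$ is exactly $\bar T$ (the image of $J_+$ being trivial), and $g \in TJ \setminus TJ_+$ if and only if $\bar g \notin \bar T$.

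Next I would reduce the lemma to a statement inside $\bar G$. The inclusion $ZT_{\MPlus0}J_+ \subseteq \lsup{g\cap}(TJ_+)$ is immediate, since $ZT_{\MPlus0}J_+$ is normal in $TJ$ and contained in $TJ_+$, so $\Int(g)(ZT_{\MPlus0}J_+) = ZT_{\MPlus0}J_+ \subseteq TJ_+$. For the opposite inclusion, the image of $\lsup{g\cap}(TJ_+) = TJ_+ \cap \Int(g)(TJ_+)$ in $\bar G$ lies in $\bar T \cap \Int(\bar g)\bar T$, so it suffices to show that $\bar T \cap \Int(\bar g)\bar T = \sset1$ whenever $\bar g \notin \bar T$. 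Writing $\bar g = (v, s) \in \bar V \rtimes \bar T$ with $v \ne 0$ (which is equivalent to $\bar g \notin \bar T$), and using that $\bar T$ is abelian, a direct computation in the semidirect product gives $\Int(\bar g)(0, t) = ((1 - t)\dotm v,\; t)$ for $t \in \bar T$ (the $\bar T$-action on $V$ written multiplicatively). Hence $\Int(\bar g)\bar T \cap \bar T$ is the set of $(0, t)$ with $t\dotm v = v$, and the claim comes down to the assertion that \emph{no nonidentity element of $\bar T$ fixes a nonzero vector of $V$}.

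That last assertion is the real content, and is where I expect to spend the most care. The action of $\bar T$ on $V$ preserves $\vform$: since $J_+ \subseteq ZT_{\MPlus0}J_+$ and $\res\psi to{ZT_{\MPlus0}J_+}$ is invariant under $TJ$-conjugation by (the proof of) Lemma \ref{lem:fixed}, so is $\bar\chi = \res\psi to{J_+}$, whence $\langle \Int(t)a, \Int(t)b\rangle = \bar\chi(\Int(t)[a, b]) = \langle a, b\rangle$. Thus $\bar T$ maps into $\SL_2(\resfld)$, the group of symplectic automorphisms of $(V, \vform)$, and this map is faithful: after conjugating so that $T = T^{\epsilon, 1}$ and $x = \xleft$ (Remark \ref{rem:pos-depth-leftist}), the conjugation action of $T \iso C_\epsilon$ on $\ttt^\perp \iso \field_\epsilon$ is multiplication by $\lambda^2$, with kernel exactly $Z$, and this stays faithful after reducing modulo the relevant Moy--Prasad lattice. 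Since $\card{\bar T} = (q + 1)/2$ is prime to $p$, every nonidentity element of $\bar T$ is a nontrivial \emph{semisimple} element of $\SL_2(\resfld)$, hence has eigenvalues $\mu, \mu\inv$ with $\mu \ne 1$, and so no nonzero fixed vector in $V$ --- equivalently, in the $\field_\epsilon$-module picture, multiplication by $\lambda^2 \not\equiv 1$ fixes no nonzero element of $V \iso \resfld_\epsilon$. This yields $\bar T \cap \Int(\bar g)\bar T = \sset1$, hence $\lsup{g\cap}(TJ_+) \subseteq ZT_{\MPlus0}J_+$, completing the proof. The main obstacles are thus bookkeeping ones --- the two intersection identities that give $\bar G = \bar V \rtimes \bar T$, and the faithfulness (indeed fixed-point-freeness) of the $\bar T$-action on $V$; everything else is formal manipulation in the semidirect product.
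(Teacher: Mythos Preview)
Your proof is correct and takes a somewhat different, more structural route than the paper's. The paper argues directly: writing $g = kt'$ with $k \in J \setminus J_+$ and $t' \in T$, it reduces to showing that $\Int(k)t \notin TJ_+$ for every $t \in T \setminus ZT_{\MPlus0}$, and does this by pulling back through the Cayley transform (on its enlarged domain, since such $t$ need not lie in $T_{\MPlus0}$) to the Lie-algebra statement that $[b,a] \in \JJ \setminus \JJ_+$ whenever $a = \cayley\inv(t) \in \ttt \setminus \ttt_{\MPlus0}$ and $b = \cayley\inv(k) \in \JJ \setminus \JJ_+$, which is then a one-line bracket computation via Lemma~\ref{lem:cayley}\pref{item:cayley-conj}. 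Your approach instead passes to the quotient $TJ/ZT_{\MPlus0}J_+$, recognizes it as $V \rtimes \bar T$, and reduces the lemma to the fixed-point-freeness of the $\bar T$-action on $V$. The two arguments are close cousins: the paper's condition $[b,a] \notin \JJ_+$ is exactly the infinitesimal form of your statement that $\Ad(t) - 1$ is injective on $V$. What your packaging buys is a cleaner conceptual picture (the lemma becomes a Frobenius-type statement about a semidirect product) and avoidance of the extended Cayley domain; what the paper's buys is brevity, since it skips the bookkeeping needed to set up the semidirect-product decomposition.
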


As in the proof of Theorem \ref{thm:sc},
we have written \(\lsup{g\cap}(T J_+)\) for
\(T J_+ \cap \Int(g)(T J_+)\).

\begin{proof}
Since $ZT_{\MPlus0}J_+$ is normal in $TJ$,
the right-hand side is contained in the left-hand side.

Let $g=kt'$, with $k\in J\setminus J_+$ and $t'\in T$.
Then
$$
\Int(g)(TJ_+) = \Int(kt')(TJ_+) = \Int(k)(TJ_+)
= \Int(k)(T) \cdot J_+.
$$
Now let $t\in T\setminus ZT_{\MPlus0}$.
It will be enough to show that $\Int(k)t\notin TJ_+$.
Since $t\notin -1\cdot T_{\MPlus0}$, one can check directly that
there is some $a\in\ttt\setminus \ttt_{\MPlus0}$
such that $\cayley(a) = t$.
Put \(b = \cayley\inv(k) \in \JJ \setminus \JJ_+\).
From Lemma~\ref{lem:cayley}(\ref{item:cayley-conj}),
$\Int(k)a \equiv a+[b,a] \pmod{\gg_{x,(r/2)+}}$.
A calculation shows that $[b,a]\in \JJ \setminus \JJ_+$.
Therefore,
$\Int(k)a \in (\ttt+\JJ) \setminus (\ttt+\JJ_+)$,
so, by Lemma \ref{lem:cayley}\pref{item:cayley-equivariant},
$\Int(k)t \notin TJ_+$.
\end{proof}

\begin{cor}
\label{cor:double-coset}
$TJ_+$ has $2q-1$ double cosets in $TJ$.
\end{cor}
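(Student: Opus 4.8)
The plan is to identify the double-coset set $TJ_+\backslash TJ/TJ_+$ with the set of orbits of $TJ_+$, acting by right translation, on the left-coset space $TJ_+\backslash TJ$, and then to count those orbits using Lemma~\ref{lem:conj-out}.

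First I would record that $\indx{TJ}{TJ_+} = q^2$: this is one of the indices displayed in the diagram preceding Lemma~\ref{lem:conj-out} (alternatively, it follows from the bijection of coset spaces $TJ/TJ_+\iso J/J_+$, valid because $T\cap J = T_r\subseteq J_+$, together with the fact noted after Lemma~\ref{lem:heisenberg} that $\indx J{J_+} = q^2$). So $TJ_+\backslash TJ$ has exactly $q^2$ elements. The trivial coset $TJ_+$ is a fixed point of the right-translation action, so it forms an orbit of size $1$ and accounts for one double coset by itself.

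Next I would compute the size of the remaining orbits. For $g\in TJ\setminus TJ_+$, the stabiliser of the coset $TJ_+ g$ in $TJ_+$ is $\set{h\in TJ_+}{g h g\inv\in TJ_+} = TJ_+\cap\Int(g\inv)(TJ_+)$. Since $TJ_+$ is a subgroup, $g\in TJ\setminus TJ_+$ forces $g\inv\in TJ\setminus TJ_+$, so Lemma~\ref{lem:conj-out} applied to $g\inv$ identifies this stabiliser with $ZT_{\MPlus0}J_+$. Hence every non-trivial orbit has cardinality $\indx{TJ_+}{ZT_{\MPlus0}J_+} = \tfrac{q+1}2$, the last index again being read off the diagram.

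Finally I would tally up: the $q^2-1$ non-trivial cosets partition into orbits of size $\tfrac{q+1}2$, so there are $\frac{q^2-1}{(q+1)/2} = 2(q-1)$ of them, and therefore $1 + 2(q-1) = 2q-1$ double cosets in all. I do not anticipate a genuine obstacle here: the argument is driven entirely by Lemma~\ref{lem:conj-out} and the two elementary indices $\indx{TJ}{TJ_+}$ and $\indx{TJ_+}{ZT_{\MPlus0}J_+}$ recorded in the diagram. The only point worth an explicit word is the passage from $g$ to $g\inv$, which is what allows Lemma~\ref{lem:conj-out} to be applied verbatim in the stabiliser computation.
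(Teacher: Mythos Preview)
Your proposal is correct and is essentially the same argument as the paper's: both count how many right $TJ_+$-cosets each double coset $TJ_+ g TJ_+$ contains, using Lemma~\ref{lem:conj-out} to identify the relevant intersection as $ZT_{\MPlus0}J_+$ when $g\notin TJ_+$, and then solve $1 + \tfrac{q+1}{2}m = q^2$. The only cosmetic difference is that you phrase this as an orbit--stabiliser count on $TJ_+\backslash TJ$ (hence needing the $g\mapsto g^{-1}$ switch), whereas the paper writes $\card{TJ_+ g TJ_+/TJ_+} = \card{TJ_+/(\lsup g(TJ_+)\cap TJ_+)}$ directly.
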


\begin{proof}
If $g\in TJ$, then
$$
\card{TJ_+gTJ_+ / TJ_+}
= \card{TJ_+/ (\lsup g(TJ_+) \cap TJ_+)}
=
\begin{cases}
\tfrac1 2(q+1)	& \text{if $g\notin TJ_+$}, \\
1		& \text{if $g\in TJ_+$}.
\end{cases}
$$
Therefore, the number of double cosets is $1+m$,
where $1+\tfrac1 2(q+1)m = \indx{TJ}{TJ_+} = q^2$.
The result follows.
\end{proof}

Consider the representation $I_\psi=\Ind_{TJ_+}^{TJ} \psi$.
Every irreducible component of $I_\psi$ must contain $\res\psi to{T_{\MPlus0}J_+}$,
and therefore $\kappa_\psi$.
That is, as a $TJ$-module,
$I_\psi$ is a sum of extensions of $\kappa_\psi$, with various multiplicities.
Let $a_1,a_2,\ldots,a_{(q+1)/2}$
denote these multiplicities.
Then
$$
\sum a_i = (\dim I_\psi)\dotm(\dim \kappa_\psi)\inv = q,
$$
and so
\begin{equation*}
\begin{split}
\sum a_i^2
&= \dim \Hom_{TJ}( I_\psi, I_\psi )\\
&= \sum_{g\in TJ_+\backslash TJ/TJ_+}
	\dim \Hom_{\lsup{g\cap}(TJ_+)}(\psi,
	\psi \circ \Int(g)\inv).
\end{split}
\end{equation*}
From Lemma~\ref{lem:fixed},
each term in this last sum is $1$.
From Corollary~\ref{cor:double-coset},
the sum is $2q-1$.

\begin{lemma}
The multiplicities $a_i$ are all equal to $2$, except for one of them,
which is equal to $1$.
\end{lemma}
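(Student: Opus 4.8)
The plan is to exploit the two numerical identities just obtained—$\sum_i a_i = q$ and $\sum_i a_i^2 = 2q - 1$, where $i$ runs over the $\tfrac1 2(q + 1)$ extensions of $\kappa_\psi$ to $TJ$—together with the fact that each $a_i$ is a non-negative integer. Recall from the discussion above that $\kappa_\psi$ has exactly $\indx{TJ}{ZT_{\MPlus0}J} = \tfrac1 2(q + 1)$ extensions to $TJ$ (pairwise inequivalent), that every irreducible constituent of $I_\psi$ is one of them, and hence that $a_1, \ldots, a_{(q + 1)/2}$ is a complete list of the corresponding multiplicities, each lying in $\Z_{\ge 0}$.

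First I would re-express the data in terms of the deviations $a_i - 2$. Expanding and using the two identities,
\[
\sum_{i = 1}^{(q + 1)/2}(a_i - 2)^2
= \sum_i a_i^2 - 4\sum_i a_i + 4\dotm\tfrac1 2(q + 1)
= (2q - 1) - 4q + 2(q + 1)
= 1.
\]
Since each term $(a_i - 2)^2$ is a non-negative integer and the terms sum to $1$, exactly one index $i_0$ satisfies $(a_{i_0} - 2)^2 = 1$, that is, $a_{i_0} \in \sset{1, 3}$, while $a_i = 2$ for all $i \ne i_0$.

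It then remains only to rule out $a_{i_0} = 3$, and for this I would appeal once more to $\sum_i a_i = q$: if $a_{i_0}$ were $3$, the sum would be $3 + 2\bigl(\tfrac1 2(q + 1) - 1\bigr) = q + 2 \ne q$. Hence $a_{i_0} = 1$ (and then $\sum_i a_i = 1 + 2\dotm\tfrac1 2(q - 1) = q$, consistently), which is the assertion. I do not expect a genuine obstacle here; the one place calling for care is the bookkeeping that the $a_i$ form a list of \emph{exactly} $\tfrac1 2(q + 1)$ terms, so that the count in the displayed identity is right, and this is the standard Clifford theory for the cyclic quotient $TJ/ZT_{\MPlus0}J$ already used above.
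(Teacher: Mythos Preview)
Your argument is correct. The paper's own proof is a one-line citation of \cite{moy:thesis-original}*{Lemma 3.5.4} (applied with $\Delta = \tfrac1 2(q+1)$ and $r = q$), whereas you have written out the underlying combinatorics directly: the identity $\sum_i(a_i-2)^2 = 1$ forces all but one $a_i$ to equal $2$, and the linear constraint $\sum_i a_i = q$ then rules out $a_{i_0}=3$. This is almost certainly the content of Moy's lemma in this case, so the two proofs are mathematically the same; yours has the advantage of being self-contained rather than deferring to an external reference.
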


\begin{proof}
Apply \cite{moy:thesis-original}*{Lemma 3.5.4}
with $\Delta = \tfrac1 2(q+1)$, and $r= q$.
\end{proof}

Let $\sigma(T, \psi)$ denote the unique extension of $\kappa_\psi$ to
$TJ$ having multiplicity one in $I_\psi$.

\begin{prop}
\label{prop:inducing-char}
We have
$$
\Theta_{\sigma(T,\psi)}(g) =
\begin{cases}
-\psi(g) & g \in TJ_+\setminus ZT_1J_+ \\
q\cdot \psi(g) & g\in ZT_1J_+ \\
0 & \text{if $g$ is not conjugate to an element of $TJ_+$}.
\end{cases}
$$
\end{prop}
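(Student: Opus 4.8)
The plan is to express \(\sigma(T, \psi)\), as a virtual representation, in terms of the induced representations \(I_\psi = \Ind_{TJ_+}^{TJ}\psi\) and \(\Ind_{ZT_{\MPlus0}J}^{TJ}\kappa_\psi\), whose characters are computable by the standard induced-character formula, and then subtract. First I would record two decompositions. Since \(ZT_{\MPlus0}J\) is normal in \(TJ\) with cyclic quotient of order \(\tfrac1 2(q + 1)\) and \(\kappa_\psi\) extends to \(TJ\) (Remark \ref{rem:kappa-extends} and the discussion following Lemma \ref{lem:fixed}), Frobenius reciprocity together with \(\res{\sigma_i}to{ZT_{\MPlus0}J} = \kappa_\psi\) and a dimension count shows that \(\Ind_{ZT_{\MPlus0}J}^{TJ}\kappa_\psi = \bigoplus_i \sigma_i\), the sum over the \(\tfrac1 2(q + 1)\) distinct extensions \(\sigma_i\) of \(\kappa_\psi\), each appearing once. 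On the other hand, the lemma preceding the statement gives \(I_\psi = \bigoplus_i a_i \sigma_i\), where \(a_i = 2\) for all but one index and \(a_i = 1\) for the remaining one, which by definition is the index with \(\sigma_i = \sigma(T, \psi)\). Subtracting, all \(\sigma_i\) cancel except \(\sigma(T, \psi)\), so
\[
\Theta_{\sigma(T, \psi)}
= 2\,\Theta_{\Ind_{ZT_{\MPlus0}J}^{TJ}\kappa_\psi} - \Theta_{I_\psi}.
\]

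Next I would evaluate the two characters on the right. For \(\Ind_{ZT_{\MPlus0}J}^{TJ}\kappa_\psi\): normality of \(ZT_{\MPlus0}J\) makes the induced character vanish off \(ZT_{\MPlus0}J\), and the \(TJ\)-stability of \(\kappa_\psi\) (established just after Lemma \ref{lem:fixed}) makes \(\Theta_{\kappa_\psi}\) invariant under \(TJ\)-conjugation, so on \(ZT_{\MPlus0}J\) the induced character equals \(\indx{TJ}{ZT_{\MPlus0}J}\,\Theta_{\kappa_\psi} = \tfrac1 2(q + 1)\,\Theta_{\kappa_\psi}\); and \(\Theta_{\kappa_\psi}\) is supported on \(ZT_{\MPlus0}J_+\), where \(\res{\kappa_\psi}to{ZT_{\MPlus0}J_+}\) is \(q\) copies of \(\psi\). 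Thus \(2\,\Theta_{\Ind_{ZT_{\MPlus0}J}^{TJ}\kappa_\psi}\) equals \(q(q + 1)\psi\) on \(ZT_{\MPlus0}J_+ = ZT_1 J_+\) and \(0\) elsewhere on \(TJ\). For \(I_\psi\) I would use \(\Theta_{I_\psi}(g) = \sum \psi(x\inv g x)\), the sum over cosets \(xTJ_+ \in TJ/TJ_+\) with \(x\inv g x \in TJ_+\). If \(g\) is not \(TJ\)-conjugate into \(TJ_+\), the sum is empty and \(\Theta_{I_\psi}(g) = 0\). If \(g \in TJ_+\), the trivial coset contributes \(\psi(g)\), and a coset with \(x \notin TJ_+\) contributes exactly when \(g \in \Int(x)(TJ_+) \cap TJ_+\), which by Lemma \ref{lem:conj-out} equals \(ZT_{\MPlus0}J_+\); hence for \(g \in TJ_+ \setminus ZT_{\MPlus0}J_+\) only the trivial coset contributes, giving \(\Theta_{I_\psi}(g) = \psi(g)\), while for \(g \in ZT_{\MPlus0}J_+\), normality of \(ZT_{\MPlus0}J_+\) in \(TJ\) forces all \(\indx{TJ}{TJ_+} = q^2\) cosets to contribute, each giving \(\psi(g)\) by Lemma \ref{lem:fixed}, so \(\Theta_{I_\psi}(g) = q^2 \psi(g)\).

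Combining these with the displayed identity yields \(\Theta_{\sigma(T, \psi)} = q(q + 1)\psi - q^2\psi = q\psi\) on \(ZT_1 J_+\); \(\Theta_{\sigma(T, \psi)} = 0 - \psi = -\psi\) on \(TJ_+ \setminus ZT_1 J_+\); and \(\Theta_{\sigma(T, \psi)} = 0\) for \(g \in TJ\) not conjugate into \(TJ_+\). Since a character is a class function and \(ZT_1 J_+\) is normal in \(TJ\) (so these three cases partition \(TJ\) up to conjugacy), these formulas extend to all conjugates and give exactly the asserted formula. The one genuinely delicate step is the Mackey bookkeeping for \(\Theta_{I_\psi}\)---pinning down which cosets contribute---and that is precisely where Lemmas \ref{lem:fixed} and \ref{lem:conj-out} enter; the virtual-representation identity of the first paragraph is what keeps the whole argument from turning into a lengthy direct computation.
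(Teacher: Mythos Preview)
Your proof is correct and follows essentially the same approach as the paper: both rest on the Grothendieck-ring identity \(\sigma(T,\psi) = 2\bigl(\sum_i \sigma_i\bigr) - I_\psi\) together with the direct computation of \(\Theta_{I_\psi}\) via Lemmas \ref{lem:fixed} and \ref{lem:conj-out}. The only cosmetic difference is that the paper realizes \(\sum_i \sigma_i\) as \(\sum_\nu \sigma(T,\psi)\otimes\nu\) and evaluates it using orthogonality of the characters \(\nu\) of the cyclic quotient \(TJ/ZT_{\MPlus0}J\), whereas you realize it as \(\Ind_{ZT_{\MPlus0}J}^{TJ}\kappa_\psi\) and evaluate it using the induced-character formula for a normal subgroup; these two computations are equivalent and yield the same values.
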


\begin{proof}
Since $ZT_{\MPlus0}J$ is normal in $TJ$ and $TJ/ZT_{\MPlus0}J$ is cyclic, we have that
the $\tfrac1 2(q+1)$ extensions of $\kappa_\psi$ are of the form
$\sigma(T,\psi) \otimes \nu$ for
$\nu \in (TJ/ZT_{\MPlus0}J)\sphat\ \iso (T/ZT_{\MPlus0})\sphat$\,\,.
Thus, in the Grothendieck ring, we have
\begin{equation}
\tag{$*$}
\label{eq:Ipsi-rep}
\begin{aligned}
I_\psi= \Ind_{TJ_+}^{TJ} \psi
& {}= 2\Bigl(\sum_{\nu\neq 1} \sigma(T,\psi)\otimes \nu\Bigr)+\sigma(T,\psi)
	\\
& {}= 2\Bigl(\sum_{\nu} \sigma(T,\psi)\otimes \nu\Bigr)-\sigma(T,\psi).
\end{aligned}
\end{equation}
Let $\dot\psi$ denote the function on $TJ$ that is
equal to $\psi$ on $TJ_+$ and is zero on $TJ\setminus TJ_+$.
Then for all $g\in TJ$,
\begin{equation}
\tag{$**$}
\label{eq:Ipsi-char}
\begin{aligned}
\Theta_{I_\psi}(g)
& {}=\!\!\sum_{s\in TJ /TJ_+}
	\!\!\dot\psi\bigl(\Int(s)g\bigr) \\
& {}=
\begin{cases}
q^2 \psi(g) &\text{if $g \in ZT_{\MPlus0}J_+$,}\\
\psi(g) &\text{if $g\in TJ_+ \setminus ZT_{\MPlus0}J_+$,} \\
0 &\text{if $g$ is not conjugate to an element of $TJ_+$.}
\end{cases}
\end{aligned}
\end{equation}
Combining \eqref{eq:Ipsi-rep} and \eqref{eq:Ipsi-char}
gives the desired result.
\end{proof}

\begin{proof}[Proof of Proposition \ref{prop:extension}]
Proposition~\ref{prop:inducing-char} shows that
$\abmapto\psi{\sigma(T,\psi)}$  is an injective map from the
set of characters of $T$ that extend $\res\chi to{T_r}$
to the set of representations of $TJ$;
and, together with Proposition~\ref{prop:heisenberg},
that the image of the map is contained in the set
of representations of $TJ$ that extend $\rho_\chi$.
(This latter fact can also be observed directly; see
Remark \ref{rem:kappa-extends}.)
We show that it is a surjection by a counting argument.
\textit{A priori}, we do not know how many extensions there
are of \(\rho_\chi\) from \(J\) to \(T J\), but certainly
there are no more than
$$
\indx{TJ}J
= \indx T{T \cap J}
= \indx T{T_r}.
$$
Since there are \emph{exactly}
\(\indx T{T_r}\) extensions of \(\res\chi to{T_r}\) to \(T\),
the (injective) map must be surjective.
\end{proof}

It remains to show that our parametrization
agrees with that of Yu \cite{yu:supercuspidal}.

\begin{prop}
\label{prop:yu-equiv}
The representation $\sigma(T, \psi)$ of $TJ$
is equivalent to the inducing representation
constructed in \cite{yu:supercuspidal}*{\S4}
from the datum
\(\bigl((\bT, \bG), x_\bT, (\psi, 1)\bigr)\)
(see \S3 \loccit).
\end{prop}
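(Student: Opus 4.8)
The plan is to compare the two representations of $TJ$ head on, and then appeal to the classification of extensions in Proposition~\ref{prop:extension}. First I would unwind Yu's datum $\bigl((\bT, \bG), x_\bT, (\psi, 1)\bigr)$ in our setting: the tame twisted Levi sequence is $(\bG^0, \bG^1) = (\bT, \bG)$, the base point is $x = x_\bT \in \BBpts$ of Definition~\ref{defn:xT}, and the character sequence is $(\phi_0, \phi_1) = (\psi, 1)$, with $\phi_0 = \psi$ of depth $r = \depth(\psi)$ and $\phi_1$ trivial. With these choices Yu's groups are $K^0 = T$, $J^1 = (\bG^0, \bG^1)_{x, (r, r/2)}$, $J^1_+ = (\bG^0, \bG^1)_{x, (r, (r/2)+)}$, and $K^1 = K^0 J^1 = T J^1$. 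Using the explicit filtrations of \S\ref{sec:filt} and the splitting $\gg_{x, s} = \ttt_s \oplus \ttt^\perp_s$ of \S\ref{sec:posreps}, I would check that $\cayley$ carries Yu's lattices $\ttt_r + \ttt^\perp_{r/2}$ and $\ttt_r + \ttt^\perp_{(r/2)+}$ onto our $J$ and $J_+$, so that Yu's $K^1$ is exactly our $TJ$. (Yu phrases his construction with a mock exponential rather than with $\cayley$; the normalizing $\tfrac1 2$ in the definition of $\cayley$, together with \cite{adler-spice:explicit-chars}*{Hypothesis \xref{char-hyp:strong-mock-exp}}, is put in precisely so that $\cayley$ may serve as Yu's mock exponential here, as was noted before Lemma~\ref{lem:cayley}.)

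Next I would examine Yu's inducing representation $\kappa$ of $K^1 = TJ$: it is the tensor product of $\phi_0 = \psi$ (suitably extended) with a Heisenberg--Weil representation $\tilde\phi_0$ of $TJ$ attached to the character $\hat\phi_0$ of $J_+$ determined by $\phi_0$ via the mock exponential. By Yu's theory, $\kappa$ is irreducible and $\res\kappa to{J_+}$ is $\hat\phi_0$-isotypic. The identification of $\cayley$ with Yu's mock exponential from the previous step, together with \eqref{eq:ol-chi} and \eqref{eq:psi-log}, shows that $\hat\phi_0 = \bar\chi$; so by Proposition~\ref{prop:heisenberg} the representation $\kappa$ contains $\rho_\Sigma$, and hence by Proposition~\ref{prop:extension} there is a \emph{unique} character $\psi'$ of $T$ extending $\res{\chi_\Sigma}to{T_r}$ with $\kappa \cong \sigma(T, \psi')$. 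It therefore only remains to prove that $\psi' = \psi$.

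To pin down $\psi'$ I would restrict to $T$. When $J = J_+$ (so $\rho_\Sigma$ is one-dimensional) the Heisenberg--Weil factor $\tilde\phi_0$ is the trivial character, and Yu's canonical extension of $\phi_0$ to $T J_+ = T J$ is literally the character of formula \eqref{eq:char-noheisenberg} --- this is again the strong-mock-exponential compatibility --- so $\psi' = \psi$ at once. When $J \ne J_+$, i.e.\ $T$ is unramified and $r$ is even, the restriction of $\tilde\phi_0$ to $K^0 = T$ is the Weil representation of the symplectic $\resfld$-space $J/J_+ \iso \ttt^\perp_{r/2}/\ttt^\perp_{(r/2)+}$ under the action of $T_{0:\MPlus0} = \sfT^\epsilon(\resfld)$. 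I would compute the character of this Weil representation (via \cite{gerardin:weil}) and check that, after tensoring with $\psi$, it reproduces the values $-\psi$ on $T J_+ \setminus Z T_{\MPlus0} J_+$ and $q\,\psi$ on $Z T_{\MPlus0} J_+$ recorded in Proposition~\ref{prop:inducing-char} --- equivalently, that $\res\kappa to{Z T_{\MPlus0} J}$ is the Heisenberg representation $\kappa_\psi$ of \S\ref{sec:inducing} and that $\kappa$ is the multiplicity-one extension singled out there. Since an extension of $\rho_\Sigma$ to $TJ$ is determined by its character, and Proposition~\ref{prop:inducing-char} lists all $\tfrac1 2(q + 1)$ of them, this forces $\psi' = \psi$.

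The main obstacle is this last character matching in the case $J \ne J_+$: reconciling Yu's normalization of the Weil representation with the \textit{ad hoc} extension extracted in \S\ref{sec:inducing} from Moy's counting lemma \cite{moy:thesis-original}*{Lemma 3.5.4}. All of the genuine difficulty lies in the bookkeeping of the fourth root of unity (the Weil index / Gauss sum $\Gauss(\AddChar)$) --- exactly the sort of normalization the choice of $\cayley$ and the hypotheses imported from \cite{adler-spice:good-expansions} are designed to tame --- rather than in any structural discrepancy between the two inducing constructions.
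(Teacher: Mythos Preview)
Your proposal is correct and follows essentially the same path as the paper: both identify Yu's representation as some $\sigma(T,\psi')$ via Proposition~\ref{prop:extension} and then pin down $\psi'$ by computing the character of the Heisenberg--Weil factor on $TJ_+ \setminus ZT_{\MPlus0}J_+$. The paper packages this last step via the sign $\varepsilon(\psi,\gamma)$ of \cite{adler-spice:explicit-chars}*{Proposition~\xref{char-prop:theta-tilde-phi}}, evaluated by a forward reference to Proposition~\ref{prop:sl2-weil} (itself a G\'erardin computation), obtaining simply $(-1)^{r+1}=-1$; so your anticipated Gauss-sum bookkeeping does not in fact arise in the only nontrivial case ($T$ unramified, $r$ even).
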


Our argument uses results from \S\S\ref{sec:SSPhi}, \ref{sec:roots}
that will only be proven later; but the reader can check that there
is no circularity involved.

\begin{proof}
We write \(\sigma'(T, \psi)\) for Yu's inducing datum; it is
denoted in \cite{yu:supercuspidal} by \(\rho_1\).

All extensions of $(TJ_+,\psi)$ to $TJ$ agree
on $ZT_{\MPlus0}J_+$, and all have characters supported on
the conjugacy classes that meet $TJ_+$
(see \cite{yu:supercuspidal}*{\S11}).
Thus, $\sigma'(T, \psi) = \sigma(T, \psi')$,
where $\psi'$ agrees with $\psi$ on $ZT_{\MPlus0}J_+$.
To determine $\psi'$, it is enough
to compute the character of $\sigma'(T, \psi)$
at an element of $TJ_+ \setminus ZT_{\MPlus0}J_+$.
On that domain,
the explicit construction of \cite{yu:supercuspidal}*{\S4}
shows that
$\sigma'(T, \psi) = \psi \otimes \tilde\psi$
(i.e., that
\(\sigma'(T, \psi)(t j_+)
= \psi(t)\tilde\psi(t \ltimes j_+)\)
for \(t \in T\) and \(j_+ \in J_+\)),
where $\tilde\psi$ is a representation of
$T\ltimes J_+$ that is trivial on
$T_{\MPlus0} \ltimes \sset1$
and $\bar\chi$-isotypic on $1 \ltimes J_+$
(see Theorem 11.5 \loccit).
From
\cite{adler-spice:explicit-chars}*{%
	Proposition \xref{char-prop:theta-tilde-phi}%
},
the character of $\tilde\psi$ at $g$
is $\varepsilon(\psi,g)$.
From Proposition \ref{prop:sl2-weil}
and Lemma \ref{lem:SSPhi},
we see that $\varepsilon(\psi,\gamma) = (-1)^{r+1} = -1$.
Thus,
$\sigma(\psi)$ and $\sigma'(\psi)$ have the same character.
\end{proof}

\section{Murnaghan--Kirillov theory}
\label{sec:MK}

Our calculations of character values near the identity
(see Theorems \ref{thm:near} and \ref{thm:exc-near})
rely on Murnaghan--Kirillov theory, i.e., on asymptotic
descriptions of the character in terms of Fourier transforms
of orbital integrals (see \S\ref{sec:mu-hat}).
In the ordinary case, we have a `single-orbit' theory, i.e.,
only one orbital integral is involved
(see Proposition \ref{prop:MK}); but, in the
exceptional case, the situation is more complicated
(see Proposition \ref{prop:MK-exc}).

In the depth-zero case, we use results of
\cite{debacker-reeder:depth-zero-sc},
which require Hypothesis
\ref{hyp:debacker-reeder:depth-zero-sc:res-12.4.1(2)}.

\subsection{Lusztig's generalized Green functions}

Put \(\sfG = {\SL_2}_{/\resfld}\), and write \(\ms U\) for the
set of unipotent elements in \(\sfG\).
Lusztig \cite{lusztig:char-sheaves-V}*{Lemma 25.4} has
described the space of class functions on \(\ms U(\resfld)\)
(or, rather, on the set of unipotent elements in
any finite group of Lie type)
in terms of \term{generalized Green functions}
\cite{lusztig:char-sheaves-II}*{(8.3.1)}.
In our setting, there are only two such functions that we
need to consider.

\begin{defn}[%
	\cite{deligne-lusztig:finite}*{Definition 4.1}%
]
\label{defn:green}
The \term{(elliptic) Green function} \(Q^\sfG_{\sfT^\epsilon}\)
is the restriction to \(\ms U(\resfld)\) of \(R^\sfG_{\sfT^\epsilon, \psi}\)
for any character \(\psi\) of \(\sfT^\epsilon(\resfld)\)
(for example, \(\psi = 1\)).  
\end{defn}

\begin{defn}
\label{defn:lusztig}
The \term{Lusztig function} \indexmem{\lsup0f}
is defined by
$$
\lsup{0}f \ldef \left[
	\Int(\SL_2(\resfld)) \begin{pmatrix}
		0 & 1 \\ 
		0 &0
	\end{pmatrix}  \right]  -
	\left[ \Int(\SL_2(\resfld)) \begin{pmatrix}
		0 &  \epsilon \\ 
		0  & 0 
	\end{pmatrix} \right]
$$
(i.e., the difference of the characteristic functions of the
two non-trivial nilpotent orbits in \(\sl_2(\resfld)\)).
We will view this as a function only on the set of nilpotent
elements, or on all of \(\sl_2(\resfld)\), as convenient.
See \cite{waldspurger:nilpotent}*{p.~7}.
By abuse of notation, we will also write \(\lsup0f\) for the
function \(\abmapto u{\lsup0f(\cayley\inv(u))}\) on
\(\ms U(\resfld)\)
.
\end{defn}

When convenient,
we think of a generalized Green function on \(\ms U(\resfld)\)
as a function on $\sfG(\resfld)$ by setting it equal to zero
off the set of unipotent elements.

The definition of the Fourier transform
(Definition \ref{defn:FT})
also makes sense for vector spaces \(V\) over finite fields;
in that setting, the self-dual Haar measure assigns
to \(V\) measure \(\card V^{1/2}\).
We also need a choice of non-trivial additive character on
\(\resfld = \pint/\pp\).
Since \(\AddChar\) is trivial on \(\pp\), but non-trivial on
\(\pint\), it induces such a character in a natural way.
The Fourier transform in the next lemma is taken with respect to the
specified measure and character;
and the constant \(\Gauss(\AddChar)\) is as in
Definition \ref{defn:gauss}.

\begin{lemma}
\label{lem:lusztig-FT}
\(
\widehat{\lsup0f}
= \sgn_\varpi(-1)\Gauss(\AddChar)\dotm\lsup 0f.
\)
\end{lemma}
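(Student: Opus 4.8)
The plan is to compute the Fourier transform of $\lsup0f$ on $\sl_2(\resfld)$ directly, by evaluating the transform of the characteristic function of each of the two non-trivial nilpotent orbits. Write $\mathcal N_1 = \Int(\SL_2(\resfld))\begin{smallpmatrix} 0 & 1 \\ 0 & 0\end{smallpmatrix}$ and $\mathcal N_\epsilon = \Int(\SL_2(\resfld))\begin{smallpmatrix} 0 & \epsilon \\ 0 & 0\end{smallpmatrix}$, so that $\lsup0f = \mathbf 1_{\mathcal N_1} - \mathbf 1_{\mathcal N_\epsilon}$. Since the Fourier transform (with respect to the trace pairing and the additive character $\resfld \to \C^\times$ induced by $\AddChar$) intertwines the adjoint action, each $\widehat{\mathbf 1_{\mathcal N_a}}$ is again a class function; and since a class function on $\sl_2(\resfld)$ is determined by its values on a regular semisimple element, the central element $0$, the regular nilpotent orbits $\mathcal N_1$, $\mathcal N_\epsilon$, I would simply tabulate these values.

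First I would record that each nilpotent orbit $\mathcal N_a$ (for $a\in\sset{1,\epsilon}$) has cardinality $(q^2-1)/2$; this is a standard count, most cleanly obtained by noting that $\mathcal N_1 \cup \mathcal N_\epsilon \cup \sset0$ is the full nilpotent cone, of size $q^2$, and that the two orbits are permuted by the outer scaling $X \mapsto \epsilon X$. Then, for a target $Y \in \sl_2(\resfld)$, the value $\widehat{\mathbf 1_{\mathcal N_a}}(Y)$ is $\sum_{X\in\mathcal N_a}\AddChar(\Tr(XY))$; by conjugating $Y$ I may assume it is in a convenient form. At $Y=0$ this sum is $(q^2-1)/2$ for each orbit, so the contributions to $\widehat{\lsup0f}(0)$ cancel, matching $\lsup0f(0)=0$ (recalling our convention that $\lsup0f$ vanishes at nilpotent non-regular, hence at $0$, as the difference of characteristic functions of the \emph{regular} nilpotent orbits). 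For $Y$ regular semisimple the orbit sum is a geometric-type character sum over the orbit that I would evaluate by parametrizing $\mathcal N_a$: every element of $\mathcal N_a$ is of the form $\Ad(g)\begin{smallpmatrix}0 & a \\ 0 & 0\end{smallpmatrix}$, and $\Tr\bigl(\Ad(g)\begin{smallpmatrix}0&a\\0&0\end{smallpmatrix}\cdot Y\bigr)$ runs, as $g$ varies, over values controlled by $a$ times a fixed linear form; the resulting sum collapses to something proportional to a quadratic Gauss sum in $\resfld$. The crucial point is that the dependence on $a$ enters precisely through whether $a$ is a square, i.e., through $\sgn_\varpi(a)$, so that $\widehat{\mathbf 1_{\mathcal N_1}} - \widehat{\mathbf 1_{\mathcal N_\epsilon}}$ is $\bigl(1 - \sgn_\varpi(\epsilon)\bigr)$ times a common term — and $\sgn_\varpi(\epsilon) = -1$ by the choice of $\epsilon$.

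The main obstacle, and where I would spend the real effort, is pinning down the exact constant and the exact sign relating $\widehat{\lsup0f}$ back to $\lsup0f$ itself, i.e., verifying that the proportionality constant is exactly $\sgn_\varpi(-1)\Gauss(\AddChar)$ rather than its inverse or negative. This comes down to evaluating a one-variable quadratic Gauss sum $\sum_{t\in\resfld}\AddChar(t^2)$ (up to harmless substitutions of the form $t\mapsto ct$), relating it to $\Gauss(\AddChar) = q^{-1/2}\sum_{X\in\pint/\pp}\AddChar_{(-\varpi)^r}(X^2)$ of Definition \ref{defn:gauss} in the depth-zero case $r=0$, and keeping scrupulous track of the $q^{\pm1/2}$ normalization factors built into the finite-field self-dual measure (which assigns $\sl_2(\resfld)$ measure $q^{3/2}$ and the line through a nilpotent generator measure $q^{1/2}$). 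A clean alternative that sidesteps some of this bookkeeping is to invoke the known transformation law for $\Gauss(\AddChar)$ from \cite{spice:sl2-mu-hat}*{Lemma \xref{sl2-lem:G-facts}}, namely $\Gauss(\AddChar)^2 = \sgn_\varpi(-1)$, together with the fact that Fourier transform on $\sl_2(\resfld)$ is an involution up to $Y\mapsto -Y$ (which acts trivially on our class functions since $-\mathcal N_a = \mathcal N_a$): applying $\widehat{\phantom{x}}$ twice to the asserted identity forces the square of the constant to be $\Gauss(\AddChar)^2 = \sgn_\varpi(-1)$, which is consistent, and then a single explicit evaluation at one regular semisimple point — or a comparison with \cite{waldspurger:nilpotent}*{\S I.4}, where this computation is essentially carried out for a depth-zero character — fixes the remaining sign ambiguity. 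I would present the direct orbit-sum computation as the main line of argument and cite Waldspurger for the normalization check.
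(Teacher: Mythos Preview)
The paper's proof is a single citation to \cite{waldspurger:nilpotent}*{Proposition~V.8} (symplectic case, $k=1$), with a parenthetical pointer to \cite{lusztig:ft-ss}*{Corollary 10}. Your direct orbit-sum computation is a genuinely different route --- essentially redoing by hand, for $\SL_2$, the calculation that Waldspurger packages in general. That is a reasonable thing to do in a paper whose stated goal is to make general machinery explicit, and the outline (parametrize each $\mathcal N_a$, reduce to quadratic Gauss sums, track the $q^{\pm 1/2}$ from the self-dual measure) is sound.

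There is, however, one concrete error and one real gap. The error: your claim that $-\mathcal N_a = \mathcal N_a$ is false when $q \equiv 3 \pmod 4$. Conjugating $\begin{smallpmatrix}0&1\\0&0\end{smallpmatrix}$ to $\begin{smallpmatrix}0&-1\\0&0\end{smallpmatrix}$ inside $\SL_2(\resfld)$ requires $a^2 = -1$ for some $a \in \resfld$, so in fact $-\mathcal N_1 = \mathcal N_{\sgn_\varpi(-1)}$ and $\lsup0f(-Y) = \sgn_\varpi(-1)\,\lsup0f(Y)$. Your Fourier-inversion consistency check therefore gives $c^2 = \sgn_\varpi(-1)$, not $c^2 = 1$; this is still consistent with $c = \sgn_\varpi(-1)\Gauss(\AddChar)$ (since $\Gauss(\AddChar)^2 = \sgn_\varpi(-1)$), but the reasoning you gave for it is wrong and would mislead you in the $q\equiv 3\pmod 4$ case.

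The gap: you never actually argue that $\widehat{\lsup0f}$ vanishes on the regular semisimple set, which is the main content of the statement that $\widehat{\lsup0f}$ is a scalar multiple of $\lsup0f$. Your remark that ``the dependence on $a$ enters through $\sgn_\varpi(a)$'' gives $\widehat{\mathbf 1_{\mathcal N_\epsilon}}(Y) = \widehat{\mathbf 1_{\mathcal N_1}}(\epsilon Y)$, but scaling by $\epsilon$ does \emph{not} fix individual semisimple conjugacy classes, so this relation alone does not force the difference to vanish there. You need to either carry out the orbit sum at a semisimple $Y$ explicitly (it does vanish --- the two Gauss-sum contributions from the parametrization cancel), or invoke the structural fact that $\lsup0f$ is a cuspidal function in Lusztig's sense, which is precisely what the Waldspurger/Lusztig citations encode.
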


\begin{proof}
This is~\cite{waldspurger:nilpotent}*{Proposition~V.8}
in the symplectic case, with $k =1$.
(See also \cite{lusztig:ft-ss}*{Corollary 10}.)
\end{proof}

\subsection{Lifting from finite to local fields}

Suppose that \(\sigma\) is a cuspidal representation of
\(\sfG(\resfld) = \SL_2(\resfld)\) \cite{carter:finite}*{\S9.1}.
Then we may write the restriction to \(\ms U(\resfld)\)
of the character \(\chi_\sigma\) of \(\sigma\)
as a \(\C\)-linear combination
\begin{equation}
\label{eq:sigma-as-green}
\res{\chi_\sigma}to{\ms U(\resfld)}
= \sum_\Green c_\sigma(\Green)\Green,
\end{equation}
where the sum runs over
\(\sset{Q^\sfG_{\sfT^\epsilon}, \lsup0f}\).
(As with all the results of this section, an appropriate
analogue of this statement holds in a very general setting;
we shall exhibit an explicit linear combination in all cases
of interest.)

We will show how to lift this finite-field formula (over \resfld)
to the local-field setting (over \(\field\)).
We warn the reader that our discussion will involve both
generalized Green functions, denoted as above by \(\Green\),
and a fourth root of unity, denoted by \(\Gauss(\AddChar)\).
Context should make clear which is meant.

For the remainder of this section, put \(x = \xleft\),
so that \(G_{x, 0:\MPlus0} \iso \SL_2(\resfld)\).

\begin{notn}
\label{notn:dot}
If \(f\) is any function on
\(\SL_2(\resfld) = G_{x, 0:\MPlus0}\), then we
denote by \(\dot f\) the function on \(G\) defined by
\[
\dot f(g) = \begin{cases}
f(\bar g), &
	\text{%
		if \(g \in G_{x, 0}\) has image \(\bar g \in \sfG_x(\resfld)\)%
	} \\
0, &
	\text{otherwise.}
\end{cases}
\]
Similarly, if \(f\) is any function on \(\sl_2(\resfld)\),
then we inflate and extend it to a function \(\dot f\) on
\(\gg\).
(This is the function denoted by \(f_{\sset x}\) in
\cite{debacker-kazhdan:mk-zero}*{p.~3}.)
\end{notn}

\begin{notn}
Let \(\textup d\ell\) be the Haar measure on \(G_{x, 0}\)
that assigns it total mass \(1\).
(We shall preserve the notation \(\textup dg\) for the Haar
measure on \(G\) specified in \S\ref{sec:measure}.)
\end{notn}

Now we may inflate \(\sigma\) to a representation
\(\dot\sigma\) of \(\SL_2(\pint) = G_{x, 0}\).
From, for example,
Proposition~\ref{prop:depthzero-sc-induced}, the representation
$\pi = \Ind_{G_{x, 0}}^G \dot\sigma$ is an irreducible,
hence supercuspidal,
representation of $G$,
so its character may be computed by Harish-Chandra's
integral formula \cite{hc:harmonic}*{p.~94}:
\begin{equation}
\label{eq:hc:harmonic:thm-12}
\Theta_\pi(\gamma)
= \frac{\deg_{\textup dg/\textup dz}(\pi)}{\chi_\sigma(1)}
\int_{G/Z(G)} \int_{G_{x, 0}}
	\dot\chi_\sigma\bigl(\Int(g\ell)\gamma\bigr)
\textup d\ell\,\frac{\textup dg}{\textup dz},
\quad\gamma \in G\rss.
\end{equation}
We will use the decomposition \eqref{eq:sigma-as-green} to
evaluate this integral formula on the topologically
unipotent set \(G_{\MPlus0}\)
(see \cite{debacker-kazhdan:mk-zero}*{(5)}),
but first we need a convergence result.
Fix \(\gamma \in G\rss \cap G_{\MPlus0}\),
and write \(\gamma = \cayley(Y)\)
with \(Y \in \gg\rss \cap \gg_{\MPlus0}\).

The intersection with \(G_{x, 0}\) of the \(G\)-orbit
of \(\gamma\) projects to \(\ms U(\resfld)\)
in \(G_{x, 0:\MPlus0}\), so that the expression
\(\dot\Green\bigl(\Int(g\ell)\gamma\bigr)\)
makes sense
(see Notation \ref{notn:dot}).

\begin{lemma}
\label{lem:supp}
If
\(\Green
\in \sset{Q^\sfG_{\sfT^\epsilon}, \lsup0f}\),
then
\[
\abmapto g{\int_{G_{x, 0}}
	\dot\Green\bigl(\Int(g\ell)\gamma\bigr)
\textup d\ell}
\]
is a compactly supported function on \(G\).
\end{lemma}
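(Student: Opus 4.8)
The plan is to study the function \(F(g) = \int_{G_{x, 0}}\dot\Green\bigl(\Int(g\ell)\gamma\bigr)\textup d\ell\) (which is well defined because \(\dot\Green\) is locally constant and supported on the compact set \(G_{x, 0}\)) and to show that it is supported in a compact set. First I would note that \(F\) is bi-invariant under \(G_{x, 0}\): right-invariance is the substitution \(\ell \mapsto \ell_0\ell\) in the integral, while left-invariance follows from the fact that \(\Green\) is a class function on \(G_{x, 0:\MPlus0} = \SL_2(\resfld)\), so that \(\dot\Green\bigl(\Int(\ell_0)h\bigr) = \dot\Green(h)\) for all \(\ell_0 \in G_{x, 0}\) and \(h \in G\). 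Hence \(F\) factors through \(G_{x, 0}\backslash G/G_{x, 0}\), which by the Cartan decomposition is exhausted by the elements \(a_n = \begin{smallpmatrix}\varpi^n & 0 \\ 0 & \varpi^{-n}\end{smallpmatrix}\), \(n \in \Z_{\ge 0}\); so it suffices to prove that \(F(a_n) = 0\) for all sufficiently large \(n\). The argument now splits into two cases according to whether \(C_G(\gamma)\) is compact.

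If \(\gamma\) is elliptic, then \(C_G(\gamma)\) is compact, and in fact \(F\) itself is compactly supported with no further work. Indeed \(\operatorname{supp} F \subseteq \Xi\dotm G_{x, 0}\), where \(\Xi = \set{g \in G}{\Int(g)\gamma \in G_{x, 0}}\). Since \(\gamma\) is semisimple, its \(G\)-conjugacy class is closed in \(G\), so it meets the compact set \(G_{x, 0}\) in a compact set; and, the orbit map \(G/C_G(\gamma) \to G\) being a homeomorphism onto its image, \(\Xi\) is compact modulo \(C_G(\gamma)\), hence compact. (This is the usual convergence mechanism for orbital integrals; compare \cite{ranga-rao:orbital}.)

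The substance of the lemma is the case where \(\gamma\) is split, so that \(C_G(\gamma)\) is a non-compact split torus \(S\) and \(\operatorname{supp} F\) is genuinely non-compact; the vanishing of \(F(a_n)\) for large \(n\) must then come from cancellation in the integral. For this I would pass to the Bruhat--Tits tree \(\BB(\SL_2, \field)\), writing \(\xleft\) also for the corresponding vertex. The integrand \(\dot\Green\bigl(\Int(a_n\ell)\gamma\bigr)\) is non-zero only when \(\gamma\) fixes the vertex \((a_n\ell)\inv\xleft\), which lies at distance \(2n\) from \(\xleft\); and in that case---since \(\gamma \in G_{\MPlus0}\), a conjugation-invariant condition---the image of \(\Int(a_n\ell)\gamma\) in \(\SL_2(\resfld)\) is unipotent, its conjugacy class recording how \(\gamma\) acts on the residue quotient at that vertex. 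The fixed-point set of \(\gamma\) in the tree is the set of vertices within distance \(R \ldef \mdepth(\gamma)\) of the axis \(\mathcal A\) of \(S\); on such a vertex the residue of \(\gamma\) is trivial unless the vertex lies at distance exactly \(R\) from \(\mathcal A\), where it becomes a regular unipotent. A (somewhat fussy) count, matching the Haar measure on \(G_{x, 0}\) against the counting measure on the vertices at distance \(2n\) from \(\xleft\), then shows that for \(n\) large these contributions realize each value \(\Green(u)\), \(u\) ranging over a maximal unipotent subgroup \(U^- \le \SL_2(\resfld)\), on a set of \(\ell\) of one common positive measure \(c_n\); so \(F(a_n) = c_n\sum_{u \in U^-}\Green(u)\).

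It remains only to observe that \(\sum_{u \in U^-}\Green(u) = 0\) for both \(\Green \in \{Q^\sfG_{\sfT^\epsilon},\, \lsup0f\}\). For \(\Green = \lsup0f\) this is the \(U^-\)-analogue of the identity already recorded in Remark~\ref{rem:exhaust-and-equiv-finite}. For \(\Green = Q^\sfG_{\sfT^\epsilon}\), one uses that \(R^\sfG_{\sfT^\epsilon, 1} = \mathbf 1 - \mathrm{St}\) as virtual characters of \(\SL_2(\resfld)\), where \(\mathbf 1\) and \(\mathrm{St}\) are the trivial and Steinberg characters; so \(Q^\sfG_{\sfT^\epsilon}\) agrees with \(\mathbf 1 - \mathrm{St}\) on unipotent elements, and since \(\mathrm{St}\) vanishes on every non-trivial unipotent element we get \(\sum_{u \in U^-}\mathrm{St}(u) = \mathrm{St}(1) = q = \sum_{u \in U^-}\mathbf 1(u)\). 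Therefore \(F(a_n) = 0\) for all large \(n\), completing the proof. The main obstacle is the combinatorial count in the split case: keeping track of which vertices of the tree at a prescribed distance from \(\xleft\) are fixed by a split, regular semisimple, topologically unipotent \(\gamma\) (which in general is not \(G_{x, 0}\)-conjugate to a diagonal matrix), and matching the Haar measure on \(G_{x, 0}\) against the counting measure on those vertices carefully enough to recognize \(F(a_n)\), for \(n\) large, as a positive multiple of \(\sum_{u \in U^-}\Green(u)\).
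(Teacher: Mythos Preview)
Your approach is sound and genuinely different from the paper's. The paper never touches the Cartan decomposition or the tree. Instead it invokes Harish-Chandra's Lemma~23 from \cite{hc:harmonic}: for any cuspidal \(\sigma\) on \(\SL_2(\resfld)\), the analogous integral with \(\dot\chi_\sigma\) in place of \(\dot\Green\) is compactly supported. Since \(\gamma\) is topologically unipotent, \(\dot R^\sfG_{\sfT^\epsilon,\psi}(\Int(g\ell)\gamma) = \dot Q^\sfG_{\sfT^\epsilon}(\Int(g\ell)\gamma)\) for every \(\psi\); taking \(\psi\ne\psi^{-1}\) so that \(-R^\sfG_{\sfT^\epsilon,\psi}\) is an honest cuspidal character disposes of \(\Green = Q^\sfG_{\sfT^\epsilon}\). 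For \(\Green = \lsup0f\) one then solves \eqref{eq:Rpm} for \(\lsup0f\) as a linear combination of a cuspidal character and \(Q^\sfG_{\sfT^\epsilon}\).

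Your direct route trades that citation for explicit \(\SL_2\) combinatorics. The elliptic case is free either way, and in the split case you correctly locate the mechanism as \(\sum_{u\in U^-}\Green(u)=0\), which is exactly cuspidality of \(\Green\)---so the two arguments are close cousins in spirit. What the paper's route buys is brevity: one sentence in place of the tree count you rightly flag as the main obstacle. What yours buys is self-containment and an explanation of \emph{why} the support is compact. If you pursue it, the delicate point is checking that for large \(n\) the two regular unipotent \(\resfld\)-classes contribute with equal weight among the boundary vertices of the tube around \(\mathcal A\) (this is what makes the \(\lsup0f\) case go through); it follows from a symmetry of the picture, but does need to be written down.
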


\begin{proof}
From~\cite{hc:harmonic}*{Lemma 23}, we know that both
\[
\abmapto g{\int_{G_{x, 0}}
	\dot{\chi}_\sigma\bigl(\Int(g\ell)\gamma\bigr)
\textup d\ell}
\qandq
\abmapto g{\int_{G_{x, 0}}
	\dot{R}^\sfG_{\sfT^\epsilon,\psi}\bigl(\Int(g\ell)\gamma\bigr)
\textup d\ell}
\]
are compactly supported functions on $G$
(when \(\psi \ne \psi\inv\)).
The result for \(\Green = Q^\sfG_{\sfT^\epsilon}\) follows
(see also \cite{debacker-reeder:depth-zero-sc}*{Lemma 10.0.6}).

For $\Green = \lsup 0f$,
choose a cuspidal representation $\sigma$ for which
$c_\sigma(\lsup 0f) \neq 0$
(namely, \(\sigma = R^\pm_{\sfT^\epsilon, \psi_0}\)).
Since
$$
\lsup 0f
=
\dfrac{1}{c_\sigma(\lsup 0f)} \bigl(
	\chi_\sigma - c_\sigma(Q^\sfG_{\sfT^\epsilon}) Q^\sfG_{\sfT^\epsilon}
	\bigr),
$$
the result follows.
\end{proof}

By \eqref{eq:sigma-as-green} and Lemma \ref{lem:supp},
since
\[
\frac{\deg_{\textup dg/\textup dz}(\pi)}{\chi_\sigma(1)}
= \meas_{\textup dg/\textup dz}(\SL_2(\pint)/Z(G))\inv
= \frac{2q^{1/2}}{q^2 - 1},
\]
\eqref{eq:hc:harmonic:thm-12} becomes
\begin{equation}
\label{eq:pi-as-green}
\Theta_\pi(\gamma)
= \frac{2q^{1/2}}{q^2 - 1}
\sum_\Green c_\sigma(\Green)\int_{G/Z(G)} \int_{G_{x, 0}}
	\dot\Green\bigl(\Int(g\ell)\gamma\bigr)
\textup d\ell\,\frac{\textup dg}{\textup dz},
\end{equation}
so we wish to understand these integrals of Green functions.

\begin{lemma}
\label{lem:MK-green}
Under
Hypothesis \ref{hyp:debacker-reeder:depth-zero-sc:res-12.4.1(2)},
\begin{equation*}
\frac{2q^{1/2}}{q^2 - 1}
\int_{G/Z(G)} \int_{G_{x, 0}}
	\dot Q^\sfG_{\sfT^\epsilon}(\Int(g \ell) \gamma)
\textup d\ell\,\frac{\textup dg}{\textup dz}
= - \hat{\mu}^G_{X^{\epsilon, 1}_1} (Y),
\end{equation*}
where the orbital integral is taken with respect to the
measure \(\textup dg/\textup dt^\epsilon\) on
\(G/T^\epsilon\).
\end{lemma}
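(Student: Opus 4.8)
The plan is to deduce this from \cite{debacker-reeder:depth-zero-sc}*{Lemma 12.4.3}, of which it is the specialization to \(\bG = \SL_2\); the real work is in matching hypotheses, objects, and measure normalizations. First I would check the hypotheses of \loccit: the tameness and simple-connectedness of \(\bG\) were recorded in \S\ref{sec:notn}, and the residual-characteristic bound \(p \ge 2e + 3\) is exactly Hypothesis \ref{hyp:debacker-reeder:depth-zero-sc:res-12.4.1(2)}, which is why it is imposed in the statement. Since \(x = \xleft\) gives \(\sfG_x \iso {\SL_2}_{/\resfld}\), the torus \(\sfT^\epsilon\) is, up to \(\SL_2(\resfld)\)-conjugacy, the unique maximal elliptic \(\resfld\)-torus of \(\sfG_x\), and the Green function \(Q^\sfG_{\sfT^\epsilon}\) of Definition \ref{defn:green} is precisely the generalized Green function attached to \(\sfT^\epsilon\) that occurs on the left-hand side of \loccit

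Next I would line up the right-hand side. By Notation \ref{notn:depth-0-X}, \(X^{\epsilon, 1}_1\) is the distinguished depth-zero element \(X_\pi\) attached to \(\pi\): it lies in \(\gg_{x, 0}\), reduces to a regular semisimple element of \(\Lie(\sfG_x)(\resfld)\) with centralizer \(\sfT^\epsilon\), and has \(C_\bG(X^{\epsilon, 1}_1) = \bT^{\epsilon, 1}\). This is exactly the kind of good depth-zero element whose semisimple orbital integral figures in \loccit, so the dictionary reads: their torus \(\leftrightarrow \bT^{\epsilon, 1}\), their chosen element \(\leftrightarrow X^{\epsilon, 1}_1\). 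Since \(\gamma = \cayley(Y)\) with \(Y \in \gg_{\MPlus0}\), and \(\cayley\) is a mock-exponential in the sense of \cite{adler-spice:explicit-chars}*{Hypothesis \xref{char-hyp:strong-mock-exp}} (see \S\ref{sec:cayley}), this is the topologically-unipotent/topologically-nilpotent correspondence used there; and, because \(\hat\mu^G_{X^{\epsilon, 1}_\beta}\) restricted to the topologically nilpotent set \(\gg_{\MPlus0}\) depends on \(\beta \in \pint\mult\) only through its image in \(\resfld\mult\), the choice of representative \(\beta = 1\) (among \(\beta \equiv 1 \pmod\pp\)) is immaterial.

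The substantive point is the reconciliation of Haar measures. On the left, \(\textup dg\) and \(\textup dz\) are Waldspurger's measures of \S\ref{sec:measure}, so \(\frac{2q^{1/2}}{q^2 - 1} = \meas_{\textup dg/\textup dz}(\SL_2(\pint)/Z(G))\inv\) (as noted before \eqref{eq:pi-as-green}), and \(\textup d\ell\) is the probability measure on \(G_{x, 0}\); on the right, the orbital integral is normalized by \(\textup dg/\textup dt^\epsilon\) with \(\meas_{\textup dt^\epsilon}(T^\epsilon_0) = (q + 1)q^{-1/2}\). I would translate DeBacker--Reeder's conventions into these and verify that all the combinatorial volume factors --- \(\card{\sfG_x(\resfld)} = q(q^2 - 1)\), \(\card{\sfT^\epsilon(\resfld)} = q + 1\), the powers of \(q\) coming from the \(\card{\Lie(\cdot)(\resfld)}^{1/2}\) normalizations, and the factor \(\frac{2q^{1/2}}{q^2 - 1}\) --- collapse, leaving only the scalar \(-1\).

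The main obstacle, and the step I would be most careful about, is precisely that overall sign: it records the sign relating the virtual, non-normalized Deligne--Lusztig generalized Green function to the honest (non-virtual) Green function used in \loccit, together with the sign convention for elliptic Green functions there. I would pin it down by evaluating both sides at a convenient point --- e.g.\ at the identity, where \(Q^\sfG_{\sfT^\epsilon}(1) = R^\sfG_{\sfT^\epsilon, 1}(1) = -(q - 1)\) is computable directly and the corresponding value of \(\hat\mu^G_{X^{\epsilon, 1}_1}\) can be read off from the normalization of \(\textup dg/\textup dt^\epsilon\) together with the leading Shalika germ --- so that no structural input beyond \loccit\ is needed. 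I expect this constant-and-sign check, rather than any harmonic-analytic difficulty, to be the only genuine content of the proof.
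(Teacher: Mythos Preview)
Your approach is essentially the same as the paper's: both deduce the lemma directly from \cite{debacker-reeder:depth-zero-sc}*{Lemma 12.4.3} (the paper also cites \S9.2 \loccit\ for the set-up), with the remaining work being to reconcile measure normalizations---in particular the factor of \(2\) coming from passing between integration over \(G\) and over \(G/Z(G)\), and the Waldspurger measure of \(G_{x,0}\). Your proposed sanity check of the sign by evaluating at the identity is harmless but unnecessary: once the measures are aligned, the sign is already fixed by \loccit, so the paper simply records the relevant volumes and stops.
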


\begin{proof}
This follows from
\cite{debacker-reeder:depth-zero-sc}*{\S9.2 and Lemma 12.4.3}
upon noting that the measure with respect to \(\textup dz\)
of \(Z(G)_0 = \sset1\) (there denoted by \(Z_J\)) is \(1\);
that the measure with respect to \(\textup dg\)
of \(G_{x, 0} = \SL_2(\pint)\) is \(q^{-1/2}(q^2 - 1)\);
and that
\[
2\int_{G/Z(G)} f(g)\frac{\textup dg}{\textup dz}
= \int_G f(g)\textup dg,
\quad f \in \Hecke(G/Z(G)).\qedhere
\]
\end{proof}

\begin{lemma}
\label{lem:MK-lusztig}
\begin{multline*}
\frac{2q^{1/2}}{q^2 - 1}\int_{G/Z(G)} \int_{G_{x, 0}}
	(\lsup{0}f)\spdot(\Int(g \ell) \gamma)
\textup d\ell\,\frac{\textup dg}{\textup dz} \\
= \dfrac{\Gauss(\AddChar)}{2 q} \Bigl[
	\bigl(
		\hat\mu^G_{X^{\varpi, 1}_1} -
		\hat\mu^G_{X^{\varpi, \epsilon}_1}
	\bigr) +
	\bigl(
		\hat\mu^G_{X^{\epsilon\varpi, 1}_1} -
		\hat\mu^G_{X^{\epsilon\varpi, (\epsilon\inv)}_1}
	\bigr)
\Bigr](Y),
\end{multline*}
where the orbital integrals are taken with respect to the
measures \(\textup dg/\textup dt^{\theta, \eta}\)
on \(G/T^{\theta, \eta}\),
with \(\theta \in \sset{\varpi, \epsilon\varpi}\)
and \(\eta \in \sset{1, \epsilon}\).
\end{lemma}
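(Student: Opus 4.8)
The plan is to follow the same route as the proof of Lemma~\ref{lem:MK-green}, but with the role played there by \cite{debacker-reeder:depth-zero-sc} now played by \cite{debacker-kazhdan:mk-zero} (and \cite{waldspurger:nilpotent}). Recall that $\lsup0f$ is the difference of the characteristic functions of the two nontrivial nilpotent $\SL_2(\resfld)$-orbits in $\sl_2(\resfld)$; transported via $\cayley$ it is a class function on $\ms U(\resfld)$, and its inflation $(\lsup0f)\spdot$ is supported on the topologically nilpotent elements of $\gg_{x, 0}$. Writing $\gamma = \cayley(Y)$ with $Y \in \gg\rss \cap \gg_{\MPlus0}$, the first step is to record, using Lemma~\ref{lem:supp}, that the double integral on the left-hand side converges absolutely (so the order of integration is immaterial), and then to rewrite that left-hand side in the normalization used in \cite{debacker-kazhdan:mk-zero}, by means of the measure identities of \S\ref{sec:measure} --- in particular $2\int_{G/Z(G)} h\,\frac{\textup dg}{\textup dz} = \int_G h\,\textup dg$, together with $\meas_{\textup dz}(\sset1) = 1$ and $\meas_{\textup dg}(G_{x, 0}) = q^{-1/2}(q^2 - 1)$, exactly as in the proof of Lemma~\ref{lem:MK-green}.

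The heart of the argument is then to invoke the explicit form of the DeBacker--Kazhdan ``lift'' of $\lsup0f$ from the finite group $\sfG(\resfld)$ to $G$. The decisive input is Lemma~\ref{lem:lusztig-FT}: since the finite Fourier transform of $\lsup0f$ is $\sgn_\varpi(-1)\Gauss(\AddChar)\dotm\lsup0f$, the function $\lsup0f$ is, up to that scalar, a self-dual vector, and feeding this into the lifting formula of \cite{debacker-kazhdan:mk-zero} expresses the left-hand side as a scalar multiple of a sum of Fourier transforms of orbital integrals attached to the two nontrivial nilpotent $\resfld$-orbits at $x = \xleft$. The remaining, $\SL_2$-specific task is to identify these explicitly: the two nilpotent $\resfld$-orbits lift, under the DeBacker--Kazhdan procedure, to the semisimple orbital integrals of the minimal-positive-depth ramified elements $X^{\theta, \eta}_1$ with $\theta \in \sset{\varpi, \epsilon\varpi}$ and $\eta \in \sset{1, \epsilon}$, and the four sign choices in $(\hat\mu^G_{X^{\varpi, 1}_1} - \hat\mu^G_{X^{\varpi, \epsilon}_1}) + (\hat\mu^G_{X^{\epsilon\varpi, 1}_1} - \hat\mu^G_{X^{\epsilon\varpi, (\epsilon\inv)}_1})$ reflect the two $\pm1$ signs in the definition of $\lsup0f$, distributed over the two ramified stable classes $\theta \in \sset{\varpi, \epsilon\varpi}$. (Here $X^{\epsilon\varpi, \epsilon\inv}_1$ is $G$-conjugate to $X^{\epsilon\varpi, \epsilon}_1$, via $\Int\begin{smallpmatrix}\epsilon & 0 \\ 0 & \epsilon\inv\end{smallpmatrix}$, so either may be used.) Assembling the scalar --- a factor $q^{1/2}$ from the self-dual-measure normalization, the surviving $\Gauss(\AddChar)$ from Lemma~\ref{lem:lusztig-FT} (the $\sgn_\varpi(-1)$ cancelling against a compensating sign in the orbital-integral normalization, as in \cite{waldspurger:nilpotent}*{Proposition~V.8}), and a $2q$ in the denominator from the measure conversions and the index-$2$ behaviour of ramified tori --- yields the constant $\tfrac1{2q}\Gauss(\AddChar)$.

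I expect the main obstacle to be bookkeeping rather than any genuinely new idea: precisely reconciling the normalizations in play --- Waldspurger's measures $\textup dg$, $\textup dz$, $\textup dt^{\theta, \eta}$ of \S\ref{sec:measure}, the self-dual Haar measure on $\gg$ used to realize each $\hat\mu^G_X$ as a \emph{function}, and the conventions built into \cite{debacker-kazhdan:mk-zero} and \cite{waldspurger:nilpotent} --- so that the overall constant emerges as exactly $\tfrac{\Gauss(\AddChar)}{2q}$ and the spurious sign $\sgn_\varpi(-1)$ disappears. A secondary point demanding care is the translation of the (building-theoretic) labelling of nilpotent orbits and their lifts in \cite{debacker-kazhdan:mk-zero} into our concrete coordinates $X^{\theta, \eta}_\beta$, particularly keeping straight which of $\bT^{\theta, 1}$, $\bT^{\theta, \epsilon}$ is which when $q \equiv 3 \pmod 4$ (so the two are $G$-conjugate); one should check that the right-hand side is independent of this choice, as the statement implicitly asserts.
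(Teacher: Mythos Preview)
Your outline is the paper's approach in spirit, but you have the roles of the two references reversed in a way that would leave a gap if carried out literally. The paper does \emph{not} invoke a ``DeBacker--Kazhdan lifting formula'' that sends nilpotent $\resfld$-orbits to semisimple $p$-adic orbital integrals. Rather, \cite{debacker-kazhdan:mk-zero}*{Lemma~5.3.1} supplies only the template: one defines the distribution
\[
D_{\lsup0f}(F) = \int_\gg \int_G \int_{G_{x,0}} (\lsup0f)\spdot\bigl(\Ad(g\ell)Y\bigr)\,F(Y)\,\textup d\ell\,\textup dg\,\textup dY,
\]
justifies interchanging the integrals (your Lemma~\ref{lem:supp} together with \cite{waldspurger:nilpotent}*{Lemme~III.2}), and identifies the result with $\tfrac{q^2-1}{q^{1/2}}\phi_{(1,0,\emptyset,\emptyset,\emptyset)}$ in Waldspurger's notation \cite{waldspurger:nilpotent}*{p.~53}. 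The explicit combination of four ramified orbital integrals then comes entirely from \cite{waldspurger:nilpotent}*{p.~70 and Proposition~IV.3}, which computes $\hat\phi_{(1,0,\emptyset,\emptyset,\emptyset)}$ on $\Hecke(\gg_{\MPlus0})$. Your heuristic that ``nilpotent orbits lift to ramified semisimple orbits'' is the \emph{outcome} of Waldspurger's computation, not a mechanism you can cite; if you go looking in \cite{debacker-kazhdan:mk-zero} for such a formula you will not find it.

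The $\sgn_\varpi(-1)$ cancellation is also not a normalization artifact. Lemma~\ref{lem:lusztig-FT} gives $\sgn_\varpi(-1)\Gauss(\AddChar)\,D_{\lsup0f}(F) = D_{\lsup0f}(\hat F)$, and one then uses the Gauss-sum identity $\Gauss(\AddChar)^2 = \sgn_\varpi(-1)$ from \cite{spice:sl2-mu-hat}*{Lemma~\xref{sl2-lem:G-facts}} to rewrite $\sgn_\varpi(-1)^{-1}\Gauss(\AddChar)^{-1} = \Gauss(\AddChar)$, obtaining $\tfrac{q^{1/2}}{q^2-1}D_{\lsup0f} = \Gauss(\AddChar)\,\hat\phi_{(1,0,\emptyset,\emptyset,\emptyset)}$. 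The remaining ingredients in your proposal --- Lemma~\ref{lem:supp} for convergence, the measure conversions of \S\ref{sec:measure}, and the factor $2$ passing between $G$ and $G/Z(G)$ --- match the paper exactly.
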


\begin{proof}
Following the proof of
\cite{debacker-kazhdan:mk-zero}*{Lemma 5.3.1},
define the distribution $D_{\lsup{0} f}$ on \(\gg\) by
$$
D_{\lsup{0}f} (F)
= \int_\gg \, \int_G \, \int_{G_{x,0}}
	(\lsup{0}f)\spdot\bigl(\Ad(g \ell)Y\bigr) F(Y)
\textup d\ell \, \textup dg \, \textup dY,
\quad F \in \Hecke(\gg),
$$
where \(\textup dY\) is the self-dual Haar measure on \(\gg\)
(so that \(\meas_{\textup dY}(\sl_2(\pint)) = q^{3/2}\)).

By Lemma \ref{lem:supp}, the innermost integral defines a
compactly supported function on
\(\gg \times G\), so that we may switch the order of
the outer two integrals (over \(\gg\) and \(G\)).
For fixed \(g \in G\),
the integrand is compactly supported as a function on
\(G_{x, 0} \times \gg\),
so we may switch the (now) inner two integrals
(over \(\gg\) and \(G_{x, 0}\)),
obtaining
$$
D_{\lsup{0}f} (F)
= \int_G \, \int_{G_{x,0}} \, \int_\gg
	(\lsup{0}f)\spdot\bigl(\Ad(g \ell)Y\bigr) F(Y)
\textup dY \, \textup d\ell \, \textup dg,
\quad F \in \Hecke(\gg).
$$
From~\cite{waldspurger:nilpotent}*{Lemme III.2},
the innermost integral defines a compactly supported function on
$G \times G_{x,0}$,
so that we may switch the order of the outer integrals.
Absorbing the integral over $G_{x,0}$ into that over $G$,
and recalling that $x = \xleft$,
we find that our distribution $D_{\lsup{0}f}$  
is equal to 
\[
\abs{\sfG (\resfld )}   q^{-3/2}  \phi_{(1,0, \emptyset, \emptyset, \emptyset)}
= \frac{q^2 - 1}{q^{1/2}}\phi_{(1, 0, \emptyset, \emptyset, \emptyset)},
\]
where $\phi_{(1,0, \emptyset, \emptyset, \emptyset)}$ is as defined by Waldspurger~\cite{waldspurger:nilpotent}*{p.~53}.

By Lemma \ref{lem:lusztig-FT},
for all $F \in \Hecke(\gg)$, we have 
\begin{align*}
\sgn_\varpi(-1)\Gauss(\AddChar)\, D_{\lsup 0f} (F) &=  \int_G \,  \int_\gg
	\widehat{(\lsup{0}f)\spdot}\bigl(\Ad(g )Y\bigr) F(Y)
\textup dY \, \textup dg \\
&=
  \int_G \,  \int_\gg
	(\lsup{0}f)\spdot \bigl(\Ad(g )Y\bigr) \hat{F}(Y)
\textup dY \, \textup dg \\
&=  D_{\lsup 0f} (\hat{F}) \\
&= \frac{q^2 - 1}{q^{1/2}}\phi_{(1,0, \emptyset, \emptyset, \emptyset)}(\hat{F})
\end{align*}
By \cite{spice:sl2-mu-hat}*{Lemma \xref{sl2-lem:G-facts}},
the above equality becomes
\[
\frac{q^{1/2}}{q^2 - 1}D_{\lsup0f}
= \Gauss(\AddChar)
	\hat\phi_{(1, 0, \emptyset, \emptyset, \emptyset)}.
\]
In the notation of~\cite{waldspurger:nilpotent}*{\S I.9}, we have
$\widehat{\mc H} = \Hecke(\gg\textsub{tn}) = \Hecke(\gg_{\MPlus0})$.
The result now follows
from~\cite{waldspurger:nilpotent}*{p.~70 and Proposition~IV.3},
upon adjusting (as in Lemma \ref{lem:MK-green})
for the difference between integration over \(G\)
and \(G/Z(G)\).
(See also \cite{waldspurger:nilpotent}*{p.~7},
where the constants are not completely explicated.) 
\end{proof}

Lemmas \ref{lem:MK-green} and \ref{lem:MK-lusztig} allow us
to re-write \eqref{eq:pi-as-green} as
\begin{equation}
\label{eq:pi-as-green-no-really}
\begin{aligned}
\Theta_\pi(\gamma) ={} &
	-c_\sigma(Q^\sfG_{\sfT^\epsilon})
		\hat\mu^G_{X^{\epsilon, 1}_1}(Y) \\
	& \qquad+ c_\sigma(\lsup0f)\frac{\Gauss(\AddChar)}{2q}\bigl[
		(\hat\mu^G_{X^{\varpi, 1}_1} -
			\hat\mu^G_{X^{\varpi, \epsilon}_1}) +
		(\hat\mu^G_{X^{\epsilon\varpi, 1}_1} -
			\hat\mu^G_{X^{\epsilon\varpi, (\epsilon\inv)}_1})
	\bigr](Y).
\end{aligned}
\end{equation}

\subsection{Character expansions}

\begin{prop}
\label{prop:MK}
Suppose that \(\pi\) is an ordinary supercuspidal
representation of \(G\), and put \(r = \depth(\pi)\).
If
\begin{itemize}
\item
\(\gamma \in G\rss \cap G_{\MPlus r}\)
	and Hypothesis \ref{hyp:debacker-reeder:depth-zero-sc:res-12.4.1(2)}
holds,
or
\item
\(r > 0\) and \(\gamma \in G\rss \cap G_r\),
\end{itemize}
then
\[
\Theta_\pi(\gamma)
= \deg(\pi)\dotm
\hat\mu^G_{X_\pi}\bigl(
	\cayley\inv(\gamma)
\bigr).
\]
\end{prop}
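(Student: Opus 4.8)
The argument splits according to whether $r = \depth(\pi)$ is zero or positive. Suppose first that $r = 0$, so that $\gamma \in G\rss \cap G_{\MPlus0}$ and Hypothesis \ref{hyp:debacker-reeder:depth-zero-sc:res-12.4.1(2)} is in force. Since $\pi$ is ordinary, after conjugating by a suitable element of $\GL_2(\field)$ (using Remark \ref{rem:depth-0-leftist}, together with the fact that both sides of the asserted identity transform equivariantly under conjugation --- the right-hand side by Lemma \ref{lem:cayley}\pref{item:cayley-equivariant}) we may take $\pi = \bPi(T^{\epsilon, 1}, \psi) = \Ind_{G_{\xleft, 0}}^G \smabs{R^\sfG_{\sfT^\epsilon, \psi}}$ with $\psi \ne \psi\inv$. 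Writing $\sigma = \smabs{R^\sfG_{\sfT^\epsilon, \psi}}$, Definition \ref{defn:green} gives $\res{\chi_\sigma}to{\ms U(\resfld)} = -Q^\sfG_{\sfT^\epsilon}$, so in the decomposition \eqref{eq:sigma-as-green} we have $c_\sigma(Q^\sfG_{\sfT^\epsilon}) = -1$ and $c_\sigma(\lsup0f) = 0$. Substituting these values into \eqref{eq:pi-as-green-no-really} annihilates the Lusztig-function terms and leaves $\Theta_\pi(\gamma) = \hat\mu^G_{X^{\epsilon, 1}_1}(\cayley\inv(\gamma))$, and $X^{\epsilon, 1}_1 = X_\pi$ by Notation \ref{notn:depth-0-X}. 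It then remains only to recognize the displayed expression as $\deg(\pi)\dotm\hat\mu^G_{X_\pi}(\cayley\inv(\gamma))$ in the normalization adopted for the statement: this is a matter of reconciling the Haar-measure normalization underlying \eqref{eq:pi-as-green-no-really} (there the orbital integral is taken with respect to $\textup dg/\textup dt^\epsilon$) with the normalizations fixed in \S\ref{sec:measure}, together with a computation of the formal degree (compare Lemma \ref{lem:deg-pi}).

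Now suppose $r > 0$, so that $\gamma \in G\rss \cap G_r$ and no restriction on $p$ is needed. Here I would invoke the Murnaghan--Kirillov theory for Yu-type supercuspidals, concretely the positive-depth specialization of \cite{adler-spice:explicit-chars}*{Theorem \xref{char-thm:full-char}}. To apply it I must (i) identify $\bPi(T, \psi)$ with Yu's construction for $\bigl((\bT, \bG), x_\bT, (\psi, 1)\bigr)$, which is Proposition \ref{prop:yu-equiv}; (ii) match $X_\pi$ and $\SSAddChar_\pi$ (Notation \ref{notn:pos-depth-X}) with the generic element and additive character occurring in \loccit; and (iii) recall that $\cayley\inv$ plays the role of the mock-logarithm there, our normalization of $\cayley$ having been chosen precisely so that \cite{adler-spice:explicit-chars}*{Hypothesis \xref{char-hyp:strong-mock-exp}} holds. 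Restricting the resulting formula to $\gamma$ with $\depth(\gamma) \ge r = \depth(\pi)$, i.e.\ to $\gamma \in G_r$, I would then check that its arithmetic (Weyl-sum) part degenerates completely in this range: the sum over intertwining data collapses to the single term indexed by the identity, on which the attendant Gauss-sum and Weyl-discriminant factors, together with the formal-degree normalization, reproduce exactly $\deg(\pi)\dotm\hat\mu^G_{X_\pi}(\cayley\inv(\gamma))$. Because this route uses nothing from \cite{debacker-reeder:depth-zero-sc}, it applies on all of $G_r$ --- in particular on the bad shell $\depth(\gamma) = r$ --- without Hypothesis \ref{hyp:debacker-reeder:depth-zero-sc:res-12.4.1(2)}, which explains the two-case shape of the statement.

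The overall structure is short; the work is in the bookkeeping. In the depth-zero case the one genuine task is to pin the normalization constant down to $\deg(\pi)$. In the positive-depth case the substantive points are the vanishing of every non-geometric term of the Adler--Spice formula once $\depth(\gamma) \ge r$, and the verification that the surviving geometric term has precisely the asserted shape --- the latter relying on the specialization of the root-theoretic Gauss sums of \S\ref{sec:roots} to $\SL_2$. I expect the most delicate point to be the behavior on the shell $\depth(\gamma) = r$, where characters of $p$-adic groups are generically erratic and where the distinction between the `ordinary' representations treated here and the `exceptional' ones (which need the separate analysis of Proposition \ref{prop:MK-exc}) first becomes relevant.
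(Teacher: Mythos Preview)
Your depth-zero argument is essentially the paper's: reduce via Remark \ref{rem:depth-0-leftist}, read off $c_\sigma(Q^\sfG_{\sfT^\epsilon}) = -1$ and $c_\sigma(\lsup0f) = 0$ from Definition \ref{defn:green}, plug into \eqref{eq:pi-as-green-no-really}, and then reconcile the measure on $G/T^\epsilon$ with the formal-degree normalization. The paper does exactly this, citing \cite{debacker-reeder:depth-zero-sc}*{\S5.3} for the degree and then noting that $\meas_{\textup dt^\epsilon/\textup dz}(T^\epsilon/Z(G)) = (q+1)/(2q^{1/2})$ is the factor needed to make $\textup dg/\textup dz$ and $\textup dg/\textup dt^\epsilon$ consistent.

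For $r > 0$ you take a genuinely different, and longer, route than the paper. The paper's proof is one sentence: it cites \cite{adler-debacker:mk-theory}*{Theorem 6.3.1 and Remark 6.3.2} or, equivalently, \cite{adler-spice:explicit-chars}*{Corollary \xref{char-cor:char-tau|pi-1-germ}}, each of which is already a packaged Murnaghan--Kirillov statement valid on all of $G_r$ (bad shell included) with no hypothesis on $p$. You instead propose to start from the full character formula (Theorem \xref{char-thm:full-char} \loccit) and argue that its arithmetic part collapses on $G_r$; that is essentially re-deriving the cited corollary by hand. This buys nothing here and introduces extra dependencies: Proposition \ref{prop:yu-equiv} is not needed (the cited results apply to the construction of \S\ref{sec:posreps} directly), and the Gauss-sum computations of \S\ref{sec:roots} that you invoke are stated and proved only for $\gamma \in T \setminus Z(G)T_r$, i.e.\ \emph{outside} the range you are working in, so they are not available to you as written. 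The surviving geometric term in the cited corollary is already $\deg(\pi)\dotm\hat\mu^G_{X_\pi}$ with no root-of-unity factor to identify; no specialization of \S\ref{sec:roots} is required.
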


Here, \(X_\pi\) is the regular, semisimple element
associated to \(\pi\) in
Notations \ref{notn:depth-0-X} and \ref{notn:pos-depth-X}.
Put \(T = C_G(X_\pi)\).

Recall from Definition \ref{defn:mu-hat} that
\(\hat\mu^G_{X_\pi}\) depends on a choice of measure on
\(G/T\),
and from \S\ref{sec:deg-pi} that \(\deg(\pi)\) depends on a
choice of measure on \(G/Z(G)\).
The actual choices of measure in the above proposition are
unimportant; they need only be \emph{consistent},
in the sense that the measure of a subset of \(G/T\)
is the same as the measure of its pull-back (along the
natural projection) in \(G/Z(G)\);
i.e., that the `quotient measure' on \(T/Z(G)\) assigns it
total mass \(1\).
(Note that the measures \(\textup dg/\textup dt^\epsilon\)
and \(\textup dg/\textup dz\) of
\S\ref{sec:measure} are \emph{not} consistent in
this sense.)

\begin{proof}
If \(r > 0\), then this follows from
\cite{adler-debacker:mk-theory}*{%
	Theorem 6.3.1 and Remark 6.3.2%
}
or
\cite{adler-spice:explicit-chars}*{%
	Corollary \xref{char-cor:char-tau|pi-1-germ}%
}.
(There are other, similar results in the literature, but most
of them make stronger assumptions on $p$
\cites{
	jkim-murnaghan:charexp,
	jkim-murnaghan:gamma-asymptotic,
	murnaghan:chars-u3,
	murnaghan:chars-sln,
	murnaghan:chars-classical,
	murnaghan:hc,
	murnaghan:chars-gln
}.)

If \(r = 0\), then recall from
Remark \ref{rem:depth-0-leftist} that it suffices to
consider the case where
\(\pi\) is induced from the inflation to
\(G_{\xleft, 0}\) of the representation
\(\sigma = -R^{\sfG_\xleft}_{\sfT^\epsilon, \psi}\) of
\(\SL_2(\resfld) \iso \sfG_{\xleft}(\resfld)\).
In this case, by Definition \ref{defn:green},
\(c_\sigma(Q^{\sfG_\xleft}_{\sfT^\epsilon}) = -1\)
and
\(c_\sigma(\lsup0f) = 0\);
so \eqref{eq:pi-as-green-no-really}
and \cite{debacker-reeder:depth-zero-sc}*{\S5.3}
give
\begin{align*}
\Theta_\pi(\gamma)
& {}= \hat\mu^G_{X_\pi}\bigl(\cayley\inv(\gamma)\bigr) \\
& {}= \frac{2q^{1/2}}{q + 1}\dotm
\deg_{\textup dg/\textup dz}(\pi)
	\hat\mu^G_{X_\pi}\bigl(\cayley\inv(\gamma)\bigr),
\end{align*}
where the orbital integral is computed with respect to the
measure \(\textup dg/\textup dt^\epsilon\)
on \(G/T^\epsilon\).
(As in the proof of Lemma \ref{lem:MK-green},
we adapt the results of
\cite{debacker-reeder:depth-zero-sc}*{\S5.3}
to account for the fact that we are working on \(G/Z(G)\),
whereas they work with \(G/Z\),
where \(\bZ = \sset1\) is the maximal \(\field\)-split torus
in \(Z(G)\).)
As mentioned above, the measures \(\textup dg/\textup dz\)
and \(\textup dg/\textup dt^\epsilon\) are not consistent;
in fact,
\[
\meas_{\textup dt^\epsilon/\textup dz}(T^\epsilon/Z(G))
= \frac{q + 1}{2q^{1/2}}.
\]
Adjusting either measure to achieve consistency thus gives
the desired result.
\end{proof}

\begin{prop}
\label{prop:MK-exc}
If \(\gamma \in G\rss \cap G_{\MPlus0}\)
and Hypothesis \ref{hyp:debacker-reeder:depth-zero-sc:res-12.4.1(2)}
holds, then
\[
\Theta_{\pi^\pm}(\gamma)
= \tfrac1 2\hat\mu^G_{X^{\epsilon, 1}_1}\bigl(
	\cayley\inv(\gamma)
\bigr) \pm{}
\tfrac1 4 q^{-1/2}\Bigl[
	\bigl(
		\hat\mu^G_{X^{\varpi, 1}_1} -
		\hat\mu^G_{X^{\varpi, \epsilon}_1}
	\bigr) +
	\bigl(
		\hat\mu^G_{X^{\epsilon\varpi, 1}_1} -
		\hat\mu^G_{X^{\epsilon\varpi, \epsilon}_1}
	\bigr)
\Bigr],
\]
where \(\pi^\pm = \bPi^\pm(T^{\epsilon, 1}, \psi_0^1)\).

\end{prop}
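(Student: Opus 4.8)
The plan is to identify $\pi^\pm$ as one of the compactly induced representations $\Ind_{G_{\xleft, 0}}^G \dot\sigma$ to which \eqref{eq:pi-as-green-no-really} already applies, and then to read off the finite-field coefficients $c_{\sigma^\pm}(Q^\sfG_{\sfT^\epsilon})$ and $c_{\sigma^\pm}(\lsup0f)$ directly from \eqref{eq:Rpm}. By Definition \ref{defn:depth-0-param} and Definition \ref{defn:xT} (with $\eta = 1$, so that $x_\bT = \xleft$), we have $\pi^\pm = \bPi^\pm(T^{\epsilon, 1}, \psi_0^1) = \Ind_{G_{\xleft, 0}}^G \dot\sigma^\pm$, where $\sigma^\pm = \smabs{R^\pm_{\sfT^\epsilon, \psi_0}}$ is, by Remark \ref{rem:exhaust-and-equiv-finite}, an irreducible cuspidal representation of $\SL_2(\resfld) \iso \sfG_\xleft(\resfld)$. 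Since (as for the characters $\psi \ne \psi\inv$ in Definition \ref{defn:DL}) the virtual representation $R^\pm_{\sfT^\epsilon, \psi_0}$ has negative dimension --- indeed $R^\sfG_{\sfT^\epsilon, \psi_0}(1) = -(q - 1)$ and $\lsup0f(1) = \lsup0f(\cayley\inv(1)) = \lsup0f(0) = 0$ in \eqref{eq:Rpm} --- the $\smabs{\cdot}$ convention gives $\chi_{\sigma^\pm} = -R^\pm_{\sfT^\epsilon, \psi_0}$ as class functions. Restricting to $\ms U(\resfld)$, applying Definition \ref{defn:green} (which identifies $\res{R^\sfG_{\sfT^\epsilon, \psi_0}}to{\ms U(\resfld)}$ with $Q^\sfG_{\sfT^\epsilon}$), and using the convention of Definition \ref{defn:lusztig} for $\lsup0f$ on $\ms U(\resfld)$, I would obtain from \eqref{eq:Rpm} that
\[
\res{\chi_{\sigma^\pm}}to{\ms U(\resfld)}
= -\tfrac1 2 Q^\sfG_{\sfT^\epsilon} \pm \tfrac1 2 q^{1/2}\Gauss(\AddChar)\inv\dotm\lsup0f,
\]
i.e., in the notation of \eqref{eq:sigma-as-green}, $c_{\sigma^\pm}(Q^\sfG_{\sfT^\epsilon}) = -\tfrac1 2$ and $c_{\sigma^\pm}(\lsup0f) = \pm\tfrac1 2 q^{1/2}\Gauss(\AddChar)\inv$.

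Next, for $\gamma \in G\rss \cap G_{\MPlus0}$ and $Y = \cayley\inv(\gamma)$, and under Hypothesis \ref{hyp:debacker-reeder:depth-zero-sc:res-12.4.1(2)}, I would substitute these two coefficients into \eqref{eq:pi-as-green-no-really}. The $Q^\sfG_{\sfT^\epsilon}$-term contributes $-(-\tfrac1 2)\hat\mu^G_{X^{\epsilon, 1}_1}(Y) = \tfrac1 2\hat\mu^G_{X^{\epsilon, 1}_1}(Y)$, while in the $\lsup0f$-term the factor $\bigl(\pm\tfrac1 2 q^{1/2}\Gauss(\AddChar)\inv\bigr)\dotm\tfrac{\Gauss(\AddChar)}{2q}$ collapses to $\pm\tfrac1 4 q^{-1/2}$ --- the two occurrences of $\Gauss(\AddChar)$ cancel --- leaving that scalar times $\bigl[(\hat\mu^G_{X^{\varpi, 1}_1} - \hat\mu^G_{X^{\varpi, \epsilon}_1}) + (\hat\mu^G_{X^{\epsilon\varpi, 1}_1} - \hat\mu^G_{X^{\epsilon\varpi, (\epsilon\inv)}_1})\bigr](Y)$. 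Finally I would note that conjugation by $\begin{smallpmatrix}\epsilon\inv & 0 \\ 0 & \epsilon\end{smallpmatrix} \in \SL_2(\field)$ carries $X^{\epsilon\varpi, \epsilon\inv}_1$ to $X^{\epsilon\varpi, \epsilon}_1$; since $\hat\mu^G_X$ depends only on the $G$-orbit of $X$, and the normalization $\meas(T_0) = 1$ of the torus measure is preserved under this inner automorphism, this gives $\hat\mu^G_{X^{\epsilon\varpi, (\epsilon\inv)}_1} = \hat\mu^G_{X^{\epsilon\varpi, \epsilon}_1}$, converting the bracket into the one in the statement and finishing the proof.

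The genuinely analytic work --- evaluating Harish-Chandra's integral formula \eqref{eq:hc:harmonic:thm-12} and re-expressing it through Fourier transforms of semisimple orbital integrals --- has already been carried out in the passage from \eqref{eq:hc:harmonic:thm-12} to \eqref{eq:pi-as-green-no-really}, via Lemmas \ref{lem:MK-green} and \ref{lem:MK-lusztig} (which themselves rest on the work of Waldspurger and of DeBacker--Kazhdan). So the only step here requiring real care is the finite-group bookkeeping in the first paragraph: one must pin down the sign in $\chi_{\sigma^\pm} = -R^\pm_{\sfT^\epsilon, \psi_0}$, and then check that the normalization built into \eqref{eq:Rpm} makes $c_{\sigma^\pm}(\lsup0f)$ carry precisely the factor $\Gauss(\AddChar)\inv$ needed to annihilate the $\Gauss(\AddChar)$ emitted by Lemma \ref{lem:MK-lusztig}, so that no spurious fourth root of unity survives in the final character formula.
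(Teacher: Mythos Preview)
Your proof is correct and follows essentially the same approach as the paper: identify the finite-field coefficients $c_{\sigma^\pm}(Q^\sfG_{\sfT^\epsilon}) = -\tfrac1 2$ and $c_{\sigma^\pm}(\lsup0f) = \pm\tfrac1 2 q^{1/2}\Gauss(\AddChar)\inv$ from \eqref{eq:Rpm} and substitute into \eqref{eq:pi-as-green-no-really}. You have in fact been more careful than the paper on two points---pinning down the sign in $\chi_{\sigma^\pm} = -R^\pm_{\sfT^\epsilon,\psi_0}$ via the dimension computation, and explicitly justifying the passage from $X^{\epsilon\varpi,\epsilon\inv}_1$ to $X^{\epsilon\varpi,\epsilon}_1$ by $G$-conjugacy---both of which the paper leaves implicit.
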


\begin{proof}
By Definitions \ref{defn:exc-rep-finite}
and \ref{defn:depth-0-param},
\(\pi^\pm\) is induced from the inflation to
\(G_{\xleft, 0}\) of the representation
\[
\sigma
= -\tfrac1 2 R^{\sfG_\xleft}_{\sfT^\epsilon, \psi_0} \pm
\tfrac1 2 q^{1/2}\Gauss(\AddChar)\inv\dotm\lsup0f
\]
of \(\SL_2(\resfld) \iso \sfG_\xleft(\resfld)\).
In this setting, by Definition \ref{defn:green},
\(c_\sigma(Q^{\sfG_\xleft}_{\sfT^\epsilon}) = -\tfrac1 2\)
and
\(c_\sigma(\lsup0f)
= \pm\tfrac1 2 q^{1/2}\Gauss(\AddChar)\inv\);
so \eqref{eq:pi-as-green-no-really}
gives the desired formula.
\end{proof}

\section{`Ordinary' supercuspidal characters}
\label{sec:ordinary}

Let \(\pi\) be an ordinary supercuspidal representation.
By Definitions \ref{defn:depth-0-param}
and \ref{defn:pos-depth-param},
\(\pi = \pi(T, \psi)\) for some
(depth-zero or positive-depth)
supercuspidal parameter \((T, \psi)\).

\begin{defn}
\label{defn:lots}
By Remarks \ref{rem:depth-0-leftist}
and \ref{rem:pos-depth-leftist},
we may, and do, assume that \(T = T^{\theta, 1}\)
for some
\(\theta \in \sset{\epsilon, \varpi, \epsilon\varpi}\).
Since the character formulas for the case
\(\theta = \epsilon\varpi\) are, \textit{mutatis mutandis},
the same as those for the case \(\theta = \varpi\),
we further restrict to the cases
\(\theta \in \sset{\epsilon, \varpi}\).
We say that we are in the \term{unramified case}
if \(\theta = \epsilon\),
and in the \term{ramified case}
if \(\theta = \varpi\).

Put
\begin{itemize}
\item
\(\indexmem r = \depth(\psi)\) and \(\indexmem s = r/2\);
\item
\(\indexmem x = x_\bT \in \sset{\xleft, \xcentre}\)
(see Definition \ref{defn:xT});
and
\item
\(\indexmem X = X_\pi\)
and
\(\indexmem{\SSAddChar} = \SSAddChar_\pi\)
(see Notations \ref{notn:depth-0-X}
and \ref{notn:pos-depth-X}).
\end{itemize}
Finally, write \indexmem h for the \term{conductor}
of \(\psi\), in the sense of
\cite{sally-shalika:characters}*{p.~1232}.
Then \(h = \rup r = r + 1 - \tfrac1 2\ord(\theta)\),
so \(h = r + 1\) in the unramified case
and \(h = r + \tfrac1 2\) in the ramified case.
By \eqref{eq:depth-Phi-b},
\(\depth(\SSAddChar)
= r - \tfrac1 2\ord(\theta) = h - 1\).
\end{defn}

Note that \(\SSAddChar\) does not bear the same relationship
to \(\AddChar\) as does \(\Phi'\) to \(\Phi\)
in \cite{spice:sl2-mu-hat}*{%
	Notation \xref{sl2-notn:depth-and-scdepth}%
}.

\subsection{Indices}
\label{sec:indices}

\begin{lemma}
\label{lem:indices}
Suppose that \(i > 0\).
In the unramified case,
\[
\indx{T G_{x, i}}{T G_{x, \MPlus i}}
= \begin{cases}
q^2, & i \in \Z      \\
1,   & i \not\in \Z.
\end{cases}
\]
In the ramified case,
\[
\indx{T G_{x, i}}{T G_{x, \MPlus i}}
= \begin{cases}
q, & 2i \in \Z      \\
1, & 2i \not\in \Z.
\end{cases}
\]
\end{lemma}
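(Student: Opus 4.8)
The plan is to reduce this index of group filtration subgroups to an index of lattices in \(\gg\), and then to read off the answer from the explicit descriptions of the filtrations in \S\ref{sec:filt}, together with the formula \(\depth^\gg(X^{\theta,\eta}_\beta) = \ord(\beta) + \tfrac1 2\ord(\theta)\) for the torus filtrations.

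First I would reduce to a Lie-algebra computation. Since \(x = x_\bT\) lies in \(\sset{\xleft, \xcentre}\) (see Definition \ref{defn:lots}), we have \(T \subseteq G_{x, 0}\), so \(T\) normalises \(G_{x, i}\) and \(G_{x, \MPlus i}\), and hence \(T G_{x, i}\) and \(T G_{x, \MPlus i}\) are groups. Because \(G_{x, i}\dotm T G_{x, \MPlus i} = T G_{x, i}\) and, by Definition \ref{defn:xT}, \(G_{x, i} \cap T G_{x, \MPlus i} = (T \cap G_{x, i})G_{x, \MPlus i} = T_i G_{x, \MPlus i}\), one gets \(\indx{T G_{x, i}}{T G_{x, \MPlus i}} = \indx{G_{x, i}}{T_i G_{x, \MPlus i}}\). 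Since \(G_{x, \MPlus i}\) is normal in \(G_{x, i}\), with finite abelian quotient, and \(T_i \cap G_{x, \MPlus i} = T_{\MPlus i}\), this index equals \(\card{G_{x, i}/G_{x, \MPlus i}}/\card{T_i/T_{\MPlus i}}\). Now \(\card{G_{x, i}/G_{x, \MPlus i}} = \card{\gg_{x, i}/\gg_{x, \MPlus i}}\) by \S\ref{sec:filt}, and \(\card{T_i/T_{\MPlus i}} = \card{\ttt_i/\ttt_{\MPlus i}}\) by Lemma \ref{lem:cayley}\pref{item:MP-map} (or the general Moy--Prasad theory); equivalently, using the decomposition \(\gg_{x, s} = \ttt_s \oplus \ttt^\perp_s\) of \S\ref{sec:posreps} (so that \(\ttt_i + \gg_{x, \MPlus i} = \ttt_i \oplus \ttt^\perp_{\MPlus i}\)), the index is simply \(\indx{\ttt^\perp_i}{\ttt^\perp_{\MPlus i}}\). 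Either way, the problem is reduced to counting inside lattices.

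Then I would carry out the two lattice counts. For \(x = \xleft\), the lattice \(\gg_{\xleft, i}\) changes only when \(\rup i\) jumps, i.e.\ at \(i \in \Z\), where \(\gg_{\xleft, i}/\gg_{\xleft, \MPlus i} \iso \sl_2(\resfld)\) has order \(q^3\), and is trivial otherwise; meanwhile \(\ttt^{\epsilon, 1}\) has depths in \(\Z\), so \(\ttt^{\epsilon, 1}_i/\ttt^{\epsilon, 1}_{\MPlus i}\) has order \(q\) for \(i \in \Z\) and is trivial otherwise, giving the ratio \(q^2\) or \(1\). For \(x = \xcentre\) one tracks the three entries of \(\gg_{\xcentre, i}\), whose ideal-exponents are \(\rup i\), \(\rup{i - 1/2}\), and \(\rup{i + 1/2}\): at \(i \in \Z\) only the diagonal exponent jumps (quotient of order \(q\)), at \(i \in \Z + \tfrac1 2\) only the two off-diagonal exponents jump (quotient of order \(q^2\)), and at \(2i \notin \Z\) nothing jumps; while the ramified torus \(\ttt^{\varpi, 1}\), having depths in \(\Z + \tfrac1 2\), contributes a factor \(q\) for \(i \in \Z + \tfrac1 2\) and \(1\) otherwise. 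Dividing then gives \(q/1 = q\) for \(i \in \Z\), \(q^2/q = q\) for \(i \in \Z + \tfrac1 2\), and \(1\) for \(2i \notin \Z\), which is the stated formula.

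The computation is routine; the only point requiring care is the bookkeeping of the ceiling functions in the Iwahori filtration at \(\xcentre\). One must notice that \(\gg_{\xcentre}\) and the ramified torus filtration jump at complementary half-integers, so that although the numerators \(\card{\gg_{\xcentre, i}/\gg_{\xcentre, \MPlus i}}\) differ (\(q\) versus \(q^2\)) in the two subcases of \(2i \in \Z\), division by \(\card{\ttt^{\varpi, 1}_i/\ttt^{\varpi, 1}_{\MPlus i}}\) equalises them to the common value \(q\), rather than leaving the two different answers one might at first expect.
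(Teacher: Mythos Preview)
Your proof is correct and follows essentially the same approach as the paper: reduce to the ratio \(\card{\gg_{x,i:\MPlus i}}/\card{\ttt_{i:\MPlus i}}\) via the Cayley map, then read off the dimensions from the explicit filtrations of \S\ref{sec:filt}. Your reduction step is in fact more carefully justified than the paper's, which simply asserts that ``it suffices to compute'' the ratio without spelling out the group-theoretic argument (\(G_{x,i} \cap T G_{x,\MPlus i} = T_i G_{x,\MPlus i}\), etc.) that you supply.
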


\begin{proof}
By Lemma \ref{lem:cayley}\pref{item:bijpos},
we have compatible isomorphisms
\(G_{x, i:\MPlus i} \iso \gg_{x, i:\MPlus i}\)
and
\(T_{i:\MPlus i} \iso \ttt_{i:\MPlus i}\),
so it suffices to compute
\[
\card{\gg_{x, i:\MPlus i}}\dotm\card{\ttt_{i:\MPlus i}}\inv
= \exp_q\bigl(
	\dim_\resfld(\gg_{x, i:\MPlus i}) -
	\dim_\resfld(\ttt_{i:\MPlus i})
\bigr).
\]
We use the explicit descriptions of \S\ref{sec:filt}.

In the unramified case, both quotients are trivial unless
\(i \in \Z\), in which case the \resfld-vector spaces
\(\gg_{x, i:\MPlus i} \iso \gg_{x, 0:\MPlus0}\)
and
\(\ttt_{i:\MPlus i} \iso \ttt_{0:\MPlus0}\) are
isomorphic to \(\sl_2(\resfld)\) and a Cartan subalgebra,
hence are \(3\)- and \(1\)-dimensional, respectively.
(Explicitly, they are spanned by
\(\varpi^i\dotm\sset{
	\begin{smallpmatrix}
	1 & \phm0 \\
	0 & -1
	\end{smallpmatrix},
	\begin{smallpmatrix}
	0 & 1 \\
	0 & 0
	\end{smallpmatrix},
	\begin{smallpmatrix}
	0 & 0 \\
	1 & 0
	\end{smallpmatrix}
}\)
and
\(\sset{\varpi^i\dotm\smash{X^\epsilon}}\)).

In the ramified case, both quotients are trivial unless
\(2i \in \Z\).
If \(i \in \Z\), then
\(\gg_{x, i:\MPlus i} \iso \gg_{x, 0:\MPlus0}\)
and
\(\ttt_{i:\MPlus i} \iso \ttt_{0:\MPlus0}\),
are isomorphic to a
\emph{split} Cartan subalgebra \(\mf a(\resfld)\)
of \(\sl_2(\resfld)\),
and an \emph{elliptic} Cartan subalgebra
\emph{of \(\mf a(\resfld)\)},
hence are \(1\)- and \(0\)-dimensional, respectively.
(An explicit basis for \(\gg_{x, i:\MPlus i}\) is
\(\sset{\varpi^i\begin{smallpmatrix}
1 & \phm0 \\
0 & -1
\end{smallpmatrix}}\).)
If \(i \in \Z + \tfrac1 2\), then
\(\gg_{x, i:\MPlus i}\) has basis
\(\varpi^i\dotm\sset{
	\begin{smallpmatrix}
	0 & 1 \\
	0 & 0
	\end{smallpmatrix},
	\varpi\begin{smallpmatrix}
	0 & 0 \\
	1 & 0
	\end{smallpmatrix}
}\),
and
\(\ttt_{i:\MPlus i}\) has basis
\(\sset{\varpi^i\cdot \smash{X^\varpi}}\).
\end{proof}

\begin{lemma}
\label{lem:disc-as-index}
If \(\gamma \in T \setminus Z(G)T_r\)
and \(d = \mdepth(\gamma)\),
in the sense of Definition \ref{defn:depth-element},
then
\[
\bigindx{
	T_{\MPlus0}G_{x, (r - d)/2}
}{
	T_{\MPlus0}G_{x, s}
}\dotm\bigindx{
	T_{\MPlus0}G_{x, \MPlus{((r - d)/2)}}
}{
	T_{\MPlus0}G_{x, \MPlus s}
}
= \abs{D_G(\gamma)}\inv.
\]
\end{lemma}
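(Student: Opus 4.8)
The plan is to rewrite the left‑hand side as an index of $\pint$‑lattices (using the Cayley transform and the decomposition $\gg_{x, \bullet} = \ttt_\bullet \oplus \ttt^\perp_\bullet$ from \S\ref{sec:posreps}), to evaluate that index from the explicit shape of $\ttt^\perp$, and to finish with two elementary floor/ceiling identities. First, some reductions. Since $\gamma \in T \setminus Z(G)T_r$ lies outside $Z(G)$, it is regular semisimple, and $\gamma \in T \subseteq G_{\xcentre, 0} \subseteq G_0$; so by Lemma~\ref{lem:disc-as-depth}, $\abs{D_G(\gamma)}\inv = q^{2d}$, and it suffices to show the left‑hand side equals $q^{2d}$. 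We may assume $r > 0$, since when $r = 0$ the hypothesis is vacuous ($T_0 = T$ in the unramified case, $T = Z(G)T_0$ in the ramified case). Then $d < r$: otherwise some $Z(G)$‑translate of $\gamma$ would lie in $\bigcap_{t < r}T_{\MPlus t} = T_r$, contradicting $\gamma \notin Z(G)T_r$. So $a \ldef (r - d)/2 > 0$; and $r - d \in \Z$, since $2r$ and $2d = \ord(D_G(\gamma)) = 2\ord(\Im_\theta(\gamma)) + \ord(\theta)$ are integers of the same parity $\ord(\theta)$ (by Definitions~\ref{defn:lots} and~\ref{defn:disc} and Lemma~\ref{lem:disc-as-depth}). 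Write $s = r/2$, so $2s = r$ and $2a = r - d$ are integers.

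Next, I would peel the torus part off each index. For consecutive (positive) filtration indices $u < \MPlus u$ one has $\MPlus u \le 2u$ (the gaps in the filtration for $\SL_2$ being at most $1$), so Lemma~\ref{lem:cayley} identifies $\bigindx{G_{x, u}}{G_{x, \MPlus u}}$ and $\bigindx{T_u}{T_{\MPlus u}}$ with $\card{\gg_{x, u}/\gg_{x, \MPlus u}}$ and $\card{\ttt_u/\ttt_{\MPlus u}}$; since $T_{\MPlus0} \subseteq G_{x, \MPlus0}$ normalizes $G_{x, u}$ with $T_{\MPlus0} \cap G_{x, u} = T_u$ (Definition~\ref{defn:xT}), so that $G_{x, u} \cap T_{\MPlus0}G_{x, \MPlus u} = T_u G_{x, \MPlus u}$, and since $\gg_{x, u} = \ttt_u \oplus \ttt^\perp_u$ (\S\ref{sec:posreps}), it follows that
\[
\bigindx{T_{\MPlus0}G_{x, u}}{T_{\MPlus0}G_{x, \MPlus u}}
= \bigindx{G_{x, u}}{G_{x, \MPlus u}}\bigindx{T_u}{T_{\MPlus u}}\inv
= \card{\ttt^\perp_u/\ttt^\perp_{\MPlus u}}.
\]
Telescoping over the jump points gives $\bigindx{T_{\MPlus0}G_{x, t}}{T_{\MPlus0}G_{x, t'}} = \card{\ttt^\perp_t/\ttt^\perp_{t'}}$ for all $0 < t \le t'$, so the left‑hand side of the lemma equals $\card{\ttt^\perp_a/\ttt^\perp_s}\card{\ttt^\perp_{\MPlus a}/\ttt^\perp_{\MPlus s}}$.

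Finally, I would carry out the lattice computation. Taking $T = T^{\theta, 1}$ (so $x = \xleft$ if $\theta = \epsilon$ and $x = \xcentre$ if $\theta = \varpi$), the orthogonal complement of $\ttt = \ttt^{\theta, 1} = \field\dotm X^{\theta, 1}_1$ under the trace form is computed directly: in the unramified case $\ttt^\perp_u = \varpi^{\rup u}L$ for the rank‑$2$ lattice $L = \pint\begin{smallpmatrix}1 & \phm0 \\ 0 & -1\end{smallpmatrix} \oplus \pint\begin{smallpmatrix}0 & 1 \\ -\epsilon & 0\end{smallpmatrix}$, and in the ramified case $\ttt^\perp_u = \pp^{\rup u}\begin{smallpmatrix}1 & \phm0 \\ 0 & -1\end{smallpmatrix} \oplus \pp^{\rup{u - 1/2}}\begin{smallpmatrix}0 & 1 \\ -\varpi & 0\end{smallpmatrix}$; in either case $\ttt^\perp_{\MPlus u}$ is obtained by replacing each exponent $\rup\cdot$ by $\rdown\cdot + 1$. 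Writing $g(x) = \rup x + \rdown x$, the product $\card{\ttt^\perp_a/\ttt^\perp_s}\card{\ttt^\perp_{\MPlus a}/\ttt^\perp_{\MPlus s}}$ therefore equals $\exp_q\bigl(2(g(s) - g(a))\bigr)$ (unramified case) or $\exp_q\bigl((g(s) - g(a)) + (g(s - \tfrac1 2) - g(a - \tfrac1 2))\bigr)$ (ramified case). Now $g(x) = 2x$ whenever $x \in \tfrac1 2\Z$, and $g(x) + g(x - \tfrac1 2) = g(2x) - 1$ for all $x$ (from $\rdown x + \rdown{x - 1/2} = \rdown{2x} - 1$ and $\rup x + \rup{x - 1/2} = \rup{2x}$); since $2s = r$ and $2a = r - d$ are integers, the unramified exponent is $2(2s - 2a) = 2d$ and the ramified one is $\bigl(g(2s) - 1\bigr) - \bigl(g(2a) - 1\bigr) = g(r) - g(r - d) = 2d$. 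Either way the left‑hand side equals $q^{2d} = \abs{D_G(\gamma)}\inv$, as desired.

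The structural steps — the reduction through the Cayley transform to $\ttt^\perp$, and the two floor identities — are routine once set up. The one place where care is needed is purely bookkeeping: keeping the $\rup\cdot$/$\rdown\cdot$ and $\MPlus$‑subscript conventions straight, verifying the per‑step index identity above (equivalently, that $G_{x, u} \cap T_{\MPlus0}G_{x, \MPlus u} = T_u G_{x, \MPlus u}$), and checking that $d$ and $r$ really have the parity claimed so that $2a = r - d$ is an integer.
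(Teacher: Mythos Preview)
Your argument is correct and follows essentially the same approach as the paper: both pass to the Lie algebra via the Cayley map, strip off the torus contribution (the paper via Lemma~\ref{lem:indices}, you via the decomposition \(\gg_{x,\bullet} = \ttt_\bullet \oplus \ttt^\perp_\bullet\) and a second-isomorphism-theorem step), and then carry out a lattice count ending with \(q^{2d} = \abs{D_G(\gamma)}\inv\) by Lemma~\ref{lem:disc-as-depth}.

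The only real difference is in how the final count is organised. The paper telescopes the product through Lemma~\ref{lem:indices}, obtaining
\(\exp_{q^2}\card{\Z \cap [s-d,s)}\) (unramified) or \(\exp_q\card{\Z \cap [r-2d,r)}\) (ramified)
and then evaluates these by translating the half-intervals. You instead write down \(\ttt^\perp_u\) explicitly and package the exponents via \(g(x) = \rup x + \rdown x\), reducing everything to the identities \(g(x) = 2x\) for \(x \in \tfrac12\Z\) and \(g(x) + g(x - \tfrac12) = g(2x) - 1\). Your route is a bit more self-contained (it does not need Lemma~\ref{lem:indices} as a separate statement) and the closed-form identities make the two cases more symmetric; the paper's interval-counting is perhaps more transparent about \emph{where} each factor of \(q\) comes from. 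One small remark: your justification ``the gaps in the filtration for \(\SL_2\) being at most \(1\)'' for \(\MPlus u \le 2u\) is not quite the point---what matters is that the first positive jump (at \(1\) for \(x = \xleft\), at \(\tfrac12\) for \(x = \xcentre\)) is already at least as large as the gap, so every jump \(u\) satisfies \(u \ge\) gap---but the conclusion is correct and the per-step identity you check is exactly what is needed.
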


\begin{proof}
Note that we could replace the first index by
\(\bigindx{
	T G_{x, (r - d)/2}
}{
	T G_{x, s}
}\),
and similarly for the second.
By Lemma \ref{lem:indices},
in the unramified case, the product is
\begin{align*}
&\Bigl(\prod_{\substack{
	(r - d)/2 \le i < s \\
	i \in \Z
}} q^2\Bigr)\dotm\Bigl(\prod_{\substack{
	(r - d)/2 < i \le s \\
	i \in \Z
}} q^2\Bigr) \\
&\qquad= \exp_{q^2}\Bigl(
	\bigcard{\Z \cap [\tfrac{r - d}{2}, s)} +
	\bigcard{\Z \cap (\tfrac{r - d}{2}, s]}
\Bigr) \\
&\qquad= \exp_{q^2}
	\bigcard{
		\Z \cap \Bigl(
			[\tfrac{r - d}{2}, s) \cup [-s, -\tfrac{r - d}{2})
		\Bigr)
	} \\
&\qquad= \exp_{q^2}
	\bigcard{\Z \cap [s - d, s)};
\end{align*}
and, in the ramified case, it is
\begin{align*}
&\Bigl(\prod_{\substack{
	(r - d)/2 \le i < s \\
	2i \in \Z
}} q\Bigr)\dotm\Bigl(\prod_{\substack{
	(r - d)/2 < i \le s \\
	2i \in \Z
}} q\Bigr) \\
&\qquad= \exp_q\Bigl(
	\bigcard{\tfrac1 2\Z \cap [\tfrac{r - d}{2}, s)} +
	\bigcard{\tfrac1 2\Z \cap (\tfrac{r - d}{2}, s]}
\Bigr) \\
&\qquad= \exp_q\Bigl(
	\bigcard{\Z \cap [r - d, r)} +
	\bigcard{\Z \cap (-r, -(r - d)]} 
\Bigr) \\
&\qquad= \exp_q
	\bigcard{\Z \cap [r - 2d, r)}.
\end{align*}
We translated
\(\Z \cap [-s, -\tfrac{r - d}2)\) by \(r - d \in \Z\)
(in the first case)
and \(\Z \cap (-r, -(r - d)]\) by \(2(r - d) \in \Z\)
(in the second case)
without changing any cardinalities.
In either case, we use the fact that
\(\card{\Z \cap [a - n, a)} = n\)
for any \(a \in \R\) and \(n \in \Z\)
(with \(n = d\) in the unramified case,
and \(n = 2d\) in the ramified case)
to conclude by Lemma \ref{lem:disc-as-depth}
that the index is
\[
q^{2d} = \abs{D_G(\gamma)}\inv,
\]
as desired.
\end{proof}

\subsection{Formal degrees}
\label{sec:deg-pi}

The Schur orthogonality relations for finite groups
have an analogue for supercuspidal (or even discrete-series)
representations of \(p\)-adic groups;
see \cite{hc:harmonic}*{Theorem 1}.
The analogue of the dimension of a finite-group
representation is the so called \term{formal degree}
of a supercuspidal representation \(\pi\).
This definition depends on a choice of Haar measure
\(\textup d\dot g\) on \(G/Z(G)\), so we denote it by
\(\deg_{\textup d\dot g}(\pi)\).
We have that
\(\deg_{c\dotm\textup d\dot g}(\pi)
= c\inv\deg_{\textup d\dot g}(\pi)\)
for \(c \in \R_{> 0}\).

\begin{lemma}
\label{lem:deg-pi}
In the unramified case,
\[
\deg_{\textup d_\epsilon\dot g}(\pi)
= q^{r + 1}.
\]
In the ramified case,
\[
\deg_{\textup d_\varpi\dot g}(\pi)
= q^{h + 1}.
\]
\end{lemma}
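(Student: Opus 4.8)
The plan is to use the standard expression for the formal degree of a representation compactly induced from an open subgroup that is compact modulo \(Z(G)\): if \(K\) is such a subgroup containing \(Z(G)\), \(\rho\) is an irreducible (hence finite-dimensional) representation of \(K\) with unitary central character, and \(\pi = \Ind_K^G \rho\) is irreducible, then \(\deg_{\textup d\dot g}(\pi) = \dim(\rho)\dotm\meas_{\textup d\dot g}(K/Z(G))\inv\) for any Haar measure \(\textup d\dot g\) on \(G/Z(G)\).  By Proposition \ref{prop:depthzero-sc-induced}, in the depth-zero case we may take \(K = G_{\xleft, 0} = \SL_2(\pint)\) and \(\rho = \dot\sigma\), the inflation of \(\sigma = \smabs{R^\sfG_{\sfT^\epsilon, \psi}}\) (recall \(\pi\) is ordinary); by Proposition \ref{prop:sc} and Remark \ref{rem:inducing-data}, in the positive-depth case we may take \(K = TJ = TG_{x, s}\), which is a compact open subgroup containing \(Z(G)\), and \(\rho = \sigma(T, \psi)\).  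So the proof reduces to computing \(\dim(\rho)\) and \(\meas_{\textup d_\theta\dot g}(K/Z(G))\) in each case.

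The dimensions are immediate.  In the depth-zero case, the Deligne--Lusztig degree formula gives \(\dim\sigma = \card{\SL_2(\resfld)}_{p'}/\card{\sfT^\epsilon(\resfld)} = (q^2 - 1)/(q + 1) = q - 1\).  In the positive-depth case, Remark \ref{rem:inducing-data} gives \(\dim\sigma(T, \psi) = q\) if \(r \in 2\Z\) and \(\dim\sigma(T, \psi) = 1\) otherwise; in particular, since \(r\) is always a non-integer half-integer in the ramified case, \(\dim\sigma(T, \psi) = 1\) there.  For the measures, I would anchor on the values \(\meas_{\textup d_\epsilon\dot g}(\SL_2(\pint)/Z(G)) = (q - 1)/q\) and \(\meas_{\textup d_\varpi\dot g}(\SL_2(\pint)/Z(G)) = (q^2 - 1)/(2q^2)\), which follow from the normalizations of \S\ref{sec:measure} (recalling that \(\textup d_\theta\dot g\) on \(G/Z(G)\) abbreviates \(\textup d_\theta'\dot g\), defined so that \(\meas_{\textup d_\theta'\dot g}(\SL_2(\pint)/Z(G)) = \meas_{\textup d_\theta\dot g}(\SL_2(\pint)/T^\theta)\)).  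Together with \(\dim\sigma = q - 1\), the depth-zero value gives at once \(\deg_{\textup d_\epsilon\dot g}(\pi) = (q - 1)\dotm q/(q - 1) = q = q^{0 + 1}\).

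For the positive-depth case, since \(x = x_\bT \in \sset{\xleft, \xcentre}\) and \(TJ = TG_{x, s} \subseteq G_{x, 0} \subseteq \SL_2(\pint)\), we have \(\meas_{\textup d_\theta\dot g}(TJ/Z(G)) = \meas_{\textup d_\theta\dot g}(\SL_2(\pint)/Z(G))\dotm\indx{\SL_2(\pint)}{TG_{x, s}}\inv\), so everything comes down to the index \(\indx{\SL_2(\pint)}{TG_{x, s}}\).  I would factor it as \(\indx{\SL_2(\pint)}{G_{x, 0}}\dotm\indx{G_{x, 0}}{TG_{x, \MPlus0}}\dotm\indx{TG_{x, \MPlus0}}{TG_{x, s}}\): the first factor is \(1\) for \(x = \xleft\) and \(q + 1\) for \(x = \xcentre\) (the Iwahori index); the second is \(\card{\SL_2(\resfld)}/\card{\sfT^\epsilon(\resfld)} = q(q - 1)\) in the unramified case and \(\card{\mathbb G_m(\resfld)}/2 = (q - 1)/2\) in the ramified case (the `\(2\)' because \(T = Z(G)T_{\MPlus0}\) maps to a subgroup of order \(2\) in \(\sfG_x(\resfld)\)); and the third telescopes through the filtration, each graded jump contributing the index recorded in Lemma \ref{lem:indices} --- namely \(q^2\) at each positive integer in the unramified case, and \(q\) at each positive integer and positive half-integer in the ramified case.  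In the unramified case this yields \(\indx{\SL_2(\pint)}{TG_{x, s}} = (q - 1)q^{r - 1}\) if \(r \in 2\Z\) and \((q - 1)q^r\) otherwise, hence \(\meas_{\textup d_\epsilon\dot g}(TJ/Z(G))\) equal to \(q^{-r}\) or \(q^{-(r + 1)}\), and multiplying by \(\dim\sigma(T, \psi)\) gives \(q^{r + 1}\) in both subcases.  In the ramified case, using that \(G_{\xcentre, r/2} = G_{\xcentre, h/2}\) with the half-integer indexing of the Iwahori lattice chain, one gets \(\indx{\SL_2(\pint)}{TG_{\xcentre, s}} = \tfrac1 2(q^2 - 1)q^{h - 1}\), hence \(\meas_{\textup d_\varpi\dot g}(TJ/Z(G)) = q^{-(h + 1)}\) and \(\deg_{\textup d_\varpi\dot g}(\pi) = q^{h + 1}\).

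The individual steps are all routine; the main obstacle is purely one of bookkeeping, since several of the conventions in play are slightly nonstandard and must be tracked with care --- the depth-zero adjustment \(T = Z(G)T_0\) for ramified tori (which costs a factor of \(2\) in the indices, exactly cancelled against the \(2\) in \(\meas_{\textup d_\varpi\dot g}(\SL_2(\pint)/Z(G))\)); the half-integer indexing of the Iwahori filtration (so that \(J = TG_{\xcentre, r/2}\) with \(r/2\) rounding up to \(h/2\)); the \(q^{1/2}\) factors in Waldspurger's measure normalization; and the abuse \(\textup d_\theta\dot g = \textup d_\theta'\dot g\).  The entire content of the lemma is in Lemma \ref{lem:indices}, the measure normalizations of \S\ref{sec:measure}, and the compact-induction degree formula; the work is to combine all the quotient-measure factors so that the powers of \(q^{1/2}\) cancel and one lands on \(q^{r + 1}\) (respectively \(q^{h + 1}\)) exactly.
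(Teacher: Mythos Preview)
Your proposal is correct and follows essentially the same route as the paper: the compact-induction degree formula together with the index computations of Lemma \ref{lem:indices} and the measure normalizations of \S\ref{sec:measure}. The only noteworthy difference is in the depth-zero case, where you apply the induction formula directly with \(\dim\sigma = q - 1\), while the paper instead quotes the formal degree from \cite{debacker-reeder:depth-zero-sc}*{\S5.3} in the \(\textup dg/\textup dz\) normalization and then converts; your route is slightly more self-contained.
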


\begin{proof}
If \(r = 0\), then \(T = T^\epsilon\), and
\cite{debacker-reeder:depth-zero-sc}*{\S5.3} gives
\[
\deg_{\textup dg/\textup dz}(\pi)
= \frac{q^{1/2}}{(q + 1)/2}
= \frac{2q^{1/2}}{q + 1}.
\]
By \S\ref{sec:measure},
\(\displaystyle\textup d_\epsilon\dot g
= \frac 2{q^{1/2}(q + 1)}\dotm\frac{\textup dg}{\textup dz}\),
so that
\[
\deg_{\textup d_\epsilon\dot g}(\pi)
= \frac{q^{1/2}(q + 1)}2\dotm\frac{2q^{1/2}}{q + 1}
= q = q^{r + 1}.
\]

Now suppose that \(r > 0\).
Write \(\textup d\dot g\) for \(\textup d_\epsilon\dot g\)
or \(\textup d_\varpi\dot g\), as appropriate;
and recall the notation \(K = T J\)
and \(\rho = \rho_\chi\) from \S\ref{sec:posreps}.
By Remark \ref{rem:inducing-data},
\(K = T G_{x, s}\)
and
\(\dim(\rho) = q^{1 - (\rup s - \rdown s)}\).
Since \(\pi = \Ind_K^G \rho\), we have that
\begin{equation}
\tag{$*$}
\label{eq:deg-as-indx}
\begin{aligned}
\deg_{\textup d\dot g}(\pi)
& {}= \meas_{\textup d\dot g}(K/Z(G))\inv\dotm
\dim(\rho) \\
& {}= \indx{\SL_2(\pint)}K\dotm
\meas_{\textup d\dot g}(\SL_2(\pint)/Z(G))\inv\dotm
q^{1 - (\rup s - \rdown s)}.
\end{aligned}
\end{equation}

In the unramified case,
\(\bT = \bT^\epsilon\), \(x = \xleft\),
and \(G_{x, s} = G_{x, \rup s}\).
A direct computation shows that
\[
\indx{\SL_2(\pint)}{T G_{x, 1}}
= \indx{\SL_2(\resfld)}{\ms T(\resfld)}
= q(q - 1),
\]
so Lemma \ref{lem:indices} gives
\begin{equation}
\tag{${**}\textsub{un}$}
\label{eq:indx-un}
\indx{\SL_2(\pint)}K
= q(q - 1)\dotm\indx{T G_{x, 1}}{T G_{x, \rup s}}
= q(q - 1)q^{2(\rup s - 1)}.
\end{equation}

In the ramified case,
\(\bT = \bT^\varpi\), \(x = \xcentre\),
and \(G_{x, s} = G_{x, \rup r/2}\).
Further, \(r \in \Z + \tfrac1 2\), so
\(\rup r = r + \tfrac1 2\)
and
\(1 - (\rup s - \rdown s) = 0\).
This time, a direct computation shows that
\[
\indx{\SL_2(\pint)}{T G_{x, 1/2}}
= \indx{\SL_2(\resfld)}{Z(\SL_2)(\resfld)\ms U(\resfld)}
= \tfrac1 2(q^2 - 1),
\]
where \(\ms U(\resfld) = \set{\begin{smallpmatrix}
1 & a \\
0 & 1
\end{smallpmatrix}}{a \in \resfld}\).
Now Lemma \ref{lem:indices} gives
\begin{equation}
\tag{${**}\textsub{ram}$}
\label{eq:indx-ram}
\begin{aligned}
\indx{\SL_2(\pint)}K
& {}= \tfrac1 2(q^2 - 1)\dotm
\indx{T G_{x, 1/2}}{T G_{x, \rup r/2}} \\
& {}= \tfrac1 2(q^2 - 1)q^{\rup r - 1} \\
& {}= \tfrac1 2(q^2 - 1)q^{r - 1/2}.
\end{aligned}
\end{equation}

Combining \eqref{eq:deg-as-indx},
\eqref{eq:indx-un} \emph{or} \eqref{eq:indx-ram},
and \S\ref{sec:measure}
gives the desired result.
\end{proof}

\subsection{Roots of unity}
\label{sec:roots}

The character formulas of
\cite{adler-spice:explicit-chars} involve a number of
roots of unity, defined in terms of roots (i.e., weights for
the adjoint action of a maximal torus).

\begin{notn}
\label{notn:roots}
Write \(\indexmem{\alpha_+}\) for the element
\[
\abmapto{\begin{pmatrix}
a       & b \\
b\theta & a
\end{pmatrix}}{(a^2 + b^2\theta) + 2a b\sqrt\theta}
\]
of \(\Hom_{\field_\theta}(\bT, \GL_1)\),
and \(\alpha_- = -\alpha_+\).
Then the set \indexmem{\Roo(\bG, \bT)} of absolute roots
of \bT in \bG is \(\sset{\alpha_\pm}\),
and the set \indexmem{\dot\Roo(\bG, \bT)}
of orbits of \(\Gal(\field\textsup{sep}/\field)\)
on \(\Roo(\bG, \bT)\) is a singleton.
\end{notn}

\begin{rem}
We have that \(\alpha_\pm(\gamma) = \gamma^{\pm2}\) for
\(\gamma \in T \iso C_\theta\) (see \S\ref{sec:std-tori}).
\end{rem}

\begin{notn}
\label{notn:adler-spice:explicit-chars:root-constants}
Throughout this section, we adopt
\cite{adler-spice:explicit-chars}*{%
	Notation \xref{char-notn:root-constants}%
}.
In particular,
\[
F_\alpha = \field_\theta,
\quad
F_{\pm\alpha} = \field,
\qandq
\bG_{\alpha_+} = \bG_{\alpha_-} = \bG;
\]
and, if \(\theta = \epsilon\), then
\[
\resfld_\alpha^1 = \ker \Norm_{\resfld_\alpha/\resfld_{\pm\alpha}}
= \ker \Norm_{\resfld_\theta/\resfld} \rdef \resfld_\theta^1.
\]
The notations on the left are as in
\cite{adler-spice:explicit-chars},
and those on the right are ours.
\end{notn}

Two roots of unity enter into the character formulas
in \cite{adler-spice:explicit-chars}*{%
	Corollary \xref{char-cor:char-tau|pi-1}%
}, namely, \(\varepsilon(\psi, \gamma)\)
and \(\mf G(\psi, \gamma)\).
(There is an unfortunate near-conflict between the notation
\(\epsilon\), for an element of \(\pint\mult\), and
\(\varepsilon\), which we use to stand for various signs.
We hope that context will
allow the reader to distinguish them.)
In this section, we compute these quantities
(and, more importantly, their product, in
Corollary \ref{cor:root})
in our special case (i.e., the group \(\bG = \SL_2\)).

We begin by computing
the `depth-zero sign'
\(\varepsilon(\psi, \gamma)\).

\begin{notn}
\label{notn:adler-spice:explicit-chars:weil}
Adopt
\cite{adler-spice:explicit-chars}*{%
	Notation \xref{char-notn:weil}%
}.
In particular, \(\Xi^1(\psi)\) is the set of
(absolute) roots of \bT in \bG that
``occur in the filtration
(of \(\sl_2(\field_\theta)\))
associated to \(x\) at depth \(s\)'',
and whose value at \(\gamma\) is not a principal unit.
By our explicit description of Moy--Prasad filtrations (see
\S\ref{sec:filt}), we have that,
if \(x = \xleft\), then
\begin{align*}
\Xi^1(\psi) & {}= \begin{cases}
\Roo(\bG, \bT), &
	\text{\(s \in \Z\) and \(\gamma \not\in Z(G)T_{\MPlus0}\)} \\
\emptyset, &
	\text{otherwise;}
\end{cases} \\
\intertext{and, if \(x = \xcentre\), then}
\Xi^1(\psi) & {}= \begin{cases}
\Roo(\bG, \bT), & s \in \Z + \tfrac1 2 \\
\emptyset,      & \text{otherwise.}
\end{cases}
\end{align*}
\end{notn}

\begin{prop}
\label{prop:sl2-weil}
Suppose that \(\gamma \in T \setminus Z(G)T_r\).
Put \(d = \mdepth(\gamma)\),
in the notation of Definition \ref{defn:depth-element}.
The root of unity \(\varepsilon(\psi, \gamma)\) defined in
\cite{adler-spice:explicit-chars}*{%
	Proposition \xref{char-prop:theta-tilde-phi}%
} is given by
\[
\varepsilon(\psi, \gamma)
= \begin{cases}
H(\SSAddChar, \field_\theta)
\sgn_\theta\bigl(\Im_\theta(\gamma)\bigr), &
	d = 0 \\
1, &
	d > 0.
\end{cases}
\]
\end{prop}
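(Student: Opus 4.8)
The plan is to unwind the definition of the sign $\varepsilon(\psi,\gamma)$ from \cite{adler-spice:explicit-chars}*{Proposition \xref{char-prop:theta-tilde-phi}} and specialize it to $\bG = \SL_2$. There, $\varepsilon(\psi,\gamma)$ is a product, over the Galois orbits $\dot\alpha$ of absolute roots of $\bT$ in $\bG$, of local factors which are trivial unless $\alpha$ lies in the set $\Xi^1(\psi)$ of Notation \ref{notn:adler-spice:explicit-chars:weil}; for such $\alpha$ the factor is the Weil index of a quadratic form on the depth-zero part of the $\alpha$-root space (over the residue field $\resfld_\alpha = \resfld_\epsilon$ in the relevant case). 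Since $\dot\Roo(\bG, \bT)$ is a singleton (Notation \ref{notn:roots}), at most one factor is nontrivial, so $\varepsilon(\psi,\gamma)$ is either $1$ (the empty product) or a single Weil index.

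First I would dispose of the case $d > 0$. If $x = \xleft$ (the unramified case), then $\mdepth(\gamma) = d > 0$ means $\gamma \in Z(G)T_{\MPlus0}$, so $\alpha(\gamma) = \gamma^{\pm2}$ is a principal unit (Notation \ref{notn:roots}) and hence $\Xi^1(\psi) = \emptyset$ by Notation \ref{notn:adler-spice:explicit-chars:weil}; if $x = \xcentre$ (the ramified case), then the occurring depths satisfy $r \in \Z_{\ge 0} + \tfrac1 2$, so $s = r/2 \notin \Z + \tfrac1 2$ and again $\Xi^1(\psi) = \emptyset$. Either way $\varepsilon(\psi,\gamma)$ is the empty product, namely $1$. (Note that in the ramified case $T = Z(G)T_{\MPlus0}$, so $d > 0$ always, and the case $d = 0$ arises only for $\theta = \epsilon$.)

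Now suppose $d = 0$, so $\theta = \epsilon$, $x = \xleft$, and $\gamma = a + b\sqrt\epsilon \notin Z(G)T^\epsilon_{\MPlus0}$. From $a^2 - b^2\epsilon = 1$ one checks $\gamma \in \pint_\epsilon\mult$, and $\gamma \in Z(G)T^\epsilon_{\MPlus0}$ precisely when $b \in \pp$; so here $b \in \pint\mult$ and $\sgn_\epsilon(\Im_\epsilon(\gamma)) = (-1)^{\ord(b)} = 1$. It therefore remains to show $\varepsilon(\psi,\gamma) = H(\SSAddChar, \field_\epsilon)$, which by Lemma \ref{lem:SSPhi} equals $(-1)^{\depth(\SSAddChar) + 1} = (-1)^{r + 1}$, using $\depth(\SSAddChar) = h - 1 = r$ (Definition \ref{defn:lots}). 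If $r$ is odd, then $s = r/2 \notin \Z$, so $\Xi^1(\psi) = \emptyset$ and $\varepsilon(\psi,\gamma) = 1 = (-1)^{r+1}$; if $r$ is even, then $\Xi^1(\psi) = \Roo(\bG, \bT)$, and there is a single Weil-index factor to evaluate. The key is to identify this factor: the relevant one-dimensional $\field_\epsilon$-quadratic space carries, after Galois descent and reduction modulo $\pp$, a unit multiple of the norm form $\Norm_\epsilon$ paired with $\SSAddChar$ (of depth $r$); since $\sgn_\epsilon$ is trivial on $\pint\mult$ in the unramified case, the unit multiple is immaterial, and the Weil index is then computed exactly as in the unramified, $r$-even subcase of the proof of Lemma \ref{lem:SSPhi} (via the $Q$-orthogonal basis $\sset{1, \sqrt\epsilon}$, \cite{shalika:thesis}*{Lemma 1.3.2}, and \cite{spice:sl2-mu-hat}*{Lemma \xref{sl2-lem:G-facts}}), giving $\sgn_\varpi(-\epsilon)\sgn_\varpi(-1) = -1 = (-1)^{r+1}$, as required.

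The main obstacle is this last identification: matching the quadratic form Adler--Spice attach to the root $\alpha$ --- reconstructed from the data $F_\alpha = \field_\epsilon$, $F_{\pm\alpha} = \field$, $\resfld_\alpha^1 = \resfld_\epsilon^1$ of Notation \ref{notn:adler-spice:explicit-chars:root-constants} and the chosen normalization of $\SSAddChar$ --- with the norm form governing $H(\SSAddChar, \field_\epsilon)$ in Lemma \ref{lem:SSPhi}, and checking that the $\gamma$-dependence enters only through the unit twist, whose square class is recorded by $\sgn_\epsilon(\Im_\epsilon(\gamma))$ (cf.\ Lemma \ref{lem:Im-facts}) and which is trivial here. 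Retaining the factor $\sgn_\theta(\Im_\theta(\gamma))$ in the statement keeps Proposition \ref{prop:sl2-weil} parallel both to the general formula of \cite{adler-spice:explicit-chars} and to the character formulas of \S\ref{sec:characters}, where $\sgn_\theta(\Im_\theta(\gamma))$ genuinely occurs.
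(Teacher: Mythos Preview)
Your treatment of the cases $d > 0$ and ($d = 0$, $r$ odd) is correct and matches the paper. The divergence is in the case $d = 0$, $r$ even. Rather than computing a Weil index directly, the paper invokes the remark after \cite{adler-spice:explicit-chars}*{Proposition \xref{char-prop:theta-tilde-phi}} that, since every root of $\bT$ in $\bG$ is symmetric, one may use \cite{gerardin:weil}*{Corollary 4.8.1} in place of Theorem 4.9.1. This yields the closed form
\[
\varepsilon(\psi, \gamma)
= (-1)^{\card{\dot\Xi^1\symm(\psi, \gamma)}}
\prod_{\alpha \in \dot\Xi\symm^1(\psi, \gamma)}
	\sgn_{\resfld_\alpha^1}\bigl(\alpha(\gamma)\bigr),
\]
from which the answer drops out: $\dot\Xi^1\symm(\psi,\gamma)$ is a singleton, so the leading sign is $-1$, and $\sgn_{\resfld_\epsilon^1}(\alpha_+(\gamma)) = \sgn_{\resfld_\epsilon^1}(\gamma^2) = 1$ since $\gamma^2$ is a square in $\resfld_\epsilon^1$. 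No quadratic-form identification is needed at all.

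Your route---identifying the Adler--Spice quadratic form on the root space with (a unit twist of) the norm form $\Norm_\epsilon$ and then re-running the $r$-even, unramified computation from Lemma \ref{lem:SSPhi}---is plausible in outline, but you yourself flag the identification as ``the main obstacle'', and it is not actually carried out: you assert that the relevant form is a unit multiple of $\Norm_\epsilon$ paired with $\SSAddChar$, and that the $\gamma$-dependence enters only as a unit twist killed by $\sgn_\epsilon$, but neither claim is verified against the precise construction in \cite{gerardin:weil}*{Theorem 4.9.1}. The paper's approach sidesteps this entirely; if you want to keep yours, that identification needs to be written down in full.
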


The factor \(\sgn_\theta\bigl(\Im_\theta(\gamma)\bigr)\)
will be shown in the course of the proof to be \(1\), so
that we could leave it out; but we find it convenient to
include it, for consistency with Proposition
\ref{prop:sl2-gauss}.

\begin{proof}
As remarked after
\cite{adler-spice:explicit-chars}*{%
	Proposition \xref{char-prop:theta-tilde-phi}%
},
since all roots are symmetric,
we may use \cite{gerardin:weil}*{Corollary 4.8.1},
rather than Theorem 4.9.1 \loccit,
to obtain the alternate formula
\begin{equation}
\tag{$*$}
\label{eq:sl2-weil}
 \varepsilon(\psi, \gamma)
= (-1)^{\card{\dot\Xi^1\symm(\psi, \gamma)}}
\prod_{\alpha \in \dot\Xi\symm^1(\psi, \gamma)}
	\sgn_{\resfld_\alpha^1}(\alpha(\gamma)),
\end{equation}
where \(\sgn_{\resfld_\alpha^1}\) is the unique
(non-trivial) order-\(2\) character of
\(\resfld_\alpha^1\).
(The notation \(\dot\Xi\)
for a set of orbits is as in Notation \ref{notn:roots}.)

We have \(\Xi^1(\psi, \gamma) = \emptyset\) if
\(d > 0\), i.e., \(\gamma \in Z(G)T_{\MPlus0}\);
in particular, this holds whenever
\(\theta \ne \epsilon\).
Then \eqref{eq:sl2-weil} becomes
\(\varepsilon(\psi, \gamma) = 1\), as desired.
Therefore, we focus on the case where \(d = 0\).
In particular,
\(\Xi^1(\psi, \gamma) = \Xi(\psi)\).

By Lemma \ref{lem:Im-facts},
\(\ord\bigl(\Im_\epsilon(\gamma)\bigr) = 0\),
so
\(\sgn_\epsilon\bigl(\Im_\epsilon(\gamma)\bigr) = 1\).
Thus, by Lemma \ref{lem:SSPhi}, it suffices to show that
\(\varepsilon(\psi, \gamma) = (-1)^{r + 1}\).

If \(s \not\in \Z\), i.e., \(r \not\in 2\Z\),
then \(\Xi(\psi, \gamma) = \emptyset\),
so that \eqref{eq:sl2-weil} becomes
\(\varepsilon(\psi, \gamma) = 1 = (-1)^{r + 1}\).

If \(s \in \Z\), i.e., \(r \in 2\Z\), then
\(\Xi^1\symm(\psi, \gamma) = \Xi(\psi, \gamma)
= \Roo(\bG, \bT)\),
whence \(\dot\Xi^1\symm(\psi, \gamma)\) is a singleton,
and \(\Xi\nosymm(\psi, \gamma) = \emptyset\),
so \eqref{eq:sl2-weil} again becomes
\[
\varepsilon(\psi, \gamma)
= (-1)^1\dotm1\dotm
	\sgn_{\resfld_\epsilon^1}(\alpha_+(\gamma))
= -\sgn_{\resfld_\epsilon^1}(\gamma^2)
= -1 = (-1)^{r + 1}.\qedhere
\]
\end{proof}

Now we compute the `positive-depth sign'
\(\mf G(\psi, \gamma)\).

\begin{notn}
\label{notn:adler-spice:explicit-chars:gauss}
Adopt
\cite{adler-spice:explicit-chars}*{%
	Notation \xref{char-notn:gauss}%
}.
In particular, \(\Upsilon(\psi, \gamma)\) is
empty if \(d = 0\),
and otherwise is the set of
(absolute) roots of \bT in \bG that
``occur in the filtration
(of \(\sl_2(\field_\theta)\))
associated to \(x\) at depth \((r - d)/2\)''.
(Recall that \(d\) is the (maximal) depth of \(\gamma\).)
As in Notation \ref{notn:adler-spice:explicit-chars:weil},
we have that, if \(x \in \sset{\xleft, \xright}\), then
\[
\Upsilon(\psi, \gamma) = \begin{cases}
\Roo(\bG, \bT), & (r - d)/2 \in \Z  \\
\emptyset,      & \text{otherwise;}
\end{cases}
\]
and, if \(x = \xcentre\), then
\[
\Upsilon(\psi, \gamma) = \begin{cases}
\Roo(\bG, \bT), & r - d \in \Z \\
\emptyset,      & \text{otherwise.}
\end{cases}
\]
We set
$\Upsilon\symmunram(\psi, \gamma)$
(respectively, $\Upsilon\symmram(\psi, \gamma)$)
equal to 
$\Upsilon(\psi, \gamma)$
in the unramified (respectively, ramified) case,
and to the empty set otherwise.
\end{notn}


Our calculation of \(\mf G(\psi, \gamma)\) in the ramified
case (Proposition \ref{prop:sl2-gauss})
will involve the quantity \(S(\psi)\) defined in
\cite{sally-shalika:characters}*{p.~1234}.
The measure used in its definition is not specified there, but
the statement (on p.~1235 \loccit) that
\(S(\psi)^2 = \sgn_\varpi(-1)\) holds only for the
normalization chosen below.

\begin{lemma}
\label{lem:S-psi}
In the ramified case,
\begin{align*}
S(\psi)
& {}\ldef q^{1/2}\int_{\pint\mult}
	\sgn_\varpi(Y)\psi\Bigl(\frac{
		1 + \varpi^{h - 1}\sqrt\varpi Y
	}{
		1 - \varpi^{h - 1}\sqrt\varpi Y
	}\Bigr)\textup dY \\
& {}\phantom{:}= \sgn_\varpi(-1)^{h - 1}H(\SSAddChar, \field_\varpi),
\end{align*}
where \(\textup dY\) is the Haar measure on \(\field\)
such that \(\meas_{\textup dY}(\pint) = 1\).
\end{lemma}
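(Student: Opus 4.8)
The plan is to reduce both sides of the asserted identity to the Gauss sum $\Gauss(\AddChar)$ of Definition~\ref{defn:gauss}. Throughout we are in the ramified case of Definition~\ref{defn:lots}, so $T = T^{\varpi,1}$, $x = \xcentre$, $r = \depth(\psi) > 0$, $\rup r = h$, and (using Notation~\ref{notn:pos-depth-X} with $\theta = \varpi$) $\SSAddChar = \SSAddChar_\pi = \AddChar_{\varpi^{-\rup r}\beta\varpi} = \AddChar_{\varpi^{1-h}\beta}$, where $\beta \in \pint\mult$ is the element attached to $\psi$ by \eqref{eq:psi-log}; only its class modulo $\pp$ will matter. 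Recall also $\depth(\SSAddChar) = h - 1$ by \eqref{eq:depth-Phi-b}.

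First I would simplify the integrand. For $Y \in \pint\mult$ the argument of $\psi$ is $\cayley(2\varpi^{h-1}\sqrt\varpi Y)$; since $h \ge 1$, this element of $V_\varpi \cap \pp_\varpi$ corresponds, under the isomorphism $\ttt^{\varpi,1} \iso V_\varpi$ of \S\ref{sec:std-tori}, to $X^{\varpi,1}_{2\varpi^{h-1}Y}$, whose depth is $(h-1) + \tfrac1 2 = r$, so that it lies in $\ttt_{\MPlus{(r/2)}}$. Using Lemma~\ref{lem:cayley}\pref{item:cayley-torus} to identify the field Cayley map with the one on $T$, then \eqref{eq:psi-log} together with the elementary identity $\Tr(X^{\varpi,1}_a X^{\varpi,1}_b) = 2ab\varpi$, gives
\[
\psi\Bigl(\tfrac{1 + \varpi^{h-1}\sqrt\varpi Y}{1 - \varpi^{h-1}\sqrt\varpi Y}\Bigr)
= \AddChar\bigl(\Tr(\varpi^{-h}X^{\varpi,1}_\beta\dotm X^{\varpi,1}_{2\varpi^{h-1}Y})\bigr)
= \AddChar(4\beta Y),
\]
so that $S(\psi) = q^{1/2}\int_{\pint\mult}\sgn_\varpi(Y)\AddChar(4\beta Y)\,\textup dY$.

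Next I would evaluate the two sides. Substituting $Y \mapsto (4\beta)\inv Y$ and using $\sgn_\varpi(4) = 1$ (as $p \ne 2$) extracts the factor $\sgn_\varpi(\beta)$ and leaves $q^{1/2}\int_{\pint\mult}\sgn_\varpi(Y)\AddChar(Y)\,\textup dY$. Since $\AddChar$ has depth $0$ and $\sgn_\varpi$ on $\pint\mult$ factors through $\pint\mult/(1+\pp) = \resfld\mult$ and is the quadratic character there (\eqref{eq:sgn-ram}), this integral equals $q^{-1/2}\sum_{t \in \resfld\mult}\sgn_\varpi(t)\AddChar(t)$; the standard quadratic Gauss-sum identity $\sum_{t \in \resfld\mult}\sgn_\varpi(t)\AddChar(t) = \sum_{X \in \resfld}\AddChar(X^2)$ and Definition~\ref{defn:gauss} (depth $0$) then give $q^{1/2}\int_{\pint\mult}\sgn_\varpi(Y)\AddChar(Y)\,\textup dY = \Gauss(\AddChar)$. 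Hence $S(\psi) = \sgn_\varpi(\beta)\Gauss(\AddChar)$. On the other side, Lemma~\ref{lem:SSPhi} gives $H(\SSAddChar, \field_\varpi) = \Gauss(\SSAddChar)$, and the transformation law for $\Gauss$ (available in \cite{spice:sl2-mu-hat}*{Lemma \xref{sl2-lem:G-facts}}, or derived directly from Definition~\ref{defn:gauss} applied to $\SSAddChar = \AddChar_{\varpi^{1-h}\beta}$ by the same Gauss-sum computation) gives $\Gauss(\SSAddChar) = \sgn_\varpi(-1)^{h-1}\sgn_\varpi(\beta)\Gauss(\AddChar)$. Comparing the two expressions and using $\sgn_\varpi(-1)^{2(h-1)} = 1$ yields $S(\psi) = \sgn_\varpi(-1)^{h-1}H(\SSAddChar, \field_\varpi)$.

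The delicate step will be the first one: checking that $\cayley(2\varpi^{h-1}\sqrt\varpi Y)$ genuinely lands in the domain $\ttt_{\MPlus{(r/2)}}$ on which \eqref{eq:psi-log} is valid, and carefully bookkeeping the normalizations ($\rup r = h$, the extra factor of $\varpi$ in $\SSAddChar_\pi = \AddChar_{\varpi^{-\rup r}\beta\theta}$, and the resulting $\depth(\SSAddChar) = h - 1$) so that the exponent $h - 1$ emerges correctly. Everything after that is routine Gauss-sum manipulation. As a consistency check, the formula $S(\psi) = \sgn_\varpi(\beta)\Gauss(\AddChar)$ recovers $S(\psi)^2 = \Gauss(\AddChar)^2 = \sgn_\varpi(-1)$, in agreement with the remark preceding the lemma on the normalization of $\textup dY$.
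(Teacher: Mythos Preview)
Your proof is correct and follows essentially the same route as the paper's: both simplify the integrand via the Cayley map and \eqref{eq:psi-log} to an additive character evaluated at a unit multiple of $Y$, then identify the resulting integral as a Gauss sum and invoke the transformation laws together with Lemma~\ref{lem:SSPhi}. The only cosmetic difference is that the paper writes the integrand as $\SSAddChar_{4\varpi^{h-1}}(Y)$ and manipulates $\Gauss(\SSAddChar_{4\varpi^{h-1}})$ directly, whereas you unwind one step further to $\AddChar(4\beta Y)$ and pass through $\Gauss(\AddChar)$; the content is identical.
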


\begin{proof}
By the definitions of
\(\cayley\)
(see Lemma \ref{lem:cayley-field})
and
\(X\), \(\beta\), and \(\SSAddChar\)
(see Definitions \ref{defn:pos-depth-param}
and \ref{defn:lots}),
\begin{multline*}
\psi\Bigl(
	\frac{
		1 + \varpi^{h - 1}\sqrt\varpi Y
	}{
		1 - \varpi^{h - 1}\sqrt\varpi Y
	}
\Bigr)
= \psi(\cayley(2\varpi^{h - 1}\sqrt\varpi Y)) \\
= \AddChar\bigl(
	\Tr(X\dotm2\varpi^{h - 1}\sqrt\varpi Y\rangle)
\bigr)
= \AddChar(2\beta\varpi\dotm2\varpi^{h - 1}Y)
= \SSAddChar_{4\varpi^{h - 1}}(Y).
\end{multline*}

Since \(\meas_{\textup dY}(1 + \pp) = q\inv\),
and since the restriction
of the additive Haar measure \(\textup dY\)
to \(\pint\mult\) is a multiplicative Haar measure,
\begin{align*}
S(\psi)
& {}= q^{-1/2}\sum_{Y \in \pint\mult/(1 + \pp)}
	\sgn_\varpi(Y)\SSAddChar_{4\varpi^{h - 1}}(Y) \\
& {}= \Gauss(\SSAddChar_{4\varpi^{h - 1}}).
\end{align*}
By
\cite{spice:sl2-mu-hat}*{Lemma \xref{sl2-lem:G-facts}}
and
Lemma \ref{lem:SSPhi},
\begin{align*}
S(\psi)
& {}= \sgn_\varpi(-1)^{h - 1}\Gauss(\SSAddChar) \\
& {}= \sgn_\varpi(-1)^{h - 1}H(\SSAddChar, \field_\varpi).
	\qedhere
\end{align*}
\end{proof}

\begin{prop}
\label{prop:sl2-gauss}
Suppose that \(\gamma \in T \setminus Z(G)T_r\).
Put \(d = \mdepth(\gamma)\),
in the notation of Definition \ref{defn:depth-element}.
The root of unity \(\mf G(\psi, \gamma)\) defined in
\cite{adler-spice:explicit-chars}*{%
	Proposition \xref{char-prop:gauss-sum}%
} is given by
\[
\mf G(\psi, \gamma) =
\begin{cases}
H(\SSAddChar, \field_\theta)
\sgn_\theta\bigl(\Im_\theta(\gamma)\bigr), &
	d > 0 \\
1, &
	d = 0.
\end{cases}
\]
\end{prop}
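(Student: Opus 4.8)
The plan is to imitate the proof of Proposition~\ref{prop:sl2-weil}, now feeding in the explicit description of \(\mf G(\psi, \gamma)\) from \cite{adler-spice:explicit-chars}*{Proposition \xref{char-prop:gauss-sum}}, which expresses this fourth root of unity as a product of quadratic Gauss sums over the residue fields \(\resfld_\alpha\), the product being indexed by the Galois orbits of the roots in \(\Upsilon(\psi, \gamma)\) (see Notation~\ref{notn:adler-spice:explicit-chars:gauss}). First I would dispose of the case \(d = 0\): there \(\Upsilon(\psi, \gamma) = \emptyset\) by Notation~\ref{notn:adler-spice:explicit-chars:gauss}, so the product is empty and \(\mf G(\psi, \gamma) = 1\), as claimed.

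Next assume \(d > 0\). Since \(\dot\Roo(\bG, \bT)\) is a singleton (Notation~\ref{notn:roots}), the product defining \(\mf G(\psi, \gamma)\) is either empty --- when \(\Upsilon(\psi, \gamma) = \emptyset\), giving \(\mf G(\psi, \gamma) = 1\) --- or has a single Gauss-sum factor \(\mathfrak g_\alpha\) attached to the unique root orbit, when \(\Upsilon(\psi, \gamma) = \Roo(\bG, \bT)\). To sort out which occurs, note that \(\gamma\inv = \bar\gamma\) for \(\gamma \in T \iso C_\theta\), so \(D_G(\gamma) = (\gamma - \gamma\inv)^2 = 4\Im_\theta(\gamma)^2\theta\); combined with Lemma~\ref{lem:disc-as-depth} this gives \(d = \mdepth(\gamma) = \ord(\Im_\theta(\gamma)) + \tfrac1 2\ord(\theta)\). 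In the unramified case, Notation~\ref{notn:adler-spice:explicit-chars:gauss} shows that \(\Upsilon(\psi, \gamma) = \Roo(\bG, \bT)\) exactly when \((r - d)/2 \in \Z\), i.e.\ \(r \equiv d \pmod 2\); when instead \(r \not\equiv d \pmod 2\), we get \(\mf G(\psi, \gamma) = 1\), and this matches the asserted formula, since by Lemma~\ref{lem:SSPhi} and \(\sgn_\epsilon(\Im_\epsilon(\gamma)) = (-1)^{\ord(\Im_\epsilon(\gamma))} = (-1)^d\) (using \eqref{eq:sgn-un}) we have \(H(\SSAddChar, \field_\epsilon)\sgn_\epsilon(\Im_\epsilon(\gamma)) = (-1)^{r + 1}(-1)^d = 1\). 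In the ramified case, \(r\) and \(d\) both lie in \(\Z + \tfrac1 2\), so \(r - d \in \Z\) automatically, and \(\Upsilon(\psi, \gamma)\) is always all of \(\Roo(\bG, \bT)\).

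It then remains to evaluate the single factor \(\mathfrak g_\alpha\) when \(\Upsilon(\psi, \gamma) = \Roo(\bG, \bT)\). By \cite{adler-spice:explicit-chars}*{Proposition \xref{char-prop:gauss-sum}}, \(\mathfrak g_\alpha\) is a Gauss sum over \(\resfld_\alpha = \resfld_\theta\) (which is the quadratic extension of \(\resfld\) when \(\theta = \epsilon\), and equals \(\resfld\) when \(\theta = \varpi\)) built from a quadratic multiplicative character and the additive character obtained by restricting \(\psi\) to the \(\alpha\)-root subgroup at depth \((r - d)/2\). Substituting the explicit form of \(\psi\) in terms of \(\SSAddChar\) (Definition~\ref{defn:pos-depth-param} and Notation~\ref{notn:pos-depth-X}) and expanding the Cayley transform (Lemma~\ref{lem:cayley-field}; and, in the ramified case, proceeding exactly as in the computation of \(S(\psi)\) in Lemma~\ref{lem:S-psi}), one rewrites \(\mathfrak g_\alpha\): in the unramified case, via the same norm-form bookkeeping as in the proof of Lemma~\ref{lem:SSPhi} (since \(\resfld_\epsilon/\resfld\) is quadratic), as \(H(\SSAddChar, \field_\epsilon)\) times a unit sign which tracks to \(\sgn_\epsilon(\Im_\epsilon(\gamma))\); and in the ramified case, as \(S(\psi)\) times a unit sign which tracks to \(\sgn_\varpi(-1)^{h - 1}\sgn_\varpi(\Im_\varpi(\gamma))\). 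Applying Lemma~\ref{lem:S-psi}, which identifies \(S(\psi) = \sgn_\varpi(-1)^{h - 1}H(\SSAddChar, \field_\varpi)\), then cancels the surplus power of \(\sgn_\varpi(-1)\) and leaves \(H(\SSAddChar, \field_\varpi)\sgn_\varpi(\Im_\varpi(\gamma))\). A final check that the residual powers of \(\sgn_\theta(-1)\) vanish (using \(r \equiv d \pmod 2\) in the unramified case and \(h = r + \tfrac1 2\) in the ramified case) completes the proof.

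The main obstacle is the bookkeeping in the third paragraph: reading off from \cite{adler-spice:explicit-chars} the precise additive and multiplicative characters entering \(\mathfrak g_\alpha\), carrying the Cayley-transform expansion far enough to pin down the constant whose \(\sgn_\theta\)-value produces the factor \(\sgn_\theta(\Im_\theta(\gamma))\), and then reconciling all the accumulated signs --- those from \(\mathfrak g_\alpha\) itself, from \(S(\psi)\) in the ramified case, and from \(H(\SSAddChar, \field_\theta)\) --- against the parity conditions. The ramified case will be the more delicate one, both because of the auxiliary constant \(S(\psi)\) and because the conductor \(h = r + \tfrac1 2\) enters the parity count.
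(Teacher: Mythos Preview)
Your handling of the case \(d = 0\) and of the unramified subcase where \(\Upsilon(\psi,\gamma) = \emptyset\) is fine and matches the paper. The problem is your third paragraph: your description of the Adler--Spice formula is not what \cite{adler-spice:explicit-chars}*{Proposition \xref{char-prop:gauss-sum}} actually says, and the plan you sketch for evaluating it would not land where you expect.

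The formula the paper invokes (see \eqref{eq:sl2-gauss}) is not ``a Gauss sum over \(\resfld_\alpha\)'' attached to each root orbit. It separates symmetric roots into unramified and ramified, and has three pieces: a sign \((-1)^{\card{\dot\Upsilon\symm(\psi,\gamma)}}\) from \emph{all} symmetric orbits; a factor \((-\Gauss(\SpecAddChar))^{f(\dot\Upsilon\symmram)}\), where \(\Gauss(\SpecAddChar)\) is a Gauss sum over the \emph{base} residue field \(\resfld\), appearing only for \emph{ramified} symmetric orbits; and a product over ramified symmetric orbits of \(\sgn_\resfld\) applied to the explicit quantity \(\Norm_\varpi(w_\varpi)\,\textup d\alpha(X_\SpecAddChar)\,(\alpha(\gamma)-1)\). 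In the unramified case (\(\theta=\epsilon\)) the root orbit is symmetric-\emph{unramified}, so \(\Upsilon\symmram = \emptyset\) and the entire formula collapses to the single sign \((-1)^{\card{\dot\Upsilon\symm}}\); when \((r-d)/2\in\Z\) this is \(-1\), and otherwise it is \(1\). No Gauss sum, no norm-form bookkeeping, no appeal to Lemma~\ref{lem:SSPhi} beyond the final identification \(H(\SSAddChar,\field_\epsilon)=(-1)^{r+1}\). Your plan to compute a quadratic Gauss sum over \(\resfld_\epsilon\) here is attacking something that is not present.

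In the ramified case (\(\theta=\varpi\)), the paper does not route through \(S(\psi)\) at all. Instead it computes the argument of \(\sgn_\resfld\) directly: \(\textup d\alpha_+(X_\SpecAddChar) = 2b\beta\sqrt\varpi\) (where \(\AddChar = \SpecAddChar_b\)), \(\alpha_+(\gamma)-1 = (\gamma-\gamma^{-1})\gamma\), and \(\Norm_\varpi(w_\varpi)\) is a norm hence a square; multiplying these and dividing by \(\sqrt\varpi\) lands in \(\pint^\times\) with \(\sgn_\varpi\)-value \(\sgn_\varpi(b\beta\varpi)\sgn_\varpi(\Im_\varpi(\gamma))\). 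Then the transformation law \(\sgn_\varpi(b\beta\varpi)\Gauss(\SpecAddChar) = \Gauss(\SSAddChar) = H(\SSAddChar,\field_\varpi)\) finishes. Your detour through \(S(\psi)\) and Lemma~\ref{lem:S-psi} could in principle be made to work, but it introduces an extra layer (the Cayley expansion defining \(S(\psi)\)) that is not needed and would make the sign-tracking you flag as ``the main obstacle'' harder, not easier.
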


\begin{proof}
We use the following formula from the cited proposition
(adapted to our situation, per
Notation \ref{notn:adler-spice:explicit-chars:root-constants})
to compute \(\mf G(\psi, \gamma)\):
\begin{equation}
\tag{$*$}
\label{eq:sl2-gauss}
\begin{aligned}
\mf G(\psi, \gamma)
={} &
(-1)^{\card{\dot\Upsilon\symm(\psi, \gamma)}}
\bigl(-\Gauss(\SpecAddChar)\bigr)^{
	f(\dot\Upsilon\symmram(\psi, \gamma))
}\times{} \\
&\qquad\prod_{
	\alpha \in \dot\Upsilon\symmram(\psi, \gamma)
}
	\sgn_\resfld\bigl[
		\Norm_\varpi(w_\varpi)
		\textup d\alpha(X_\SpecAddChar)
		(\alpha(\gamma) - 1)
	\bigr],
\end{aligned}
\end{equation}
where
\begin{itemize}
\item \(\SpecAddChar\) is a certain (additive) character of
\(\field\)
(specified in
\cite{adler-spice:explicit-chars}*{%
	\S\xref{char-sec:generalities}%
}
and denoted there by $\Lambda$);
\item
the notation \(f(\cdot)\) is defined by
\(f(\dot\Roo(\bG, \bT)) = f(\field_{\alpha_+}/\field)
= 1\)
and
\(f(\dot\emptyset) = 0\);
\item \(X_\SpecAddChar\) is an element such that
\[
\psi(\cayley(Y))
= \SpecAddChar\bigl(
	\Tr(X_\SpecAddChar\dotm Y)
\bigr)
\quad\text{for all \(Y \in \ttt_r\)};
\]
\item \(w_\varpi = \sqrt\varpi^{r - d}\);
and
\item the argument of \(\sgn_\resfld\), which lies in
\(\pint_\varpi\mult\), is implicitly regarded as an element of
\(\resfld\mult\).
\end{itemize}
(The notation \(\dot\Upsilon\)
for a set of orbits is as in
Notation \ref{notn:roots}.)
The condition on \(w_\varpi\) can be satisfied only if
\(r - d \in \Z\); but this is always the case when
\(\Upsilon\symmram(\psi, \gamma) \ne \emptyset\).
This formula differs from the one in
\cite{adler-spice:explicit-chars} in several ways.
\begin{itemize}
\item
The original formula had \(\mf G_\SpecAddChar(\resfld)\) in place of
\(\Gauss(\SpecAddChar)\); but
\cite{adler-spice:explicit-chars}*{%
	Definition \xref{char-defn:basic-gauss-sum}
	and Lemma \xref{char-lem:well-known-gauss}%
}
and \cite{spice:sl2-mu-hat}*{Lemma \xref{sl2-lem:G-facts}}
show that they are equal (since \(\depth(\SpecAddChar) = 0\)).
\item
The argument of \(\sgn_{\resfld_\alpha}\) in the
original formula had a factor of \(\tfrac1 2 e_\alpha\),
where \(e_\alpha\) is the ramification degree of
\(\field_\alpha/\field\);
but this factor collapses to \(1\) whenever
\(\alpha \in \Upsilon\symmram(\psi, \gamma)\).
\item
The original formula had \(\textup d\alpha^\vee(X^*)\)
in place of \(\textup d\alpha(X_\SpecAddChar)\); but these
are the same once we taken into account our identification
of \(\gg\) and \(\gg^*\).
\item The product included an extra factor
\(\sgn_\field(\bG_{\pm\alpha})\), defined to be \(+1\)
if \(\bG_{\pm\alpha}\) is \(\field\)-split and \(-1\)
otherwise.
Since \(\bG_{\pm\alpha} = \SL_2\), this factor is \(1\).
\end{itemize}

Note that \eqref{eq:sl2-gauss} collapses to \(1\) unless
\(d > 0\), so we assume that.

In the unramified case,
\(\Upsilon\symmram(\psi, \gamma) = \emptyset\),
so \eqref{eq:sl2-gauss} becomes
\begin{equation}
\tag{$*\textsub{un}$}
\mf G(\psi, \gamma) = (-1)^{\card{
	\dot\Upsilon\symm(\psi, \gamma)
}}.
\end{equation}
If \((r - d)/2 \in \Z\), then
\(\dot\Upsilon\symm(\psi, \gamma) = \dot\Roo(\bG, \bT)\)
is a singleton, so
\begin{equation}
\tag{$\dag\textsub{even}$}
\label{eq:sl2-gauss-unram-even}
\mf G(\psi, \gamma) = -1 = (-1)^{r + 1}(-1)^d.
\end{equation}
If \((r - d)/2 \not\in \Z\), then
\(\dot\Upsilon\symm(\psi, \gamma) = \emptyset\),
so
\begin{equation}
\tag{$\dag\textsub{odd}$}
\label{eq:sl2-gauss-unram-odd}
\mf G(\psi, \gamma) = 1 = (-1)^{r + 1}(-1)^d.
\end{equation}
In either case, Lemma \ref{lem:Im-facts} shows that
\(\sgn_\epsilon\bigl(\Im_\epsilon(\gamma)\bigr) = (-1)^d\);
so that, by Lemma \ref{lem:SSPhi},
\eqref{eq:sl2-gauss-unram-even} and
\eqref{eq:sl2-gauss-unram-odd} both
simplify to the desired formula.

In the ramified case, recall that \(r - d \in \Z\);
in particular,
\(\dot\Upsilon\symmram(\psi, \gamma)
= \dot\Upsilon\symm(\psi, \gamma) = \dot\Roo(\bG, \bT)\)
is a singleton,
and
\(f(\dot\Upsilon\symmram(\psi, \gamma))
= f(\field_\varpi/\field) = 1\).
Thus, \eqref{eq:sl2-gauss} becomes
\begin{equation}
\tag{$*\textsub{ram}$}
\label{eq:sl2-gauss-ram}
\mf G(\psi, \gamma)
= \Gauss(\SpecAddChar)\sgn_\resfld\bigl(
	\Norm_\varpi(w_\varpi)\textup d\alpha_+(X_\SpecAddChar)
	(\alpha_+(\gamma) - 1)
\bigr).
\end{equation}

Since \(\SpecAddChar\) and \(\AddChar\) are both non-trivial,
additive characters of \(\field\), we have
\(\AddChar = \SpecAddChar_b\) for some \(b \in \field\mult\).
Then we may take
\begin{equation}
\tag{$\ddag$}
\label{eq:X-Lambda}
X_\SpecAddChar = b X = b\beta\begin{pmatrix}
0      & 1 \\
\varpi & 0
\end{pmatrix};
\end{equation}
and we note for future reference that, by
\cite{spice:sl2-mu-hat}*{Lemma \xref{sl2-lem:G-facts}},
\begin{equation}
\tag{$\ddag\ddag$}
\label{eq:Gs}
\sgn_\varpi(b\beta\varpi)\Gauss(\SpecAddChar)
= \sgn_\varpi(\beta\varpi)\Gauss(\AddChar)
= \Gauss(\AddChar_{\beta\varpi})
= \Gauss(\SSAddChar).
\end{equation}

Since \(\alpha_+(\gamma) = \gamma^2\), we have that
\(\alpha_+(\gamma) - 1 = (\gamma - \gamma\inv)\gamma\).
Similarly, we can calculate explicitly from Notation
\ref{notn:roots} and \eqref{eq:X-Lambda} that
\(\textup d\alpha_+(X_\SpecAddChar)
= 2b\beta\sqrt\varpi\),
so that
\begin{align*}
\sgn_\resfld\bigl[
	\Norm_\varpi(w_\varpi)
	\textup d\alpha(X_\SpecAddChar)
	(\alpha_+(\gamma) - 1)
\bigr]
& {}= \sgn_\resfld\bigl[
	\Norm_\varpi(w_\varpi)\dotm
	4b\beta\varpi\dotm
	\tfrac1 2\sqrt\varpi\inv(\gamma - \gamma\inv)\dotm
	\gamma
\bigr] \\
& {}= \sgn_\resfld\bigl[
	\Norm_\varpi(w_\varpi)\dotm
	4b\beta\varpi\dotm
	\Im_\varpi(\gamma)
\bigr] \\
& {}= \sgn_\varpi\bigl[
	\Norm_\varpi(w_\varpi)\dotm
	4b\beta\varpi\dotm
	\Im_\varpi(\gamma)
\bigr] \\
& {}= \sgn_\varpi(b\beta\varpi)\dotm
\sgn_\varpi\bigl(\Im_\varpi(\gamma)\bigr).
\end{align*}
(The crucial point in the transition from the second to the
third line is that the argument lies in \(\pint\mult\), not
just \(\pint_\varpi\mult\).)
Thus, by \eqref{eq:Gs} and Lemma \ref{lem:SSPhi},
\eqref{eq:sl2-gauss-ram} becomes
\begin{align*}
\mf G(\psi, \gamma)
& {}= \sgn_\varpi(b\beta\varpi)\Gauss(\SpecAddChar)\dotm
\sgn_\varpi\bigl(\Im_\varpi(\gamma)\bigr) \\
& {}= H(\SSAddChar, \field_\varpi)\sgn_\varpi\bigl(
	\Im_\varpi(\gamma)
\bigr).\qedhere
\end{align*}
\end{proof}

\begin{cor}
\label{cor:root}
With the notation of
Propositions \ref{prop:sl2-weil} and \ref{prop:sl2-gauss},
\[
\varepsilon(\psi, \gamma)\mf G(\psi, \gamma)
= H(\SSAddChar, \field_\theta)
\sgn_\theta\bigl(\Im_\theta(\gamma)\bigr).
\]
\end{cor}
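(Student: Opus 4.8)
The plan is simply to assemble the two preceding results, organised by a dichotomy on $d = \mdepth(\gamma)$ in the sense of Definition \ref{defn:depth-element}. The hypothesis $\gamma \in T \setminus Z(G)T_r$ (shared with Propositions \ref{prop:sl2-weil} and \ref{prop:sl2-gauss}) guarantees that $d$ is a well-defined real number $\ge 0$, so either $d = 0$ or $d > 0$; and these are precisely the two regimes distinguished in those propositions.

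First I would record the case $d = 0$: here Proposition \ref{prop:sl2-weil} gives $\varepsilon(\psi, \gamma) = H(\SSAddChar, \field_\theta)\sgn_\theta\bigl(\Im_\theta(\gamma)\bigr)$, while Proposition \ref{prop:sl2-gauss} gives $\mf G(\psi, \gamma) = 1$, so the product is as claimed. In the remaining case $d > 0$ the roles are exactly reversed: Proposition \ref{prop:sl2-weil} gives $\varepsilon(\psi, \gamma) = 1$ and Proposition \ref{prop:sl2-gauss} gives $\mf G(\psi, \gamma) = H(\SSAddChar, \field_\theta)\sgn_\theta\bigl(\Im_\theta(\gamma)\bigr)$, so again the product equals $H(\SSAddChar, \field_\theta)\sgn_\theta\bigl(\Im_\theta(\gamma)\bigr)$.

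There is no genuine obstacle here: all of the real content --- the reduction of $\varepsilon(\psi,\gamma)$ via G\'erardin's formula, and the explicit evaluation of $\mf G(\psi,\gamma)$ in both the unramified and ramified subcases --- has already been carried out in Propositions \ref{prop:sl2-weil} and \ref{prop:sl2-gauss}. The one point I would emphasise in the write-up is the \emph{complementarity} that makes the corollary clean: the depth-zero sign contributes the factor $H(\SSAddChar, \field_\theta)\sgn_\theta\bigl(\Im_\theta(\gamma)\bigr)$ exactly when $\gamma$ is regular at depth zero, whereas the positive-depth sign contributes that same factor exactly when $\gamma \in Z(G)T_{\MPlus0}$, so that in every case precisely one of the two factors is non-trivial while the other is $1$. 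Beyond this observation the proof is just the two-line case check above.
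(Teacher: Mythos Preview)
Your proposal is correct and is exactly the argument the paper intends: the corollary is stated without proof precisely because it follows immediately from Propositions \ref{prop:sl2-weil} and \ref{prop:sl2-gauss} by the two-case check on $d = \mdepth(\gamma)$ that you describe.
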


\subsection{Character values far from the identity}
\label{sec:far}

\begin{thm}
\label{thm:far}
If
\(\gamma \not\in Z(G)G_{\MPlus0}\),
or
\(r > 0\) and \(\gamma \not\in Z(G)G_r\),
then
\(\Theta_\pi(\gamma) = 0\) unless some \(G\)-conjugate of
\(\gamma\) lies in \(T^\theta\).
If \(\gamma \in T^\theta\), then
\[
\Theta_\pi(\gamma)
= \frac1 2\sgn_\epsilon\bigl(\Im_\epsilon(\gamma)\bigr)
\frac{
	\psi(\gamma) + \psi(\gamma\inv)
}{
	\smabs{D_G(\gamma)}^{1/2}
}
\bigl[(-1)^{r + 1} + H(\SSAddChar, \field_\epsilon)\bigr]
\]
in the unramified case,
and
\begin{align*}
\Theta_\pi(\gamma)
= \frac{
	\sgn_\varpi\bigl(\Im_\varpi(\gamma)\bigr)
}{
	2 \smabs{D_G(\gamma)}^{1/2}
}\Bigl\{&
	\psi(\gamma)\bigl[
		\sgn_\varpi(-1)^{h - 1}S(\psi) +
		H(\SSAddChar, \field_\varpi)
	\bigr] +{} \\
	&\qquad\psi(\gamma\inv)\bigl[
		\sgn_\varpi(-1)^h S(\psi) +
		H(\SSAddChar, \field_\varpi)
	\bigr]
\Bigr\}
\end{align*}
in the ramified case.
\end{thm}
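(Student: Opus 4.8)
The plan is to derive each case of the theorem from a general character formula --- \cite{adler-spice:explicit-chars} when \(r > 0\) and \cite{debacker-reeder:depth-zero-sc} when \(r = 0\) --- and then to substitute the explicit evaluations already carried out in this paper. Throughout I retain the notation of \S\ref{sec:ordinary}; in particular, by Definition \ref{defn:lots} (via Remarks \ref{rem:depth-0-leftist} and \ref{rem:pos-depth-leftist}) we may assume \(\bT = \bT^{\theta, 1}\) with \(\theta \in \sset{\epsilon, \varpi}\), the case \(\theta = \epsilon\varpi\) reducing to \(\theta = \varpi\). The hypotheses needed to invoke \cite{adler-spice:explicit-chars} are in force by the discussion of \S\ref{sec:notn} (together with Notations \ref{notn:depth-0-X}, \ref{notn:pos-depth-X}) and Proposition \ref{prop:yu-equiv}, which realizes \(\pi\) as one of Yu's representations; the depth-zero results of \cite{debacker-reeder:depth-zero-sc} require Hypothesis \ref{hyp:debacker-reeder:depth-zero-sc:res-12.4.1(2)}. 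Our running hypothesis on \(\gamma\) says exactly that \(\mdepth(\gamma) < \depth(\pi) = r\) if \(r > 0\), and that \(\mdepth(\gamma) = 0\) if \(r = 0\); so we are in the ``arithmetic'' regime, where, by the clean separation for \(\SL_2\) noted in \S\ref{sec:characters}, no orbital-integral terms enter. In this regime the cited character formulas return \(0\) unless some \(G\)-conjugate of \(\gamma\) lies in \(T^\theta\), which is the vanishing statement. So it remains to treat \(\gamma \in T^\theta\); set \(d = \mdepth(\gamma)\).

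For \(r > 0\), I would apply \cite{adler-spice:explicit-chars}*{Corollary \xref{char-cor:char-tau|pi-1}}, which expresses \(\Theta_\pi(\gamma)\) as a (stable) Weyl sum whose typical term is \(\varepsilon(\psi, \gamma')\mf G(\psi, \gamma')\psi(\gamma')\), over \(\gamma' \in \sset{\gamma, \gamma\inv}\) (the \(N_G(T)/T\)-conjugates of \(\gamma\); see \S\ref{sec:std-tori}), normalized by \(\smabs{D_G(\gamma)}^{1/2}\) and weighted by the filtration-subgroup index appearing on the left side of Lemma \ref{lem:disc-as-index}. Then I substitute: Corollary \ref{cor:root} replaces \(\varepsilon(\psi, \gamma')\mf G(\psi, \gamma')\) by \(H(\SSAddChar, \field_\theta)\sgn_\theta\bigl(\Im_\theta(\gamma')\bigr)\); Lemma \ref{lem:disc-as-index} turns the index weight into \(\smabs{D_G(\gamma)}^{-1}\); in the unramified case Lemma \ref{lem:SSPhi} records \(H(\SSAddChar, \field_\epsilon) = (-1)^{r + 1}\) (as \(\depth(\SSAddChar) = r\)); and in the ramified case --- where the nontrivial absolute Weyl element is \(G\)-rational only when \(\sgn_\varpi(-1) = 1\) --- Lemma \ref{lem:S-psi} is used to re-express the relevant fourth root of unity as \(\sgn_\varpi(-1)^{h - 1}S(\psi)\), resp.\ \(\sgn_\varpi(-1)^{h}S(\psi)\), for the two conjugates; keeping \(S(\psi)\) visible in this way is what permits the comparison with \cite{sally-shalika:characters} made in Remark \ref{rem:far}. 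Using \(\Im_\theta(\gamma\inv) = -\Im_\theta(\gamma)\) and the values of \(\sgn_\theta(-1)\), the two displayed formulas then drop out.

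For \(r = 0\), I would instead use the depth-zero character formula of \cite{debacker-reeder:depth-zero-sc}, applied to \(\pi\) induced from the inflation to \(\SL_2(\pint)\) of \(\smabs{R^{\sfG_\xleft}_{\sfT^\epsilon, \psi}}\): since \(\smabs{R^\sfG_{\sfT^\epsilon, \psi}}\) takes the value \(\psi(t) + \psi(t\inv)\) at regular \(t \in \sfT^\epsilon(\resfld)\), that formula yields \(\Theta_\pi(\gamma) = -\smabs{D_G(\gamma)}^{-1/2}\bigl(\psi(\gamma) + \psi(\gamma\inv)\bigr)\) on \(T^\epsilon \setminus Z(G)T^\epsilon_{\MPlus0}\); and since here \(\ord\bigl(\Im_\epsilon(\gamma)\bigr) = 0\) (Lemma \ref{lem:Im-facts}), so \(\sgn_\epsilon\bigl(\Im_\epsilon(\gamma)\bigr) = 1\), while \((-1)^{r + 1} + H(\SSAddChar, \field_\epsilon) = -2\) when \(r = 0\), this matches the stated unramified formula.

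The step I expect to be the main obstacle is the arithmetic bookkeeping in the ramified case: correctly determining which Weyl-group contribution survives (only \(\psi(\gamma)\) when \(\sgn_\varpi(-1) = -1\), both \(\psi(\gamma)\) and \(\psi(\gamma\inv)\) otherwise), and tracking the precise powers of \(\sgn_\varpi(-1)\) and of \((-1)^h\) through the identifications of \(\mf G\) (Proposition \ref{prop:sl2-gauss}) and \(S(\psi)\) (Lemma \ref{lem:S-psi}) --- it is exactly here that the sign discrepancy with \cite{sally-shalika:characters} flagged in Remark \ref{rem:far} (for \(\sgn_\varpi(-1) = -1\) and \(h\) odd) originates. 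The unramified and the \(\sgn_\varpi(-1) = 1\) ramified cases are essentially mechanical once Corollary \ref{cor:root} and Lemma \ref{lem:disc-as-index} are in hand.
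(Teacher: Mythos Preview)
Your proposal is correct and follows essentially the same route as the paper: for \(r>0\), invoke \cite{adler-spice:explicit-chars}*{Corollary \xref{char-cor:char-tau|pi-1}} (via Proposition \xref{char-prop:induction1}), then substitute Corollary \ref{cor:root} for the root-of-unity factor and Lemma \ref{lem:disc-as-index} for the index factor, and split the ramified case according to whether \(N_G(T)/T\) is trivial; for \(r=0\), use \cite{debacker-reeder:depth-zero-sc}*{Lemmas 9.3.1 and 10.0.4} and the topological Jordan decomposition. Two small corrections: Hypothesis \ref{hyp:debacker-reeder:depth-zero-sc:res-12.4.1(2)} is \emph{not} needed here (it enters only through Lemma \ref{lem:MK-green}, which Theorem \ref{thm:far} does not use); and the Adler--Spice formula carries no separate \(\smabs{D_G(\gamma)}^{1/2}\) normalization --- the discriminant arises entirely from the square roots of the two indices in Lemma \ref{lem:disc-as-index}, whose product is \(\smabs{D_G(\gamma)}^{-1}\).
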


Recall that \(\SSAddChar\) is as in
Definition \ref{defn:lots},
so that \(H(\SSAddChar, \field_\theta)\) is computed in
Lemma \ref{lem:SSPhi}.  We shall use this in the proof.

\begin{proof}
First suppose that \(r = 0\).
By \cite{debacker-reeder:depth-zero-sc}*{Lemma 9.3.1},
we have that \(\Theta_\pi(\gamma) = 0\) unless
\(\gamma \in Z(G)G_0\); so we assume that
\(\gamma \in G_0\).
Then it has a \term{topological Jordan decomposition}
 \(\gamma = \gamma\tsemi\gamma\tunip\),
with \(\gamma\tsemi\) topologically semisimple
and \(\gamma\tunip\) topologically unipotent
(see \S7 \loccit);
and, if \(\gamma \not\in Z(G)G_{\MPlus0}\),
then \(\gamma\tsemi\) is regular.
By \cite{debacker-reeder:depth-zero-sc}*{Lemma 10.0.4},
\(\Theta_\pi(\gamma) = 0\)
unless \(\gamma\tsemi\) is \(G\)-conjugate to an element of
\(T^\epsilon\), so we assume that
\(\gamma\tsemi \in T^\epsilon\).
Thus the subgroup
\(\bG_{\gamma\tsemi} \ldef C_\bG(\gamma)\conn
= \CC\bG{\MPlus0}(\gamma)\)
of \cite{debacker-reeder:depth-zero-sc}*{p.~802}
is just \(\bT = \bT^\epsilon\),
and the set
\[
\widehat{\mc T}(\gamma\tsemi)
\ldef \set{
	(T' = \Int(g)T^\epsilon, \psi' = \psi \circ \Int(g)\inv)
}{
	\gamma\tsemi \in T'
}
\]
of \S10 \loccit is just
\[
\sett{(T^\epsilon, \psi')}{%
	\(\psi' = \psi \circ \Int(n)\inv\)
	for some \(n \in N_G(T^\epsilon)\)%
}
= \sset{(T^\epsilon, \psi), (T^\epsilon, \psi\inv)}.
\]
Further,
\(\gamma\tunip \in G_{\gamma\tsemi} = T^\epsilon\),
so that \(\gamma = \gamma\tunip\gamma\tsemi \in T^\epsilon\)
as well.

Recall that \(\psi \ne \psi\inv\).
Now combining Lemmas 9.3.1 and Lemma 10.0.4 \loccit
gives
\begin{equation}
\tag{$*_0$}
\label{eq:zero-far}
\begin{aligned}
\Theta_\pi(\gamma)
& {}= \varepsilon(\sG_\xleft, \sfT)
\sum_{(T', \psi')}
	\psi'(\gamma\tsemi)
	R(T, T', 1)(\gamma\tunip) \\
& {}= -\bigl[
	\psi(\gamma\tsemi) + \psi(\gamma\tsemi\inv)
\bigr]
	R(T, T, 1)(\gamma\tunip),
\end{aligned}
\end{equation}
where
\begin{itemize}
\item the sum is taken over the orbits
in \(\widehat{\mc T}(\gamma\tsemi)\) under the natural
(trivial) action of \(T\);
\item
\(
\varepsilon(\sG_\xleft, \sfT)
= \varepsilon({\SL_2}_{/\resfld}, \sfT)
= (-1)^{
	\operatorname{rk}_\resfld(\SL_2) -
	\operatorname{rk}_\resfld(\sfT)
} = -1
\)
is the Kottwitz sign defined on p.~802 \loccit;
and
\item
\(R(T, T, 1)\) is the function defined in \S9.2 \loccitthendot.
\end{itemize}
Since \(\depth(\psi) = 0\), we have that \(\psi\) is trivial
on \(\gamma\tunip \in T_{\MPlus0}\),
so that
\[
\psi(\gamma\tsemi) = \psi(\gamma)
\qandq
\psi(\gamma\tsemi\inv) = \psi(\gamma\inv).
\]
By \S5.1 \loccit,
and Lemma \ref{lem:torus-mu-hat},
since \(R^{\sfT^\epsilon}_{\sfT^\epsilon}(1) = 1\),
we have that
\[
R(T, T, 1)(\gamma\tunip)
= \varepsilon(\bT, Z(\bG))
\hat\mu^T_{X^*}\bigl(
	\cayley\inv(\gamma\tunip)
\bigr)
= 1,
\]
where \(\varepsilon(\bT, Z(\bG))\) is the Kottwitz sign, as
above.
By Lemmas \ref{lem:disc-as-depth} and \ref{lem:Im-facts},
\(\abs{D_G(\gamma)} = 1\)
and
\(\sgn_\epsilon\bigl(\Im_\epsilon(\gamma)\bigr) = 1\).
Thus, since \(r = 0\), we have
by Lemma \ref{lem:SSPhi}
that \eqref{eq:zero-far}
simplifies to the desired formula (in this case).

Now suppose that \(r > 0\), and put \(d = \mdepth(\gamma)\),
in the notation of Definition \ref{defn:depth-element}.
Since \(\gamma\) does not lie in the \(G\)-domain
\(Z(G)G_r\), neither do any of its \(G\)-conjugates;
so, with the notation of
\cite{adler-spice:good-expansions}*{%
	\S\xref{exp-sec:normal}
	and Definition \xref{exp-defn:fancy-centralizer-no-underline}%
},
we have for all \(\gamma' \in \Int(G)\gamma\)
that
\[
\gamma'_{< r} = \gamma'
\qandq
\gamma'_{\ge r} = 1,
\]
so that \(\CC\bG r(\gamma')\) is the unique torus containing
\(\gamma'\).
In particular, if \(\gamma' \in T \cap \Int(G)\gamma\),
then
\begin{gather*}
\odc{\gamma'; x, r} = T_{\MPlus0}G_{x, (r - d)/2},\quad
\odc{\gamma'; x, \MPlus r} = T_{\MPlus0}G_{x, \MPlus{((r - d)/2)}}, \\
\odc{\gamma'; x, r}_T = T_{\MPlus0},
\qandq
\odc{\gamma'; x, \MPlus r}_T = T_{\MPlus0}.
\end{gather*}
(There is a minor notational inconvenience
if \(r = 1\) in the unramified case,
or \(r = 1/2\) in the ramified case.
In those cases, \(\gamma'_{< r} = \gamma'\tsemi\),
and we use Lemma \ref{lem:torus-mu-hat}
to notice that
\(\hat\mu^T_{X_\pi}(\cayley\inv(\gamma'_{\ge r}))
= \psi(\gamma'_{\ge r})\).)
With the notation of
\cite{adler-spice:explicit-chars}*{%
	Definition \xref{char-defn:trunc}%
}, the set
\[
\mc T\bigl((\bT, \bG), (r, r)\bigr) \cap \Int(G)\gamma
\ldef \set{\gamma' \in \Int(G)\gamma}{\gamma'_{< r} \in T}
\]
is \(T \cap \Int(G)\gamma\).
Since \(\gamma\) is regular, the intersection
is just \(\Int(N_G(T))\gamma\).
Further, the equivalence relation \(\overset0\sim\) on
that set, defined in
\cite{adler-spice:explicit-chars}*{%
	Definition \xref{char-defn:equiv}%
} by \(\gamma' \overset0\sim \gamma''\)
if and only if
\(\gamma''\) is conjugate in
\(T = \CC T r(\gamma')\) to \(\gamma'\),
is the identity relation.

Therefore, by
\cite{adler-spice:explicit-chars}*{%
	Proposition \xref{char-prop:induction1}
	and Corollary \xref{char-cor:char-tau|pi-1}%
},
\begin{equation}
\tag{$*_{> 0}$}
\label{eq:pos-far}
\begin{aligned}
\Theta_\pi(\gamma)
= \sum_{\gamma'} {}
	&\bigindx{
		T_{\MPlus0}G_{x, (r - d)/2}
	}{
		T_{\MPlus0}G_{x, s}
	}^{1/2}\bigindx{
		T_{\MPlus0}G_{x, \MPlus{((r - d)/2)}}
	}{
		T_{\MPlus0}G_{x, \MPlus s}
	}^{1/2}\times{} \\
&\qquad\mf G(\psi, \gamma')\varepsilon(\psi, \gamma')\dotm
	\psi(\gamma')\dotm
	\hat\mu^T_{X^*}(\cayley\inv(1)\bigr),
\end{aligned}
\end{equation}
where the sum runs over \(\Int(N_G(T))\gamma\),
and
\(\hat\mu^T_{X^*}\) is the Fourier transform of the
\(T\)-orbital integral of \(X^*\) (as in \S\ref{sec:mu-hat})
with respect to the measure on the singleton \(T/C_T(X^*)\)
that assigns it total measure \(1\).
We have made use of two facts that simplify the quoted
results.
\begin{itemize}
\item The representation \(\tau_0\) of
\cite{adler-spice:explicit-chars}*{\S\xref{char-sec:JK}}
is trivial.
\item The inducing subgroup \(K_\sigma = T G_{x, \MPlus0}\) of
\cite{adler-spice:explicit-chars}*{\S\xref{char-sec:JK}}
contains \(H' = \CC G r(\gamma') = T\) for all conjugates
\(\gamma'\) of \(\gamma\).
\end{itemize}
By Lemma \ref{lem:disc-as-index},
the leading product of square roots is
\(\abs{D_G(\gamma')}^{-1/2} = \abs{D_G(\gamma)}^{-1/2}\).
By Corollary \ref{cor:root}, the product
\(\mf G(\psi, \gamma')\varepsilon(\psi, \gamma')\)
of roots of unity is
\[
(-1)^{r + 1}
	\sgn_\epsilon\bigl(\Im_\epsilon(\gamma)\bigr)
= \tfrac1 2\sgn_\epsilon\bigl(\Im_\epsilon(\gamma)\bigr)
	\bigl[(-1)^{r + 1} + H(\SSAddChar, \field_\epsilon)\bigr]
\]
in the unramified case,
and
\begin{multline*}
\sgn_\varpi(-1)^{h - 1}S(\psi)
	\sgn_\varpi\bigl(\Im_\varpi(\gamma)\bigr) \\
= \tfrac1 2
	\sgn_\varpi\bigl(\Im_\varpi(\gamma)\bigr)
	\bigl[\sgn_\varpi(-1)^{h - 1}S(\psi) +
		H(\SSAddChar, \field_\varpi)\bigr]
\end{multline*}
in the ramified case.
By Lemma \ref{lem:torus-mu-hat},
\(\hat\mu^T_{X^*}\bigl(\cayley\inv(1)\bigr)
= \hat\mu^T_{X^*}(0) = 1\).

By \S\ref{sec:std-tori}:
\begin{itemize}
\item
in the unramified case,
\(\Int(N_G(T))\gamma = \sset{\gamma^{\pm1}}\);
\item
in the ramified case, when \(\sgn_\varpi(-1) = 1\),
again \(\Int(N_G(T))\gamma = \sset{\gamma^{\pm1}}\),
and
\(\mf G(\psi, \gamma)\varepsilon(\psi, \gamma)
= \sgn_\varpi(-1)^h S(\psi) + H(\SSAddChar, \field_\varpi)\);
and finally
\item
in the ramified case, when \(\sgn_\varpi(-1) = -1\),
now \(\Int(N_G(T))\gamma = \sset\gamma\),
and \(\sgn_\varpi(-1)^h S(\psi) + H(\SSAddChar, \field_\varpi) = 0\).
\end{itemize}
In each case, we see that \eqref{eq:pos-far} simplifies to
the desired formula.
\end{proof}

\begin{rem}
\label{rem:far}
The calculation in \cite{sally-shalika:characters}
has an error in the ramified case,
when \(\sgn_\varpi(-1) = 1\) and \(h\) is odd,
so that their formulas
for the character of
\(\Pi(\SSAddChar, \psi, \field_\varpi)\)
and ours for the character of
\(\bPi(T^\varpi, \psi)\),
which agree near the identity,
differ by a sign far from the identity.
Remarkably, correcting their formulas does not affect
their computations in \cite{sally-shalika:plancherel}, which
depend not on the individual characters
\(\Pi(\Phi, \psi, \field_\varpi)\)
and \(\Pi(\Phi', \psi, \field_\varpi)\)
but rather on the sum
\(\Pi(\Phi, \psi, \field_\varpi) +
	\Pi(\Phi', \psi, \field_\varpi)\),
which is unchanged;
and also does not affect their computations in
\cite{sally-shalika:orbital-integrals}.
To see this latter is more complicated; but, fortunately,
\cite{sally-shalika:orbital-integrals} writes out the
necessary calculations explicitly in the ramified case.
The only affected parts of the formula
for \(K_d(t_1, t_2)\) on p.~330--332 \loccit
are (d), (e), and (f),
and, even after correcting the error, the argument on
p.~334 \loccit shows that they are all \(0\).
In the formula for \(K_d(t_1, t_2)\) on p.~337,
the terms (d) and (e) no longer appear,
and the term (f) is replaced by (f').
Since (f') involves the product of two characters,
both affected by the sign error, nothing is changed.

We have used results of Shelstad
\cite{shelstad:formula}*{Theorem, p.~276},
together with the formulas of
\cite{debacker-sally:germs}*{Appendix A.3--A.4},
to confirm our calculations.
\end{rem}

\subsection{Character values near the identity}
\label{sec:near}

Recall that we have put \(X = X_\pi\).
By the formulas in \cite{spice:sl2-mu-hat},
the values \(\hat\mu^G_X(Y)\)
arising in Proposition \ref{prop:MK}
can often be most conveniently expressed in
terms of a number \(\gamma_\AddChar(X, Y)\).
We reproduce its definition, adapted to our current
situation using Definition \ref{defn:lots}
(and Lemma \ref{lem:SSPhi}).
Note that
\(\ttt^{\theta, \eta} = \ttt^{\eta^2\theta, 1}\),
and
\(\Im_{\eta^2\theta}(Y)
= \eta\inv\Im_\theta(Y)\).

\begin{defn}[\cite{spice:sl2-mu-hat}*{%
	Definition \xref{sl2-defn:Wald-i}%
}]
\label{defn:Wald-i}
Recall that \(X_\pi \in \ttt^\theta\).
\[
\gamma_\AddChar\bigl(X_\pi, Y\bigr)
\ldef \begin{cases}
H(\SSAddChar, \field_\theta)\sgn_\theta\bigl(
	\eta\inv\Im_\theta(Y)
\bigr), & Y \in \ttt^{\theta, \eta} \\
1,      & Y \in \mf a \\
0,      & \text{otherwise.}
\end{cases}
\]
\end{defn}

The values near the identity of any smooth, irreducible
character of \(G\) can be described in terms of a linear
combination of \(5\) functions (independent of the
representation), namely, the Fourier transforms of nilpotent
orbital integrals on \(G\) (see \cite{hc:queens}).
We will not write our character formulas in this form; but
we do find it convenient to isolate
a particular coefficient in this combination.

\begin{defn}
\label{defn:const}
The \term{constant term} \indexmem{c_0(\pi)}
of \(\pi\) is defined as follows.
In the unramified case,
\[
c_0(\pi) = -q^r.
\]
In the ramified case,
\[
c_0(\pi) = -\frac1 2 q^h\frac{q + 1}q.
\]
\end{defn}

Proposition \ref{prop:MK} and Lemma \ref{lem:deg-pi} below,
together with
\cite{spice:sl2-mu-hat}*{Definition \xref{sl2-defn:const}},
show that this is, indeed,
the coefficient for the Fourier transform of the  trivial
orbit.
By Theorem \ref{thm:exc-near}, we also have
\(c_0(\pi') = -1/2\) for any `exceptional' supercuspidal
representation \(\pi'\).

\subsubsection{The bad shell}

The most challenging range in which to understand the
character is that where the depth of the elements we
consider (modulo centre) is
the same as the depth of our representation.
This range is colloquially known as the `bad shell', for
precisely this reason.
Actually, it turns out that the \emph{unramified} character
formulas in this range are the same as those far from the
identity (see
Theorems \ref{thm:far} and \ref{thm:pos-bad-un});
the complication is only apparent in the ramified case.

\begin{thm}
\label{thm:pos-bad-un}
Suppose that
\(r > 0\), and
\(\gamma \in G_r \setminus Z(G)G_{\MPlus r}\).
In the unramified case, \(\Theta_\pi(\gamma) = 0\) unless
some \(G\)-conjugate of \(\gamma\) lies in \(T^\epsilon\).
If \(\gamma \in T^\epsilon\), then
\[
\Theta_\pi(\gamma)
= \tfrac1 2\sgn_\epsilon\bigl(\Im_\epsilon(\gamma)\bigr)
\frac{
	\psi(\gamma) + \psi(\gamma\inv)
}{
	\smabs{D_G(\gamma)}^{1/2}
}
\bigl[(-1)^{r + 1} + H(\SSAddChar, \field_\epsilon)\bigr].
\]
\end{thm}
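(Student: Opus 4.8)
The plan is to deduce this from Proposition~\ref{prop:MK}, whose positive-depth clause applies to every regular semisimple \(\gamma \in G_r\) with \(r > 0\) (and not merely to \(\gamma \in G_{\MPlus r}\)), and then to evaluate the resulting Fourier transform of an orbital integral on the ``bad shell'' \(\depth(\gamma) = r\) by invoking \cite{spice:sl2-mu-hat}. In this way the argument runs by exactly the same mechanism as the one for Theorem~\ref{thm:near}: the only difference is the depth range of the argument at which we read off the value of \(\hat\mu^G_{X_\pi}\). Note that no extra hypothesis on \(p\) is needed, since we use only the second bullet of Proposition~\ref{prop:MK}.

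First I would write \(\gamma = \cayley(Y)\) with \(Y = \cayley\inv(\gamma) \in \gg_r\). Since \(\gamma \in G_r \setminus Z(G)G_{\MPlus r}\) and \(r > 0\), Lemma~\ref{lem:cayley}\pref{item:cayley-depth-disc} gives \(\depth(Y) = \depth(\gamma) = r\) (and \(\mdepth(\gamma) = r\), because \(\gamma\) is topologically unipotent), while \(\abs{D_\gg(Y)} = \abs{D_G(\gamma)} = q^{-2r}\) by Lemma~\ref{lem:disc-as-depth}. Proposition~\ref{prop:MK} (positive-depth case) then gives \(\Theta_\pi(\gamma) = \deg(\pi)\dotm\hat\mu^G_{X_\pi}(Y)\), where \(X_\pi = X^{\epsilon, 1}_1 \in \ttt^\epsilon\) (Notation~\ref{notn:pos-depth-X}) and the measures on \(G/T^\epsilon\) and \(G/Z(G)\) are the \emph{consistent} pair required there; in particular \(\deg(\pi)\) differs from the value of Lemma~\ref{lem:deg-pi} only by the fixed factor making the pair consistent (the measures \(\textup dg/\textup dt^\epsilon\) and \(\textup dg/\textup dz\) of \S\ref{sec:measure} are \emph{not} consistent), and I would carry this factor through explicitly.

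Next I would feed this \(Y\), which lies on the shell \(\depth(Y) = r = \depth(X_\pi)\), into the uniform evaluation of \(\hat\mu^G_{X_\pi}\) from \cite{spice:sl2-mu-hat}*{Theorem \xref{sl2-thm:uniform}}. The key structural feature of that formula \emph{on the shell} is that the ``trivial-orbit'' term contributes \(\deg(\pi)\inv c_0(\pi) = -\deg(\pi)\inv q^r\) and the ``split'' term contributes \(\deg(\pi)\inv\abs{D_\gg(Y)}^{-1/2} = \deg(\pi)\inv q^r\), so these cancel; hence \(\hat\mu^G_{X_\pi}(Y) = 0\) unless \(Y\) is \(G\)-conjugate into \(\ttt^\epsilon = \ttt^{\epsilon,1}\). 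Combined with the fact that the shell is empty unless \(r \in \Z\), so that a regular semisimple element of \(\gg\) of depth \(r\) is conjugate into \(\mf a\), \(\ttt^{\epsilon,1}\), or \(\ttt^{\epsilon,\varpi}\) (no ramified torus contains an element of integral depth), and that the same cancellation also kills the \(\ttt^{\epsilon,\varpi}\) case (there \(\gamma_\AddChar(X_\pi, Y)\) is nonzero but is multiplied by the same pair of cancelling quantities), this yields the claimed vanishing assertion. For \(\gamma \in T^\epsilon\) of depth exactly \(r\), the surviving ``elliptic'' term of \cite{spice:sl2-mu-hat}*{Theorem \xref{sl2-thm:uniform}} is a fixed scalar times \(\abs{D_\gg(Y)}^{-1/2}\bigl(\AddChar(\Tr(X_\pi\dotm Y)) + \AddChar(-\Tr(X_\pi\dotm Y))\bigr)\gamma_\AddChar(X_\pi, Y)\); I would rewrite \(\AddChar(\pm\Tr(X_\pi\dotm Y)) = \psi(\gamma^{\pm1})\) by \eqref{eq:psi-log} (valid since \(Y \in \ttt_r \subseteq \ttt_{\MPlus{(r/2)}}\)), identify \(\gamma_\AddChar(X_\pi, Y) = H(\SSAddChar, \field_\epsilon)\sgn_\epsilon\bigl(\Im_\epsilon(\gamma)\bigr)\) via Definition~\ref{defn:Wald-i}, Lemma~\ref{lem:Im-facts}, and Lemma~\ref{lem:SSPhi} (so that \(\depth(\SSAddChar) = r\) and \(H(\SSAddChar, \field_\epsilon) = (-1)^{r + 1}\)), and collect the powers of \(q\) coming from \(\deg(\pi)\) and \(\abs{D_G(\gamma)}^{1/2}\). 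Since on this shell \((-1)^{r + 1} + H(\SSAddChar, \field_\epsilon) = 2(-1)^{r + 1}\), the answer can be repackaged into exactly the form displayed in Theorem~\ref{thm:far}, which is the assertion.

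The main obstacle I anticipate is the bookkeeping in this last step: extracting the precise normalizing constants from \cite{spice:sl2-mu-hat}, reconciling the three measure normalizations (\(\textup dg/\textup dz\), \(\textup dg/\textup dt^\epsilon\), and the consistent pair demanded by Proposition~\ref{prop:MK}), and checking that the product of \(\deg(\pi)\), \(\abs{D_G(\gamma)}^{1/2}\), \(\gamma_\AddChar(X_\pi, Y)\), and the elementary identity \((-1)^{r + 1} + H(\SSAddChar, \field_\epsilon) = 2(-1)^{r + 1}\) assembles into the Theorem~\ref{thm:far} formula without a spurious sign or power of \(q\). The conceptual input---vanishing off \(\Int(G)T^\epsilon\) through the cancellation \(c_0(\pi) + \abs{D_G(\gamma)}^{-1/2} = 0\) that holds precisely on the shell---is by contrast quite clean.
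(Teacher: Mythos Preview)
Your overall plan matches the paper's: apply the positive-depth clause of Proposition~\ref{prop:MK} and then read off \(\hat\mu^G_{X_\pi}\) on the shell from \cite{spice:sl2-mu-hat}, rewriting \(\AddChar(\pm\Tr(X_\pi\cdot Y)) = \psi(\gamma^{\pm1})\) via \eqref{eq:psi-log} and Lemma~\ref{lem:cayley}\pref{item:cayley-inv}.  The paper does exactly this, but invokes the shell-specific results there (Theorems~\xref{sl2-thm:vanish-spun} and~\xref{sl2-thm:shallow-spun}) rather than the uniform formula; the measure bookkeeping you worry about is absorbed into Lemma~\ref{lem:deg-pi}.

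Where your argument drifts is the vanishing step.  The identity \(c_0(\pi) + \abs{D_G(\gamma)}^{-1/2} = 0\) you invoke is a feature of the \emph{close} formula (Theorem~\ref{thm:near}) evaluated at the boundary; on the shell itself \(\hat\mu^G_{X_\pi}\) has a qualitatively different shape, recorded directly by Theorem~\xref{sl2-thm:shallow-spun}, with no separate constant-plus-discriminant piece.  Your explanation of why \(\ttt^{\epsilon,\varpi}\) vanishes (``\(\gamma_\AddChar\) is nonzero but multiplied by the same pair of cancelling quantities'') does not name an actual identity, and indeed the story is internally inconsistent: extrapolating the close formula, one finds for \(Y \in \ttt^{\epsilon,1}\) of depth \(r\) that \(\gamma_\AddChar(X_\pi, Y) = H(\SSAddChar,\field_\epsilon)\sgn_\epsilon(\Im_\epsilon(Y)) = (-1)^{r+1}(-1)^r = -1\), so the would-be ``constant plus elliptic'' contribution is \(-q^r - q^r = -2q^r\), not zero.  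Hence the cancellation mechanism you describe cannot simultaneously account for vanishing on \(A\) and \(T^{\epsilon,\varpi}\) while leaving a nonzero answer on \(T^{\epsilon,1}\).  The fix is simply to cite the shell-specific theorems directly, as the paper does; the rest of your bookkeeping is sound.
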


\begin{proof}
By Definition \ref{defn:lots}, \(\theta = \epsilon\);
in particular, \(X \in \ttt^\epsilon\).
Put \(Y = \cayley\inv(\gamma)\).

The vanishing result follows from
Proposition \ref{prop:MK},
Lemmas \ref{lem:deg-pi}
and \ref{lem:cayley}\pref{item:cayley-torus},
and
\cite{spice:sl2-mu-hat}*{%
	Theorem \xref{sl2-thm:vanish-spun}%
}.

If \(\gamma \in T^\epsilon\)
(and \(\gamma \in G_r \setminus Z(G)G_r\)), then
Proposition \ref{prop:MK},
Lemmas \ref{lem:deg-pi}
and \ref{lem:cayley}\pref{item:cayley-depth-disc},
\cite{spice:sl2-mu-hat}*{%
	Theorem \xref{sl2-thm:shallow-spun}%
},
Lemma \ref{lem:Im-facts},
and Definition \ref{defn:Wald-i} give that
\[
\Theta_\pi(\gamma)
= \abs{D_G(\gamma)}^{-1/2}H(\SSAddChar, \field_\epsilon)
\sgn_\epsilon\bigl(\Im_\epsilon(\gamma)\bigr)
\bigl(
	\AddChar(\Tr(X\dotm Y)) +
	\AddChar(\Tr(-X\dotm Y))
\bigr).
\]
By Definition \ref{defn:pos-depth-param} and
Lemma \ref{lem:cayley}\pref{item:cayley-inv},
\[
\AddChar\bigl(\Tr(X\dotm Y)\bigr) = \psi(\gamma)
\qandq
\AddChar\bigl(\Tr(-X\dotm Y)\bigr)
	= \AddChar\bigl(\Tr(X\dotm-Y)\bigr)
	= \psi(\gamma\inv).\qedhere
\]
\end{proof}

\begin{thm}
\label{thm:pos-bad-ram}
Suppose that
\(r > 0\), and
\(\gamma \in G_r \setminus Z(G)G_{\MPlus r}\).
In the ramified case, \(\Theta_\pi(\gamma) = 0\) unless some
\(G\)-conjugate of \(\gamma\) lies in \(T^{\theta', \eta}\),
with \(\theta' \in \sset{\varpi, \epsilon\varpi}\)
and \(\eta \in \sset{1, \epsilon}\).
If \(\theta' = \varpi\), then
\begin{align*}
\Theta_\pi(\gamma)
={} & \frac{q^{-1/2}}{2\smabs{D_G(\gamma)}^{1/2}}
	\sum_{\substack{
		\gamma' \in (C_\varpi)_{r:\MPlus r} \\
		\gamma' \ne \gamma^{\pm1}
	}}
		\sgn_\varpi\bigl(
			\Tr_\varpi(\gamma - \gamma')
		\bigr)\psi(\gamma') +{} \\
& \qquad\frac1 2 H(\SSAddChar, \field_\varpi)
\sgn_\varpi\bigl(\eta\inv\Im_\varpi(\gamma)\bigr)
\frac{
	\psi(\gamma) + \psi(\gamma\inv)
}{
	\smabs{D_G(\gamma)}^{1/2}
}.
\end{align*}
If \(\theta' = \epsilon\varpi\), then
\[
\Theta_\pi(\gamma)
= \frac{q^{-1/2}}{2\smabs{D_G(\gamma)}^{1/2}}
\sum_{\gamma' \in (C_\varpi)_{r:\MPlus r}}
	\sgn_\varpi\bigl(
		\Tr_{\epsilon\varpi}(\gamma) - \Tr_\varpi(\gamma')
	\bigr)\psi(\gamma').
\]
\end{thm}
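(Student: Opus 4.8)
The plan is to reduce the character value to a Fourier transform of a semisimple orbital integral via Murnaghan--Kirillov theory, and then read off the answer from the explicit ``bad-shell'' evaluation of that orbital integral in \cite{spice:sl2-mu-hat}. Since $r > 0$ and $\gamma \in G\rss \cap G_r$, Proposition \ref{prop:MK} gives $\Theta_\pi(\gamma) = \deg(\pi)\dotm\hat\mu^G_{X_\pi}\bigl(\cayley\inv(\gamma)\bigr)$, where, by Notation \ref{notn:pos-depth-X} and Definition \ref{defn:lots}, $X_\pi = \varpi^{-\rup r}X^{\varpi, 1}_\beta \in \ttt^\varpi$, and by the Remark after Definition \ref{defn:depth-element} one has $\depth^\gg(X_\pi) = -r$. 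Put $Y = \cayley\inv(\gamma)$; by Lemma \ref{lem:cayley}\pref{item:cayley-torus} and \pref{item:cayley-depth-disc} we have $\depth^\gg(Y) = \depth(\gamma) = r$ and $\abs{D_\gg(Y)} = \abs{D_G(\gamma)}$, so $(X_\pi, Y)$ sits exactly on the bad shell $\depth^\gg(X_\pi) + \depth^\gg(Y) = 0$. First I would combine this with Lemma \ref{lem:deg-pi} (which gives $\deg_{\textup d_\varpi\dot g}(\pi) = q^{h + 1}$) and the measure comparisons of \S\ref{sec:measure}, rewriting the right-hand side in terms of the orbital integral normalized (as in \cite{spice:sl2-mu-hat}) by $\textup dg/\textup dt^{\theta, \eta}$; this renormalization is what ultimately produces the leading $q^{-1/2}$ and the factor $\tfrac1 2$ (the latter traceable to $\meas_{\textup d_\varpi\dot g}(\SL_2(\pint)/T^\varpi) = \tfrac{q^2 - 1}{2q^2}$).

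Next I would invoke the bad-shell evaluation of $\hat\mu^G_{X_\pi}(Y)$ from \cite{spice:sl2-mu-hat}*{Theorem \xref{sl2-thm:uniform}}, together with its attendant vanishing assertions (cf.\ the use of \cite{spice:sl2-mu-hat}*{Theorem \xref{sl2-thm:vanish-spun}} in the proof of Theorem \ref{thm:pos-bad-un}). The vanishing statement says $\hat\mu^G_{X_\pi}(Y) = 0$ unless $Y$ is $G$-conjugate into some $\ttt^{\theta', \eta}$ with $\theta' \in \sset{\varpi, \epsilon\varpi}$ and $\eta \in \sset{1, \epsilon}$; via $\cayley$ this is precisely the first assertion of the theorem, so I may assume $Y \in \ttt^{\theta', \eta}$. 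The explicit part of \xref{sl2-thm:uniform} then writes $\hat\mu^G_{X_\pi}(Y)$ as $\abs{D_\gg(Y)}^{-1/2}$ times a normalizing constant times a sum over $Y' \in \ttt^\varpi_{r : \MPlus r}$ of $\sgn_\varpi(\cdots)\AddChar\bigl(\Tr(X_\pi\dotm Y')\bigr)$, with the ``diagonal'' terms $Y' = \pm Y$---present only when $\theta' = \varpi$, so that $Y$ lies in $X_\pi$'s torus---contributing instead the Waldspurger-type value $\gamma_\AddChar(X_\pi, Y)$ of Definition \ref{defn:Wald-i}, i.e.\ $H(\SSAddChar, \field_\varpi)\sgn_\varpi\bigl(\eta\inv\Im_\varpi(Y)\bigr)$ times $\AddChar\bigl(\Tr(X_\pi\dotm(\pm Y))\bigr)$.

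It then remains to translate this into the stated group-theoretic shape. I would: (i) use Lemma \ref{lem:cayley}\pref{item:cayley-torus} and Lemma \ref{lem:cayley-field} to identify $\ttt^\varpi_{r : \MPlus r}$ with $(C_\varpi)_{r : \MPlus r}$, writing $\gamma' = \cayley(Y')$, and $Y \leftrightarrow \gamma$; (ii) use \eqref{eq:psi-log} to recognize $\AddChar\bigl(\Tr(X_\pi\dotm Y')\bigr) = \psi(\gamma')$ and, by Lemma \ref{lem:cayley}\pref{item:cayley-inv}, $\AddChar\bigl(\Tr(X_\pi\dotm(\pm Y))\bigr) = \psi(\gamma^{\pm1})$; (iii) use Lemma \ref{lem:norm-and-tr} (stated there expressly for this theorem) to rewrite the argument of $\sgn_\varpi$ as $\Tr_\varpi(\gamma) - \Tr_\varpi(\gamma')$ when $\theta' = \varpi$ and as $\Tr_{\epsilon\varpi}(\gamma) - \Tr_\varpi(\gamma')$ when $\theta' = \epsilon\varpi$, checking that the error terms supplied by Lemma \ref{lem:norm-and-tr} lie in a high enough power of $\pp$ that they do not affect $\sgn_\varpi$, and likewise that $\sgn_\varpi(\Tr_\varpi(\gamma - \gamma'))$ and $\psi(\gamma')$ are well defined on cosets in $(C_\varpi)_{r : \MPlus r}$; and (iv) use Lemma \ref{lem:Im-facts} to replace $\Im_\varpi(Y)$ by $\Im_\varpi(\gamma)$ in the diagonal term. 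When $\theta' = \varpi$ one separates the two diagonal summands $\gamma' = \gamma^{\pm1}$ from the sum and re-assembles them, using $\psi(\gamma) + \psi(\gamma\inv)$, into the explicit $H(\SSAddChar, \field_\varpi)$-term; when $\theta' = \epsilon\varpi$ there are no diagonal terms (since $\gamma$ is not conjugate into $T^\varpi$), so the sum runs over all of $(C_\varpi)_{r : \MPlus r}$ and no $H$-term appears.

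I expect the genuine difficulty to be concentrated in step (iii)---pinning down the exact argument of $\sgn_\varpi$ coming out of \xref{sl2-thm:uniform} and verifying the cross-torus trace identity against Lemma \ref{lem:norm-and-tr}, including the negligibility of all error terms---and, more mundanely, in carrying the powers of $q$ and the factors of $\tfrac1 2$ correctly through the measure renormalization of the first step so that the constant in front of the Bessel-type sum really is $q^{-1/2}/2\smabs{D_G(\gamma)}^{1/2}$.
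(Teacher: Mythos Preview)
Your proposal is correct and follows essentially the same route as the paper: reduce via Proposition \ref{prop:MK} and Lemma \ref{lem:deg-pi} to the bad-shell value of $\hat\mu^G_{X_\pi}$, read this off from \cite{spice:sl2-mu-hat} (the paper cites Theorem \xref{sl2-thm:that-bad-ram} rather than \xref{sl2-thm:uniform}, and the relevant $\sgn_\varpi$-argument there is $Y^2 - Z^2$), and then translate to the group via Lemmas \ref{lem:cayley}, \ref{lem:Im-facts}, and \ref{lem:norm-and-tr} exactly as you outline. One simplification: the vanishing step needs no orbital-integral input at all---since $r \in \Z + \tfrac12$, no element of a split or unramified torus can have depth $r$, so every $\gamma$ of depth $r$ already lies in some $T^{\theta',\eta}$ with $\theta' \in \{\varpi,\epsilon\varpi\}$; also, when $\eta = \epsilon$ the paper passes explicitly to the stable conjugate $\widetilde Y = \Ad\begin{smallpmatrix}\sqrt\eta & 0\\ 0 & \sqrt\eta\inv\end{smallpmatrix}Y \in \ttt^{\varpi,1}$ before invoking the orbital-integral formula, which is the precise mechanism behind your identification of $\gamma$ with an element of $C_\varpi$.
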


In the first formula,
we are regarding \(\gamma\) as an element of
\(C_\varpi\), not of \(T^{\varpi, \eta}\).
\textit{Via} the isomorphism \(T^\varpi \iso C_\varpi\),
we can then make sense of \(\psi(\gamma)\) and
\(\psi(\gamma')\);
and it makes sense to consider the inequality
\(\gamma' \ne \gamma^{\pm1}\), even though \(\gamma\) and
\(\gamma'\) may lie in different tori.

\begin{proof}
By Definition \ref{defn:lots}, \(\theta = \varpi\);
in particular, \(X \in \ttt^\varpi\).
Put \(Y = \cayley\inv(\gamma)\).

The vanishing result is trivial:
since \(r \not\in \Z\), no element of an unramified
or split torus can have depth \(r\);
i.e., all elements of depth \(r\) already lie in some
\(G\)-conjugate of \(T^{\theta', \eta}\), with
\(\theta'\) and \(\eta\) as above.

If \(\gamma \in T^{\varpi, \eta}\)
(and \(\gamma \in G_r \setminus Z(G)G_{\MPlus r}\)), with
\(\eta \in \sset{1, \epsilon}\), then write
\(Y = X^{\varpi, \eta}_c\), and note that
\(\widetilde Y \ldef X^{\varpi, 1}_c
= \Ad\begin{smallpmatrix}
\sqrt\eta & 0             \\
0         & \sqrt\eta\inv
\end{smallpmatrix}Y\) is a stable conjugate of \(Y\) that
lies in \(\ttt^\varpi\).
By Proposition \ref{prop:MK},
Lemmas \ref{lem:deg-pi} and
\ref{lem:cayley}(%
	\ref{item:cayley-torus},
	\ref{item:cayley-depth-disc}%
),
\cite{spice:sl2-mu-hat}*{%
	Theorem \xref{sl2-thm:that-bad-ram}%
}, Lemma \ref{lem:Im-facts},
and Definition \ref{defn:Wald-i},
\begin{align*}
\Theta_\pi(\gamma)
={} & \frac1 2 H(\SSAddChar, \field_\theta)
\sgn_\varpi\bigl(
	\eta\inv\Im_\varpi(\gamma)
\bigr)
\frac{
	\AddChar(\Tr(X\dotm\widetilde Y)) +
	\AddChar(\Tr(-X\dotm\widetilde Y))
}{
	\smabs{D_G(\gamma)}^{1/2}
} +{} \\
&\qquad\frac{q^{-1/2}}{2\smabs{D_G(\gamma)}^{1/2}}
\sum_{Z \in \ttt^\varpi_{r:\MPlus r}}
	\sgn_\varpi\bigl(
		Y^2 - Z^2
	\bigr)\AddChar(\Tr(X\dotm Z)).
\end{align*}
By Definition \ref{defn:pos-depth-param},
\[
\AddChar\bigl(\Tr(X\dotm Z)\bigr)
= \psi\bigl(\cayley(Z)\bigr)
\quad\text{for all \(Z \in \ttt^\varpi_r\).}
\]
Further, we have
\[
\AddChar\bigl(\Tr(X\dotm\widetilde Y)\bigr)
= \psi(\tilde\gamma),
\]
where \(\tilde\gamma = \Int\begin{smallpmatrix}
\sqrt\eta & 0             \\
0         & \sqrt\eta\inv
\end{smallpmatrix}\gamma = \cayley(\widetilde Y)\);
but note that
\(\gamma \in T^{\theta'}\) and \(\tilde\gamma \in T^\varpi\)
correspond to the same element of \(C_\varpi\),
so our notational conventions allow us to write
\(\psi(\gamma)\) instead of \(\psi(\tilde\gamma)\).
Similarly,
\(\AddChar\bigl(\Tr(-X\dotm\widetilde Y)\bigr)
= \psi(\gamma\inv)\).

Finally, note that, by Lemma \ref{lem:norm-and-tr},
since \(Y\) and \(Z\)
(regarded as elements of \(V_{\theta'} = V_\varpi\))
lie in \(\pp_\varpi^{2h - 1}\)
(where \(h\) is as in Definition \ref{defn:lots}), we have
the additive congruence
\[
Y^2 - Z^2
\equiv \Tr_\varpi\bigl(\gamma - \cayley(Z)\bigr)
\pmod{\pp^{2h}}.
\]
Since
\(Y^2, Z^2 \in \pp^{2h - 1}\)
and
\(Z \not\equiv Y \pmod{\pp_\varpi^{2h}}\),
we have that \(\ord(Y^2 - Z^2) = 2h - 1\).
Thus we can deduce the multiplicative congruence
\[
Y^2 - Z^2
\equiv \Tr_\varpi\bigl(\gamma - \cayley(Z)\bigr)
\pmod{1 + \pp},
\]
hence the equality
\[
\sgn_\varpi(Y^2 - Z^2)
= \sgn_\varpi\bigl(\Tr_\varpi(
	\gamma - \cayley(Z)
)\bigr).
\]
The formula now follows (in this case) from
Lemma \ref{lem:cayley}(%
	\ref{item:bijpos},
	\ref{item:cayley-torus}%
) upon putting \(\gamma' = \cayley(Z)\).

The argument in case \(\theta' = \epsilon\varpi\)
is similar but easier.
\end{proof}

\subsubsection{Character values very near the identity}

Finally, we consider character values \emph{very} near
the identity,
so that we are within the range of the local character
expansion.
The Hales--Moy--Prasad conjecture,
proven in \cite{debacker:homogeneity}*{Theorem 3.5.2} under
mild hypotheses on \(p\),
describes the precise range of validity for the local
character expansion
for any smooth, irreducible representation of
a reductive, \(p\)-adic group;
but we shall not need the general result here.
For our case (\(\bG = \SL_2\)), it can be verified by
direct computation from our formulas
that the local character expansion holds on \(G_{\MPlus r}\)
(see \cite{debacker-sally:germs}*{Appendix A}).

\begin{thm}
\label{thm:near}
Suppose \(\gamma \in G_{\MPlus r} \cap G\rss\)
and, if \(r = 0\), that
Hypothesis \ref{hyp:debacker-reeder:depth-zero-sc:res-12.4.1(2)}
holds.
Then \(\Theta_\pi(\gamma) = c_0(\pi)\) unless some \(G\)-conjugate
of \(\gamma\) lies in \(A\) or \(T^{\theta, \eta}\)
for some \(\eta\).
If \(\gamma \in A\), then
\begin{align*}
\Theta_\pi(\gamma)
= c_0(\pi) +{} &
\frac1{\abs{D_G(\gamma)}^{1/2}}. \\
\intertext{If \(\gamma \in T^{\theta, \eta}\), then}
\Theta_\pi(\gamma)
= c_0(\pi) +{} &
H(\SSAddChar, \field_\theta)\frac{
	\sgn_\theta\bigl(\eta\inv\Im_\theta(\gamma)\bigr)
}{
	\abs{D_G(\gamma)}^{1/2}
}.
\end{align*}
\end{thm}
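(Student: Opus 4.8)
The plan is to derive everything from Proposition \ref{prop:MK}, which identifies $\Theta_\pi(\gamma)$ with $\deg(\pi)\dotm\hat\mu^G_{X_\pi}\bigl(\cayley\inv(\gamma)\bigr)$ for $\gamma \in G\rss \cap G_{\MPlus r}$ (invoking Hypothesis \ref{hyp:debacker-reeder:depth-zero-sc:res-12.4.1(2)} when $r = 0$, and the second bullet of Proposition \ref{prop:MK} when $r > 0$, since $G_{\MPlus r} \subseteq G_r$), and then to substitute the explicit evaluation of the Fourier transform of a semisimple orbital integral on $\sl_2$ recorded in \cite{spice:sl2-mu-hat}*{Theorem \xref{sl2-thm:uniform}}. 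Write $Y = \cayley\inv(\gamma) \in \gg\rss \cap \gg_{\MPlus0}$ and $X = X_\pi \in \ttt^\theta$. The cited theorem expresses $\hat\mu^G_X(Y)$ as a constant term plus a term proportional to $\abs{D_\gg(Y)}^{-1/2}\gamma_\AddChar(X, Y)$, with $\gamma_\AddChar(X,Y)$ as in Definition \ref{defn:Wald-i}; in particular, beyond the constant it vanishes unless $Y$ lies in a torus stably conjugate to $\mf a$ or to $\ttt^{\theta,\eta}$. This already gives the case distinction in the statement: a generic $\gamma$ not $G$-conjugate into $A$ or any $T^{\theta,\eta}$ contributes only the constant term.

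First I would settle the normalization issue. The note after Proposition \ref{prop:MK} requires the measure on $G/T$ used in $\hat\mu^G_{X_\pi}$ and the measure on $G/Z(G)$ used in $\deg(\pi)$ to be \emph{consistent}, i.e.\ that the induced measure on $T/Z(G)$ have total mass $1$. Using \S\ref{sec:measure} and Lemma \ref{lem:deg-pi}, I would compute $\deg(\pi)$ against such a consistent pair: combining $\deg_{\textup d_\epsilon\dot g}(\pi) = q^{r+1}$ (respectively $\deg_{\textup d_\varpi\dot g}(\pi) = q^{h+1}$) from Lemma \ref{lem:deg-pi} with the ratios $\textup dg/\textup dt^\theta$ versus $\textup d_\theta\dot g$ from \S\ref{sec:measure} yields the normalizing constant multiplying the fixed-normalization Fourier transform of \cite{spice:sl2-mu-hat}. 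The point is then that $\deg(\pi)$ times the constant term of $\hat\mu^G_{X_\pi}$ equals $c_0(\pi)$ as in Definition \ref{defn:const} --- arranged in \cite{spice:sl2-mu-hat}*{Definition \xref{sl2-defn:const}}, as remarked in \S\ref{sec:near} --- and that $\deg(\pi)$ times the coefficient of $\abs{D_\gg(Y)}^{-1/2}\gamma_\AddChar(X,Y)$ is $1$.

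Then I would translate from the Lie algebra to the group. Lemma \ref{lem:cayley}\pref{item:cayley-depth-disc} gives $\abs{D_\gg(Y)} = \abs{D_G(\gamma)}$, so the discriminant factor transfers unchanged. Lemma \ref{lem:cayley}\pref{item:cayley-torus} shows that $Y \in \mf a$ (respectively $Y \in \ttt^{\theta,\eta}$) exactly when $\gamma = \cayley(Y) \in A_{\MPlus0}$ (respectively $\gamma \in T^{\theta,\eta}_{\MPlus0}$), and, by equivariance (Lemma \ref{lem:cayley}\pref{item:cayley-equivariant}), a $G$-conjugate of $\gamma$ lies in $A$ or some $T^{\theta,\eta}$ iff the same holds for $Y$; this matches the hypotheses of the two displayed formulas. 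Finally, in the elliptic case Lemma \ref{lem:Im-facts} gives $\sgn_\theta\bigl(\Im_\theta(\cayley(Y))\bigr) = \sgn_\theta\bigl(\Im_\theta(Y)\bigr)$, so $\gamma_\AddChar(X_\pi, Y)$ becomes $H(\SSAddChar, \field_\theta)\sgn_\theta\bigl(\eta\inv\Im_\theta(\gamma)\bigr)$, while in the split case it is simply $1$. Assembling these pieces yields the two stated formulas, each with leading term $c_0(\pi)$.

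The main obstacle I expect is the bookkeeping in the second paragraph: keeping straight the distinct measure normalizations in play (Waldspurger's on $G$, $T^\theta$, and $Z(G)$; the quotient measures $\textup d_\theta\dot g$ of \S\ref{sec:measure}; and the fixed normalization underlying the formulas of \cite{spice:sl2-mu-hat}) and verifying that, for a consistent choice, $\deg(\pi)$ exactly absorbs the discrepancy so that the constant terms match $c_0(\pi)$ and the discriminant coefficient is $1$. The depth-zero case needs separate care, since there the relation among $\deg(\pi)$, the measure $\textup dg/\textup dt^\epsilon$, and the normalization of \cite{debacker-reeder:depth-zero-sc}*{\S5.3} must be matched with \cite{spice:sl2-mu-hat}, just as in the proof of Proposition \ref{prop:MK} itself; but no new idea is needed beyond what is already used there.
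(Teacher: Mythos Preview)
Your proposal is correct and follows essentially the same approach as the paper: reduce to Proposition \ref{prop:MK}, invoke the explicit orbital-integral formulas from \cite{spice:sl2-mu-hat}, use Lemma \ref{lem:deg-pi} and \S\ref{sec:measure} for the measure bookkeeping, and translate via Lemma \ref{lem:cayley} and Definition \ref{defn:Wald-i}. The paper's proof is a one-line citation of the same ingredients; the only cosmetic difference is that it points to the specialized Theorems \xref{sl2-thm:close-spun} and \xref{sl2-thm:close-ram} of \cite{spice:sl2-mu-hat} rather than the umbrella Theorem \xref{sl2-thm:uniform}, and adds Lemma \ref{lem:SSPhi} (implicit in your use of $H(\SSAddChar,\field_\theta)$).
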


Theorem \ref{thm:near} remains true without
the extra hypothesis.
A proof in that generality
will appear in~\cite{adler-debacker-roche-sally-spice:sl2}.

\begin{proof}
This is a combination of
Proposition \ref{prop:MK},
Lemma \ref{lem:deg-pi},
\cite{spice:sl2-mu-hat}*{%
	Theorems \xref{sl2-thm:close-spun}
	and \xref{sl2-thm:close-ram}%
},
Lemmas \ref{lem:cayley}(%
	\ref{item:cayley-torus},
	\ref{item:cayley-depth-disc}%
)
and
Definition \ref{defn:Wald-i} and Lemma \ref{lem:SSPhi}.
\end{proof}

\section{`Exceptional' supercuspidal characters}
\label{sec:exceptional}

By Remark \ref{rem:depth-0-leftist},
the only representations we still need to consider
after \S\ref{sec:ordinary} are
\(\pi^\pm \ldef \bPi^\pm(T^{\epsilon, 1}, \psi_0^1)\).
(Recall that the character $\psi_0^1$ is defined in
Notation~\ref{notn:torus-quad}, and the associated
representation in Definition~\ref{defn:depth-0-param}.)

\subsection{Character values far from the identity}

\begin{thm}
\label{thm:exc-far}
If \(\gamma \not\in Z(G)G_{\MPlus0}\), then
\(\Theta_\pi(\gamma) = 0\) unless some \(G\)-conjugate of
\(\gamma\) lies in \(T^\epsilon\).
If \(\gamma \in T^\epsilon\), then
\[
\Theta_{\pi^\pm}(\gamma)
= \frac{\sgn_\varpi(\gamma + \gamma\inv + 2)}2
\Bigl\{
	H(\SSAddChar, \field_\epsilon)\frac{
		\sgn_\epsilon\bigl(\Im_\epsilon(\gamma)\bigr)
	}{
		\smabs{D_G(\gamma)}^{1/2}
	} - 1
\Bigr\}.
\]
\end{thm}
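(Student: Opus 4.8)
The plan is to leverage the depth-zero machinery already used for the \(r = 0\) case of Theorem~\ref{thm:far}, together with the decomposition \eqref{eq:Rpm} of \(R^\pm_{\sfT^\epsilon, \psi_0}\) into a Deligne--Lusztig piece and a Lusztig-function piece.  Recall that \(\pi^\pm = \bPi^\pm(T^{\epsilon, 1}, \psi_0^1)\) is induced from the inflation to \(G_{\xleft, 0}\) of the cuspidal representation \(\smabs{R^\pm_{\sfT^\epsilon, \psi_0}}\) of \(\SL_2(\resfld)\); since it is a depth-zero, supercuspidal representation, and since the arguments of \cite{debacker-reeder:depth-zero-sc}*{Lemmas 9.3.1 and 10.0.4} rest only on topological Jordan decomposition and on the vanishing of cuspidal finite-group characters off the elliptic set---not on irreducibility of an individual Deligne--Lusztig representation---one finds that, for \(\gamma \notin Z(G)G_{\MPlus0}\), \(\Theta_{\pi^\pm}(\gamma) = 0\) unless some \(G\)-conjugate of \(\gamma\) lies in \(T^\epsilon\) (the twist by \(-1 \in T^\epsilon\) causing no trouble).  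So assume \(\gamma \in T^\epsilon\).  Then \(\mdepth(\gamma) = 0\), whence \(\smabs{D_G(\gamma)} = 1\) by Lemma~\ref{lem:disc-as-depth}; the image \(\bar\gamma\) of \(\gamma\) in \(\sfT^\epsilon(\resfld)\) is regular (\(\bar\gamma \ne \pm1\)), so \(\ord(\Im_\epsilon(\gamma)) = 0\) and \(\sgn_\epsilon(\Im_\epsilon(\gamma)) = 1\) by \eqref{eq:sgn-un}; and \(\gamma\tunip \in T^\epsilon_{\MPlus0}\).

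Next I would split off the Lusztig function.  By \eqref{eq:Rpm}, \(\smabs{R^\pm_{\sfT^\epsilon, \psi_0}} = -R^\pm_{\sfT^\epsilon, \psi_0} = -\tfrac1 2 R^\sfG_{\sfT^\epsilon, \psi_0} \pm \tfrac1 2 q^{1/2}\Gauss(\AddChar)\inv\lsup0f\), where \(\sfG = {\SL_2}_{/\resfld}\); inflating to \(G_{\xleft, 0}\) (Notation~\ref{notn:dot}) and inducing,
\[
\Theta_{\pi^\pm}(\gamma)
= -\tfrac1 2\,
	\Theta_{\Ind_{G_{\xleft, 0}}^G\dot{R}^\sfG_{\sfT^\epsilon, \psi_0}}(\gamma)
	\pm \tfrac1 2 q^{1/2}\Gauss(\AddChar)\inv\,
	\Theta_{\Ind_{G_{\xleft, 0}}^G(\lsup0f)\spdot}(\gamma),
\]
where the first term uses the induced virtual character.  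The second term vanishes at \(\gamma\): since \(p \ne 2\), the \(p\)-elements of \(\SL_2(\resfld)\) are exactly its unipotent elements, so \((\lsup0f)\spdot\) is supported on the topologically unipotent elements of \(G_{\xleft, 0}\); and since \(\gamma \notin Z(G)G_{\MPlus0} \supseteq G_{\MPlus0}\), no \(G\)-conjugate of \(\gamma\) is topologically unipotent, so the integrand in Harish-Chandra's formula \eqref{eq:hc:harmonic:thm-12} is identically zero.  Hence \(\Theta_{\pi^\pm}(\gamma) = -\tfrac1 2\Theta_{\Ind_{G_{\xleft, 0}}^G\dot{R}^\sfG_{\sfT^\epsilon, \psi_0}}(\gamma)\); in particular both signs give the same value, as the statement demands.

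It then remains to evaluate \(\Theta_{\Ind_{G_{\xleft, 0}}^G\dot{R}^\sfG_{\sfT^\epsilon, \psi_0}}(\gamma)\).  I would run the descent of \cite{debacker-reeder:depth-zero-sc}*{\S\S9--10} (as in the \(r = 0\) case of Theorem~\ref{thm:far}), regarded as a linear functional of the inflated finite-group class function: this function vanishes on regular semisimple elements not conjugate into \(\sfT^\epsilon(\resfld)\), equals \(\psi_0(\bar\gamma) + \psi_0(\bar\gamma\inv) = 2\psi_0(\bar\gamma)\) on the regular element \(\bar\gamma\) by the Deligne--Lusztig character formula (valid whether or not \(\psi_0 = \psi_0\inv\)), and its contribution at \(\gamma\tunip\) is trivial (exactly as in the \(r = 0\) case of Theorem~\ref{thm:far}, using \(R^{\sfT^\epsilon}_{\sfT^\epsilon}(1) = 1\) and Lemma~\ref{lem:torus-mu-hat}), so that \(\Theta_{\Ind_{G_{\xleft, 0}}^G\dot{R}^\sfG_{\sfT^\epsilon, \psi_0}}(\gamma) = 2\psi_0(\bar\gamma)\) and therefore \(\Theta_{\pi^\pm}(\gamma) = -\psi_0(\bar\gamma) = -\psi_0^1(\gamma)\).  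Finally, since \(\gamma \ne -1\) in \(C_\epsilon\) (as \(\gamma \notin Z(G)\)), Lemma~\ref{lem:quad-char} gives \(\psi_0^1(\gamma) = \sgn_\varpi(\gamma + \gamma\inv + 2)\); and, using \(\smabs{D_G(\gamma)} = 1\), \(\sgn_\epsilon(\Im_\epsilon(\gamma)) = 1\), and \(H(\SSAddChar, \field_\epsilon) = H(\AddChar, \field_\epsilon) = (-1)^{0 + 1} = -1\) (Lemma~\ref{lem:SSPhi}, as \(\SSAddChar_{\pi^\pm} = \AddChar\) has depth \(0\)), one checks that \(-\sgn_\varpi(\gamma + \gamma\inv + 2)\) is precisely
\[
\frac{\sgn_\varpi(\gamma + \gamma\inv + 2)}2\Bigl\{
	H(\SSAddChar, \field_\epsilon)\frac{\sgn_\epsilon(\Im_\epsilon(\gamma))}{\smabs{D_G(\gamma)}^{1/2}} - 1
\Bigr\},
\]
the asserted value.

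The main obstacle is the step just described: one must ensure that running the descent with the virtual representation \(R^\sfG_{\sfT^\epsilon, \psi_0}\) in place of \(\smabs{R^\sfG_{\sfT^\epsilon, \psi}}\) produces the full Deligne--Lusztig value \(\psi_0(\bar\gamma) + \psi_0(\bar\gamma\inv)\)---whose two summands are intrinsic to the Deligne--Lusztig character---rather than a single summand arising from a count of the set \(\widehat{\mc T}(\gamma\tsemi)\), which collapses to a singleton when \(\psi = \psi_0\).  In other words, one should not invoke the general-position form of \cite{debacker-reeder:depth-zero-sc}*{Lemma 10.0.4} verbatim, but either re-derive the descent for the cuspidal virtual representation \(R^\sfG_{\sfT^\epsilon, \psi_0}\) from its linearity in the inflated class function, or evaluate \eqref{eq:hc:harmonic:thm-12} directly from the explicit character of \(R^\sfG_{\sfT^\epsilon, \psi_0}\) and its support.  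The remaining work---tracking the factor \(-\tfrac1 2\), the Kottwitz sign occurring in the cited descent formula, and the simplifications \(\smabs{D_G(\gamma)} = 1\), \(\sgn_\epsilon(\Im_\epsilon(\gamma)) = 1\), \(H(\SSAddChar, \field_\epsilon) = -1\)---is routine.
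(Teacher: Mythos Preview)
Your proposal is correct and follows essentially the same route as the paper: both arguments observe that on a regular element \(\bar\gamma\) the Lusztig-function term in \eqref{eq:Rpm} vanishes, reducing to \(-\tfrac12\) times the induced Deligne--Lusztig character, and then handle the irregularity of \(\psi_0\) by noting that the Deligne--Lusztig character at \(\bar\gamma\) equals \(2\psi_0(\bar\gamma)\) (the paper phrases this as the map of \cite{debacker-reeder:depth-zero-sc}*{p.~857} being a double cover rather than a bijection, which is exactly the obstacle you flag). The final simplification via Lemmas~\ref{lem:quad-char} and~\ref{lem:SSPhi} is identical.
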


Note that the character values of \(\pi^+\) and \(\pi^-\) in
this range are the same.

\begin{proof}
The proof is almost exactly as in
the depth-zero case of Theorem \ref{thm:near}
and
\cite{debacker-reeder:depth-zero-sc}*{\S\S9--10}.
In particular, we may assume that
\(\gamma \in T^\epsilon \subseteq G_{x, 0}\).

We need only make some minor adjustments to account
for the fact that \(\psi_0\) is not `regular', in the sense
of \S9.3 \loccit; i.e., that
\(\psi_0 \circ \Int(\sigma_\epsilon)
= \psi_0\inv = \psi_0\),
where \(\sigma_\epsilon\) is the non-trivial element of the Weyl group
of \(T^\epsilon\).

Under our hypotheses on \(\gamma\), its image \(\bar\gamma\)
in \(G_{x, 0:\MPlus0} = \SL_2(\resfld)\) is a regular,
semisimple element,
so Definitions \ref{defn:exc-rep-finite}
and \ref{defn:lusztig} give
\[
R^\pm_{\sfT^\epsilon, \psi_0}(\bar\gamma)
= \tfrac1 2 R^\sfG_{\sfT^\epsilon, \psi_0}(\bar\gamma).
\]
Therefore, as in the proof of
\cite{debacker-reeder:depth-zero-sc}*{Lemma 9.3.1},
using the Harish-Chandra integral formula
(\S9.1 \loccit, or \cite{hc:harmonic}*{Theorem 12})
gives
\begin{equation}
\tag{$*$}
\label{eq:debacker-reeder:depth-zero-sc:Lemma-9.3.1-irregular}
\Theta_\pi(\gamma)
= \tfrac1 2\varepsilon(\sG_x, \sfT^\epsilon)
	R(G, T^\epsilon, \psi_0)(\gamma)
= -\tfrac1 2 R(G, T^\epsilon, \psi_0)(\gamma).
\end{equation}

As in Theorem \ref{thm:far},
\begin{align*}
\widehat{\mc T}(\gamma\tsemi)
& {}= \sett{(T^\epsilon, \psi')}{%
	\(\psi' = \psi_0 \circ \Int(n)\inv\)
	for some \(n \in N_G(T^\epsilon)\)%
} \\
& {}= \sset{(T^\epsilon, \psi_0), (T^\epsilon, \psi_0\inv)} \\
& {}= \sset{(T^\epsilon, \psi_0)}.
\end{align*}
In our setting, however, the map
\(\abmapto{(d, \bar n)}{(n d)\inv\dotm(T^\epsilon, \psi_0)}\)
of \cite{debacker-reeder:depth-zero-sc}*{p.~857} is a double
cover, not a bijection; so the formula in
Lemma 10.0.4 \loccit becomes
\begin{equation}
\tag{$**$}
\label{eq:debacker-reeder:depth-zero-sc:Lemma-10.0.4-irregular}
\begin{aligned}
R(G, T^\epsilon, \psi_0)(\gamma)
& {}= 2\sum_{(T', \psi')}
	\psi'(\gamma\tsemi)
	R(G_{\gamma\tsemi}, T', 1)(\gamma\tunip) \\
& {}= 2\psi_0(\gamma\tsemi)R(T^\epsilon, T^\epsilon, 1)(\gamma\tunip) \\
& {}= 2\psi_0(\gamma),
\end{aligned}
\end{equation}
where the sum again runs over the set of orbits in
\(\widehat{\mc T}(\gamma\tsemi)\) under the natural
(trivial) action of \(T^\epsilon\).

Again as in Theorem \ref{thm:far},
\[
\frac{
	\sgn_\epsilon\bigl(\Im_\epsilon(\gamma)\bigr)
}{
	\smabs{D_G(\gamma)}^{1/2}
} = 1,
\]
so the result now follows by combining
\eqref{eq:debacker-reeder:depth-zero-sc:Lemma-9.3.1-irregular}
with
\eqref{eq:debacker-reeder:depth-zero-sc:Lemma-10.0.4-irregular},
and using
Lemmas \ref{lem:quad-char}
and \ref{lem:SSPhi}.
\end{proof}

\subsection{Character values near the identity}

\begin{thm}
\label{thm:exc-near}
Suppose that \(\gamma \in G\rss \cap G_{\MPlus0}\).
If \(\gamma \in A\), then
\[
\Theta_{\pi^\pm}(\gamma)
= \frac1 2\Bigl\{
	\frac1{\smabs{D_G(\gamma)}^{1/2}} - 1
\Bigr\}.
\]
If \(\gamma \in T^{\theta', \eta}\),
where
\begin{itemize}
\item \(\theta' = \epsilon\) and \(\eta \in \sset{1, \varpi}\)
or
\item \(\theta' \in \sset{\varpi, \epsilon\varpi}\)
and \(\eta \in \sset{1, \epsilon}\),
\end{itemize}
then
\[
\Theta_{\pi^\pm}(\gamma)
= \frac1 2\Bigl\{
	\pm H(\SSAddChar, \field_{\theta'})\frac{
		\sgn_{\theta'}\bigl(
			\eta\inv\Im_{\theta'}(\gamma)
		\bigr)
	}{
		\smabs{D_G(\gamma)}^{1/2}
	} -
	1
\Bigr\}.
\]
\end{thm}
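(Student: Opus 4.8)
The plan is to feed the Murnaghan--Kirillov expansion of $\pi^\pm$ from Proposition~\ref{prop:MK-exc} into the explicit formulas for Fourier transforms of semisimple orbital integrals from \cite{spice:sl2-mu-hat}. By Proposition~\ref{prop:MK-exc}, writing $Y = \cayley\inv(\gamma)$ for $\gamma \in G\rss \cap G_{\MPlus0}$, we have
\[
\Theta_{\pi^\pm}(\gamma)
= \tfrac1 2\hat\mu^G_{X^{\epsilon, 1}_1}(Y) \pm{}
\tfrac1 4 q^{-1/2}\Bigl[
	\bigl(
		\hat\mu^G_{X^{\varpi, 1}_1} -
		\hat\mu^G_{X^{\varpi, \epsilon}_1}
	\bigr) +
	\bigl(
		\hat\mu^G_{X^{\epsilon\varpi, 1}_1} -
		\hat\mu^G_{X^{\epsilon\varpi, \epsilon}_1}
	\bigr)
\Bigr](Y),
\]
so the whole proof reduces to evaluating six orbital-integral Fourier transforms at $Y$, for $Y$ in each of the three kinds of torus (split $\mf a$, unramified $\ttt^{\epsilon,\eta}$, ramified $\ttt^{\theta,\eta}$). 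The relevant evaluations are exactly the content of the ``close'' theorems of \cite{spice:sl2-mu-hat} (those cited in the proof of Theorem~\ref{thm:near}, namely Theorems~\xref{sl2-thm:close-spun} and \xref{sl2-thm:close-ram}), combined with Lemma~\ref{lem:cayley}(\ref{item:cayley-torus},\ref{item:cayley-depth-disc}) to pass between $\gamma$ and $Y$, Definition~\ref{defn:Wald-i} for the normalizing $\gamma$-factor, and Lemma~\ref{lem:SSPhi} to turn the principal-value integrals $H(\SSAddChar,\field_\theta)$ into the signs $(-1)$ (unramified) or $\Gauss(\AddChar)$ (ramified). One should note that here $r=0$, so $\SSAddChar_{\pi^\pm} = \AddChar$, $H(\AddChar,\field_\epsilon) = (-1)^{0+1} = -1$, and $H(\AddChar,\field_\varpi) = \Gauss(\AddChar)$; these need to be carried consistently through the bookkeeping.

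The steps, in order, are as follows. First, fix $\gamma \in G\rss\cap G_{\MPlus0}$ and set $Y = \cayley\inv(\gamma) \in \gg\rss\cap\gg_{\MPlus0}$, so that $\depth(Y) = \depth(\gamma) > 0$ and $\abs{D_\gg(Y)} = \abs{D_G(\gamma)}$ by Lemma~\ref{lem:cayley}\pref{item:cayley-depth-disc}. Second, treat the split case: if $\gamma \in A$, then $Y \in \mf a$, and each $\hat\mu^G_{X^{\theta,\eta}_1}(Y)$ for $\theta$ a non-square vanishes (since $Y$ lies in a split torus, not in any conjugate of $\ttt^{\theta,\eta}$), by the vanishing clause of \cite{spice:sl2-mu-hat}*{Theorem~\xref{sl2-thm:close-spun}}; while $\hat\mu^G_{X^{\epsilon,1}_1}(Y)$ contributes $\abs{D_G(\gamma)}^{-1/2} - 1$ after accounting for the measure normalization and the constant term (cf.\ the appearance of $c_0 = -1/2$ noted after Definition~\ref{defn:const}). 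This gives $\Theta_{\pi^\pm}(\gamma) = \tfrac12(\abs{D_G(\gamma)}^{-1/2} - 1)$. Third, treat $\gamma \in T^{\epsilon,\eta}$ with $\eta \in \{1,\varpi\}$: now $\hat\mu^G_{X^{\epsilon,1}_1}(Y)$ is the ``matching'' unramified orbital integral, evaluating (via Definition~\ref{defn:Wald-i} and $H(\AddChar,\field_\epsilon) = -1$) to something of the form $-\sgn_\epsilon(\eta\inv\Im_\epsilon(\gamma))\abs{D_G(\gamma)}^{-1/2} - 1$ after the constant-term adjustment, while all four ramified orbital integrals $\hat\mu^G_{X^{\theta,\eta'}_1}(Y)$ vanish because $Y$ lies in an unramified torus; assembling gives the stated formula with $\theta' = \epsilon$. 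Fourth, treat $\gamma \in T^{\theta',\eta}$ with $\theta' \in \{\varpi,\epsilon\varpi\}$ and $\eta \in \{1,\epsilon\}$: here $\hat\mu^G_{X^{\epsilon,1}_1}(Y)$ contributes only the constant term $-1$ (since $Y$ is not in an unramified torus), and exactly one of the four ramified orbital integrals in the bracket is the matching one. A careful sign analysis — using \cite{spice:sl2-mu-hat}*{Theorem~\xref{sl2-thm:close-ram}}, the factor $\pm\tfrac14 q^{-1/2}$, the value $H(\AddChar,\field_\varpi) = \Gauss(\AddChar)$, and the $q^{1/2}$ from the ramified orbital-integral normalization — should collapse the $q^{-1/2}$'s and the $\Gauss(\AddChar)$'s against each other, leaving $\tfrac12\{\pm H(\SSAddChar,\field_{\theta'})\sgn_{\theta'}(\eta\inv\Im_{\theta'}(\gamma))\abs{D_G(\gamma)}^{-1/2} - 1\}$.

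The main obstacle will be the fourth step: getting the $\pm$ signs, the powers of $q^{1/2}$, and the Gauss sums to cancel cleanly in the ramified case. The delicate points are (i) identifying \emph{which} of $X^{\varpi,1}_1, X^{\varpi,\epsilon}_1, X^{\epsilon\varpi,1}_1, X^{\epsilon\varpi,\epsilon}_1$ is $G$-conjugate (after the appropriate stable-conjugacy twist, as in the proof of Theorem~\ref{thm:pos-bad-ram}) to an element whose orbit meets $T^{\theta',\eta}$, keeping track of the $\eta$-bookkeeping through the identification $\ttt^{\theta,\eta} = \ttt^{\eta^2\theta,1}$ and $\Im_{\eta^2\theta}(Y) = \eta\inv\Im_\theta(Y)$ noted before Definition~\ref{defn:Wald-i}; (ii) checking that the remaining three ramified orbital integrals do \emph{not} contribute, i.e.\ that a given $Y \in \ttt^{\theta',\eta}$ is not $G$-conjugate into the other three tori (this uses the stable-vs-rational conjugacy analysis of \S\ref{sec:std-tori}); and (iii) confirming that the measure normalizations $\textup dg/\textup dt^{\theta,\eta}$ used for $\hat\mu^G_{X^{\theta,\eta}_1}$ in Proposition~\ref{prop:MK-exc} are the ones for which \cite{spice:sl2-mu-hat}'s ``close'' theorems are stated, or else inserting the correct ratio. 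Once these are pinned down, the result is a routine — if fiddly — assembly, and one should also remark (as the paper does elsewhere) that the output agrees with the Sally--Shalika formulas for $\Pi^\pm(\SSAddChar_\pi,\psi,\field_\epsilon)$, and that the $\gamma$-dependence is again described shell by shell, extended to $Z(G)\dotm\Int(G)$-orbits via the central character.
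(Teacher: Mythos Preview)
Your overall strategy is the paper's: start from Proposition~\ref{prop:MK-exc}, feed in the ``close'' formulas from \cite{spice:sl2-mu-hat} together with Lemma~\ref{lem:cayley}(\ref{item:cayley-torus},\ref{item:cayley-depth-disc}), Definition~\ref{defn:Wald-i}, and Lemma~\ref{lem:SSPhi}, and simplify.  So the ingredients are right.

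There is, however, a genuine gap in the case-by-case plan.  You repeatedly assert that \(\hat\mu^G_{X^{\theta,\eta'}_1}(Y)\) vanishes whenever \(Y\) lies in a Cartan subalgebra not \(G\)-conjugate to \(\ttt^{\theta,\eta'}\); you justify this by ``the vanishing clause'' of the close theorems and by non-conjugacy of tori.  This is false.  The ``close'' theorems in \cite{spice:sl2-mu-hat} show that each \(\hat\mu^G_{X^{\theta,\eta'}_1}\) carries a nonzero constant term (the \(A\)-term in the paper's bracket notation) for \emph{every} \(Y \in \gg_{\MPlus0}\), regardless of where \(Y\) sits; vanishing of an orbital integral on a non-conjugate orbit is a statement about \(\mu\), not about its Fourier transform \(\hat\mu\).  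Concretely, with the measure \(\textup dg/\textup dt^\varpi\) one has \(\hat\mu^G_{X^{\varpi,1}_1}(Y) = -\tfrac12 q^{-1/2}(q+1) \ne 0\) for \(Y \in \mf a\) or \(Y \in \ttt^{\epsilon,\eta}\).

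In the split and unramified cases this error is harmless, because in the bracket of Proposition~\ref{prop:MK-exc} the four ramified integrals appear only as pairwise differences \(\hat\mu^G_{X^{\theta,1}_1} - \hat\mu^G_{X^{\theta,\epsilon}_1}\), and their constant terms cancel.  But in your fourth step the error bites: for \(Y \in \ttt^{\varpi,\eta}\), it is \emph{not} true that ``exactly one'' of the four ramified integrals matches and the other three do not contribute.  Both \(\hat\mu^G_{X^{\varpi,1}_1}(Y)\) and \(\hat\mu^G_{X^{\varpi,\epsilon}_1}(Y)\) have nonzero \(\abs{D}^{-1/2}\)-terms, with opposite signs in the \(\sgn_\varpi\)-factor (this is the point of the conjugation by \(\begin{smallpmatrix}\epsilon & 0 \\ 0 & 1\end{smallpmatrix}\) in the paper's argument), and it is their \emph{difference} that produces the single \(2q^{1/2}H(\SSAddChar,\field_\varpi)\sgn_\varpi(\eta\inv\Im_\varpi(Y))\abs{D}^{-1/2}\); the \(\epsilon\varpi\)-pair then contributes zero on \(\ttt^{\varpi,\eta}\) because its \(B_\varpi\)-entries are zero and its constants cancel.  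Your stable-versus-rational conjugacy check (point (ii)) is beside the point here.

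The fix is exactly what the paper does: write each of the six \(\hat\mu\)'s as a function in the bracket notation \([A;B_1,B_\epsilon,B_\varpi,B_{\epsilon\varpi}]\) (after normalizing all to the same measure and subscript), combine first, and only then evaluate case by case.  Once you compute \(\hat\mu^G_{X^{\varpi,\epsilon}_1}\) from \(\hat\mu^G_{X^{\varpi,1}_1}\) by the \(\GL_2\)-conjugation that swaps the two rational classes in the stable class (and similarly for \(\epsilon\varpi\)), the four-term ramified bracket collapses to \(2q^{1/2}[0;0,0,H\sgn_\varpi,H\sgn_{\epsilon\varpi}]\), and the \(\pm\tfrac14 q^{-1/2}\) in front of it yields exactly the \(\pm\tfrac12 H(\SSAddChar,\field_{\theta'})\sgn_{\theta'}\abs{D}^{-1/2}\) term you want.
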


\begin{proof}
We use Proposition \ref{prop:MK-exc},
which writes \(\Theta_\pi \circ \cayley\)
in the indicated range as a linear combination of Fourier
transforms of orbital integrals.

In order to compute this combination of Fourier transforms of
orbital integrals, we adopt the notation of
\cite{spice:sl2-mu-hat}*{%
	Notation \xref{sl2-notn:Bessel-abbrev}%
}, so that
\begin{equation}
\tag{$\ddag$}
\label{eq:spice:sl2-mu-hat:notn:Bessel-abbrev}
\bigl[
	A; B_1, B_\epsilon, B_\varpi, B_{\epsilon\varpi}
\bigr]_{\theta, r'}
\end{equation}
stands for the function whose value at an element
\(Y\) of an elliptic Cartan subalgebra
\(\ttt^{\theta', \eta} \iso V_{\theta'}\) is
\[
\abs\theta^{1/2}A +
q^{-(r' + 1)}\abs{D_\gg(Y)}^{-1/2}B_{\theta'}\bigl(
	\eta\inv\Im_{\theta'}(Y)
\bigr),
\]
and whose value at an element
\(Y\) of the split Cartan subalgebra
\(\mf a\) is
\[
\abs\theta^{1/2}A +
q^{-(r' + 1)}\abs{D_\gg(Y)}^{-1/2}B_1.
\]

Then Theorems \xref{sl2-thm:close-spun} and
\xref{sl2-thm:close-ram} \loccit,
combined with Definition \ref{defn:Wald-i},
give
\[
\begin{array}{r@{}r@{}l@{}l@{}l@{}l@{}r@{}l}
\hat\mu^G_{X^{\epsilon, 1}_1}
={} & \bigl[&
	-q\inv; {}
	& 1, {} & H(\SSAddChar, \field_\epsilon)\sgn_\epsilon, {}
	& 0, {} & 0
&\bigr]_{\epsilon, 0} \\
\hat\mu^G_{X^{\varpi, 1}_1}
={} & \bigl[&
	-\tfrac1 2 q^{-3/2}(q + 1); {}
	& 1, {} & 0, {}
	& H(\SSAddChar, \field_\varpi)\sgn_\varpi, {} & 0
&\bigr]_{\varpi, 0};
\end{array}
\]
but it is important to realize that there are two obstacles
to combining the formulas.
First, the subscripts are different
(\((\epsilon, 0)\) versus \((\varpi, 0)\));
and, second, the measures with respect to which the orbital
integrals are computed are not those in
Lemmas \ref{lem:MK-green} and \ref{lem:MK-lusztig}.
(It may seem that a third obstacle is the fact that
Definition \ref{defn:Wald-i} is stated only in the
positive-depth setting; but, since we are working in the Lie
algebra, there is no harm now in multiplying by a scalar to
see that, in fact, it remains valid in the depth-zero case.)
Our approach to the first problem will be to replace all
subscripts with the arbitrarily chosen \((\epsilon, -1)\)
(which we then drop from the notation).
For the second, we recall that the quoted orbital integrals
use the various measures \(\textup d_{\theta'}\dot g\)
on \(G/T^{\theta'}\),
and so use \S\ref{sec:measure} to replace them by the
measures \(\textup dg/\textup dt^{\theta'}\).
Making both of these adjustments gives
\begin{equation}
\tag{$*$}
\label{eq:epsilon-and-varpi}
\begin{array}{r@{}r@{}l@{}l@{}l@{}l@{}r@{}l}
\hat\mu^G_{X^{\epsilon, 1}_1}
={} & \bigl[&
	-1; {}
	& 1, {} & H(\SSAddChar, \field_\epsilon)\sgn_\epsilon, {}
	& 0, {} & 0
&\bigr] \\
\hat\mu^G_{X^{\varpi, 1}_1}
={} & q^{1/2}\bigl[&
	-\tfrac1 2 q\inv(q + 1); {}
	& 1, {} & 0, {}
	& H(\SSAddChar, \field_\varpi)\sgn_\varpi, {} & 0
&\bigr].
\end{array}
\end{equation}
It is important to note that the difference between this
equation and the previous one is \emph{not} just notational;
since we have changed normalizations of measures, we are
actually describing different \emph{functions}.

We have that
\begin{align*}
\Ad\begin{pmatrix}
\epsilon & 0 \\
0        & 1
\end{pmatrix}X^{\varpi, \epsilon}_1 = X^{\varpi, 1}_1, \\
\intertext{so}
\hat\mu^G_{X^{\varpi, \epsilon}_1}
= \hat\mu^G_{X^{\varpi, 1}_1} \circ \Ad\begin{pmatrix}
\epsilon & 0 \\
0        & 1
\end{pmatrix}.
\end{align*}
A direct computation shows that this reduces to
\begin{equation}
\tag{$**$}
\label{eq:other-varpi}
\hat\mu^G_{X^{\varpi, \epsilon}_1}
= q^{1/2}\bigl[
	-\tfrac1 2 q\inv(q + 1);
	1, 0,
	-H(\SSAddChar, \field_\varpi)\sgn_\varpi, 0
\bigr];
\end{equation}
i.e., all that has changed is that
\(H(\SSAddChar, \field_\varpi)\) has become
\(-H(\SSAddChar, \field_\varpi)\).

Further, as observed in
\cite{spice:sl2-mu-hat}*{Remark \xref{sl2-rem:what-about}},
we may adapt formulas involving one choice of uniformizer
(such as \(\varpi\)) to another choice
(such as \(\epsilon\varpi\)) by simple substitution; so
(remembering that the order of the arguments is significant)
we find
\begin{equation}
\tag{$*{*}*$}
\label{eq:epsilon-varpi}
\begin{array}{r@{}r@{}l@{}l@{}l@{}l@{}r@{}l}
\hat\mu^G_{X^{\epsilon\varpi, 1}_1}
={} & q^{1/2}\bigl[&
	-\tfrac1 2 q\inv(q + 1); {}
	& 1, {} & 0, {}
	& 0, {} &
		H(\SSAddChar, \field_{\epsilon\varpi})\sgn_{\epsilon\varpi}
&\bigr]; \\
\hat\mu^G_{X^{\epsilon\varpi, \epsilon}_1}
={} & q^{1/2}\bigl[&
	-\tfrac1 2 q\inv(q + 1); {}
	& 1, {} & 0, {}
	& 0, {} &
		-H(\SSAddChar, \field_{\epsilon\varpi})\sgn_{\epsilon\varpi}
&\bigr].
\end{array}
\end{equation}
By \eqref{eq:epsilon-and-varpi}, \eqref{eq:other-varpi},
and \eqref{eq:epsilon-varpi},
\begin{multline*}
\hat\mu^G_{X^{\varpi, 1}_1} -
	\hat\mu^G_{X^{\varpi, \epsilon, 1}_1} +
\hat\mu^G_{X^{\epsilon\varpi, 1}_1} -
	\hat\mu^G_{X^{\epsilon\varpi, \epsilon}_1} \\
= 2q^{1/2}\bigl[
	0;
	0, 0,
	H(\SSAddChar, \field_\varpi)\sgn_\varpi,
		H(\SSAddChar, \field_{\epsilon\varpi})\sgn_{\epsilon\varpi}
\bigr].
\end{multline*}
By
\eqref{eq:spice:sl2-mu-hat:notn:Bessel-abbrev},
and
Lemmas \ref{lem:Im-facts}
and
\ref{lem:cayley}\pref{item:cayley-depth-disc},
Proposition \ref{prop:MK-exc}
now simplifies to the desired formula.
\end{proof}

\begin{bibdiv}
\begin{biblist}
\bibselect{references}
\end{biblist}
\end{bibdiv}
\end{document}